\let\oldtocsection=\tocsection
\let\oldtocsubsection=\tocsubsection
\let\oldtocsubsubsection=\tocsubsubsection
\renewcommand{\tocsection}[2]{\hspace{0em}\oldtocsection{#1}{#2}}
\renewcommand{\tocsubsection}[2]{\hspace{1em}\oldtocsubsection{#1}{#2}}
\renewcommand{\tocsubsubsection}[2]{\hspace{2em}\oldtocsubsubsection{#1}{#2}}
\definecolor{cadmiumgreen}{rgb}{0.0, 0.42, 0.24}
\newcommand{\DD}{\mathbb{D}}
\newcommand{\EE}{\mathbb{E}}
\newcommand{\NN}{\mathbb{N}}
\newcommand{\PP}{\mathbb{P}}
\newcommand{\RR}{\mathbb{R}}
\newcommand{\TT}{\mathbb{T}}
\newcommand{\D}{\mathrm{d}}
\newcommand{\dq}{\mathrm{d}q}
\newcommand{\ds}{\mathrm{d}s}
\newcommand{\dr}{\mathrm{d}r}
\newcommand{\dt}{\mathrm{d}t}
\newcommand{\du}{\mathrm{d}u}
\newcommand{\dv}{\mathrm{d}v}
\newcommand{\dx}{\mathrm{d}x}
\newcommand{\Df}{\mathrm{D}}
\newcommand{\E}{\mathrm{e}}
\newcommand{\Cc}{\mathcal{C}}
\newcommand{\Ee}{\mathcal{E}}
\newcommand{\Ff}{\mathcal{F}}
\newcommand{\Gg}{\mathcal{G}}
\newcommand{\Uu}{\mathcal{U}}
\newcommand{\Vv}{\mathcal{V}}
\newcommand{\Ww}{\mathcal{W}}
\newcommand{\Xx}{\mathcal{X}}
\newcommand{\ep}{\varepsilon}
\newcommand{\frA}{\mathfrak{A}}
\newcommand{\frB}{\mathfrak{B}}
\newcommand{\B}{\mathrm{B}}
\newcommand{\bdg}{\mathfrak{b}}
\newcommand{\nn}{[\![n]\!]}
\newcommand{\half}{\frac{1}{2}}
\newcommand{\halft}{\frac{3}{2}}
\newcommand{\halfs}{\frac{1}{6}}
\newcommand{\one}{\mathbbm{1}}
\newcommand{\abs}[1]{\left\lvert#1\right\rvert}
\newcommand{\norm}[1]{\left\lVert#1\right\rVert}
\newcommand{\bu}{\overline{u}}
\newcommand{\ww}{\bm{\omega}}
\newcommand{\wphi}{\widetilde{\phi}}
\newcommand{\wpsi}{\widetilde{\psi}}
\newcommand{\kphi}{{\kappa_{\phi}}}
\newcommand{\kpsi}{{\kappa_{\psi}}}
\newcommand{\Const}{\boldsymbol{\mathrm{C}}}
\newcommand{\XTb}{X_T^{t,\overline{X}_t}}
\newcommand{\Pf}{\mathfrak{P}}
\newcommand{\XXi}{\boldsymbol{\Xi}}
\newtheorem{theorem}{Theorem}[section]
\newtheorem{corollary}[theorem]{Corollary}
\newtheorem{lemma}[theorem]{Lemma}
\newtheorem{proposition}[theorem]{Proposition}
\theoremstyle{definition}
\newtheorem{definition}[theorem]{Definition}
\newtheorem{remark}[theorem]{Remark}
\newtheorem{assumption}[theorem]{Assumption}
\theoremstyle{plain}
\newtheorem{example}[theorem]{Example}
\numberwithin{equation}{section}
\definecolor{ocean}{rgb}{0,0.1,0.6}
\definecolor{imperialGreen}{RGB}{2,137,59}
\definecolor{imperialBlue}{RGB}{0, 62, 116}
\definecolor{imperialBrick}{RGB}{165,25,0}
\definecolor{imperialProcess}{RGB}{0,133,202}
\author{Ofelia Bonesini}
\address{Department of Mathematics, London School of Economics and Political Science}
\email{o.bonesini@lse.ac.uk}
\author{Antoine Jacquier}
\address{Department of Mathematics, Imperial College London, and the Alan Turing Institute}
\email{a.jacquier@imperial.ac.uk}
\author{Alexandre Pannier}
\address{LPSM, Université Paris Cité}
\email{pannier@lpsm.paris}
\title{Rough volatility, path-dependent PDEs and weak rates of convergence}
\date{\today}
\thanks{We would like to thank Christian Bayer for introducing us to the weak rate problem and an anonymous referee for bringing Lemma~\ref{lemma:Friz} to our attention.
The authors acknowledge financial support from the EPSRC grant EP/T032146/1.}
\subjclass[2020]{60G22, 35K10, 65C20, 91G20, 91G60}
\keywords{Rough volatility, path-dependent PDEs, weak rates, stochastic Volterra equations}
\newcolumntype{s}{>{\hsize=.6\hsize}X}
\newcolumntype{y}{>{\hsize=1.4\hsize}X}
\begin{document}

\begin{abstract}
    In the setting of stochastic Volterra equations, and in particular rough volatility models, we show that conditional expectations are the unique classical solutions to path-dependent PDEs. The latter arise from the functional Itô formula developed by [Viens, F., \& Zhang, J. (2019). A martingale approach for fractional Brownian motions and related path dependent PDEs. \emph{Ann. Appl. Probab.}]. We then leverage these tools to study weak rates of convergence for discretised stochastic integrals of smooth functions of a Riemann-Liouville fractional Brownian motion with Hurst parameter $H \in (0,\half)$. These integrals approximate log-stock prices in rough volatility models. We obtain the optimal weak error rates of order~$1$ if the test function is quadratic and of order~$(3H+\half)\wedge1$ if the test function is five times differentiable; in particular these conditions are independent of the value of~$H$.
    \end{abstract}

\maketitle

\tableofcontents

%%%%%%%%%%%%%%%%%%%%%%%%%%%%%%%%%%%%%%%%%%%%
%%%%%%%%%%%%%%%%%%%%%%%%%%%%%%%%%%%%%%%%%%%%

\section{Introduction}

\subsection{Motivation}
Until and unless softwares become capable of handling infinite quantities in finite time, numerical error analysis justifies the application of continuous-time models to discretised real-world applications. In this paper, we consider a time-grid of $(N+1)$ equally spaced points~$(t_i:=\frac{iT}{N})_{i=0}^{N}$ and we study the convergence, as $\Delta=\frac{T}{N}$ goes to zero, of the Euler approximation
\begin{equation}\label{eq:EulerRVol}
    \overline{X}_{t_0}=x_0, \qquad\overline{X}_{t_{i+1}} = \overline{X}_{t_{i}} + \psi(V_{t_i})(B_{t_{i+1}}- B_{t_i})- \half  \psi(V_{t_i})^2 \Delta,
\end{equation}
to the original rough volatility model, where $X$ represents the log-price,
\begin{equation}\label{eq:RVol}
    X_t = x_0 + \int_0^t \psi(V_r)\,\D B_r -\half \int_0^t \psi(V_r)^2 \dr, \quad V_t= \int_0^t (t-r)^{H-\half}\,\D W_r.
\end{equation}
Here, $x_0\in\RR$, $B$ and $W$ are correlated Brownian motions with the natural filtration~$(\Ff_t)_{t\in\TT}$, and $V$ is a Gaussian (Volterra) process with known covariance function, which can therefore be exactly sampled at discrete-time points by Cholesky decomposition. 
The pivotal parameter is~$H$, which controls the H\"older regularity of~$V$. When~$H=\half$, one recovers the well-known Markov and semimartingale theories. The singular case~$H\in(0,\half)$, supported by empirical data under both historical and pricing measures~\cite{BFG16, bolko2022gmm, dupret2022impact, gatheral2018volatility, han2021hurst, romer2022empirical, wu2022rough}, is precisely where both theoretical and numerical analyses go haywire. While~$X$ remains a semimartingale, $V$ fails to be so, and an application of Itô's formula with~$\psi$ Lipschitz continuous shows that
\[
\EE\left[\abs{X_T-\overline{X}_T}^2\right] \lesssim\sum_{i=0}^{N-1} \int_{t_i}^{t_{i+1}} \EE[\abs{V_r-V_{t_i}}^2]\dr \lesssim \Delta^{2H},
\]
namely that the \emph{strong rate of convergence} is of order~$H$. When~$H$ is close to zero, as is generally agreed upon in the community, a practical implementation would not converge in a reasonable amount of time (dividing the error by~$2$ requires multiplying~$N$ by~$2^{1/H}$). This shortcoming is confirmed by the lower bound of~\cite[Theorem 2]{neuenkirch2016order}, and in the more general framework of Stochastic Volterra Equations (SVEs) by~\cite[Theorems~2.3 and~2.4]{li2022numerical}, \cite[Theorems~2.2 and~2.4]{richard2021discrete} (rate~$H$ and~$2H$ for Euler and Milstein schemes respectively) and \cite[Corollary~3.1]{alfonsi2021approximation} (rate~$4H/3$ with a multifactor approximation). A more precise analysis was undertaken 
in~\cite[Theorem~2.1]{fukasawa2023limit} and~\cite[Theorems~2.1 and~2.2]{nualart2023error}, which provide a Central Limit Theorem  for the asymptotic error between an SVE and its Euler discretisation. The rescaling is the strong error rate~$N^H$.

%%%%%%%%%%%%%%%%%%%%%%%%%%%%%%%%%%%%%%%%%%%%
\subsection{Weak rates}

Fortunately, the financial applications we have in mind, such as option pricing, only require to approximate quantities of the form~$\EE[\phi(X_T)]$, for a payoff function~$\phi$.
This falls into the realm of the \emph{weak rate of convergence} which is usually much higher than its strong counterpart. Most friendly stochastic integrals, such as Itô's SDEs where~$H=\half$, exhibit~$1$ for the former and~$\half$ for the latter. One then naturally wonders how the weak error rate evolves in the inhospitable interval~$H\in(0,\half)$. In particular, a positive lower bound is needed to justify the use of simulation schemes.

Unfortunately, the analysis of the weak rate turns out to be an intricate problem, even in relatively simple settings. For $H>\half$, \cite[Theorem 18]{bezborodov2019option} obtained a weak rate of order $H$ where the volatility follows a fractional Ornstein-Uhlenbeck process. Let us also mention~\cite{leon2023euler} for the analysis of fractional SDEs with~$H>\frac{1}{3}$, where a weak rate of~$4H-1$ is obtained via rough paths and Malliavin calculus techniques.

Regarding rough volatility models, the rough Donsker theorem of~\cite[Theorem 2.11]{horvath2017functional}, with weak rate~$H$, was designed in an age when bare convergence was already an achievement,  while the quantization method developed in~\cite{bonesini2021functional} only applies to VIX derivatives. 
Regarding the weak error rate between~\eqref{eq:EulerRVol} and~\eqref{eq:RVol}, the following table is, as far as we are aware, an exhaustive literature review of the results obtained so far.
{\footnotesize
\begin{table}[ht!]
\begin{center}
 \begin{tabularx}{142.9mm}
 {|p{34mm} p{17mm} p{79mm}|}
 \hline
 \textbf{Authors} & \textbf{Weak rate} & \textbf{Assumptions}\\
 \hline
 Bayer, Hall, Tempone
 
 \cite[Theorem~2.1]{bayer2020weak}
 & $H+\half$
 & Linear variance ($\psi(x)=x$), bounded payoff ($\phi \in \mathcal{C}_b^{\lceil \frac{1}{H}\rceil}$)\\
 \hline 
 Bayer, Fukasawa, Nakahara
 \cite[Theorem 1.2]{bayer2022weak}
 & $H+\half$
 & Linear variance ($\psi(x)=x$),
 regular payoff ($\phi \in \mathcal{C}_p^{2+\lceil\frac{1}{2H}\rceil}$) \\ 
 \hline
Gassiat
\cite[Theorem 2.1]{gassiat2022weak}
& $(3H+\half)\wedge1$
& Linear variance ($\psi(x)=x$),
bounded payoff ($\phi \in \mathcal{C}_b^{3+2\lceil\frac{1}{4H}\rceil}$)

\emph{or} 

regular variance ($\psi \in \mathcal{C}_b^2$), cubic payoff ($\phi(x)=x^3$)\\
 \hline
 Friz, Salkeld, Wagenhofer
 \cite[Theorem 1.1]{friz2022weak} 
 & $(3H+\half)\wedge1$
 & Regular variance ($\psi \in \mathcal{C}^m$, $m\in\NN$), polynomial payoff ($\phi(x)=x^n, \, n\le m$)\\
 \hline 
 \end{tabularx}
\end{center}
\end{table}}
\\
They give a positive answer to the question of the lower bound: a minimum of~$\half$ is achieved in all these papers, even supplemented by an unexpected~$3H$ (and a logarithmic correction for~$H=\halfs$) in the last two. On the other hand, all of them rely one way or another on the given structure of the model (the last column) to derive explicit computations. 
Friz, Salkeld and Wagenhofer~\cite{friz2022weak} built on previous ideas by Gassiat~\cite{gassiat2022weak}, which is itself in a similar spirit as the duality approach in~\cite{clement2006duality}, to write down an explicit formula for the moments of the stochastic integral in~\eqref{eq:RVol}. In both cases, a precise fractional analysis is necessary to wind up with a sweet~$3H+\half$ weak rate. Furthermore, let us mention that the constant obtained in~\cite{friz2022weak} blows up as~$n$ goes to infinity, ruling out a polynomial approximation of more general payoffs. The authors also prove the optimality of this rate by providing a counterexample when~$\phi$ is cubic function~\cite[Proposition 6.1]{friz2022weak}.

This problem is difficult because, here more than elsewhere, standard techniques for diffusions rely on It\^o's formula or on the Markov property, and in particular on PDE methods~\cite{talay1986discretisation,talay1990expansion,pages2018numerical}.
Inspired by the functional It\^o formula developed in~\cite[Theorem~3.10 and~3.17]{viens2019martingale} and the resulting path-dependent PDEs (PPDEs) studied in~\cite[Theorem~3.1,~3.2,~3.4]{wang2022path}, we decide to explore a PDE approach analogous to the Markov case. The aim of this paper is then twofold: \medskip
\begin{enumerate}%\setlength{\itemsep}{5pt}
    \item We present the general PPDE theory for SVEs (Section~\ref{sec:GenPPDE}) and prove that it applies to rough volatility models (Section~\ref{sec:RvolApplication}).
    \item For the Euler approximation~\eqref{eq:EulerRVol} with no drift and assuming  $\phi\in \Cc^5,\psi\in \Cc^{3}$ with polynomial and exponential growth respectively, we derive the optimal weak error rate of order~$(3H+\half)\wedge1$ in Theorem~\ref{th:main}. 
    If~$\phi$ is quadratic we obtain a weak error of rate one.
\end{enumerate}
\medskip
Relaxing the requirements on $\phi$ and $\psi$ is the main achievement of this work compared to former results as it breaks away from the polynomial setting and provides a condition independent of~$H$ (the constant may blow up as~$H\downarrow0$ though).
Although the error analysis is restricted to the case without drift and exactly sampled variance for conciseness, the PDE approach should stretch beyond this setting without difficulty, and even has the potential to extend to larger classes of SVEs and different types of approximation.

%%%%%%%%%%%%%%%%%%%%%%%%%%%%%%%%%%%%%%%%%%%%
\subsection{Path-dependent PDEs}
Viens and Zhang~\cite{viens2019martingale} were interested in understanding the path-dependent structure of conditional expectations of functionals of Volterra processes. Let us start by explaining the idea at the core of their paper in a simple setting. 
For~$0\le t\le s\le T$, the authors discarded the decomposition~$V_s= V_t + [V_s-V_t]$ as~$V$ does not have a flow or the Markov property. Instead, they promoted the decomposition
$$
    V_s = \int_0^s K(s,r) \D W_r = \underbrace{\int_0^t K(s,r) \D W_r}_{\text{\normalsize{$=:\Theta^t_s$}}} + \underbrace{\int_t^s K(s,r) \D W_r}_{\text{\normalsize{$=:I^t_s$}}},
$$
which is an orthogonal decomposition in the sense that, for $t\le s$, $\Theta^t_s$ is~$\Ff_t$-measurable  and~$I^t_s$ is independent of~$\Ff_t$. In financial terms, $\Theta^t_s=\EE[V_s|\Ff_t]$ corresponds to the forward variance. Moreover, it turns out that~$\Theta^t$ encodes precisely the path-dependence one needs to express the conditional expectation. Consider~\eqref{eq:RVol} and ignore the drift for clarity, then it holds
\begin{align*}
    \EE[\phi(X_T)|\Ff_t]
    & = \EE\left[\phi\left(X_t + \int_t^T \psi(V_s) \D B_s\right) \Big|\Ff_t\right]\\
    & = \EE\left[\phi \left(X_t + \int_t^T \psi(\Theta^t_s + I^t_s ) \D B_s  \right) \Big| X_t, \Theta^t_{[t,T]} \right]\\
    &= u\left(t,X_t,\Theta^t_{[t,T]}\right),
\end{align*}
where $u:\TT\times\RR\times C([t,T])\to \RR$ is defined as
\begin{equation}\label{eq:utxomega}
    u(t,x,\omega) 
    := \EE\left[ \phi(X_T) | X_t=x, \, \Theta^t=\omega\right].
\end{equation}
This means that the conditional expectation is only a function of~$X_t$ and~$\Theta^t_{[t,T]}$, in other words we do not need to take into account the past of~$X$ and~$V$, respectively~$X_{[0,t)}$ and~$V_{[0,t)}$. 
We pay the recovery of the Markov property of~$(X,\Theta^t)$ (see~\cite[Proposition 1 and Corollary]{fukasawa2021hedging} for a proof) by lifting the state space to a path space. Let us mention that such a representation was achieved the same year by~\cite[Theorem~2.1]{euch2018perfect} for the rough Heston model using different techniques and that, in the affine framework, this also induces a stochastic PDE representation~\cite[Equation~(3.5)]{abi2019markovian},\cite[Theorem~5.12]{cuchiero2020generalized}.
\\

In a more general setting where~$\bm{X}$ solves an SVE like~\eqref{eq:MainSVE} and~$\bm{\Theta}^t$ is the appropriate ~$\Ff_t$-measurable projection, the main result of~\cite{viens2019martingale} is a functional Itô formula~\cite[Theorem~3.17]{viens2019martingale} for functions of the concatenated process~$\bm{X}\otimes_t \bm{\Theta}^t:= \bm{X} \one_{[0,t)} + \bm{\Theta}^t\one_{[t,T]}$ taking values in~$C(\TT,\RR^d)$, see Section~\ref{sec:ItoPPDE}. 
This involves Fréchet derivatives where the path is only perturbed on~$[t,T]$ in the direction of the kernel.
When the kernel~$K(\cdot,t)$ is singular at~$t$, hence not continuous on~$[t,T]$, the derivative is defined as the limit of those derivatives perturbed by a truncated kernel.

Using BSDE techniques and an intermediary process, in \cite[Theorem 3.2]{wang2022path} the authors demonstrated that conditional expectations of the latter are classical solutions to (semilinear parabolic) path-dependent PDEs, as alluded to in~\cite{viens2019martingale}.
As we detail in Remark~\ref{rem:path_der_special_cases}, the Fréchet derivative---morally, because in a different space---generalises Dupire's vertical derivative~\cite{dupire2019functional} as the path is not frozen and the direction not constant. The Volterra case leads to a different class of PPDEs which do not lie in the scope of previous existence results~\cite{peng2016bsde,ren2014overview}.

We adapt the proof of~\cite[Theorem 3.2]{wang2022path} to include singular kernels, derive uniqueness of the solution, and show the connection with the original SVE. This confirms that the representation~\eqref{eq:utxomega} holds for a broad class of SVEs. The caveat lies in the stringent assumptions made on the road to Itô's formula: the function~$u$ needs to be twice Fréchet differentiable with particular regularity conditions. Exploiting the smoothness of the payoff~$\phi$ and of the volatility function~$\psi$, we are however able to verify these for rough volatility models of interest~\eqref{eq:RVol}, and thus prove well-posedness of the pricing PPDE~\eqref{eq:PPDE_rvol}. 
Outside weak error rates, we believe this opens the gates to a number of PDE applications for rough volatility, such as but not limited to, stochastic control and optimisation, numerical methods \cite{jacquier2019deep,pannier2024path} and regularity of the value function.

\subsection{Method of proof}
The idea of our proof consists in expressing the error~$\EE[\phi(X_T)]-\EE[\phi(\overline{X}_T)]$ as a telescopic sum of conditional expectations between successive discretisation points as in~\cite{talay1990expansion}, ~\cite[Section~7.6]{pages2018numerical} and~\cite[Section 3.3]{bayer2020weak} for instance. We apply the functional Itô formula to the Euler approximation between~$t_i$ and~$t_{i+1}$ and then cancel the time derivative of~$u$ thanks to the PPDE; this is presented in Proposition~\ref{prop:Telescopic}. The problem boils down to studying local errors of the type
\begin{equation}\label{eq:PsiDifference}
\EE\left[\left(\psi^2(V_t)-\psi^2(V_{t_i})\right) \phi''(X_T)\right].
\end{equation}
Only a sharp analysis can exploit this difference without exhibiting the strong rate. Indeed, the variance process is not a semimartingale thus we cannot apply Itô's formula again and standard approximations and estimates necessitate to take absolute values, hence invoking the strong rate.
On the other hand, one can show that~$\EE\big[\psi^2(V_t)-\psi^2(V_{t_i})\big]\lesssim t^{2H}-t_i^{2H}$. This only yields a local weak rate of~$2H$, but the passage to the global rate (after integrating over~$t$ and summing over~$i$) achieves a rate of order one (see Lemma~\ref{lemma:trick_rate_one}). This trick allows us to recover this optimal rate for quadratic payoffs, and inspires us to look for ways of disentangling the stochastic factors in~\eqref{eq:PsiDifference}.

The crucial tool for expressing the differences in~\eqref{eq:PsiDifference} without taking absolute values---which would only yield the strong rate of convergence, $H$---is found in a combination of the Clark-Ocone formula and integration by parts. 
This approach effectively decouples the stochastic terms from the kernels, eliminating the need for crude estimates like those provided by the Cauchy-Schwarz inequality. However, for some of the terms, applying such a strategy once is not sufficient; multiple applications are required but ultimately succeed in achieving the desired rate.
\\

In a nutshell, this paper is a tale that starts from a numerical analysis problem, then dives into stochastic analysis before bouncing back on PDE theory, adds a salty touch of Malliavin calculus and ends its course with fractional calculus.

\subsection{Organisation of the paper}
    The paper is structured as follows:
    In Section~\ref{sect:def_and_ito_form}, we introduce the framework and definitions needed to extend the It\^{o} formula to functionals with super-polynomial growth.
    In Section~\ref{sec:GenPPDE}, the connection with the path-dependent PDE is established, as in the Markovian setting. We apply these results to rough volatility models in Section~\ref{sec:RvolApplication} and postpone the proofs to Section~\ref{sec:proof_RvolApplication}. 
     In Section~\ref{sec:weak_rates_of_conv} we state our main result for the weak rates of convergence, whose proof is carried out in details in Sections~\ref{sec:Bone} and~\ref{sec:Btwo}.

\section{The functional It\^o formula and the path-dependent PDE}\label{sec:ItoPPDE}

\subsection{Notations}
    We write, for $m,n \in \NN$ with $m <n$,  $[\![m,n]\!]:=\{m,\dots,n\}$ and $\nn := [\![1,n]\!]=\{1,\ldots, n\}$. 
    We fix a finite time horizon~$T>0$ and denote the corresponding time interval as $\TT:=[0,T]$.
    For a couple of topological spaces $\mathcal{Y}$ and $\mathcal{Z}$, the set $\Cc^0(\mathcal{Y,Z})$ (resp.  $D^0(\mathcal{Y,Z})$) represents the set of continuous (resp. c\`adl\`ag) functions from $\mathcal{Y}$ to $\mathcal{Z}$.
    We write $f(x)\lesssim g(x)$ for two positive functions $f,g$ if there exists $c>0$ independent of~$x$ such that~$f(x)\le c g(x)$. We only use this notation when it is clear that the constant~$c$ is inconsequential. We define~$\bdg_p$ to be the BDG constant for any~$p>1$.

We consider multi-dimensional stochastic Volterra equations (SVE) given by
    \begin{equation}\label{eq:MainSVE}
\bm{X}_t = x + \int_0^t b(t,r,\bm{X}_r)\dr + \int_0^t \sigma(t,r,\bm{X}_r)\,\D \bm{W}_r,
    \end{equation}
taking values in~$\RR^d$, with $d\geq 1$.
The coefficients~$b:\TT^2\times\RR^d\to\RR^d$ and $\sigma:\TT^2\times\RR^d\to\RR^{d\times m}$ are Borel-measurable functions, 
and~$\bm{W}$ is an~$m$-dimensional Brownian motion on the filtered probability space~$(\Omega,\Ff,\{\Ff_t\}_{t\in\TT},\PP)$ satisfying the usual conditions. 
For each~$t\in \TT$, we introduce the key~$\Ff_t$-measurable process~$(\bm{\Theta}^t_s)_{s\ge t}$:
\begin{equation}\label{eq:ThetaDef}
\bm{\Theta}^t_s = x + \int_0^t b(s,r,\bm{X}_r)\dr + \int_0^t \sigma(s,r,\bm{X}_r)\,\D \bm{W}_r.
\end{equation}
We highlight one important property: fixing~$s\in\TT$ and viewing the index~$t \in [0,s]$ as time, $(\bm{\Theta}^t_s - \int_0^t b(s,r,\bm{X}_r)\dr)_{t\in[0,s]}$ is a martingale (provided the right integrability conditions).

\subsection{Definitions and the functional It\^o formula}\label{sect:def_and_ito_form}
We need to introduce a number of notations that lead to the functional It\^o formula.
The following is a summary of \cite[Section~3.1]{viens2019martingale} with the small twist that we allow for faster than polynomial growth.
We define the following spaces and distances: 
\begin{alignat}{2}
    &\Ww := \Cc^0 (\TT, \mathbb{R}^d), \qquad &&\overline{\Ww} := \Df^0 (\TT, \mathbb{R}^d), \qquad \Ww_t = \Cc^0 ([t, T], \mathbb{R}^d); \nonumber\\
&\Lambda := \TT \times \Ww, \qquad &&\overline{\Lambda} := \left\{(t, \bm{\omega})\in\TT \times \overline{\Ww}:\bm{\omega}\!\mid_{[t, T]}\in\Ww_t\right\} \label{eq:defs_spaces}\\
&\norm{\bm{\omega}}_{\TT} = \sup_{t \in \TT} \abs{\bm{\omega}_t}, \qquad &&\bm{d}((t,\bm{\omega}),(t',\bm{\omega}')):= \abs{t-t'} + \norm{\bm{\omega}-\bm{\omega}'}_{\TT}.\nonumber
\end{alignat}
For two paths~$\bm{\omega},\bm{\theta}$ on $\TT$ we define their concatenation at time~$t\in\TT$ as
$$
\bm{\omega} \otimes_t \bm{\theta} := \bm{\omega} \one_{[0,t)} + \bm{\theta} \one_{[t,T]}.
$$
Let $\Cc^0(\overline{\Lambda}):=\Cc^0(\overline{\Lambda}, \RR)$ denote the set of functions~$U:\overline{\Lambda}\to\RR$ continuous under $\bm{d}$ and define the right time derivative
$$
\partial_t U(t,\bm{\omega}):= \lim_{\ep\downarrow0}\frac{U(t+\ep,\bm{\omega})-U(t,\bm{\omega})}{\ep},
$$
for all~$(t,\bm{\omega})\in\overline{\Lambda}$, provided the limit exists.  {For all~$(t,\bm{\omega})\in\overline{\Lambda}$, we introduce the spatial Fréchet derivative~$\Ww\ni\eta\mapsto \langle\partial_{\bm{\omega}} U(t,\bm{\omega}),\eta\rangle\in\RR$ with respect to~$\bm{\omega}\lvert_{[t,T]}$:
\begin{equation}
U(t,\bm{\omega}+\eta\one_{[t,T]})-U(t,\bm{\omega}) = \langle \partial_{\bm{\omega}} U(t,\bm{\omega}),\eta\rangle
    +o(\norm{\eta\one_{[t,T]}}_{\TT}).
    \label{eq:PathwiseDerDef}
\end{equation}
}
This is a linear operator and, if it exists, it is equal to the Gateaux derivative
\begin{equation*}
 \langle \partial_{\bm{\omega}} U(t,\bm{\omega}),\eta\rangle
    = \lim_{\ep\downarrow0}\frac{U(t,\bm{\omega}+\ep\eta\one_{[t,T]})-U(t,\bm{\omega})}{\ep}.
\end{equation*}
 {For all~$(t,\bm{\omega})\in\overline{\Lambda}$, we similarly define the second derivative~$\Ww^2\ni(\eta^{(1)},\eta^{(2)})\mapsto\langle\partial_{\bm{\omega}\bm{\omega}}U(t,\bm{\omega}),(\eta^{(1)},\eta^{(2)})\rangle \in\RR$ with respect to $\bm{\omega}\lvert_{[t,T]}$:
\begin{equation}
\langle \partial_{\bm{\omega}} U(t,\bm{\omega}+\eta^{(2)}\one_{[t,T]}),\eta^{(1)}\rangle - \langle \partial_{\bm{\omega}} U(t,\bm{\omega}),\eta^{(1)}\rangle =\langle\partial_{\bm{\omega}\bm{\omega}}U(t,\bm{\omega}),(\eta^{(1)},\eta^{(2)})\rangle
+o\left(\norm{\eta^{(2)}\one_{[t,T]}}_{\TT}\right).
    \label{eq:PathwiseDerDef2}
\end{equation}
}

The following assumption ensures well-posedness of~\eqref{eq:MainSVE}.
\begin{assumption}\label{assu:wellposedness}The coefficients $b$ and $\sigma$ are such that:
\begin{enumerate}%\setlength{\itemsep}{3pt}
    \item[(i)] The SVE~\eqref{eq:MainSVE} has a weak solution $(\bm{X},\bm{W})$;
    \item[(ii)] For all~$p\ge1$, $\EE[\|\bm{X}\|_\TT^p]=\EE[\sup_{t\in \TT} \abs{\bm{X}_t}^p]<\infty$.
\end{enumerate}
\end{assumption}
This assumption, from~\cite{viens2019martingale}, only requires $\bm{X}$ to have finite moments forcing us to take into account only functions of~$\bm{X}$  with polynomial growth. 
We would like to relax this assumption.
\begin{definition}
    Given~\eqref{eq:MainSVE}, 
    let $\Xx:=\{G:\RR\to\RR: \EE[\|G({\bm{X}})\|_\TT^p]<\infty \text{ for all }p>1\}$.
\end{definition}
\begin{example}\
\begin{enumerate}
    \item If $b$ and $\sigma$ have linear growth in space then standard arguments show that~$\EE[\|{\bm{X}}\|_\TT^p]$ is finite for all~$p>1$, so that~$\Xx$ includes all functions with at most polynomial growth.
    \item If $b, \sigma$ are independent of~$x$ then~$\bm{X}$ is a Gaussian Volterra process, thus $\EE[\E^{p\norm{\bm{X}}_{\TT}}]<\infty$ for all~$p>1$ \cite[Lemma 6.13]{jacquier2021rough} and~$\Xx$ includes all functions of at most exponential growth. 
    \item The model we have in mind for relaxing the polynomial growth assumption of~\cite{viens2019martingale} is a rough volatility model where the second component (the stochastic process driving the volatility) is a Gaussian Volterra process and the first (the log-price) is a non-Gaussian stochastic integral. The latter has finite moments of all orders while the former has finite exponential moments (by item (2) of this list). Hence in this framework the set $\Xx$ includes all functions with at most polynomial growth in the first component and exponential growth in the second, that is for all~$x\in\RR^2$,
    $$
G(x) \le C\left(1+ |x^{(1)}| + \exp\{x^{(2)}\}\right), 
\quad \text{for some } C>0,
    $$
\end{enumerate}
\end{example}

%%%%%%%%%%%%%%%%%%%%%%%%%%%%%%%%%%%%%
A thorough examination of the proofs in~\cite[Theorems 3.10 and 3.17]{viens2019martingale} confirms that the functional It\^o formula still stands with the new growth definitions that follow.
\begin{definition}
Let $U\in \Cc^0(\overline{\Lambda})$ such that~$\partial_{\bm{\omega}} U$ exists for all~$(t,\bm{\omega})\in\overline{\Lambda}$.
For~$G\in\Xx$, we say that $\partial_{\bm{\omega}} U$ has $G$-growth if 
$$
\abs{\langle \partial_{\bm{\omega}} U(t,\bm{\omega}),\eta\rangle}\lesssim \norm{G(\bm{\omega})}_{\TT} \norm{\eta\one_{[t,T]}}_{\TT}, \qquad \text{ for all  }\eta\in\Ww, \text{ for all  }t\in \TT,
$$
and similarly for all 
$\eta^{(1)}, \eta^{(2)}\in\Ww$
and all $t\in \TT$,
$$
    \abs{\langle \partial_{\bm{\omega}\bm{\omega}} U(t,\bm{\omega}),(\eta^{(1)},\eta^{(2)})\rangle}\lesssim \norm{G(\bm{\omega})}_{\TT}\norm{|\eta^{(1)}||\eta^{(2)}|\one_{[t,T]}}_{\TT}.
    $$
\end{definition}

\begin{definition}
    We say~$U\in \Cc^{1,2}(\overline{\Lambda})\subset \Cc^0(\overline{\Lambda})$ if~$\partial_t U(t,\bm{\omega}),\,\langle \partial_{\bm{\omega}} U(t,\bm{\omega}),\eta^{(1)}\rangle$ and $\langle \partial_{\bm{\omega}\bm{\omega}} U(t,\bm{\omega}),(\eta^{(1)},\eta^{(2)})\rangle $ exist and are continuous with respect to~$(t,\bm{\omega})\in\overline{\Lambda}$ and the distance~$\bm{d}$ defined in~\eqref{eq:defs_spaces}, and for all~$\eta^{(1)},\eta^{(2)}\in\Ww$.
    Moreover, $U\in \Cc^{1,2}_+(\overline{\Lambda})\subset \Cc^{1,2}(\overline{\Lambda})$ if there exists~$G\in\Xx$ such that all derivatives of~$U$ have~$G$-growth.
\end{definition}

\begin{assumption}\label{assu:RegTimeDerivative}
    For $\varphi=b,\sigma$, for every~$s\in(t,T]$, with $t \in \TT$, $\partial_t\varphi(s,t,\cdot)$ exists  and there exist~$G\in\Xx$ and~$H\in(0,\half)$ such that, for all $x\in\RR^d$,
    $$
\abs{\varphi(s,t, x)} \lesssim G(x) (s-t)^{H-\half}
\quad\text{and}\quad
\abs{\partial_t\varphi(s, t, x)} \lesssim G(x) (s-t)^{H-\frac32}.
    $$
\end{assumption}
The following definition is a variation of \cite[Definition 3.4]{viens2019martingale} where we modify the estimates required on the second derivative, thereby enabling the use of the functional Itô formula in our model.
\begin{definition}\label{def:Cplusalpha}
    We say that $U\in \Cc^{1,2}_{+,\alpha}(\Lambda)$ with $\alpha\in(0,1)$ if there exist a continuous extension of~$U$ in~$\Cc^{1,2}_{+}(\overline\Lambda)$,
    a growth function~$G\in\Xx$,
    a modulus of continuity function~$\varrho\le G$ such that for any~$0\le t<T, \,\delta,\delta_1,\delta_2\in(0, T-t]$, $\eta,\eta_1,\eta_2\in\Ww$ with supports contained in~$[t,t+\delta],[t,t+\delta_1],[t,t+\delta_2]$ respectively,
    \begin{enumerate}
        \item[(i)] for any~$\bm{\omega}\in\overline\Ww$ such that~$\bm{\omega}\one_{[t,T]}\in\Ww_t$,
        \begin{align*}
            \abs{\langle \partial_{\bm{\omega}} U(t,\bm{\omega}),\eta\rangle} &\le \norm{G(\bm{\omega})}_{\TT} \norm{\eta}_{\TT} \delta^\alpha,\\
            \abs{\langle \partial_{\bm{\omega}\bm{\omega}} U(t,\bm{\omega}),(\eta_1,\eta_2)\rangle} &\le \norm{G(\bm{\omega})}_{\TT} \norm{\eta_1}_{\TT}\norm{\eta_2}_{\TT} \delta_1^\alpha\delta_2^{\alpha};
        \end{align*}
        \item[(ii)] for any other~$\bm{\omega}'\in\overline\Ww$ such that~$\bm{\omega}'\one_{[t,T]}\in\Ww_t$,
        \begin{align*}
            \abs{\langle \partial_{\bm{\omega}} (U(t,\bm{\omega})-U(t,\bm{\omega}')),\eta\rangle} &\le \big[\norm{G(\bm{\omega})}_{\TT}+\norm{G(\bm{\omega}')}_{\TT}\big] \norm{\eta}_{\TT} \varrho(\norm{\bm{\omega}-\bm{\omega}'}_{\TT}) \delta^\alpha,\\
            \abs{\langle \partial_{\bm{\omega}\bm{\omega}} (U(t,\bm{\omega})-U(t,\bm{\omega}')),(\eta_1,\eta_2)\rangle} &\le \big[\norm{G(\bm{\omega})}_{\TT}+\norm{G(\bm{\omega}')}_{\TT}\big]   \norm{\eta_1}_{\TT}\norm{\eta_2}_{\TT} \varrho(\norm{\bm{\omega}-\bm{\omega}'}_{\TT}) \delta_1^\alpha\delta_2^{\alpha} ;
        \end{align*}
        \item[(iii)] for any~$\bm{\omega}\in\overline\Ww$, $\langle \partial_{\bm{\omega}} U(t,\bm{\omega}),\eta\rangle$ and $\langle \partial_{\bm{\omega}\bm{\omega}} U(t,\bm{\omega}),(\eta_1,\eta_2)\rangle$ are continuous in~$t$.
    \end{enumerate}
\end{definition}
\begin{remark}
    In~\cite[Definition 3.4]{viens2019martingale}, the modulus of continuity is assumed bounded, which we relax by bounding the growth of~$\varphi(s,t,\cdot)$ by~$G$, which controls the growth of~$\varrho$.
\end{remark}
\begin{remark}
    The time continuity of item (iii) in the list is not necessary to apply It\^o's formula but we need it in the proof of Proposition~\ref{prop:ClassicalSol}.
\end{remark}
We intend to deal with singular coefficients that satisfy Assumption~\ref{assu:RegTimeDerivative} but may not be continuous on the diagonal. Hence, for~$\varphi=b,\sigma$ and~$\delta>0$, introduce the truncated functions
$$
\varphi^{\delta}(s,t,x):= \varphi(s\lor(t+\delta);t,x),
$$
and for $u\in \Cc^{1,2}_{+,\alpha}$ the spatial derivatives
\begin{align}\label{eq:SpatialDerivatives}
    \langle \partial_{\bm{\omega}} U(t,\bm{\omega}),\varphi^{t,\bm{\omega}}\rangle &:= \lim_{\delta\downarrow0} \langle \partial_{\bm{\omega}} U(t,\bm{\omega}),\varphi^{\delta,t,\bm{\omega}}\rangle, \qquad \text{for }
    \varphi\in\{b,\sigma\},\\ \nonumber
    \langle \partial_{\bm{\omega}\bm{\omega}} U(t,\bm{\omega}),(\sigma^{t,\bm{\omega}}\sigma^{t,\bm{\omega}})\rangle &:= \lim_{\delta\downarrow0} \langle \partial_{\bm{\omega}\bm{\omega}} U(t,\bm{\omega}),(\sigma^{\delta,t,\bm{\omega}}\sigma^{\delta,t,\bm{\omega}})\rangle,
\end{align}
where~$\varphi^{t,\bm{\omega}}(s):=\varphi(s;t,\bm{\omega}_t)$.
The proof of the functional It\^o formula from~\cite[Theorem~3.17]
{viens2019martingale} remains valid in the setting of Assumption~\ref{assu:RegTimeDerivative}.
\begin{theorem}\label{thm:Viens_Zhang}
Under Assumptions~\ref{assu:wellposedness}-\ref{assu:RegTimeDerivative}, let~$U\in \Cc^{1,2}_{+,\alpha}$, with~$\alpha>\half-H$, $H\in(0,\half)$. 
Then the spatial derivatives~\eqref{eq:SpatialDerivatives} exist and the following functional It\^o formula holds:
    \begin{align}
        \D U(t,\bm{X}\otimes_t \bm{\Theta}^t) =&\,  \partial_t U(t,\bm{X}\otimes_t \bm{\Theta}^t)\dt + \half \langle\partial_{\bm{\omega}\bm{\omega}} U(t,\bm{X}\otimes_t \bm{\Theta}^t),(\sigma^{t,\bm{X}},\sigma^{t,\bm{X}})\rangle \dt \nonumber\\
        &+ \langle \partial_{\bm{\omega}} U(t,\bm{X}\otimes_t \bm{\Theta}^t),b^{t,\bm{X}}\rangle \dt + \langle \partial_{\bm{\omega}} U(t,\bm{X}\otimes_t \bm{\Theta}^t), \sigma^{t,\bm{X}}\rangle \,\D \bm{W}_t. 
        \label{eq:Ito}
    \end{align}
\end{theorem}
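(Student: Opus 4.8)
The statement is~\cite[Theorem~3.17]{viens2019martingale} with the polynomial-growth requirement on $u$ and its derivatives replaced by the $G$-growth condition of Definition~\ref{def:Cplusalpha} for some $G\in\Xx$. The plan is to re-run their proof and to verify that every estimate which there invoked $\EE[\norm{\bm{X}}_\TT^p]<\infty$ together with a polynomial bound on $u,\partial_{\bm{\omega}}u$ or $\partial_{\bm{\omega}\bm{\omega}}u$ survives when that polynomial bound is replaced by a $G$-growth bound with $G\in\Xx$. The single genuinely new point is that this integrability must hold along the concatenated paths $\bm{X}\otimes_t\bm{\Theta}^t$: from~\eqref{eq:ThetaDef} and Assumption~\ref{assu:RegTimeDerivative} one has $\abs{\sigma(s,r,\bm{X}_r)}\lesssim G(\bm{X}_r)(s-r)^{H-\half}$ and likewise for $b$, and since $r\le t\le s$ the nearest singularity $r=t$ keeps $\int_0^t\abs{\sigma(s,r,\bm{X}_r)}^2\dr$ bounded uniformly in $s\in[t,T]$; together with the $(s-r)^{H-\frac32}$ bound of Assumption~\ref{assu:RegTimeDerivative} (which yields an $s$-continuity estimate for $\bm{\Theta}^t$), Burkholder--Davis--Gundy and Assumption~\ref{assu:wellposedness}(ii), this gives $\EE[\norm{G(\bm{X}\otimes_t\bm{\Theta}^t)}_\TT^p]<\infty$ for every $p\ge1$. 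This is exactly the uniform integrability needed to integrate the $G$-growth bounds against the driving noise, and in the rough volatility application of Section~\ref{sec:RvolApplication} it is where the exponential moments of the Gaussian component enter.

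Granting this, I would first check that the spatial derivatives~\eqref{eq:SpatialDerivatives} are well defined. The directions $\sigma^{t,\bm{\omega}}(s)=\sigma(s;t,\bm{\omega}_t)$ are square-integrable on $[t,T]$ because $H>0$, but singular at the diagonal $s=t$; decomposing $\sigma^{\delta,t,\bm{\omega}}$ dyadically near $t$ and bounding the contribution of the $k$-th block by the estimates of Definition~\ref{def:Cplusalpha}(i) produces a series whose $k$-th term is controlled by $2^{-k(\alpha+H-\frac12)}$, which converges precisely because $\alpha>\half-H$; hence $\langle\partial_{\bm{\omega}}u(t,\bm{\omega}),\sigma^{\delta,t,\bm{\omega}}\rangle$ has a limit as $\delta\downarrow0$, and the same circle of ideas (with more care) handles the second-order object. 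This is where the assumption $H\in(0,\half)$ with $\alpha>\half-H$ is consumed, exactly as in~\cite{viens2019martingale}. For the formula itself I would take a sequence of partitions $0=t_0<\cdots<t_n=T$ of $\TT$ with vanishing mesh, telescope $u(T,\bm{X})-u(0,\bm{\Theta}^0)$ over the grid, and on each subinterval $[t_i,t_{i+1}]$ split the increment of $u$ into a pure time move at frozen path (yielding $\partial_t u\,(t_{i+1}-t_i)$ up to $o(t_{i+1}-t_i)$ by continuity of $\partial_t u$) and a pure path move taking $\bm{\Theta}^{t_i}$ into $\bm{\Theta}^{t_{i+1}}$. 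By~\eqref{eq:ThetaDef} the path perturbation is $\int_{t_i}^{t_{i+1}}b(s,r,\bm{X}_r)\dr+\int_{t_i}^{t_{i+1}}\sigma(s,r,\bm{X}_r)\,\D\bm{W}_r$, which by Assumption~\ref{assu:RegTimeDerivative} equals, to leading order, $b^{t_i,\bm{X}}(t_{i+1}-t_i)+\sigma^{t_i,\bm{X}}(\bm{W}_{t_{i+1}}-\bm{W}_{t_i})$; a second-order pathwise Taylor expansion of $u(t_i,\cdot)$ along this direction then produces the drift, diffusion and quadratic-variation terms of~\eqref{eq:Ito}, and summing and passing to the limit closes the argument.

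The step I expect to be the main obstacle is the control of the error terms, where the constraint $\alpha>\half-H$ is again decisive. These are the coefficient-replacement errors $\varphi(s,r,\bm{X}_r)-\varphi(s,t_i,\bm{X}_{t_i})$ for $\varphi=b,\sigma$, integrated over a mesh interval against the singular kernel; the cross terms $(t_{i+1}-t_i)(\bm{W}_{t_{i+1}}-\bm{W}_{t_i})$ and the higher-order It\^o remainders; and the Taylor remainder of $u$, which by Definition~\ref{def:Cplusalpha}(i)--(ii) carries a factor $(t_{i+1}-t_i)^\alpha$, resp. $(t_{i+1}-t_i)^{2\alpha}$, times $\norm{G(\cdot)}_\TT$ and a modulus of continuity. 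Because the perturbation directions are singular at the diagonal, each per-interval bound picks up, besides this $(t_{i+1}-t_i)^\alpha$, the mass generated by the kernel $(s-r)^{H-\half}$; when one sums over the $\sim(t_{i+1}-t_i)^{-1}$ subintervals the surviving power of the mesh is positive exactly when $\alpha>\half-H$, and the attendant passages to $L^1$-limits are legitimised by the uniform-integrability bound of the first paragraph. Everything else is a transcription of~\cite{viens2019martingale}; the crux of the adaptation is simply to carry the non-polynomial growth class $\Xx$ through these singular-kernel estimates without losing integrability.
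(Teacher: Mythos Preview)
Your approach is essentially the paper's own: the paper does not give a proof of this theorem but simply cites~\cite[Theorem~3.17]{viens2019martingale} and remarks, just before introducing the growth class~$\Xx$, that ``a thorough examination of the proofs in~\cite{viens2019martingale} confirms that their functional Itô formula still stand with the new growth definitions.'' Your proposal is precisely this---re-run Viens--Zhang and check that polynomial growth may be replaced by $G$-growth with $G\in\Xx$---and you go further by sketching where in their argument (existence of the singular-direction derivatives via the $\alpha>\tfrac12-H$ condition, the telescoping/Taylor scheme, and the remainder control) the integrability is actually consumed, including the point that it must hold along $\bm{X}\otimes_t\bm{\Theta}^t$ rather than just~$\bm{X}$.
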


\begin{proof}
The condition set in \cite[Definition 3.4]{viens2019martingale} for the second derivative features the estimate $\norm{|\eta_1||\eta_2}_{\TT}\delta^{2\alpha}$ for $\eta_1,\eta_2$ supported on the same interval~$[t,t+\delta]$. This assumption only seems to hold true for a Gaussian process, hence we replace this estimate with~$\norm{\eta_1}_{\TT}\norm{\eta_2}_{\TT} \delta_1^\alpha\delta_2^{\alpha}$ which is more aligned with the spirit of Fréchet derivatives. Below we provide a justification that the proof of the Itô formula in the singular case still goes through.

We refer to Step 1 of the proof (on pages 29-30, in \cite{viens2019martingale}) where the authors prove that \\
$(\langle \partial_{\bm{\omega}\bm{\omega}} U(t,\bm{\omega}),(\sigma^{\delta,t,\bm{\omega}},\sigma^{\delta,t,\bm{\omega}})\rangle)_{\delta\in(0,1)}$ is a Cauchy sequence. This is achieved by decomposing $\sigma^{\delta,t,\bm{\omega}}$ in order to apply the estimates of Definition~\ref{def:Cplusalpha} on small intervals. We recall the notations used in the proof. Let $\delta_n:=2^{-n}$ and $t_n:=t+\delta_n$ for all $n\in\NN$. Introduce a sequence of continuous functions $\psi_n$ with the property that~$\psi_n$ is supported on $(t_{n+1},t_{n-1})$ and $\psi_n+\psi_{n+1}=1$ on $[t_{n+1},t_n]$ (the exact construction can be found in Eq. (3.42) \cite{viens2019martingale}).
Let $\delta'<\delta$ and $\delta'\in[\delta_{n+1},\delta_n)$, while $\delta\in[\delta_{m+1},\delta_m)$ for some $m\le n$. 
This allows to decompose the directions via the following decomposition:
\begin{equation}\label{eq:decomposition1}
    \one_{[t,T]} = (1-\psi_m)\one_{[t_m,T]}+\sum_{k=m}^n\psi_k + (1-\psi_n)\one_{[t,t_n]}.
\end{equation}
We will also use this different decomposition where we replace~$m$ with $m_0=\min\{ m\in\NN : t_m\le T\}$:
\begin{equation}\label{eq:decomposition2}
    \one_{[t,T]} = (1-\psi_{m_0})\one_{[t_{m_0},T]}+\sum_{i=m_0}^n\psi_i + (1-\psi_n)\one_{[t,t_n]}.
\end{equation}
We notice that~$\sigma^{\delta,t,\bm{\omega}}-\sigma^{\delta',t,\bm{\omega}}=0$ over $[t_m,T]$ and we use the decompositions \eqref{eq:decomposition1} and~\eqref{eq:decomposition2} for the first and second directions, respectively:
\begin{align*}
    &\langle \partial_{\bm{\omega}\bm{\omega}} U(t,\bm{\omega}),(\sigma^{\delta,t,\bm{\omega}},\sigma^{\delta,t,\bm{\omega}})\rangle - \langle \partial_{\bm{\omega}\bm{\omega}} U(t,\bm{\omega}),(\sigma^{\delta',t,\bm{\omega}},\sigma^{\delta',t,\bm{\omega}})\rangle \\
    &=\Big\langle \partial_{\bm{\omega}\bm{\omega}} U(t,\bm{\omega}),\Big(\big(\sigma^{\delta,t,\bm{\omega}}-\sigma^{\delta',t,\bm{\omega}}\big) \one_{[t,T]},\big(\sigma^{\delta,t,\bm{\omega}}+\sigma^{\delta',t,\bm{\omega}}\big) \one_{[t,T]}\Big)\Big\rangle \\
    &=% \sum_{i=m}^n \Big\langle \partial_{\bm{\omega}\bm{\omega}} U(t,\bm{\omega}), \Big((1-\psi_m)\one_{[t_m,T]} (\sigma^{\delta,t,\bm{\omega}}-\sigma^{\delta',t,\bm{\omega}}),\psi_i(\sigma^{\delta,t,\bm{\omega}}+\sigma^{\delta',t,\bm{\omega})}) \Big)\Big\rangle\\
    %&\quad + \Big\langle \partial_{\bm{\omega}\bm{\omega}} U(t,\bm{\omega}), \Big( (1-\psi_m)\one_{[t_m,T]} (\sigma^{\delta,t,\bm{\omega}}-\sigma^{\delta',t,\bm{\omega}}),(1-\psi_n)\one_{[t,t_n]}(\sigma^{\delta,t,\bm{\omega}}+\sigma^{\delta',t,\bm{\omega})}) \Big)\Big\rangle \\
    \sum_{k=m}^n \Big\langle \partial_{\bm{\omega}\bm{\omega}} U(t,\bm{\omega}), \Big(\psi_k(\sigma^{\delta,t,\bm{\omega}}-\sigma^{\delta',t,\bm{\omega}}),(1-\psi_{m_0})\one_{[t_{m_0},T]}(\sigma^{\delta,t,\bm{\omega}}+\sigma^{\delta',t,\bm{\omega})}) \Big)\Big\rangle\\
    &\quad+  
    \sum_{k=m}^n \sum_{i=m_0}^n \Big\langle \partial_{\bm{\omega}\bm{\omega}} U(t,\bm{\omega}), \Big((\psi_k(\sigma^{\delta,t,\bm{\omega}}-\sigma^{\delta',t,\bm{\omega}}),\psi_i(\sigma^{\delta,t,\bm{\omega}}+\sigma^{\delta',t,\bm{\omega})}) \Big)\Big\rangle\\
    &\quad + \sum_{k=m}^n \Big\langle \partial_{\bm{\omega}\bm{\omega}} U(t,\bm{\omega}), \Big(\psi_k(\sigma^{\delta,t,\bm{\omega}}-\sigma^{\delta',t,\bm{\omega}}),(1-\psi_n)\one_{[t,t_n]}(\sigma^{\delta,t,\bm{\omega}}+\sigma^{\delta',t,\bm{\omega})}) \Big)\Big\rangle\\
    &\quad+ \Big\langle \partial_{\bm{\omega}\bm{\omega}} U(t,\bm{\omega}), \Big((1-\psi_n)\one_{[t,t_n]} (\sigma^{\delta,t,\bm{\omega}}-\sigma^{\delta',t,\bm{\omega}}),(1-\psi_{m_0})\one_{[t_{m_0},T]}(\sigma^{\delta,t,\bm{\omega}}+\sigma^{\delta',t,\bm{\omega})}) \Big)\Big\rangle\\
    &\quad+ \sum_{i=m_0}^n \Big\langle \partial_{\bm{\omega}\bm{\omega}} U(t,\bm{\omega}), \Big((1-\psi_n)\one_{[t,t_n]} (\sigma^{\delta,t,\bm{\omega}}-\sigma^{\delta',t,\bm{\omega}}),\psi_i (\sigma^{\delta,t,\bm{\omega}}+\sigma^{\delta',t,\bm{\omega})}) \Big)\Big\rangle\\
    &\quad+ \Big\langle \partial_{\bm{\omega}\bm{\omega}} U(t,\bm{\omega}), \Big((1-\psi_n)\one_{[t,t_n]} (\sigma^{\delta,t,\bm{\omega}}-\sigma^{\delta',t,\bm{\omega}}),(1-\psi_n)\one_{[t,t_n]}(\sigma^{\delta,t,\bm{\omega}}+\sigma^{\delta',t,\bm{\omega})}) \Big)\Big\rangle\\
    &=: J_1 + J_2 + J_3 + J_4 + J_5 + J_6.
\end{align*}
Notice that, since $\delta\in[\delta_{m+1},\delta_m)$ and $\delta'\in[\delta_{n+1},\delta_n)$, Assumption \ref{assu:RegTimeDerivative} entails
\begin{equation}
    \sup_{s\in[t_{k+1},t_{k-1}]} \abs{\sigma^{\delta,t,\bm{\omega}}} \lesssim 
    \left\{ 
    \begin{aligned}
        &\norm{G(\bm{\omega})}_{\TT} \delta_{m+1}^{H-1/2}, \quad k\ge m\\
        &\norm{G(\bm{\omega})}_{\TT} \delta_{k+1}^{H-1/2}, \quad k< m
    \end{aligned}
    \right.\; ;
    \quad \sup_{s\in[t_{k+1},t_{k-1}]} \abs{\sigma^{\delta',t,\bm{\omega}}} \lesssim 
    \left\{ 
    \begin{aligned}
        &\norm{G(\bm{\omega})}_{\TT} \delta_{n+1}^{H-1/2}, \quad k\ge n\\
        &\norm{G(\bm{\omega})}_{\TT} \delta_{k+1}^{H-1/2}, \quad k< n
    \end{aligned}
    \right.
\end{equation}
By the same principle we have
\begin{align*}
    &\sup_{s\in[t,t_n]} \Big(\abs{\sigma^{\delta,t,\bm{\omega}}} + \abs{\sigma^{\delta',t,\bm{\omega}}}\Big) 
    \lesssim \norm{G(\bm{\omega})}_{\TT} \big( \delta_{m+1}^{H-1/2} + \delta_{n+1}^{H-1/2} \big)\le 2\norm{G(\bm{\omega})}_{\TT} \delta_{n+1}^{H-1/2}, \\
    &\sup_{s\in[t_{m_0},T]} \Big(\abs{\sigma^{\delta,t,\bm{\omega}}} + \abs{\sigma^{\delta',t,\bm{\omega}}}\Big) 
    \lesssim 2\norm{G(\bm{\omega})}_{\TT} \delta_{m_0}^{H-1/2}.
\end{align*}
%\AlexComm{The first term with $1_{[t_m,T]}$ actually does not converge. I think we need to replace all these bounds by $L^2$ bounds, like we do to prove the convergence, and like the assumptions we use for the PDE/BSDE proof anyway.}
% The authors apply this decomposition in Equation (3.46) to analyse it on smaller intervals of the form $[t,t+\delta]$. They obtain a single sum but we find that instead that there should be a double sum. 
% The issue seems to be the necessity of picking the biggest interval between $[t,t_{i-1}]$ and~$[t,t_{k-1}]$. If our intuition is correct, then one may replace condition \eqref{eq:assumption} with 
% \begin{equation}\label{eq:assumption_modified}
%     \lvert \langle \partial^2_{\omega\omega} u(t,\omega),(\eta_1,\eta_2)\rangle\lvert \le C [1+\norm{\omega}^\kappa_T] \norm{\eta_1}_T\norm{\eta_2}_T \delta_1^{\alpha}\delta_2^{\alpha},
% \end{equation}
% where $\eta_1,\eta_2$ are supported on (potentially) different intervals $[t,t+\delta_1],[t,t+\delta_2]$ respectively. This would disentangle both sums and allows to conclude the proof as in the first derivative case. Indeed, \eqref{eq:delta2alpha} is replaced with
%We detail the computations only for the term with the double sum since the others are obtained in a similar fashion (or as in (3.45) dealing with the first derivative). 
Since $\psi_k$ is supported on $(t_{k+1},t_{k-1})$ we have by Definition~\ref{def:Cplusalpha}(i) and Assumption~\ref{assu:RegTimeDerivative} that, for any~$k,i\in\{m,\cdots,n\}$
%We are now going to replace the estimate obtained in Equation (3.46) by the following
\begin{align}
    \big\lvert \big\langle&\partial_{\bm{\omega}\bm{\omega}} U(t,\bm{\omega}),
    \big(\psi_k \big(\sigma^{\delta,t,\bm{\omega}}-\sigma^{\delta',t,\bm{\omega}}\big),\psi_i\big(\sigma^{\delta,t,\bm{\omega}}+\sigma^{\delta',t,\bm{\omega}}\big)\big)\big\rangle\big\lvert\\
    &\le  \norm{G(\bm{\omega})}_{\TT} \left(\sup_{s\in[t,T]} \psi_k(s)\abs{\sigma^{\delta,t,\bm{\omega}}(s)-\sigma^{\delta',t,\bm{\omega}}(s)}\right) \left(\sup_{s\in[t,T]} \psi_i(s)\abs{\sigma^{\delta,t,\bm{\omega}}(s)+\sigma^{\delta',t,\bm{\omega}}(s)}\right) \delta^{\alpha}_{k-1}\delta^{\alpha}_{i-1} \\
    &\le  \norm{G(\bm{\omega})}_{\TT} \left(\sup_{s\in[t_{k+1},t_{k-1}]} \abs{\sigma^{\delta,t,\bm{\omega}}(s)}+\abs{\sigma^{\delta',t,\bm{\omega}}(s)}\right) \left(\sup_{s\in[t_{i+1},t_{i-1}]} \abs{\sigma^{\delta,t,\bm{\omega}}(s)}+\abs{\sigma^{\delta',t,\bm{\omega}}(s)}\right) \delta^{\alpha}_{k-1}\delta^{\alpha}_{i-1}  \\
    &\le 4\norm{G(\bm{\omega})}_{\TT}   \delta^{H-\half}_{k+1} \delta^{H-\half}_{i+1} \delta^{\alpha}_{k-1}\delta^{\alpha}_{i-1}\\
    &=  \norm{G(\bm{\omega})}_{\TT}   2^{3-2H+2\alpha} 2^{-k(\alpha+H-\half)} 2^{-i(\alpha+H-\half)}.
\end{align} 
Recall that $\beta=\alpha+H-1/2>0$, thus the double sum now reads:
% \begin{align*}
%     &\sum_{k=m}^n 2^{-2k\beta} +
%     2\sum_{k=m}^n \sum_{i=k+1}^n 2^{-k\beta} 2^{-i\beta} 
%     \le \sum_{k=m}^\infty 2^{-2k\beta} +2 \sum_{k=m}^\infty 2^{-k\beta} \sum_{i=k+1}^\infty  2^{-i\beta} \\
%     &= \frac{2^{-2(m-1)\beta}}{2^{2\beta}-1} + 2 \sum_{k=m}^\infty 2^{-k\beta} \frac{2^{-k\beta}}{2^\beta-1} 
%     \lesssim  2^{-2m\beta} = \delta^{2\beta}.
% \end{align*}
\begin{align*}
    \abs{J_2}\lesssim 
    \sum_{k=m}^\infty 2^{-k(\alpha+H-1/2)} \sum_{i=m_0}^\infty 2^{-i(\alpha+H-1/2)} =\frac{2^{-\beta(m+1)}}{1-2^{-\beta}}\frac{2^{-\beta(m_0+1)}}{1-2^{-\beta}}\lesssim \delta^{\beta}_m.
\end{align*}
This tends to zero as $\delta$ goes to zero (equivalently as $m\to\infty$). Similarly,
\begin{align*}
    \abs{J_1} 
    &\le  \norm{G(\bm{\omega})}_{\TT} \sum_{k=m}^n \left(\sup_{s\in[t_{k+1},t_{k-1}]} \abs{\sigma^{\delta,t,\bm{\omega}}(s)}+\abs{\sigma^{\delta',t,\bm{\omega}}(s)}\right) \left(\sup_{s\in[t_{m_0},T]} \abs{\sigma^{\delta,t,\bm{\omega}}(s)}+\abs{\sigma^{\delta',t,\bm{\omega}}(s)}\right) \delta^{\alpha}_{k-1} \\
    &\le \norm{G(\bm{\omega})}_{\TT} \sum_{k=m}^n  \delta^{H-\half}_{k+1} \delta^{H-\half}_{m_0} \delta^{\alpha}_{k-1} \lesssim \delta_m^\beta\\
    \abs{J_3} &\le \norm{G(\bm{\omega})}_{\TT}  \sum_{k=m}^n \delta^{H-\half}_{k+1}  \delta^{\alpha}_{k-1} \delta^{H-\half}_{n+1}\delta^{\alpha}_{n-1} \lesssim \delta_m^\beta  \delta_n^\beta \\
    \abs{J_4} &\le \norm{G(\bm{\omega})}_{\TT} \delta^{H-\half}_{n+1}\delta^{\alpha}_{n-1}  \delta^{H-\half}_{m_0}  \lesssim \delta_n^\beta \\
    \abs{J_5} &\le \norm{G(\bm{\omega})}_{\TT} \delta^{H-\half}_{n+1}\delta^{\alpha}_{n-1}  \sum_{i=m_0}^n \delta^{H-\half}_{i+1}  \delta^{\alpha}_{i-1}  \lesssim \delta_n^\beta \\
    \abs{J_6} &\le \norm{G(\bm{\omega})}_{\TT}( \delta^{H-\half}_{n+1}\delta^{\alpha}_{n-1}  )^2 \lesssim \delta_n^{2\beta}.
\end{align*}
This concludes the proof of Step 1 with the modified assumption because we confirmed that \\
$(\langle \partial_{\bm{\omega}\bm{\omega}} U(t,\bm{\omega}),(\sigma^{\delta,t,\bm{\omega}},\sigma^{\delta,t,\bm{\omega}})\rangle)_{\delta>0}$ is a Cauchy sequence. Moreover, by taking $\delta'\to0$ or equivalently $n\to\infty$, we have the estimate
\begin{align*}
    \abs{\langle \partial_{\bm{\omega}\bm{\omega}} U(t,\bm{\omega}),(\sigma^{\delta,t,\bm{\omega}},\sigma^{\delta,t,\bm{\omega}})\rangle - \langle \partial_{\bm{\omega}\bm{\omega}} U(t,\bm{\omega}),(\sigma^{t,\bm{\omega}},\sigma^{t,\bm{\omega}})\rangle}
    \lesssim \norm{G(\bm{\omega})}_{\TT} \delta^{\beta}.
\end{align*}
This is slightly weaker than Eq. (3.46) in \cite{viens2019martingale} but is sufficient to conclude Step 3 of the proof since it does vanish as~$\delta\to0$.
\end{proof}

\begin{remark}\label{rem:path_der_multidim}
    One can show that if $\bm{\omega}$ is $\RR^d$-valued, then for $\eta$ (living in $\RR^d$ as well),
    \begin{align*}
    \langle \partial_{\bm{\omega}} U(t,\bm{\omega}),\eta\rangle 
    = \sum_{i=1}^d \langle \partial_{\bm{\omega}_i} U(t,\bm{\omega}),\eta_i\rangle  
    \quad \text{and} \quad
    \langle \partial_{\bm{\omega} \bm{\omega}}U(t,\bm{\omega}),(\eta,\eta)\rangle 
    = \sum_{i,j=1}^d \langle \partial_{\bm{\omega}_i\bm{\omega}_j} U(t,\bm{\omega}),(\eta_i,\eta_j)\rangle.
    \end{align*}
    Furthermore, if $\boldsymbol{\eta}$ is matrix valued, say in~$\RR^{d\times m}$, { extending the $\RR^{d}$-valued directions space we have restricted ourselves so far}, then
    \begin{align*}
    \langle \partial_{\bm{\omega}} { U}(t,\bm{\omega}),\boldsymbol{\eta}\rangle  
    & = 
    \left(\sum_{i=1}^d\langle \partial_{\bm{\omega}_i} { U}(t,\bm{\omega}),\boldsymbol{\eta}_{i,j}\rangle\right)_{j=1}^m, \\
    \langle \partial_{\bm{\omega} \bm{\omega}}{ U}(t,\bm{\omega}),(\boldsymbol{\eta},\boldsymbol{\eta})\rangle 
    & = \sum_{k=1}^m  \sum_{i,j=1}^d \langle \partial_{\bm{\omega}_i\bm{\omega}_j} { U}(t,\bm{\omega}),(\boldsymbol{\eta}_{j,k},\boldsymbol{\eta}_{i,k})\rangle.
    \end{align*}
    In finite dimensions, these correspond to the gradient and trace of the Hessian, respectively.
\end{remark}

%%%%%%%%%%%%%%%%%%%%%%%%%%%%%%%%%%%%%%%%%%%%%%%%%%%%%%

\subsection{A path-dependent PDE} \label{sec:GenPPDE}

As in the Markovian case, this It\^o formula leads to natural connections with PDEs. In this section, we rigorously prove that conditional expectations (equivalently, solutions to a BSDE) are solutions to path-dependent PDEs. This was predicted by~\cite[Section~4.2]{viens2019martingale} where they also showed the reverse implication, known as Feynman-Kac formula. 
The crucial aspect is to show time differentiability, for which we follow~\cite[Theorem 3.2]{wang2022path}, where a similar result is presented for regular kernels. This requires to introduce new variables and notations.
Define the forward-backward SVE for~$\ww\in\Ww$ and { all~$0 \leq t, s\le T$:  
    \begin{align}\label{eq:XsxEq}
        \bm{X}_s^{t,\ww} 
        &= 
        \begin{cases}
            \ww_s, & s\leq t, \\
            \ww_s + \int_t^s b(s,r,\bm{X}_r^{t,\ww}) \dr + \int_t^s \sigma(s,r,\bm{X}_r^{t,\ww}) \,\D \bm{W}_r, & t\leq s,
        \end{cases}
        \\
        \bm{Y}_s^{t,\ww} &= \Phi(\bm{X}^{t,\ww}_\cdot) +\int_s^T f(r,\bm{X}^{t,\ww}_r,\bm{Y}_r^{t,\ww},\bm{Z}_r^{t,\ww}) \dr- \int_s^T \bm{Z}^{t,\ww}_r \,\D \bm{W}_r.
         \label{eq:BSDE}
    \end{align}}
    The forward process~$\bm{X}$ lives in~$\RR^d$ and the backward one~$\bm{Y}$ in~$\RR^{d'}$ with~$d,d'\in\NN$.
    Moreover~$\Phi$ maps~$\Ww$ to~$\RR^{d'}$ and $f:\TT\times\RR^d\times\RR^{d'}\times\RR^{d'\times d}\to\RR^{d'}$ is measurable in all variables.
    Under the following assumptions, the backward SDE has a unique square integrable solution~\cite[Theorem 4.3.1]{zhang2017backward}.
\begin{assumption}\label{assu:BSDE}\
\begin{enumerate}%\setlength{\itemsep}{5pt}
    \item[(i)] $\EE[\Phi(\bm{X}^{t,\ww})^2]<\infty$, for all~$\ww\in\Ww$ and all~$0\le t\le s\le T$;
    \item[(ii)] $f$ is uniformly Lipschitz continuous in $(y,z)$ and uniformly continuous in~$t$ and $x$;
    \item[(iii)] $\EE\left[\left(\int_0^T \abs{f(r,\bm{X}^{t,\ww}_r,0,0)} \dr \right)^2\right]$ is finite.
\end{enumerate}
\end{assumption}
Condition~(iii) is satisfied if, for instance, there exist~$G\in\Xx$ and~$g\in L^1(\TT)$ such that~$f(r,\ww,0,0)\le g(r) \sqrt{\norm{G(\ww)}_{\TT}}$.
We now show that
\begin{align}\label{eq:def_U}
    U(t,\ww):=\bm{Y}^{t,\ww}_t
\end{align}
satisfies a semi-linear path-dependent PDE.

\begin{assumption}\label{assumption:regcoef+U}\
\begin{enumerate}
    \item[(i)] There is a modulus of continuity~$\varrho'$ and~$G\in\Xx$ such that 
\begin{align*}
    \abs{b(s,r,x)-b(s,r,y)}+\abs{\sigma(s,r,x)-\sigma(s,r,y)} \le (s-r)^{H-1/2} \varrho'(\abs{x-y}) (G(x)+G(y)).
\end{align*} 
    \item[(ii)] Moreover, for any $t\in\TT$, $\ww,\ww'\in \overline{\Ww}$ such that~$\ww,\ww',\eta,\eta'\in\Ww_t$, it holds
\begin{align*}
    &\abs{\big\langle\partial_{\ww} U(t,\ww),\eta\big\rangle} \le \norm{G(\ww)}_{\TT} \norm{\eta}_{L^2(\TT)};\\
    &\abs{\big\langle\partial_{\ww\ww} U(t,\ww),(\eta,\eta')\big\rangle} \le \norm{G(\ww)}_{\TT} \norm{\eta}_{L^2(\TT)}\norm{\eta'}_{L^2(\TT)};\\
    &\abs{\big\langle\partial_{\ww} U(t,\ww),\eta\big\rangle - \big\langle\partial_{\ww} U(t,\ww'),\eta\big\rangle} \le \big[\norm{G(\omega)}_{\TT}+\norm{G(\omega')}_{\TT}\big] \varrho'(\norm{\ww-\ww'}_{\TT} )\norm{\eta}_{L^2(\TT)};\\
    &\abs{\big\langle\partial_{\ww\ww} U(t,\ww),(\eta,\eta)\big\rangle - \big\langle\partial_{\ww\ww} U(t,\ww'),(\eta,\eta)\big\rangle} \le \big[\norm{G(\omega)}_{\TT}+\norm{G(\omega')}_{\TT}\big] \varrho'(\norm{\ww-\ww'}_{\TT} )\norm{\eta}_{L^2(\TT)}^2,
\end{align*}
for some growth function $G\in\Xx$ and a concave modulus of continuity~$\varrho\le G$.
\end{enumerate}
\end{assumption}

\begin{proposition}\label{prop:ClassicalSol}
Let Assumptions~\ref{assu:wellposedness},~\ref{assu:RegTimeDerivative}, \ref{assu:BSDE} and~\ref{assumption:regcoef+U} hold. 
Moreover, assume~the SVE~\eqref{eq:XsxEq} has a unique strong solution and~$U\in \Cc^{0,2}_{+,\alpha}$, with~$\alpha>\half-H$ and~$H\in(0,\half)$. 
Then the time derivative of~$U${ , as defined in~\eqref{eq:def_U},} exists and~$U$ is a classical solution to the path-dependent PDE
\begin{equation}\label{Eq:PPDE_u}
   \partial_t U(t,\bm{\omega}) + \half \langle\partial_{\bm{\omega}\bm{\omega}} U(t,\bm{\omega}),(\sigma^{t,\bm{\omega}},\sigma^{t,\bm{\omega}})\rangle + \langle \partial_{\bm{\omega}} U(t,\bm{\omega}),b^{t,\bm{\omega}}\rangle + f\big(t, \bm{\omega}_r,U(t,\bm{\omega}),\langle \partial_{\bm{\omega}} U(t,\bm{\omega}),\sigma^{t,\bm{\omega}}\rangle\big) = 0, 
\end{equation}
with boundary condition $U(T,\bm{\omega}) = \Phi(\bm{\omega})$.
\end{proposition}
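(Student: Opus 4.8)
The plan is to follow the strategy of \cite[Theorem~3.2]{wang2022path}, upgrading the assumed regularity $U\in C^{0,2}_{+,\alpha}$ to $U\in C^{1,2}_{+,\alpha}$ while accommodating the singular kernels permitted by Assumption~\ref{assu:RegTimeDerivative} and the enlarged growth classes of Section~\ref{sect:def_and_ito_form}. There are four movements: (a)~a flow / dynamic-programming identity expressing $\bm{Y}^{t,\ww}_s$ through $U$ evaluated along a concatenated path; (b)~the martingale it produces; (c)~the existence of $\partial_t U$, obtained by expanding a difference quotient — the heart of the matter; and (d)~reading off the PPDE~\eqref{Eq:PPDE_u} and the terminal condition.

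First I would establish the flow identity. Fix $0\le t\le s\le T$ and $\ww\in\Ww$. Since~\eqref{eq:XsxEq} has a unique strong solution, for $r\ge s$ the value $\bm{X}^{t,\ww}_r$ depends on $\Ff_s$ only through the $\Ff_s$-measurable forward curve $\bm{\Theta}^{t,\ww,s}_r:=\ww_r+\int_t^s b(r,v,\bm{X}^{t,\ww}_v)\dv+\int_t^s\sigma(r,v,\bm{X}^{t,\ww}_v)\,\D\bm{W}_v$ (in the spirit of~\eqref{eq:ThetaDef}), so that, conditionally on $\Ff_s$, the family $(\bm{X}^{t,\ww}_r)_{r\ge s}$ solves~\eqref{eq:XsxEq} started from $(s,\Gamma^{t,\ww}_s)$, where $\Gamma^{t,\ww}_s:=\bm{X}^{t,\ww}\one_{[0,s]}+\bm{\Theta}^{t,\ww,s}\one_{[s,T]}$. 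Combined with uniqueness of the square-integrable solution of~\eqref{eq:BSDE} (Assumption~\ref{assu:BSDE} and \cite[Theorem~4.3.1]{zhang2017backward}), this should yield $\bm{Y}^{t,\ww}_s=U(s,\Gamma^{t,\ww}_s)$ $\PP$-a.s.; hence, by~\eqref{eq:BSDE}, the process
\[
\bm{M}_s:=U(s,\Gamma^{t,\ww}_s)+\int_t^s f\big(r,\bm{X}^{t,\ww},\bm{Y}^{t,\ww}_r,\bm{Z}^{t,\ww}_r\big)\dr,\qquad s\in[t,T],
\]
is a square-integrable $(\Ff_s)$-martingale with $\bm{M}_t=U(t,\ww)$ (which is deterministic, since $\bm{X}^{t,\ww}$ is driven only by the increments of $\bm{W}$ after $t$).

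Next I would prove that $\partial_t U$ exists. Taking expectations in~\eqref{eq:BSDE} between $t$ and $t+\delta$ and using $\bm{M}_t=\EE[\bm{M}_{t+\delta}]$ rewrites the difference quotient as
\[
\frac{U(t+\delta,\ww)-U(t,\ww)}{\delta}=-\frac1\delta\,\EE\big[U(t+\delta,\Gamma^{t,\ww}_{t+\delta})-U(t+\delta,\ww)\big]-\frac1\delta\,\EE\bigg[\int_t^{t+\delta} f\big(r,\bm{X}^{t,\ww},\bm{Y}^{t,\ww}_r,\bm{Z}^{t,\ww}_r\big)\dr\bigg].
\]
The second term converges, by continuity of $f$ in time (Assumption~\ref{assu:BSDE}(ii)), continuity of $U$ and of the spatial derivatives, and dominated convergence (using the $G$-growth and Assumption~\ref{assu:wellposedness}(ii)), to $f\big(t,\ww,U(t,\ww),\langle\partial_{\bm\omega}U(t,\ww),\sigma^{t,\ww}\rangle\big)$. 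For the first term I would compare $\Gamma^{t,\ww}_{t+\delta}$ with $\ww$ along an auxiliary path built from the truncated coefficients $b^\delta,\sigma^\delta$ (frozen after $t+\delta$), splitting $\Gamma^{t,\ww}_{t+\delta}-\ww$ into a part supported on $[t,t+\delta]$ — controlled via the small-interval bounds of Definition~\ref{def:Cplusalpha}(i) — and a part supported on $[t+\delta,T]$ carrying the genuine Fréchet directions; a second-order path expansion of $U(t+\delta,\cdot)$, in which the stochastic-integral contributions drop out in expectation because $\partial_{\bm\omega}U(t+\delta,\ww)$ and $\partial_{\bm\omega\bm\omega}U(t+\delta,\ww)$ are deterministic (the quadratic term reducing, via the It\^o isometry, to the bracket $\int_t^{t+\delta}\sigma\sigma^{\top}$), leaves only contributions of order $\delta$. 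Letting $\delta\downarrow0$ — using the limits of~\eqref{eq:SpatialDerivatives} and the modulus-of-continuity bounds of Definition~\ref{def:Cplusalpha}(ii) to transfer derivatives from $\ww+\theta\Delta$ back to $\ww$ — should give the limit $-\half\langle\partial_{\bm\omega\bm\omega}U(t,\ww),(\sigma^{t,\ww},\sigma^{t,\ww})\rangle-\langle\partial_{\bm\omega}U(t,\ww),b^{t,\ww}\rangle$ (with the vector/matrix bookkeeping of Remark~\ref{rem:path_der_multidim}).

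Combining the two displays yields the existence of $\partial_t U$ and the identity~\eqref{Eq:PPDE_u}; since its right-hand side is continuous in $(t,\ww)$, $U\in C^{1,2}_{+,\alpha}$, so Theorem~\ref{thm:Viens_Zhang} applies to $s\mapsto U(s,\Gamma^{t,\ww}_s)$ and re-derives the equation by matching the drift of $\bm{M}$ to zero, at the same time confirming $\bm{Z}^{t,\ww}_s=\langle\partial_{\bm\omega}U(s,\Gamma^{t,\ww}_s),\sigma^{s,\bm{X}^{t,\ww}}\rangle$. The terminal condition $U(T,\ww)=\Phi(\ww)$ is immediate since $\bm{X}^{T,\ww}\equiv\ww$ forces $\bm{Y}^{T,\ww}_T=\Phi(\ww)$. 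The hard part will be movement~(c): the kernel is singular on the diagonal (unlike in~\cite{wang2022path}), so the increment $\Gamma^{t,\ww}_{t+\delta}-\ww$ carries factors $(r-v)^{H-\half}$ and the truncation in~\eqref{eq:SpatialDerivatives} is unavoidable; the estimates close only because the $\delta^{2\alpha}$ small-interval decay of $\partial_{\bm\omega\bm\omega}U$ meets the $\delta^{2H}$ size of the Volterra increments with $2\alpha+2H>1$, which is precisely $\alpha>\half-H$, Assumption~\ref{assu:RegTimeDerivative} providing the quantitative control of the coefficients needed for this.
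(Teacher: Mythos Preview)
Your skeleton is the right one (the flow identity and the terminal condition are fine), but movement~(c) contains a genuine gap. Passing to the limit in the $f$-term requires knowing that $\bm Z^{t,\ww}_r\approx\langle\partial_{\bm\omega}U(r,\Gamma^{t,\ww}_r),\sigma^{r,\cdot}\rangle$ near $r=t$, yet you only derive this identification in step~(d), \emph{after} $\partial_t U$ has been established --- this is circular. Your ``second-order path expansion plus It\^o isometry'' for the first term is also not quite right as stated: $\partial_{\bm\omega\bm\omega}U$ is a general bilinear form on paths (not a product of linear functionals), the direction $\sigma(\cdot,v,\bm X^{t,\ww}_v)$ is itself random, and carrying out a bare Taylor expansion with singular kernels amounts to re-proving Theorem~\ref{thm:Viens_Zhang} in situ.

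The paper breaks the circle with a BSDE-comparison device. One applies the functional It\^o formula to the \emph{frozen-time} map $r\mapsto U(t_{i+1},\widehat{\bm X}^{t,\ww,r})$ on $(t_i,t_{i+1}]$ (your $\Gamma^{t,\ww}_r$ is the paper's $\widehat{\bm X}^{t,\ww,r}$); this is legitimate under the mere $C^{0,2}_{+,\alpha}$ hypothesis precisely because no time derivative is involved. It produces an auxiliary process $\bm Y^n_r:=U(t_{i+1},\widehat{\bm X}^{t,\ww,r})$ with explicit diffusion coefficient $\bm Z^n_r:=\langle\partial_{\bm\omega}U(t_{i+1},\widehat{\bm X}^{t,\ww,r}),\sigma^{r,\cdot}\rangle$. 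The difference $\Delta\bm Y^n:=\bm Y^n-\bm Y^{t,\ww}$ then solves a BSDE with zero terminal value at each $t_{i+1}$, and standard stability estimates for BSDEs force $\int|\bm Z^n_r-\bm Z^{t,\ww}_r|^2\dr\to0$ as the mesh vanishes, identifying $\bm Z^{t,\ww}$ \emph{before} the time derivative is taken. Finally, since $\Delta\bm Y^n_{t+\delta}=0$, the very same BSDE delivers $U(t+\delta,\ww)-U(t,\ww)$ as an expectation of an integral over $[t,t+\delta]$ plus an $o(\delta)$ remainder controlled by those stability bounds together with the time-continuity in Definition~\ref{def:Cplusalpha}(iii); dividing by $\delta$ then yields $\partial_t U$ and~\eqref{Eq:PPDE_u} in one stroke.
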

\begin{proof}
     We follow the proof of~\cite[Theorem 3.2]{wang2022path} but drop one argument (the first one in~$U$) which makes it slightly simpler. 
     
     \textbf{Step 1.} For any~$0 \le t\le r\le s\le T$ and any $\ww\in \Ww$, define 
     \begin{equation}\label{eq:Xtilde}
\widetilde{\bm{X}}^{t,\ww}_{r,s} := \ww_s + \int_t^r b(s,r',\bm{X}^{t,\ww}_{r'}) \D r' + \int_t^r \sigma(s,r',\bm{X}^{t,\ww}_{r'}) \D \bm{W}_{r'}.
     \end{equation}
     Hence, by strong uniqueness of solutions for~\eqref{eq:XsxEq}, we have
$$
    \bm{X}^{t,\ww}_s 
    = \bm{X}_s^{r,\widehat{\bm{X}}^{t,\ww,r}}, \qquad 
    \text{where   } 
    \widehat{\bm{X}}^{t,\ww,r}_{s} 
    := \big(\bm{X}^{t,\ww}_\cdot\otimes_r \widetilde{\bm{X}}^{t,\ww}_{r{ ,\cdot}}\big)_{s}
    = \bm{X}^{t,\ww}_{s} \one_{s\le r} + \widetilde{\bm{X}}^{t,\ww}_{r,s} \one_{s>   r}.
    $$
By uniqueness of the solution to the BSDE, then $\bm{Y}^{t,\ww}_r = \bm{Y}^{r,\widehat{\bm{X}}^{t,\ww,r}}_r= U(r,\widehat{\bm{X}}^{t,\ww,r})$. 

\textbf{Step 2.} Fix~$n\in\NN$, $\delta=\frac{T-t}{n}$, $t_i=t+i\delta$ for all~$i\le n$. For~%{ $t_i \leq t < r \leq t_{i+1}$}, 
 {$s\in[t_i,t_{i+1}]$ and $s\ge t$}, let
$$
\bm{Y}^n_s := U(t_{i+1}, \widehat{\bm{X}}^{t,\ww,s}), \qquad \bm{Z}^n_s:=\langle \partial_{\bm{\omega}} U(t_{i+1}, \widehat{\bm{X}}^{t,\ww,s}), \sigma^{s,\widehat{\bm{X}}^{t,\ww,s}} \rangle.
$$
Thanks to our assumptions we apply the functional It\^o formula to~$U(t_{i+1},\cdot)$ for $s\in(t_i,t_{i+1}]$:
\begin{align}\label{eq:Ynr}
&\bm{Y}^n_{t_{i+1}} - \bm{Y}^n_s = \int_s^{t_{i+1}}\bm{Z}^n_r \,\D \bm{W}_r \\
&+\int_s^{t_{i+1}} \bigg[ \half \langle \partial_{\bm{\omega}\bm{\omega}} U(t_{i+1}, \widehat{\bm{X}}^{t,\ww,r}), (\sigma^{r,\widehat{\bm{X}}^{t,\ww,r}}, \sigma^{r,\widehat{\bm{X}}^{t,\ww,r}})\rangle + \langle \partial_{\bm{\omega}} U(t_{i+1}, \widehat{\bm{X}}^{t,\ww,r}), b^{r,\widehat{\bm{X}}^{t,\ww,r}} \rangle \bigg] \dr.
%\D \bm{Y}^n_r = \bigg[ \half \langle \partial_{\bm{\omega}\bm{\omega}} U(t_{i+1}, \widehat{\bm{X}}^{t,\ww,r}), (\sigma^{r,\widehat{\bm{X}}^{t,\ww,r}}, \sigma^{r,\widehat{\bm{X}}^{t,\ww,r}})\rangle + \langle \partial_{\bm{\omega}} U(t_{i+1}, \widehat{\bm{X}}^{t,\ww,r}), b^{r,\widehat{\bm{X}}^{t,\ww,r}} \rangle \bigg] \dr + \bm{Z}^n_r \,\D \bm{W}_r, \qquad
\end{align}
%where we used the fact that in~\eqref{eq:Xtilde}, $\bm{X}^{t,\ww}_{r'}=\widehat{\bm{X}}^{t,\ww,r}_{r'}$ for~$r'\in [t,r)$.
We recall that~$(\bm{Y}^{t,\ww},\bm{Z}^{t,\ww})$ solves, for $s\in(t_i,t_{i+1}]$,
\begin{align*}
    \bm{Y}^{t,\ww}_{t_{i+1}} -\bm{Y}^{t,\ww}_s = - \int_s^{t_{i+1}}  f(r,\widehat{\bm{X}}^{t,\ww,r}_{r},\bm{Y}_r^{t,\ww},\bm{Z}_r^{t,\ww}) \dr + \int_s^{t_{i+1}}\bm{Z}^{t,\ww}_r \,\D \bm{W}_r,
\end{align*}
where we used $\widehat{\bm{X}}^{t,\ww,r}_{r}=\bm{X}^{t,\ww}_{r}$. Further define~$\Delta \bm{Y}^n_r := \bm{Y}^n_r - \bm{Y}^{t,\ww}_r$ and~$\Delta \bm{Z}^n_r := \bm{Z}^n_r - \bm{Z}^{t,\ww}_r$. 
Upon recognising that~$\Delta\bm{Y}^{n}_{t_{i+1}}=\bm{Y}^{n}_{t_{i+1}}-\bm{Y}^{t,\ww}_{t_{i+1}}=0$,  we obtain that $(\Delta \bm{Y}^n,\Delta \bm{Z}^n)$ satisfies a new BSDE, for $s\in(t_i,t_{i+1}]$,
\begin{equation}\label{eq:DeltaYnr}
\Delta \bm{Y}^n_s = - \int_s^{t_{i+1}}\widehat{f}\left(r,t_{i+1},\widehat{\bm{X}}^{t,\ww,r},\bm{Y}^n_r-\Delta \bm{Y}^n_r,\bm{Z}^n_r-\Delta \bm{Z}^n_r\right) \dr - \int_s^{t_{i+1}}\Delta \bm{Z}^n_r \,\D \bm{W}_r,
%\D\Delta \bm{Y}^n_r  = \widehat{f}\left(r,t_{i+1},\widehat{\bm{X}}^{t,\ww,r}_r,\bm{Y}^n_r-\Delta \bm{Y}^n_r,\bm{Z}^n_r-\Delta \bm{Z}^n_r\right) \dr
%+ \Delta \bm{Z}^n_r \,\D \bm{W}_r,
\end{equation}
where, for $r\in(t_i,t_{i+1}]$,
$$
\widehat{f}(r,t_{i+1},\bm{x},y,z) := \half \langle \partial_{\bm{\omega}\bm{\omega}}^2 U(t_{i+1},\bm{x}), (\sigma^{r,\bm{x}}, \sigma^{r,\bm{x}})\rangle + \langle \partial_{\bm{\omega}} U(t_{i+1}, \bm{x}), b^{r,\bm{x}} \rangle + f(r,\bm{x}_r, y,z).
$$
We use standard BSDE estimates~\cite[Theorem 4.2.1]{zhang2017backward}, $\Delta\bm{Y}^n_{t_{i+1}}=0$, the uniform Lipschitz continuity of~$(y,z)\mapsto f(r,x,y,z)$ and Jensen's inequality to get 
\begin{align}\label{eq:BSDE_estimate}
&\EE\left[\sup_{t_i< s\le t_{i+1}} \abs{\Delta \bm{Y}_s^n}^2 + \int_{t_i}^{t_{i+1}} \abs{\Delta \bm{Z}_r^n}^2\dr\right]
\lesssim \EE\left[\left(\int_{t_i}^{t_{i+1}} \abs{\widehat{f}(r,t_{i+1},\widehat{\bm{X}}^{t,\ww,r},\bm{Y}^{t,\ww}_r,\bm{Z}^{t,\ww}_r)}\dr\right)^2\right] \nonumber\\
&\qquad\qquad \qquad \lesssim \delta\EE\left[ \int_{t_i}^{t_{i+1}} \abs{\widehat{f}(r,t_{i+1},\widehat{\bm{X}}^{t,\ww,r},0,0)}^2+\abs{\bm{Y}^{t,\ww}_r}^2+|\bm{Z}^{t,\ww}_r|^2\dr\right]. 
\end{align}
BSDE estimates combined with Assumption~\ref{assu:BSDE}(i) and $G$-growth of the pathwise derivatives entails that
$$
\EE\left[ \int_{t}^T \abs{\widehat{f}(r,\frac{\lceil rN\rceil}{N},\widehat{\bm{X}}^{t,\ww,r},0,0)}^2+\abs{\bm{Y}^{t,\ww}_r}^2+|\bm{Z}^{t,\ww}_r|^2\dr\right]<\infty.
$$

We combine the previous estimates to conclude this step:
\begin{align*}
    &\EE\left[\sum_{i=1}^{n-1}\int_{t_i}^{t_{i+1}} \abs{\bm{Z}^{t,\ww}_r -\bm{Z}^n_r}^2 \dr \right] 
    = \EE\left[\sum_{i=1}^{n-1}\int_{t_i}^{t_{i+1}} \abs{\Delta \bm{Z}_r^n}^2\dr \right]\\
    & \le \delta\,\EE\left[\int_{t}^{T} \abs{\widehat{f}\left(r,\frac{\lceil rN \rceil}{N},\widehat{\bm{X}}^{t,\ww,r}_r,0,0\right)}^2+\abs{\bm{Y}^n_r}^2+|\bm{Z}^n_r|^2\dr\right],
\end{align*}
which goes to zero as~$\delta\downarrow0$ and therefore implies~$\bm{Z}^{t,\ww}_r =\bm{Z}^n_r=\big\langle \partial_{\bm{\omega}} U(t+\delta, \widehat{\bm{X}}^{t,\ww,r}), \sigma^{r,\widehat{\bm{X}}^{t,\ww,r}} \big\rangle$.

\textbf{Step 3.}
In particular, recalling that~$t_0=t,t_1=t+\delta$ and~$\Delta \bm{Y}^n_{t_{i+1}}=0$, we have
\begin{align}\label{eq_incr_U}
    U(t+\delta,\ww)-U(t,\ww)
      &= -(\Delta \bm{Y}^n_{t+\delta}- \Delta \bm{Y}^n_{t})\nonumber\\
     & = -\EE\left[\int_{t}^{t+\delta} \widehat{f}(r,t+\delta,\widehat{\bm{X}}^{t,\ww,r},\bm{Y}^{t,\ww}_r,\bm{Z}^{t,\ww}_r) \D r\right]\\
     &= - \EE\left[\delta \widehat{f}(t,t+\delta,\widehat{\bm{X}}^{t,\ww,t},\bm{Y}^{t,\ww}_t,\bm{Z}^{t,\ww}_t)\right] - R(\delta),
\end{align}
where we took expectations while noting that the left-hand-side is deterministic, it holds
\begin{align*}
    \widehat{\bm{X}}^{t,\ww,t}=\ww,\quad\bm{Y}^{t,\ww}_t=U(t,\ww),\quad \bm{Z}^{t,\ww}_t=\big\langle \partial_{\ww} U(t,\ww), \sigma^{t,\ww}\big\rangle,
\end{align*}
and where
\begin{align*}
     R(\delta):&=\EE\left[\int_{t}^{t+\delta} \Big(\widehat{f}(r,t+\delta,\widehat{\bm{X}}^{t,\ww,r},\bm{Y}^{t,\ww}_r,\bm{Z}^{t,\ww}_r) -\widehat{f}(t,t+\delta,\widehat{\bm{X}}^{t,\ww,t},\bm{Y}^{t,\ww}_t,\bm{Z}^{t,\ww}_t)\Big)\D r\right] \\
     &= \half \EE\bigg[\int_{t}^{t+\delta} \Big( \big\langle \partial_{\omega\omega}U(t+\delta,\widehat{\bm{X}}^{t,\ww,r}),(\sigma^{r,\widehat{\bm{X}}^{t,\ww,r}},\sigma^{r,\widehat{\bm{X}}^{t,\ww,r}}) \big\rangle \\
     &\qquad \qquad
     - \big\langle \partial_{\omega\omega}U(t+\delta,\widehat{\bm{X}}^{t,\ww,t}),(\sigma^{t,\widehat{\bm{X}}^{t,\ww,t}},\sigma^{t,\widehat{\bm{X}}^{t,\ww,t}}) \big\rangle
     \Big)\D r\bigg] \\
     &\quad + \EE\left[\int_{t}^{t+\delta} \Big( \big\langle \partial_{\omega}U(t+\delta,\widehat{\bm{X}}^{t,\ww,r}),b^{r,\widehat{\bm{X}}^{t,\ww,r}} \big\rangle
     - \big\langle \partial_{\omega}U(t+\delta,\widehat{\bm{X}}^{t,\ww,t}),b^{t,\widehat{\bm{X}}^{t,\ww,t}} \big\rangle
     \Big)\D r\right] \\
     &\quad + \EE\left[\int_{t}^{t+\delta} \Big(f(r,\widehat{\bm{X}}^{t,\ww,r}_r,\bm{Y}^{t,\ww}_r,\bm{Z}^{t,\ww}_r) - f(t,\widehat{\bm{X}}^{t,\ww,t}_t,\bm{Y}^{t,\ww}_t,\bm{Z}^{t,\ww}_t)\Big)\D r\right] \\
     &=: R_1(\delta)+R_2(\delta)+R_3(\delta).
\end{align*}

First notice that, since~$\widehat{\bm{X}}^{t,\ww,r}_r=\bm{X}^{t,\ww}_r$ and $\sigma^{r,\ww}(s)=\sigma(s,r,\ww_r)$, it entails the simplifications~$\sigma^{r,\widehat{\bm{X}}^{t,\ww,r}}=\sigma^{r,\bm{X}^{t,\ww}}$ and $\sigma^{t,\widehat{\bm{X}}^{t,\ww,t}}=\sigma^{t,\bm{X}^{t,\ww}}=\sigma^{t,\ww}$, and similarly for~$b$. We make use of Assumption~\ref{assumption:regcoef+U}. With these in hand, let us look at $R_1(\delta)$,

\begin{align*}
    &\abs{ \big\langle \partial_{\omega\omega}U(t+\delta,\widehat{\bm{X}}^{t,\ww,r}),(\sigma^{r,\bm{X}^{t,\ww}},\sigma^{r,\bm{X}^{t,\ww}}) \big\rangle 
     - \big\langle \partial_{\omega\omega}U(t+\delta,\ww),(\sigma^{t,\ww},\sigma^{t,\ww}) \big\rangle} \nonumber\\
     &= \lim_{\ep\downarrow0} \abs{ \big\langle \partial_{\omega\omega}U(t+\delta,\widehat{\bm{X}}^{t,\ww,r}),(\sigma^{\ep,r,\bm{X}^{t,\ww}},\sigma^{\ep,r,\bm{X}^{t,\ww}}) \big\rangle 
     - \big\langle \partial_{\omega\omega}U(t+\delta,\ww),(\sigma^{\ep,t,\ww},\sigma^{\ep,t,\ww}) \big\rangle} \nonumber\\
     &\le \lim_{\ep\downarrow0} \abs{ \big\langle \partial_{\omega\omega}U(t+\delta,\widehat{\bm{X}}^{t,\ww,r}),(\sigma^{\ep,r,\bm{X}^{t,\ww}},\sigma^{\ep,r,\bm{X}^{t,\ww}}) \big\rangle 
     - \big\langle \partial_{\omega\omega}U(t+\delta,\widehat{\bm{X}}^{t,\ww,r}),(\sigma^{\ep,r,\ww},\sigma^{\ep,r,\ww}) \big\rangle} \\ %\label{eq:eq_zero} \\
     &\; + \lim_{\ep\downarrow0} \abs{ \big\langle \partial_{\omega\omega}U(t+\delta,\widehat{\bm{X}}^{t,\ww,r}),(\sigma^{\ep,r,\ww},\sigma^{\ep,r,\ww}) \big\rangle
     - \big\langle \partial_{\omega\omega}U(t+\delta,\widehat{\bm{X}}^{t,\ww,r}),(\sigma^{\ep,t,\ww},\sigma^{\ep,t,\ww}) \big\rangle} \nonumber\\
     &\; + \lim_{\ep\downarrow0} \abs{ \big\langle \partial_{\omega\omega}U(t+\delta,\widehat{\bm{X}}^{t,\ww,r}),(\sigma^{\ep,t,\ww},\sigma^{\ep,t,\ww}) \big\rangle
     - \big\langle \partial_{\omega\omega}U(t+\delta,\ww),(\sigma^{\ep,t,\ww},\sigma^{\ep,t,\ww}) \big\rangle} \nonumber\\
     &\le \lim_{\ep\downarrow0}\bigg(
     \big\lVert G(\widehat{\bm{X}}^{t,\ww,r})\big\lVert_{\TT}
    \big\lVert \sigma^{\ep,r,\bm{X}^{t,\ww,r}} - \sigma^{\ep,r,\ww} \big\lVert_{L^2(\TT)} \big\lVert \sigma^{\ep,r,\bm{X}^{t,\ww,r}} + \sigma^{\ep,r,\ww} \big\lVert_{L^2(\TT)} \nonumber\\
    &\quad+ G(\widehat{\bm{X}}^{t,\ww,r})\big\lVert_{\TT}
    \big\lVert \sigma^{\ep,r,\ww} - \sigma^{\ep,t,\ww} \big\lVert_{L^2(\TT)} \big\lVert \sigma^{\ep,r,\ww} + \sigma^{\ep,t,\ww} \big\lVert_{L^2(\TT)} \nonumber\\
    &\quad+ \varrho\Big(\big\lVert\widehat{\bm{X}}^{t,\ww,r}-\ww\big\lVert_{\TT}\Big)  \big\lVert\sigma^{\ep,t,\ww}\big\lVert_{L^2(\TT)}^2 \bigg). \nonumber
\end{align*}
%The line~\eqref{eq:eq_zero} is equal to zero because $\sigma^{r,\bm{X}^{t,\ww}}=\sigma^{r,\ww}$ as we mentioned above. 
Routine arguments combined with Assumption~\ref{assu:RegTimeDerivative} and Jensen's inequality show that
\begin{align*}
    \EE\Big[\varrho\big(\big\lVert \widehat{\bm{X}}^{t,\ww,r}-\ww\big\lVert_{\TT}\big)\Big]\le \varrho\Big(\EE\Big[\big\lVert \widehat{\bm{X}}^{t,\ww,r}-\ww\big\lVert_{\TT}^2\Big]^\half \Big)\lesssim \varrho\big( (r-t)^{H}\big).
\end{align*}
Meanwhile, Assumption~\ref{assu:RegTimeDerivative} also yields~$\abs{\sigma^{\ep,t,\ww}(s)} \le G(\ww_t) (s-t)^{H-\half}$ for a possibly different $G\in\Xx$, and hence~$\norm{\sigma^{\ep,t,\ww}}_{L^2(\TT)}^2\le G(\ww_t) T^{2H}/(2H)$. Furthermore, dominated convergence and Assumption~\ref{assu:RegTimeDerivative} lead to
\begin{align*}
    &\lim_{\ep\downarrow0} \big\lVert  \sigma^{\ep,r,\ww} - \sigma^{\ep,t,\ww} \big\lVert_{L^2(\TT)} 
    = \big\lVert \sigma^{r,\ww} - \sigma^{t,\ww} \big\lVert_{L^2(\TT)} \\
    &= \int_0^T \abs{\sigma(s,r,\ww_r)-\sigma(s,t,\ww_t)}^2 \ds \\
    &\le 2 \int_0^T \abs{\sigma(s,r,\ww_r)-\sigma(s,t,\ww_r)}^2 \ds + 2 \int_0^T \abs{\sigma(s,t,\ww_r)-\sigma(s,t,\ww_t)}^2 \ds \\
    &= 2 \int_r^T \abs{\sigma(s,r,\ww_r)-\sigma(s,t,\ww_r)}^2 \ds
    + 2 \int_t^r \abs{\sigma(s,t,\ww_r)}^2 \ds 
    + 2 \int_t^T \abs{\sigma(s,t,\ww_r)-\sigma(s,t,\ww_t)}^2 \ds\\
    &\lesssim G(\ww_r)^2 \int_r^T \left( \int_r^t (s-u)^{H-3/2} \du\right)^2 \ds 
    + G(\ww_r)^2 \int_t^r (s-t)^{2H-1}\ds \\
    &\qquad+ \varrho'(\abs{\ww_r-\ww_t})^2 \int_t^T (s-t)^{2H-1}\ds \,G(\ww_r\vee\ww_t)^2.
\end{align*}
We finally note that
\begin{align*}
    \int_r^T \left( \int_r^t (s-u)^{H-3/2} \du\right)^2 \ds 
    \le \int_r^T \left( \int_r^t (s-t)^{H/2-1/2} (t-u)^{H/2-1} \du\right)^2 \ds 
    = \frac{T^H}{H} \frac{4(t-r)^{H/2}}{H^2}.
\end{align*}
In a similar fashion we have
\begin{align*}
    \lim_{\ep\downarrow0} \big\lVert  \sigma^{\ep,r,\bm{X}^{t,\ww}} - \sigma^{\ep,r,\ww} \big\lVert_{L^2(\TT)}^2
    \lesssim \norm{G(\widehat{\bm{X}}^{t,\ww,r})}_{\TT} G(\bm{X}^{t,\ww}_r\vee \ww_r) \varrho'\left(\abs{\bm{X}^{t,\ww}_r-\ww_r}\right)^2 ,
\end{align*}
which vanishes for $r=t$ and is continuous in~$r$ since one can show~$\big\lVert\bm{X}^{t,\ww}_r-\ww_r\big\lVert_{L^p(\Omega)}\lesssim (r-t)^{H}$. 
Eventually this shows that $R_1(\delta)/\delta$ tends to zero as $\delta$ goes to zero. The same strategy applies to $R_2(\delta)$ as well.  
Finally, as $r\to t$,
\begin{align*}
    f(r,\bm{X}^{t,\ww}_r,\bm{Y}^{t,\ww}_r,\bm{Z}^{t,\ww}_r) - f(t,\ww_t,\bm{Y}^{t,\ww}_t,\bm{Z}^{t,\ww}_t)
\end{align*}
tends to zero from continuity assumptions of $f$ in $t$, $x$ and in $(y,z)$. Indeed, $\bm{Y}^{t,\ww},\bm{Z}^{t,\ww}$ are almost surely continuous and routine computations show that~$\bm{X}^{t,\ww}_r-\ww_t$ tends to zero as $r\to t$ since $\ww$ is also continuous.
\end{proof}
\begin{proposition}\label{prop:UniqueSol}
    Under the same assumptions as in Proposition~\ref{prop:ClassicalSol}, the PPDE~\eqref{Eq:PPDE_u} with its boundary condition has a unique~$\Cc^{1,2}_{+,\alpha}$ solution.
\end{proposition}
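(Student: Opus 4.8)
The strategy is a verification (Feynman--Kac type) argument: I will show that \emph{any} $C^{1,2}_{+,\alpha}$ solution~$u$ of~\eqref{Eq:PPDE_u} with terminal condition $u(T,\cdot)=\Phi$ must coincide with the function $U(t,\ww)=\bm{Y}^{t,\ww}_t$ constructed in Proposition~\ref{prop:ClassicalSol}. Since $U$ is itself such a solution, this yields uniqueness.

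Fix $(t,\ww)\in\Lambda$ and recall from Step~1 of the proof of Proposition~\ref{prop:ClassicalSol} the concatenated path $\widehat{\bm{X}}^{t,\ww,s}=\bm{X}^{t,\ww}\one_{[0,s]}+\widetilde{\bm{X}}^{t,\ww}_{s}\one_{(s,T]}$, $s\in[t,T]$, which plays for the shifted SVE~\eqref{eq:XsxEq} the role of $\bm{X}\otimes_s\bm{\Theta}^s$ in Theorem~\ref{thm:Viens_Zhang}. Since $u\in C^{1,2}_{+,\alpha}$ with $\alpha>\half-H$ and Assumptions~\ref{assu:wellposedness}--\ref{assu:RegTimeDerivative} hold, the functional Itô formula~\eqref{eq:Ito} applies to $s\mapsto u(s,\widehat{\bm{X}}^{t,\ww,s})$ on $[t,T]$; cancelling the $\ds$-terms by means of the PPDE~\eqref{Eq:PPDE_u} shows that
\[
\widehat{\bm{Y}}_s:=u\big(s,\widehat{\bm{X}}^{t,\ww,s}\big),\qquad
\widehat{\bm{Z}}_s:=\big\langle\partial_{\bm{\omega}}u\big(s,\widehat{\bm{X}}^{t,\ww,s}\big),\sigma^{s,\widehat{\bm{X}}^{t,\ww,s}}\big\rangle
\]
solves, with terminal value $u(T,\widehat{\bm{X}}^{t,\ww,T})=\Phi(\widehat{\bm{X}}^{t,\ww,T})$, exactly the backward equation~\eqref{eq:BSDE} defining $(\bm{Y}^{t,\ww},\bm{Z}^{t,\ww})$.

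It then remains to check that $(\widehat{\bm{Y}},\widehat{\bm{Z}})$ is square integrable on $[t,T]\times\Omega$, so that the uniqueness half of~\cite[Theorem 4.3.1]{zhang2017backward} applies and forces $\widehat{\bm{Y}}_s=\bm{Y}^{t,\ww}_s$ for all $s\in[t,T]$, a.s.; evaluating at $s=t$ and using $\widehat{\bm{X}}^{t,\ww,t}=\ww$ then gives $u(t,\ww)=\bm{Y}^{t,\ww}_t=U(t,\ww)$, and since $(t,\ww)$ is arbitrary, $u\equiv U$. The integrability uses the $G$-growth of $\partial_{\bm{\omega}}u$ and $\partial_{\bm{\omega}\bm{\omega}}u$ together with the fact that, by strong well-posedness of~\eqref{eq:XsxEq} and standard Volterra moment estimates, $\bm{X}^{t,\ww}$ --- hence $\widehat{\bm{X}}^{t,\ww,\cdot}$ --- has finite moments of every order in supremum norm, so $\norm{G(\widehat{\bm{X}}^{t,\ww,\cdot})}_\TT\in L^p$ for all $p\ge1$; the $L^2$-bound on $\widehat{\bm{Z}}$ relies on the $C^{1,2}_{+,\alpha}$ estimates of Definition~\ref{def:Cplusalpha}(i) applied to the truncated kernels $\sigma^{\delta,s,\widehat{\bm{X}}^{t,\ww,s}}$ (whose $\delta^\alpha$-factor, with $\alpha>\half-H$, compensates the $(\cdot-s)^{H-\half}$ singularity, as in the proof of Theorem~\ref{thm:Viens_Zhang} and in~\eqref{eq:SpatialDerivatives}) and on Assumption~\ref{assu:BSDE}.

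The main obstacle is precisely this last bookkeeping: ensuring that the functional Itô formula is genuinely applicable along the flow $\widehat{\bm{X}}^{t,\ww,\cdot}$ (i.e.\ that the shifted SVE~\eqref{eq:XsxEq} inherits Assumption~\ref{assu:wellposedness}) and that $\widehat{\bm{Z}}\in L^2([t,T]\times\Omega)$ despite the singular kernel in $\sigma^{s,\widehat{\bm{X}}^{t,\ww,s}}$. These estimates are of the same nature as those already carried out in Steps~2--3 of the proof of Proposition~\ref{prop:ClassicalSol} (cf.~\eqref{eq:BSDE_estimate}--\eqref{eq:BSDE_estimate_2}), so I would quote them rather than reproduce them; everything else is routine.
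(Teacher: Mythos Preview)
Your approach is essentially identical to the paper's: apply the functional It\^o formula to $s\mapsto u(s,\widehat{\bm{X}}^{t,\ww,s})$, cancel the drift via the PPDE, observe that the resulting pair solves the BSDE~\eqref{eq:BSDE}, and invoke uniqueness of BSDE solutions under Assumption~\ref{assu:BSDE} to conclude $u(t,\ww)=\bm{Y}^{t,\ww}_t=U(t,\ww)$. You are, if anything, more careful than the paper in flagging the square-integrability of $(\widehat{\bm{Y}},\widehat{\bm{Z}})$ and the applicability of Theorem~\ref{thm:Viens_Zhang} along the shifted flow; the paper simply asserts these steps without comment.
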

\begin{proof}
    This is inspired by the proof of~\cite[Theorem~4.1]{peng2016bsde}. 
    Given the assumptions, Proposition~\ref{prop:ClassicalSol} guarantees the existence of a solution ~$U\in \Cc^{1,2}_{+,\alpha}$ to the PPDE.
    Apply It\^o's formula~\eqref{eq:Ito} to $U(s,\widehat{\bm{X}}^{t,\ww,s})$
    and, after cancelling the terms from the PPDE, we obtain
    \begin{align*}
    \D U(s,\widehat{\bm{X}}^{t,\ww,s}) 
    = & -f\left(s, \widehat{\bm{X}}^{t,\ww,s}_s, U(s,\widehat{\bm{X}}^{t,\ww,s}) , \left\langle\partial_{\bm{\omega}} U(s,\widehat{\bm{X}}^{t,\ww,s}), \sigma^{s,\widehat{\bm{X}}^{t,\ww,s}} \right\rangle \right) \ds\\
    & + \left\langle\partial_{\bm{\omega}} U(s,\widehat{\bm{X}}^{t,\ww,s}), \sigma^{s,\widehat{\bm{X}}^{t,\ww,s}} \right\rangle \D \bm{W}_s,
    \end{align*}
    where we recall that~$\widehat{\bm{X}}^{t,\ww,s}_s=\bm{X}^{t,\ww}_s$.
    Combined with the boundary condition and the fact that the BSDE in~\eqref{eq:BSDE} has a unique solution by Assumption~\ref{assu:BSDE}, this implies
    $$ (\bm{Y}^{t,\ww}_s,\bm{\bm{Z}}^{t,\ww}_s)
    =\big(U(s,\widehat{\bm{X}}^{t,\ww,s}),\langle\partial_{\bm{\omega}} U(s,\widehat{\bm{X}}^{t,\ww,s}), \sigma^{s,\widehat{\bm{X}}^{t,\ww,s}} \rangle \big)
    $$
    is the unique solution to the BSDE~\eqref{eq:BSDE}.
    In particular, $\bm{Y}^{t,\ww}_t=U(t,\ww)$, concluding the proof.
\end{proof}

\begin{remark}\label{rem:OptionRPZ}
In financial applications, we are interested in evaluating conditional expectations of the type~$\EE[\Phi(S)|\Ff_t]$
where the asset price $S$ is defined explicitly. However in the general Volterra case (where the coefficients are state-dependent) it is not trivial to express it as a function of~$(t,\bm{X}\otimes_t\bm{\Theta}^t)$. With this in mind we observe that, for~$0\le t\le s\le T$, 
\begin{align}
\bm{X}_s &= \bm{X}_0 + \int_0^t \big[ b(s,r,\bm{X}_r) \dr + \sigma(s,r,\bm{X}_r) \,\D \bm{W}_r\big] + \int_t^s \big[ b(s,r,\bm{X}_r) \dr + \sigma(s,r,\bm{X}_r) \,\D \bm{W}_r\big] \\ \label{eq:SVE_split}
& = \bm{\Theta}^t_s + \int_t^s \big[ b(s,r,\bm{X}_r) \dr + \sigma(s,r,\bm{X}_r) \,\D \bm{W}_r\big]. 
\end{align}
Hence, by the strong uniqueness of the solutions to the SVEs~\eqref{eq:MainSVE} and~\eqref{eq:XsxEq}, 
$\bm{X}=  \bm{X}^{t,\bm{X}\otimes_t\bm{\Theta}^t}$ almost surely on~$\TT$.

With $\bm{Y}$ being the solution of the BSDE~\eqref{eq:BSDE} with~$f\equiv0$,
and for any~$\Ff_t$-measurable random variable~$\xi\in L^2(\Omega;\Ww)$, equations~\eqref{eq:BSDE} and~\eqref{eq:def_U} entail
    \begin{equation}\label{eq:U_is_cond_exp}
        U(t,\xi)=\bm{Y}^{t,\xi}_t=\EE[\phi(\bm{X}^{t,\xi}) \lvert \Ff_t].
    \end{equation}
This yields in particular
\begin{equation}
    \EE[\Phi(\bm{X})|\Ff_t] = \EE\left[\Phi\left( \bm{X}^{t,\bm{X}\otimes_t\bm{\Theta}^t}\right)|\Ff_t\right] = \bm{Y}^{t,\bm{X}\otimes_t\bm{\Theta}^t}_t = U(t,\bm{X}\otimes_t\bm{\Theta}^t).
\label{eq:Conditional_Exp}
\end{equation}
\end{remark}

\begin{remark}
The results of this section do not go through without the assumption of strong uniqueness of the solution to the SVE. This acts as a further impetus for resolving this open problem in the rough Heston model.    
\end{remark}

\begin{remark}\label{rem:path_der_special_cases}
The general setting simplifies when applied to semimartingales and/or payoffs that depend only on the terminal value. 
We recall the function $U:\overline{\Lambda}\to\RR$ given by~$U(t,\ww)=\EE[\phi(\bm{X}^{t,\ww})]$. 
    \begin{enumerate}
        \item If $K$ is a constant, i.e. $\bm{X}$ is a Markov process and a semimartingale, then $\ww\mapsto U(t,\ww)$ is in fact measurable with respect to~$\sigma(\{\ww\lvert_{[0,t]}: \ww \in \overline{\Ww}\})$, hence there exists 
    $$
u_1:\Big\{(t,\ww_1)\in\TT\times \big(\Df^0([0,t])\cap \Cc^0([0,t)\big) \Big\} \to \RR, \quad \text{such that}\; u_1(t,\ww\lvert_{[0,t]})= U(t,\ww).
    $$
        The perturbation then becomes $U(t,\ww+\eta\one_{[t,T]})=u_1(t,\ww\lvert_{[0,t]}+\eta_t \one_{\{t\}})$, which consists in perturbing the path only at the end point, akin to Dupire's vertical derivative~\cite{dupire2019functional}. The conditional expectation~\eqref{eq:Conditional_Exp} reads~$\EE[\Phi(\bm{X})\lvert \Ff_t]=u_1(t,\bm{X}_{[0,t]})$, which is consistent with the observation that~$\boldsymbol{\Theta}^t_s=X_t$ for all~$s\ge t$ and which entails that the concatenated path~$\bm{X}\otimes_t \bm{\Theta}^t$ is frozen after~$t$. This setting is reminiscent of the functional It\^o calculus developed by Dupire~\cite{dupire2019functional} with the aim of pricing path-dependent options on Markovian underlyings. Dupire considers paths stopped after~$t$; they are thus of different \emph{lengths} which induce a different state space and a different time derivative. The applications, however, are precisely those encompassed by the map~$u_1$.
        \item If $K$ is not constant but $\phi$ only acts on the terminal time, then $\ww\mapsto U(t,\ww)$ is now measurable with respect to~$\sigma(\{\ww\lvert_{[t,T]}: \ww \in \overline{\Ww}\})$, hence there exists 
    $$
u_2:\Big\{(t,\ww)\in\TT\times \big(\Df^0([t,T])\cap \Cc^0(t,T]\big) \Big\} \to \RR, \quad \text{such that}\; u_2(t,\ww\lvert_{[t,T]})= U(t,\ww).
    $$ 
    The perturbation remains $U(t,\ww+\eta\one_{[t,T]})=u_2(t,\ww\lvert_{[t,T]}+\eta \lvert_{[t,T]})$, and the conditional expectation is~$\EE[\phi(\bm{X}_T)\lvert \Ff_t]=u_2(t,\bm{\Theta}^t)$.
        \item If $K$ is constant and $\phi$ only acts on the terminal time, then $\ww\mapsto U(t,\ww)$ is measurable with respect to~$\sigma(\{\ww_t: \ww \in \overline{\Ww}\})$, hence there exists 
    $$
u_3:\Big\{(t,x)\in\TT\times\RR^d \Big\} \to \RR, \quad \text{such that}\; u_3(t,\ww_t)= U(t,\ww).
    $$
    The perturbation is~$U(t,\ww+\eta\one_{[t,T]})=u_3(t,\ww_t+\eta_t)$. This corresponds to the classical problem of pricing vanilla options in a semimartingale model, with the option price being equal to    the conditional expectation $\EE[\phi(\bm{X}_T)\lvert \Ff_t]=u_3(t,\bm{X}_t)$. 
    \end{enumerate}
\end {remark}

%%%%%%%%%%%%%%%%%%%%%%%%%%%%%%%%%%%%%%%%%%%%%%%%%%%%%%%%%%%%%%%%%%%%%%%%
\subsection{Application to rough volatility} \label{sec:RvolApplication}
We consider the following rough volatility model:
\begin{equation}\label{eq:MainModel}
     X_t  :=x_0+ \int_{0}^t \psi(r,V_r) \D B_r + \zeta \int_0^t \psi(r,V_r)^2\dr, \qquad V_t  := \int_0^t K(t,r) \D W_r,
\end{equation}
with~$x_0,\zeta\in\RR$, $B=\overline{\rho} \overline{W}+ \rho W$, $W$ and $\overline W$ independent Brownian motions, $\rho\in[-1,1]$ and~$\overline{\rho}:=\sqrt{1-\rho^2}$. The kernel~$K:\TT^2\to\RR$ can be singular, $V$ is a Gaussian (Volterra) process and thus has finite exponential moments. We are chiefly interested in the Riemann-Liouville case~$K(t,s)=(t-s)^{H-\half}$.
 The two-dimensional~$\bm{X}=(X,V)^\top$ is a particular case of SVE~\eqref{eq:MainSVE} with $d=m=2$ and
\begin{align}\label{eq:coeffs_roughvol}
    b(t,r,x_1,x_2) = \begin{bmatrix} \zeta \psi(r,x_2)^2 \\ 0 \end{bmatrix}, 
    \qquad \quad 
    \sigma(t,r,x_1,x_2) = \begin{bmatrix} \bar{\rho} \psi(r,x_2) & \rho \psi(r,x_2) \\ 0 & K(t,r)\end{bmatrix}.
\end{align}
In financial models, $X$ is the log of an exponential martingale and thus~$\zeta=-\half$ while, for simplicity, previous papers studying weak error rates had set~$\zeta=0$. In this section we check all the conditions of Proposition~\ref{prop:ClassicalSol} and derive the pricing PPDE for the rough volatility model~\eqref{eq:MainModel}. In particular, this gives a rigorous justification to \cite[Remark 5.2]{viens2019martingale}.

Denote with $\{\mathcal{F}_t\}_{t \in \TT}$ the filtration generated by~$(\overline{W},W)$. We note that in this case
\begin{align}
    \Theta_s^t = \int_0^t K(s,r) \D W_r= \EE[V_s|\Ff_t], \quad s \ge t.
\end{align}
As a special case of~\eqref{eq:ThetaDef}, this process arises from the decomposition of $V$ as
$$
    V_s = \int_0^s K(s,r) \D W_r = \int_0^t K(s,r) \D W_r + \int_t^s K(s,r) \D W_r =: \Theta_s^t + I_s^t,
    \quad\text{for all }
    0\le t\le s \le T.
$$
This decomposition is orthogonal in the sense that~$I_s^t$ is independent from~$\mathcal{F}_t$ whereas~$\Theta_s^t = \EE[V_s | \mathcal{F}_t]$. In particular, $\Theta_t^t = V_t$ and for~$s \in\TT$ fixed, $\Theta_s^\cdot=(\Theta_s^t)_{t \in[0, s]}$ is a martingale.    

%%%%%%%%%%%%%%%%%%%%%%%%%%%%%%%%%%%%%%%%%%%%%%%%%%%%%%%%%%%
We are interested in representations of the value function $U:\overline{\Lambda}\to\RR$ such that $U(t,\ww)=\EE[\Phi(\bm{X}^{t,\ww})]$, which fits in the framework of the previous subsection with~$d=2$. As we saw in the introduction, see~\eqref{eq:utxomega}, the semimartingality of $X$ and the state-dependent payoff imply that the conditional expectation~$\EE[\phi(X_T)\lvert \Ff_t]$ is only a function of~$(t,X_t,\Theta^t)$. Applying items (2) and (3) of Remark~\ref{rem:path_der_special_cases} sequentially, one shows that there exists 
    \begin{equation}\label{eq:u_from_U}
u:\Big\{(t,x,\omega)\in\TT\times \RR\times \big(\Df^0([t,T])\cap \Cc^0(t,T]\big) \Big\} \to \RR, \text{ such that } u\big(t,\omega^{(1)}_t,\omega^{(2)}\lvert_{[t,T]}\big)= U(t,\ww).
    \end{equation}
    We thus recover the relation already derived in the introduction
$$
u(t,X_t,\Theta^t_{[t,T]})=\EE[\phi(X_T) |\Ff_t].
$$
    The perturbation in the first component is $U(t,\ww+\eta\one_{[t,T]})=u(t,\omega_t^{(1)} + \eta^{(1)}_t, \omega^{(2)}\lvert_{[t,T]})${ , with  $\eta=(\eta^{(1)},0)$}, and thus corresponds to a standard derivative (right derivative if $\eta^{(1)}_t>0$), while the derivative in the second component is the Fréchet derivative defined in~\eqref{eq:PathwiseDerDef}.
Analogously to~$\Lambda$ and~$\overline{\Lambda}$, we name the space of interest on which it acts
\begin{equation}\label{def:Gamma_spaces}
\Gamma:=\TT\times\RR\times\Ww,\qquad 
\overline{\Gamma}: = \big\{ (t,x,\omega)=\left(t,(x,\omega)\right)\in\TT\times\RR\times\overline{\Ww}: \omega|_{[t,T]}\in\Ww_t \big\},
\end{equation}
where $\Ww,\Ww_t,\overline{\Ww}$ are defined as in Section~\ref{sect:def_and_ito_form}, with $d=1$. The corresponding state space, analogue to Definition~\ref{def:Cplusalpha}, is denoted~$\Cc^{1,2,2}_{+,\alpha}(\Gamma)$.
\begin{definition}
\
\begin{itemize}%\setlength{\itemsep}{5pt}
    \item[(i)] We say that~$f:\RR\to\RR$ belongs to~$\Cc^{m}_{{\rm poly}}(\RR)$ with $m\in\NN$ if there exists~$\kappa>0$ such that 
    $\max_{j\le m} \abs{f^{(j)}(x)} \lesssim 1+|x|^\kappa$
    for all~$x\in\RR$.
    \item[(ii)]    We say that~$g:\TT\times\RR\to\RR$ belongs to~$\Cc^{0,n}_{{\rm exp}}(\TT\times\RR)$ with $n\in\NN$ if there exists~$\kappa>0$ such that $\max_{j\le n} \abs{\partial_x^{(j)} g(t,x)} \lesssim 1+\E^{\kappa (x+t)}$
    for all~$(t,x)\in\TT\times\RR$.
\end{itemize}
\end{definition}
\begin{assumption}\label{assu:phipsi}\
\begin{itemize}%\setlength{\itemsep}{5pt}
    \item[(i)] The payoff function $\phi$ belongs to~$\Cc^{3}_{{\rm poly}}(\RR)$ with constant~$\kphi>0$ and~$\phi^{(3)}$ is locally H\"older continuous;
    \item[(ii)] The volatility function $\psi$ belongs to~$\Cc^{0,3}_{{\rm exp}}(\TT\times\RR)$ with constant~$\kpsi>0$. Its first and second derivative in space are denoted by $\psi'$ and $\psi''$, respectively.
    \item[(iii)] The kernel $K$, similarly to Assumption~\ref{assu:RegTimeDerivative}, satisfies for all~$s<t$
    $$
    \abs{K(t,s)} \lesssim (t-s)^{H-\half}, \quad \abs{\partial_t K(t,s)} \lesssim(t-s)^{H-\frac32}.
    $$
\end{itemize}
\end{assumption}
\begin{remark}
    We are limited to polynomial growth for~$\phi$ as it is still an open problem whether \hbox{$\EE[\E^{p X_T}]<\infty$} for most values of~$(p,\zeta)\in \RR^2$. 
    { For recent developments on the topic we refer to \cite{gassiat2019martingale, abi2025martingale}}.
\end{remark}
This assumption paves the way for the following lemma, proved in Section~\ref{sec:Proof_Bounds_XV}.

\begin{lemma}\label{lemma:Bounds_XV}
    Let Assumption~\ref{assu:phipsi}(ii) hold.
    For all~$(t,x,\omega)\in\overline{\Gamma}$ and $s\in[ t,T]$, define
    \begin{equation}\label{eq:Xtxomega}
    V^{t,\omega}_s:= \omega_s+I^t_s, \quad \text{and}\quad 
     X_T^{t,x,\omega}:=x+\int_t^T \psi(s,V^{t,\omega}_s)\,\D B_s+\zeta\int_t^T \psi(s,V^{t,\omega}_s)^2\ds.    
    \end{equation}
    Then, for all~$p\ge1$,
    \begin{equation}\label{eq:Bounds_XV}     \EE[\E^{p\norm{V}_{\TT}}]<\infty
    , \quad   {\sup_{s\in[ t,T]}\EE\left[\E^{p \abs{V^{t,\omega}_s}}\right] \lesssim \E^{p \norm{\omega}_{\TT}},} \quad
\EE\left[\norm{X^{t,x,\omega}}_\TT^p\right] \lesssim \abs{x}^p + \E^{2\kpsi p \norm{\omega}_\TT} .
    \end{equation}
\end{lemma}
    This lemma characterises functions belonging to~$\Xx$ in the rough volatility framework. Indeed, any function~$G:\RR^2\to\RR$ $G(x,v)\lesssim 1+|x|^\ell + \E^{\ell v}$ for some $\ell >0$ satisfies
    \begin{equation}\label{eq:lemma_bound_G}
        \EE\left[\|G(X,V)\|_\TT\right]<\infty.
    \end{equation}
    Such a function thus belongs to the set~$\Xx$ corresponding to the system~\eqref{eq:MainModel}. These estimates further lead to growth estimates of the pathwise derivatives.
    In terms of notations, we write~$\EE_{t,x,\omega}$ for the expectation conditioned on~$X_t=x,\,\Theta^t=\omega$. In other words, it corresponds to the expectation with~$X^{t,x,\omega}$ and~$V^{t,\omega}$, as in the previous lemma. 
    The following proposition, proved in Section~\ref{sec:Proof_SpatialDerivatives}, gives an explicit representation of the derivatives of~$u$.
\begin{proposition}\label{prop:SpatialDerivatives}
Let Assumption~\ref{assu:phipsi} hold.
 {The function~$(x,\omega)\mapsto u(t,x,\omega)$ is twice continuously differentiable (in the sense of~\eqref{eq:PathwiseDerDef})} and, for all~$(t,x,\omega)\in\overline{\Gamma}$, $\eta$, $\eta^{(1)},\eta^{(2)}\in\Ww_t$,
\begin{align*}
    \partial_x u(t,x,\omega)&=\EE_{t,x,\omega}[\phi'(X_T)], \qquad
    \partial_{xx} u(t,x,\omega)=\EE_{t,x,\omega}[\phi''(X_T)], \\
    \langle \partial_{\omega} u(t,x,\omega), \eta\rangle &= \EE_{t,x,\omega}\left[\phi'(X_T) \left\{\int_t^T \psi'(s,V_s) \eta_s \,\D B_s +\zeta \int_t^T (\psi^2)'(s,V_s) \eta_s \,\D s \right\}\right], \\
    \langle \partial_\omega (\partial_x u)(t,x,\omega), \eta \rangle 
    &= \EE_{t,x,\omega}\left[\phi''(X_T)  \left\{\int_t^T \psi'(s,V_s) \eta_s \,\D B_s +\zeta \int_t^T (\psi^2)'(s,V_s) \eta_s \,\D s \right\}\right], \nonumber\\
    \langle \partial_{\omega\omega} u(t,x,\omega), (\eta^{(1)},\eta^{(2)})\rangle & = \EE_{t,x,\omega}\bigg[\phi''(X_T) \left\{\int_t^T \psi'(s,V_s) \eta^{(1)}_s \,\D B_s +\zeta \int_t^T (\psi^2)'(s,V_s) \eta^{(1)}_s \,\D s \right\} \nonumber\\
    & \qquad \qquad \cdot \left\{\int_t^T \psi'(s,V_s) \eta^{(2)}_s \,\D B_s +\zeta \int_t^T (\psi^2)'(s,V_s) \eta^{(2)}_s \,\D s \right\} \nonumber\\
    & + \phi'(X_T) \int_t^T \psi''(s,V_s) \eta^{(1)}_s \eta^{(2)}_s \,\D B_s  +\zeta \int_t^T (\psi^2)''(s,V_s) \eta^{(1)}_s \eta^{(2)}_s \,\D s  \bigg]. \nonumber
\end{align*}
\end{proposition}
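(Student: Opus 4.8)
The plan is to establish the formulas by differentiating under the expectation in the representation $u(t,x,\omega)=\EE[\phi(X_T^{t,x,\omega})]$, where $X_T^{t,x,\omega}$ is defined in~\eqref{eq:Xtxomega} and $V^{t,\omega}_s=\omega_s+I^t_s$. First I would treat the $x$-derivatives: since $x$ enters $X_T^{t,x,\omega}$ only as an additive constant, we have $\partial_x X_T^{t,x,\omega}=1$ and, by a dominated-convergence argument justifying the interchange of $\partial_x$ and $\EE$, we get $\partial_x u=\EE_{t,x,\omega}[\phi'(X_T)]$ and $\partial_{xx}u=\EE_{t,x,\omega}[\phi''(X_T)]$. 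The domination comes from Assumption~\ref{assu:phipsi}(i) (polynomial growth of $\phi,\phi',\phi''$) together with the moment bounds $\EE[\|X^{t,x,\omega}\|_\TT^p]<\infty$ from Lemma~\ref{lemma:Bounds_XV}, which control the difference quotients uniformly.

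Next I would handle the path (Fréchet) derivative in the direction $\eta\in\Ww_t$. The key observation is that $V^{t,\omega+\ep\eta\one_{[t,T]}}_s=V^{t,\omega}_s+\ep\eta_s$ for $s\in[t,T]$, so the perturbed terminal value is
\begin{align*}
X_T^{t,x,\omega+\ep\eta}=x+\int_t^T\psi(s,V_s+\ep\eta_s)\,\D B_s+\zeta\int_t^T\psi(s,V_s+\ep\eta_s)^2\,\ds.
\end{align*}
A first-order Taylor expansion in $\ep$ of $\psi(s,V_s+\ep\eta_s)$ and $\psi(s,V_s+\ep\eta_s)^2$ shows that the directional derivative of the inner bracket is $\int_t^T\psi'(s,V_s)\eta_s\,\D B_s+\zeta\int_t^T(\psi^2)'(s,V_s)\eta_s\,\ds$; composing with $\phi$ via the chain rule and interchanging limit and expectation yields the stated formula for $\langle\partial_\omega u,\eta\rangle$. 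The mixed derivative $\langle\partial_\omega(\partial_x u),\eta\rangle$ follows by the same computation applied to $\partial_x u=\EE_{t,x,\omega}[\phi'(X_T)]$. For the second path derivative I would expand to second order: differentiating the product of the two first-order brackets gives the $\phi''(X_T)$ term, while the second-order term in the Taylor expansion of $\psi$ (which involves $\psi''$) paired with $\phi'(X_T)$ gives the remaining term; this is just the product/chain rule applied twice under the integral sign.

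The main obstacle is the rigorous justification of each differentiation under the expectation, i.e. producing $L^p$-dominating functions for the difference quotients and their increments. The strategy is: (a) use the stochastic-integral structure and the BDG inequality to reduce $\EE[\,|\,\cdot\,|^p]$ bounds of the brackets $\int_t^T\psi'(s,V_s)\eta_s\D B_s$ etc.\ to moments of $\psi'(s,V_s)$, $\psi''(s,V_s)$, which are controlled by the exponential-moment bounds $\sup_s\EE[\E^{pV_s^{t,\omega}}]\lesssim\E^{p\omega_t}$ and $\EE[\E^{p\|V\|_\TT}]<\infty$ from Lemma~\ref{lemma:Bounds_XV}, using Assumption~\ref{assu:phipsi}(ii); (b) bound $|\phi^{(k)}(X_T^{t,x,\omega+\ep\eta})|$ by $1+|X_T^{t,x,\omega+\ep\eta}|^{\kphi}$ and control this moment uniformly in $\ep\in(0,1]$ via the last inequality in~\eqref{eq:Bounds_XV} applied with $\omega$ replaced by $\omega+\ep\eta$, noting $\|\omega+\ep\eta\|_\TT\le\|\omega\|_\TT+\|\eta\|_\TT$; (c) combine via Hölder. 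The same estimates, now read as inequalities rather than limits, simultaneously verify that all the derivatives have $G$-growth with $G(x,v)=C(1+|x|^{\kphi}+\E^{\ell v})$ for a suitable $\ell$ depending on $\kpsi$, hence $(x,\omega)\mapsto u(t,x,\omega)\in C^{2,2}_+$; continuity of the derivatives in $(x,\omega)$ follows from continuity of $\psi,\psi',\psi'',\phi',\phi''$ and a further dominated-convergence argument with the same dominating functions.
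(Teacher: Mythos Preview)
Your proposal is correct and follows essentially the same approach as the paper: differentiate under the expectation using the additive structures $X^{x+\ep}_T=X^x_T+\ep$ and $V^{t,\omega+\ep\eta}_s=V^{t,\omega}_s+\ep\eta_s$, Taylor-expand, and justify the interchange via BDG/H\"older together with the moment bounds of Lemma~\ref{lemma:Bounds_XV} and the growth conditions of Assumption~\ref{assu:phipsi}. The paper implements the Taylor step explicitly via the integral remainder (writing e.g.\ $\widetilde\phi(\ep)=\int_0^1\phi'(X^\omega_T+\lambda\Delta_{\ep\eta}X^\omega_T)\,\D\lambda$ and $\widetilde\psi_s(\ep)$), which is exactly the concrete realisation of your dominated-convergence plan.
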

The passage to singular kernels/directions requires some Malliavin calculus, which we briefly recall.
Adopting notations and definitions from~\cite[Section~1.2]{Nualart06}, we denote by~$\bm{\Df}$ the  Malliavin derivative operator with respect to~$\bm{W}$ and by~$\DD^{1,2}$ its domain of application in~$L^2(\Omega)$. For~$F\in\DD^{1,2}$, $\bm{\Df} F= (\Df^W F, \Df^{\overline{W}}F)^\top=:(\Df F, \overline{\Df}F)^\top$.
The (conditional) Malliavin integration by parts formula plays a crucial role
\begin{equation}
    \EE\left[ F \int_t^T \langle h_s,\D \bm{W}_s\rangle \Big| \Ff_t\right] = \EE\left[\int_t^T \langle \bm{\Df}_s F, h_s \rangle \ds \Big| \Ff_t\right],
\end{equation}
for any~$F\in\DD^{1,2}$ and { any $\Ff$-adapted}~$h\in L^2(\Omega\times\TT, \RR^2)$, and with~$\langle\cdot,\cdot\rangle$ the inner product in~$\RR^2$ (not to be confused with the pathwise derivative).
In particular, we use extensively the following application with~$h$ a real-valued process and~$\bm{\rho}=(\rho,\overline{\rho})$
\begin{align}\label{eq:IBP_B}
    \EE\left[ F\int_t^T h_s\,\D B_s \Big|\Ff_t\right] 
    = \EE \left[\int_t^T \langle \bm{\Df}_s F,\bm{\rho} \rangle h_s \ds \Big|\Ff_t\right].
\end{align}

We now state the pathwise derivative with respect to the singular kernel, 
with proof postponed to Section~\ref{sec:Proof_SingularDerivative}.
This representation resorts to the integration by parts~\eqref{eq:IBP_B}
since the stochastic integral~$\int_t^T \psi''(s,V_s)K(s,t)^2\D B_s$ is ill-defined in general.
%%%%%%%%%%%%%%%%%%%%%%%%%%%%%%%%%%%%%%%%%%%%%%%%%%%
\begin{proposition}\label{prop:SingularDerivative}
Under Assumption~\ref{assu:phipsi}, for all~$t\in\TT$, the conclusions of Proposition~\ref{prop:SpatialDerivatives} still hold when the directions~$\eta,\eta^{(1)},\eta^{(2)}\in\Ww_t$ are replaced by the singular kernel~$K^t:=K(\cdot,t)$, where~$K$ satisfies Assumption~\ref{assu:phipsi}(iii), with the modification
\begin{align} \label{eq:SingularDerivative}
  & \qquad\langle \partial_{\omega\omega} u(t,x,\omega), (K^t,K^t)\rangle\\
  & = \EE_{t,x,\omega}\left[\phi''(X_T) \left\{\int_t^T \psi'(s,V_s) K(s,t) \,\D B_s +\zeta \int_t^T (\psi^2)'(s,V_s) K(s,t) \,\D s \right\}^2 \right] \\ \nonumber
    & \quad + \EE_{t,x,\omega}\left[ \int_t^T \langle\bm{\Df}_s \phi'(X_T),\bm{\rho}\rangle \psi''(s,V_s) K(s,t)^2 \ds +\zeta \int_t^T (\psi^2)''(s,V_s) K(s,t)^2 \,\D s  \right]. 
\end{align}
\end{proposition}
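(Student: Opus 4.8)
The plan is to regularise the singular direction $K^t=K(\cdot,t)$, invoke Proposition~\ref{prop:SpatialDerivatives} for the regularised directions, and pass to the limit, the one genuine subtlety being that in the second-order formula the Malliavin integration by parts~\eqref{eq:IBP_B} must be carried out \emph{before} the limit is taken. Fix $t\in\TT$ and $(t,x,\omega)\in\overline{\Lambda}$, and for $\delta>0$ let $K^{t,\delta}\in\Ww_t$ be the direction obtained by freezing $K^t$ on $[t,t+\delta]$ at the value $K(t+\delta,t)$ and leaving it unchanged on $[t+\delta,T]$. Then $K^{t,\delta}$ is bounded and continuous, $\abs{K^{t,\delta}(s)}\le\abs{K^t(s)}$ for every $s\in[t,T]$ since $H<\half$, and $\norm{K^{t,\delta}-K^t}_{L^2([t,T])}^2\lesssim\delta^{2H}\to0$ as $\delta\downarrow0$ (this uses $H>0$, which also makes $K^t\in L^2([t,T])$). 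These directions realise the limit defining the pathwise derivatives in the direction $K^t$ as in~\eqref{eq:SpatialDerivatives}, so it suffices to apply Proposition~\ref{prop:SpatialDerivatives} with $\eta=\eta^{(1)}=\eta^{(2)}=K^{t,\delta}$ and let $\delta\downarrow0$ in the resulting identities.

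For the first-order and mixed derivatives, and for the ``regular'' parts of the second-order one, the limit is direct. By the It\^o isometry, $\int_t^T\psi'(s,V_s)K^{t,\delta}(s)\,\D B_s$ converges in $L^2(\Omega)$ to $\int_t^T\psi'(s,V_s)K^t(s)\,\D B_s$, using $\norm{K^{t,\delta}-K^t}_{L^2}\to0$ together with $\sup_{s\in[t,T]}\EE_{t,x,\omega}[\abs{\psi'(s,V_s)}^p]<\infty$ for all $p\ge1$, which follows from Assumption~\ref{assu:phipsi}(ii) and the estimate $\sup_s\EE_{t,x,\omega}[\E^{pV_s}]\lesssim\E^{p\omega_t}$ of Lemma~\ref{lemma:Bounds_XV}; the drift $\int_t^T(\psi^2)'(s,V_s)K^{t,\delta}(s)\,\D s$ converges in every $L^p(\Omega)$ by dominated convergence. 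Multiplying by $\phi'(X_T)$ or $\phi''(X_T)$, which lie in all $L^q(\Omega)$ by Assumption~\ref{assu:phipsi}(i) and~\eqref{eq:Bounds_XV}, and using H\"older together with uniform integrability, one exchanges limit and $\EE_{t,x,\omega}$ and obtains the claimed formulas for $\langle\partial_\omega u(t,x,\omega),K^t\rangle$ and $\langle\partial_\omega(\partial_x u)(t,x,\omega),K^t\rangle$; the same argument applied to the square of the bracket gives the first summand of~\eqref{eq:SingularDerivative}, and the Lebesgue term $\int_t^T(\psi^2)''(s,V_s)K^{t,\delta}(s)^2\,\D s$ (with whichever bounded-moment prefactor Proposition~\ref{prop:SpatialDerivatives} attaches to it) converges by dominated convergence, since $K^{t,\delta}(\cdot)^2\le(\cdot-t)^{2H-1}\in L^1([t,T])$ for $H>0$.

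The delicate term is $\EE_{t,x,\omega}[\phi'(X_T)\int_t^T\psi''(s,V_s)K^{t,\delta}(s)^2\,\D B_s]$: the naive limit fails because $K^t(\cdot)^2=(\cdot-t)^{2H-1}\notin L^2([t,T])$ as soon as $H\le1/4$, so the stochastic integral has no limit. Instead, for each fixed $\delta>0$ --- where $K^{t,\delta}$ is bounded, hence $\psi''(\cdot,V_\cdot)K^{t,\delta}(\cdot)^2\in L^2(\Omega\times[t,T])$, and $\phi'(X_T^{t,x,\omega})\in\DD^{1,2}$ with $\bm{\Df}_s\phi'(X_T)=\phi''(X_T)\bm{\Df}_sX_T$ by the chain rule --- I would apply~\eqref{eq:IBP_B} to rewrite this term as $\EE_{t,x,\omega}[\int_t^T\langle\bm{\Df}_s\phi'(X_T),\bm{\rho}\rangle\psi''(s,V_s)K^{t,\delta}(s)^2\,\D s]$, which is now a Lebesgue integral. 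The limit $\delta\downarrow0$ then follows from dominated convergence: $K^{t,\delta}(s)^2\le(s-t)^{2H-1}\in L^1([t,T])$ for $H>0$, while $\sup_{s\in[t,T]}\EE_{t,x,\omega}[\abs{\langle\bm{\Df}_s\phi'(X_T),\bm{\rho}\rangle\psi''(s,V_s)}]<\infty$ follows from the explicit Volterra-type representation of $\bm{\Df}_sX_T$ obtained by differentiating~\eqref{eq:Xtxomega}, the moment bounds on $\phi''(X_T)$, and the exponential-moment estimates of Lemma~\ref{lemma:Bounds_XV}. Collecting all the limits reconstitutes the right-hand side of~\eqref{eq:SingularDerivative}; since every limiting expression is written intrinsically in terms of $K^t$, it does not depend on the regularising family, and therefore equals $\langle\partial_{\omega\omega}u(t,x,\omega),(K^t,K^t)\rangle$.

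The step I expect to be the main obstacle is precisely the uniform-in-$s$ $L^1$ control of $\langle\bm{\Df}_s\phi'(X_T^{t,x,\omega}),\bm{\rho}\rangle\,\psi''(s,V_s^{t,\omega})$ needed for this last dominated convergence: one has to differentiate $X_T^{t,x,\omega}$ in the Malliavin sense through the stochastic integral in~\eqref{eq:Xtxomega}, control the singular kernel $K(r,s)$ that appears inside $\bm{\Df}_sX_T$, and carry through the exponential growth of $\psi,\psi',\psi''$ --- which is exactly what the estimates of Lemma~\ref{lemma:Bounds_XV} are designed for.
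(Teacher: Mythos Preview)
Your proposal is correct and follows essentially the same route as the paper: regularise the singular direction, apply Proposition~\ref{prop:SpatialDerivatives} with the regularised kernel, perform the Malliavin integration by parts~\eqref{eq:IBP_B} on the stochastic integral $\int_t^T\psi''(s,V_s)K^{t,\delta}(s)^2\,\D B_s$ \emph{before} letting $\delta\downarrow0$, and then pass to the limit by dominated convergence using $K^t(\cdot)^2\in L^1$ and a uniform-in-$s$ moment bound on $\langle\bm{\Df}_s\phi'(X_T),\bm{\rho}\rangle$. The only cosmetic difference is that the paper shifts the kernel, $K^{\delta,t}(s)=K(s+\delta,t)$, whereas you cap it on $[t,t+\delta]$; both regularisations are dominated pointwise by $K^t$ and converge in $L^2$, so the argument is the same, and the ``main obstacle'' you flag is exactly what the paper resolves by writing out $\bm{\Df}_s X_T$ explicitly (their~\eqref{eq:DX}) and bounding it in every $L^p$ uniformly in $s$ (their~\eqref{eq:Bound_DX}).
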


%%%%%%%%%%%%%%%%%%%%%%%%%%%%%%%%%%%%%%%%%%%
The following proposition and lemma verify that the derivatives satisfy the proper regularity conditions of Definition~\ref{def:Cplusalpha} and Assumption~\ref{assumption:regcoef+U} and are  proved in Section~\ref{sec:Proof_Cplusalpha} and in Section~\ref{app:u_reg}, respectively.
\begin{proposition}\label{prop:Cplusalpha}
Under Assumption~\ref{assu:phipsi},
$u \in \Cc^{0,2,2}_{+,\alpha}$ with~$\alpha=\half$.
\end{proposition}
\begin{lemma}\label{lemma:u_regularity}
    Under Assumption~\ref{assu:phipsi}, $u$ satisfies Assumption~\ref{assumption:regcoef+U}.
\end{lemma}

As a consequence of Propositions~\ref{prop:ClassicalSol} and~\ref{prop:UniqueSol}, this yields the well-posedness of the PPDE, the main result of this section.
\begin{theorem}\label{th:pricingPDE}
Let Assumption~\ref{assu:phipsi} hold, then~$u\in \Cc^{1,2,2}_{+,\alpha}$, with~$\alpha=\half$, and is the unique classical solution of the PPDE
\begin{equation}\label{eq:PPDE_rvol}
    \partial_t u + \zeta\psi(t,\omega_t)^2\partial_x u + \half\psi(t,\omega_t)^2 \partial_{xx} u + \half \langle \partial_{\omega\omega} u, (K^t,K^t)\rangle + \rho\psi(t,\omega_t)\langle \partial_{\omega} (\partial_x u), K^t\rangle =0,
\end{equation}
for all~$(t,x,\omega)\in\overline{\Gamma}$ and with boundary condition~$u(T,x,\omega)=\phi(x)$. 
\end{theorem}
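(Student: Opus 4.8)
The plan is to obtain Theorem~\ref{th:pricingPDE} as a direct consequence of Propositions~\ref{prop:ClassicalSol} and~\ref{prop:UniqueSol}, once all of their hypotheses have been verified for the rough volatility specification~\eqref{eq:MainModel}, and then to identify the abstract PPDE~\eqref{Eq:PPDE_u} explicitly in this case. Throughout, $\bm X=(X,V)^\top$ is viewed as the solution of~\eqref{eq:MainSVE} with the coefficients $b,\sigma$ listed after~\eqref{eq:MainModel}; the relevant BSDE data are $f\equiv 0$ and the terminal functional $\Phi(\bm\omega)=\phi(\omega^1_T)$, so that, by Remark~\ref{rem:OptionRPZ}, the function $u(t,x,\omega)=\EE[\phi(X_T)\mid X_t=x,\Theta^t=\omega]$ is $U(t,\cdot)$ evaluated at the path that is constantly equal to $x$ in the first coordinate and equal to $\omega$ in the second.

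\textbf{Step 1 (standing assumptions).} For Assumption~\ref{assu:wellposedness}: $V$ is a Gaussian Volterra process built directly from $W$, $X$ is then an explicit It\^o integral of the adapted process $\psi(\cdot,V_\cdot)$, and Lemma~\ref{lemma:Bounds_XV} provides $\EE[\E^{p\norm{V}_\TT}]<\infty$ and $\EE[\norm{X}_\TT^p]<\infty$ for all $p\ge1$. For Assumption~\ref{assu:RegTimeDerivative}: among the entries of $b$ and $\sigma$ only $K(t,r)=(t-r)^{H-\half}$ depends on the first time argument, and $\partial_tK(t,r)=(H-\half)(t-r)^{H-\frac32}$ gives precisely the two required singular bounds, while the $\psi$-entries are bounded in $t\in\TT$ and hence dominated by $G(x)(t-s)^{H-\half}$ since $(t-s)^{H-\half}\ge T^{H-\half}$ on $\TT$. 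For Assumption~\ref{assu:BSDE}: as $f\equiv0$ it reduces to $\EE[\phi(X^{t,x,\omega}_T)^2]<\infty$, which follows from the polynomial growth of $\phi$ in Assumption~\ref{assu:phipsi}(i) together with the last estimate in~\eqref{eq:Bounds_XV}. Finally, strong uniqueness of~\eqref{eq:XsxEq} is immediate here: the $V$-component carries no feedback and is an explicit Volterra integral against $W$, after which $X$ is defined by an explicit stochastic integral.

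\textbf{Step 2 (regularity of $u$ and the abstract theorems).} Propositions~\ref{prop:SpatialDerivatives},~\ref{prop:SingularDerivative} and~\ref{prop:Cplusalpha}, all proved under Assumption~\ref{assu:phipsi}, show that $(x,\omega)\mapsto u(t,x,\omega)$ is twice differentiable in $x$ and twice Fr\'echet differentiable in $\omega$ (including in the singular direction $K^t$), with derivatives satisfying the $G$-growth and the $\alpha$-regularity requirements of Definition~\ref{def:Cplusalpha} with $\alpha=\half$; in particular $u\in C^{2,2}_{+,\alpha}$ and $\alpha=\half>\half-H$ since $H>0$. This is exactly the input needed by Proposition~\ref{prop:ClassicalSol}, which therefore yields the existence of $\partial_t u$ (hence $u\in C^{1,2,2}_{+,\alpha}$) and the fact that $u$ is a classical solution of~\eqref{Eq:PPDE_u} with terminal data $\Phi$; Proposition~\ref{prop:UniqueSol} then gives uniqueness among such solutions.

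\textbf{Step 3 (identification of the PPDE and main obstacle).} It remains to specialise~\eqref{Eq:PPDE_u} to the present coefficients. Since $b$ and the first row of $\sigma$ do not depend on the first time argument, the $X$-component of the concatenated path $\bm X\otimes_t\bm\Theta^t$ is frozen at $X_t$ on $[t,T]$ (cf. Remark~\ref{rem:Dupire}), so Fr\'echet derivatives in that coordinate reduce to the ordinary derivatives $\partial_x,\partial_{xx}$. Expanding $\langle\partial_{\bm\omega\bm\omega}u,(\sigma^{t,\bm\omega},\sigma^{t,\bm\omega})\rangle$ and $\langle\partial_{\bm\omega}u,b^{t,\bm\omega}\rangle$ via Remark~\ref{rem:path_der_multidim}, with $\sigma^{t,\bm\omega}(s)$ the matrix of rows $(\overline{\rho}\,\psi_t(\omega_t),\rho\psi_t(\omega_t))$ and $(0,K(s,t))$ and $b^{t,\bm\omega}(s)=(\zeta\psi_t(\omega_t)^2,0)^\top$, and using $\overline{\rho}^2+\rho^2=1$, the abstract PPDE collapses into~\eqref{eq:PPDE_rvol} (the $f$-term vanishing), with boundary condition $u(T,x,\omega)=\Phi((x,\omega))=\phi(x)$. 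The genuinely substantial work — twice Fr\'echet differentiability in the \emph{singular} direction $K^t$ and the $C^{2,2}_{+,\alpha}$ estimates of Definition~\ref{def:Cplusalpha}, which rest on Malliavin calculus and careful control of singular integrals — is confined to Propositions~\ref{prop:SpatialDerivatives}--\ref{prop:Cplusalpha}; within the proof of Theorem~\ref{th:pricingPDE} itself the only delicate points are the matching of the Riemann--Liouville kernel to the singular time-derivative bound of Assumption~\ref{assu:RegTimeDerivative} and the (routine but bookkeeping-heavy) reduction of the two-dimensional abstract PPDE~\eqref{Eq:PPDE_u} to the scalar-plus-path equation~\eqref{eq:PPDE_rvol}.
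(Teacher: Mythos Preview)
Your proposal is correct and follows essentially the same route as the paper: verify Assumptions~\ref{assu:wellposedness}, \ref{assu:RegTimeDerivative}, \ref{assu:BSDE} and strong uniqueness for the explicit system~\eqref{eq:MainModel}, invoke Propositions~\ref{prop:SpatialDerivatives}--\ref{prop:Cplusalpha} for the $C^{2,2}_{+,\alpha}$ regularity, apply Propositions~\ref{prop:ClassicalSol} and~\ref{prop:UniqueSol}, and then unpack the abstract PPDE via Remark~\ref{rem:path_der_multidim}. The only cosmetic difference is your explicit justification that the $\psi$-entries of $\sigma$ satisfy the singular bound in Assumption~\ref{assu:RegTimeDerivative} via $(t-s)^{H-\half}\ge T^{H-\half}$, whereas the paper simply notes $|\psi(y)|\le G(y)$ with $G\in\Xx$.
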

\begin{proof}[Proof]
For a two-dimensional path~$\boldsymbol{\omega}=(\omega^{(1)},\omega^{(2)})$, recall that~$U(t,\boldsymbol{\omega})=u(t,\omega^{(1)}_t,\omega^{(2)}\lvert_{[t,T]})$ as defined in~\eqref{eq:u_from_U}.
Let us start by checking that all the necessary assumptions are satisfied, namely~\ref{assu:wellposedness} and~\ref{assu:RegTimeDerivative}, the existence and uniqueness of strong solutions for our SDEs and~\ref{assu:BSDE}.
The two-dimensional SVE~\eqref{eq:MainModel} is explicit and Lemma~\ref{lemma:Bounds_XV} provides the moment bounds hence Assumption~\ref{assu:wellposedness} holds.
Regarding the coefficients, 
$\abs{\psi(y)}\le G(y)$, $\abs{\psi'(y)}\le G(y)$, $\abs{\psi^2(y)}\le G^2(y)$ and $\abs{(\psi^2)'(y)}=\abs{2\psi(y)\psi'(y)}\le 2G^2(y)$ for some~$G$
(and consequently~$G^2$) in~$\Xx$, from Assumption~\ref{assu:phipsi}(ii) and Lemma~\ref{lemma:Bounds_XV}, while~$K$ verifies Assumption~\ref{assu:phipsi}(iii) which ensures that Assumption~\ref{assu:RegTimeDerivative} is satisfied. 

Moreover, Assumption~\ref{assu:BSDE}(i) is also verified because  of Lemma~\ref{lemma:Bounds_XV} and Assumption~\ref{assu:phipsi}, whereas Assumption~\ref{assu:BSDE}(ii) and~(iii) are trivially satisfied as~$f\equiv0$. 

The last ingredient is Proposition~\ref{prop:Cplusalpha}, which allows us to apply Propositions~\ref{prop:ClassicalSol} and~\ref{prop:UniqueSol}. 
These propositions state that~$U$ is the unique solution to the PPDE~\eqref{Eq:PPDE_u}
    $$
\partial_t U +  \half \langle\partial_{\boldsymbol{\omega}\boldsymbol{\omega}} U,(\sigma^{t,\boldsymbol{\omega}},\sigma^{t,\boldsymbol{\omega}})\rangle + \langle \partial_{\boldsymbol{\omega}} U,b^{t,\boldsymbol{\omega}}\rangle=0,
    $$
    with terminal condition~$U(T,\boldsymbol{\omega})=\phi(\omega^{(1)}_T)$. Since~$U(t,\boldsymbol{\omega})=u(t,\omega^{(1)}_t,\omega^{(2)})$, the derivative in~$\omega^{(1)}$ is a standard derivative.
    Thanks to Remark~\ref{rem:path_der_multidim} and the definitions of the coefficients~\eqref{eq:coeffs_roughvol}, the derivatives correspond to
    \begin{align*}
        \langle \partial_{\boldsymbol{\omega}} U,b^{t,\boldsymbol{\omega}}\rangle 
        &= \langle \partial_{\omega^{(1)}} U,\zeta\psi(t,\omega_t^{(2)})^2\rangle = \zeta\psi(t,\omega_t^{(2)})^2\partial_x u, \\
        \langle\partial_{\boldsymbol{\omega}\boldsymbol{\omega}} U,(\sigma^{t,\boldsymbol{\omega}},\sigma^{t,\bm{\omega}})\rangle
        &= \langle\partial_{\omega^{(1)}\omega^{(1)}} U,(\psi(t,\omega^{(2)}_t),\psi(t,\omega^{(2)}_t))\rangle
        + 2\langle\partial_{\omega^{(1)}\omega^{(2)}} U, (\rho \psi(t,\omega^{(2)}_t), K^t)\rangle \\
        & \qquad + \langle\partial_{\omega^{(2)}\omega^{(2)}} U,(K^t,K^t)\rangle \\
        &=\psi(t,\omega^{(2)}_t)^2 \partial_{xx} u 
        + 2 \rho \psi(t,\omega^{(2)}_t) \langle\partial_{\omega^{(2)}}(\partial_x u), K^t\rangle 
        + \langle\partial_{\omega^{(2)}\omega^{(2)}} u,(K^t,K^t)\rangle.
    \end{align*}
    This boils down to~\eqref{eq:PPDE_rvol}.
\end{proof}
In passing, we showed that the functional Itô formula~\eqref{eq:Ito} holds for $u_t=u(t,X_t,\Theta^t)$:
\begin{corollary}
    Let Assumption~\ref{assu:phipsi} hold, then the functional Itô formula holds
    \begin{align}\label{eq:Ito_roughvol}
        \D u_t &= \Big\{\partial_t u_t+ \zeta\psi(t,V_t)^2\partial_x u_t +  \half\psi(t,V_t)^2 \partial_{xx} u_t
        +  \rho \psi(t,V_t) \langle\partial_{\omega}(\partial_x u_t), K^t\rangle 
        + \half\langle\partial_{\omega\omega} u_t,(K^t,K^t)\rangle \Big\}\D t \nonumber \\
        & \qquad +\psi(t,V_t) \partial_x u_t \D B_t + \langle \partial_{\omega} u_t , K^t \rangle \D W_t.
    \end{align}
\end{corollary}
\begin{proof}
    Notice that~$u\in\Cc^{1,2,2}_{+,\alpha}(\Gamma)$ entails that~$U\in \Cc^{1,2}_{+,\alpha}(\Lambda)$.
    Recall that~$u_t=u(t,X_t,\Theta^t)=U(t,\bm{X}\otimes_t \bm{\Theta}^t)=U_t$. It thus suffices to apply the functional Itô formula~\eqref{eq:Ito} to~$U_t$: 
    \begin{align*}
        \D u_t=\D U_t
        &= \Big( \partial_t U_t +  \half \langle\partial_{\boldsymbol{\omega}\boldsymbol{\omega}} U_t,(\sigma^{t,\boldsymbol{\omega}},\sigma^{t,\boldsymbol{\omega}})\rangle + \langle \partial_{\boldsymbol{\omega}} U_t,b^{t,\boldsymbol{\omega}}\rangle \Big) \dt  + \langle \partial_{\boldsymbol{\omega}} U_t,\sigma^{t,\boldsymbol{\omega}}\rangle \D \bm{W}_t,
    \end{align*}
    and identifying the derivatives as in the proof of Theorem~\ref{th:pricingPDE} concludes the proof.
\end{proof}

%%%%%%%%%%%%%%%%%%%%%%%%%%%%%%%%%%%%%%%%%%%%%%%%%%%%%%%%%%%%%%%%%%%%%%%%%%%%%%%%%%%%%%%%%%%%%%%%%%%%
\section{Weak rates of convergence for rough volatility}\label{sec:weak_rates_of_conv}

\subsection{The main result}
Let $N \in \mathbb{N}$, set $\Delta := \frac{T}{N}$. and $t_i:= i \Delta$ for $i=0, \ldots, N$. 
For conciseness we consider the model~\eqref{eq:MainModel} with~$\zeta=0$ and~$\psi(s,v)=\psi(v)$.
Relaxing the latter is only cumbersome but should not alter the main results while relaxing the former generates additional terms which require a more involved analysis. In the literature, the drift has always been considered uninfluential on the rate. As a matter of fact, the rate obtained in Case~1 of our main theorem does not hold if a drift is present ($\zeta\neq0$).
We perform an Euler discretisation of~$X$ that we name~$\overline{X}$:
\begin{align}
    & \overline{X}_{t_0}=x_0,\\
    & \overline{X}_{t_{i+1}} = \overline{X}_{t_{i}} + \psi(V_{t_i})(B_{t_{i+1}}- B_{t_i}),
\end{align}
for $i=0,\ldots, N-1$, where~$V$ is simulated exactly (e.g. by Cholesky decomposition).
Then, we extend to the whole time interval the process $\overline{X}$ by interpolation
\begin{align}\label{eq:DefEuler}
    \overline{X}_t= x_0+\int_0^t \psi(V_{\kappa_s}) \D B_s,
\end{align}
{ with $\kappa_s=t_i$ for $s \in [t_i, t_{i+1})$.}
Our goal is to estimate the difference between the price and its approximation with respect to the number of grid points~$N$:
$$ 
\mathcal{E}^N := \EE[\phi(X_T)]-\EE\left[\phi\left(\overline X_T\right)\right] .
$$
From now on, $f(\Delta)\lesssim g(\Delta)$ means there exists~$C>0$ such that $f(\Delta)\le Cg(\Delta)$ where~$C$ does not depend on~$\Delta$ (or equivalently on~$N$), but may depend on other constants such as~$p,H$ and the growth constants of~$\phi$ and~$\psi$. 
Our main result characterises the weak rate of convergence of this numerical scheme as follows:
\begin{theorem} \label{th:main} Let~$H\in[0,\half)$ and~$K(t,s)=(t-s)^{H-\half}$ for $s<t$ and zero otherwise.
\begin{itemize}
\item[\emph{Case 1.}] If~$\phi$ is quadratic and $\psi\in \Cc^3_{{\rm exp}}(\RR)$ then 
$\displaystyle \abs{\mathcal{E}^N} \lesssim \Delta$.%= \Oo\left(N^{-1}\right)$.
\item[]
\item[\emph{Case 2.}] 
If $\phi\in \Cc^5_{{\rm poly}}(\RR)$ and $\psi\in \Cc^{3}_{{\rm exp}}(\RR)$ 
then 
$\displaystyle \abs{\mathcal{E}^N}\lesssim \bigstar(\Delta)$, where
\begin{align*}
    \bigstar(\Delta):
 = \Delta^{3H+\half} \one_{H<\halfs}
 + \Delta \one_{H>\halfs}
 + \Delta|\log(\Delta)|\one_{H=\halfs}.
\end{align*}
\end{itemize}
\end{theorem}
\begin{remark}
    The rate~$\bigstar(\Delta)$ was proven to be optimal in~\cite[Proposition 6.1]{friz2022weak}.
\end{remark}
\begin{remark}
    The regularity required for~$\phi$ and~$\psi$ is independent of~$H$ but depends on the number of Malliavin integration by parts required to disentangle the different terms.
\end{remark}
\begin{remark}
    The growth conditions and Lemma~\ref{lemma:Bounds_XV} imply  that there exist~$\Const>0$ (depending only on the growth constants~$\kappa_\phi,\kappa_\psi$) such that for all~$p\ge1$ and~$k\le 3,\,n\le 5$,
\begin{equation}\label{eq:Estimates_phinpsin}
\EE\left[\Big\lvert\phi^{(n)}(X_T)\Big\lvert^p\right] \le \Const^{p}, \quad \sup_{t\in\TT} \EE\left[\Big\lvert\psi_t^{(k)}(V_t)\Big\lvert^p\right] \le \Const^{p},
\quad \sup_{t\in\TT} \EE\left[\Big\lvert{ (\psi_t^{2})^{(k)}}(V_t)\Big\lvert^p\right] \le \Const^{p}.
\end{equation}
\end{remark}
%%%%%%%%%%%%%%%%%%%%%%%%%%%%%%%%%%%%%%%%%%%%%%%%%%%%%%%%%%%%%%%%%%%%%%%%%%%%%%%%%%%%%%%%%%%%%%%%
\subsection{Decomposition of the error}\label{sec:ErrorSum}
We first decompose the error 
in the spirit of~\cite[Section 3.3]{bayer2020weak}, see also \cite{talay1986discretisation}. Let $\bu_t:= u(t,\overline{X}_t, \Theta^t_{[t,T]})$ for all~$t\in\TT$,  where $u$ is defined in Theorem~\ref{th:pricingPDE} as the unique solution to the PPDE.
Then~\eqref{eq:U_is_cond_exp} yields~$\bu_t=\EE[\phi(\XTb)\lvert\Ff_t]=\EE[\phi(X_T)\lvert X_t=\overline{X}_t,\Theta^t]$, where~$X_T^{t,\overline{X}_t}:=\overline{X}_t+ \int_t^T \psi(V_s)\,\D B_s$.
\begin{proposition}\label{prop:Telescopic}
Under Assumption~\ref{assu:phipsi}, we have
\begin{align}
\mathcal{E}^{N} = \sum_{i=0}^{N-1} \frA_i, \qquad \text{where }\quad 
&
 \frA_i :=   \int_{t_i}^{t_{i+1}} \Big(\half \frB_1(t) + \rho\frB_2(t) \Big)\dt \qquad \text{for all   }i\le N-1,
\end{align}
and
\begin{align*}                                     
    \frB_1(t):=\EE\big[\left(\psi(V_t)^2 - \psi(V_{t_i})^2\right)\partial_{xx} \bu_t\big],\qquad
    \frB_2(t):=\EE\big[\big(\psi(V_{t}) - \psi(V_{t_i}) \big)  \langle \partial_\omega (\partial_x\bu_t) ,K^t\rangle \big].
\end{align*}
\end{proposition}
\begin{remark}
By virtue of Proposition~\ref{prop:SpatialDerivatives}, the following representations hold:
\begin{equation}\label{eq:der_u}
\left\{
\begin{array}{rl}    \partial_{xx}\bu_t
     & = \displaystyle \EE\left[\phi''(\XTb) |\, \Ff_t \right],\\
    \left\langle \partial_\omega (\partial_x \bu_t), K^t \right\rangle 
    &  = \displaystyle \EE\left[\phi''(\XTb)  \int_t^T \psi'(V_s) K(s,t) \,\D B_s \Big|\Ff_t\right].
\end{array}
\right.
\end{equation}
\end{remark}

\begin{proof}
Recalling that~$t_0=0$ and~$t_N=T$, we write~$\mathcal{E}^N$ as a telescopic sum:
\begin{align*}
    \EE\left[\phi(X_T) - \phi\left(\overline X_T\right)\right]
    & = \EE\left[\EE\left[\phi(X_T)| X_0=\overline{X}_0\right]\right]-\EE\left[\EE\left[\phi\left(\overline X_T\right)| X_T=\overline{X}_T\right]\right]  \\
    & = \EE\left[u\left(t_0, \overline{X}_{t_0}, \Theta^{t_0}_{\left[t_0,t_N\right]}\right)\right]
    - \EE\left[u\left(t_n, \overline{X}_{t_N}, \Theta^{t_N}_{t_N}\right)\right]\\
    & = \sum_{i=0}^{N-1} \Big\{\EE\Big[u\left(t_i, \overline{X}_{t_i}, \Theta^{t_i}_{\left[t_i,t_N\right]}\right)\Big] - \EE\Big[u\left(t_{i+1}, \overline{X}_{t_{i+1}}, \Theta^{t_{i+1}}_{\left[t_{i+1},t_N\right]}\right)\Big] \Big\}\\
    & =: \sum_{i=0}^{N-1} \frA_i.
\end{align*}
We now show that the terms $(\frA_i)_{i=0,\dots, N-1}$ have the form given in the proposition.
By virtue of the representation~\eqref{eq:DefEuler} and the regularity of~$u$ from Theorem~\ref{th:pricingPDE}, we apply the functional It\^o formula~\eqref{eq:Ito} on $[t_i, t_{i+1})$ to~$u(t,\overline{X}_t,\Theta^t)$ 
\begin{align*}
    \D \bu_t = &\bigg( \partial_t \bu_t + \psi(V_{t_i})^2\half\partial_{xx}  \bu_t + \half \langle \partial_{\omega\omega} \bu_t, (K^t,K^t)\rangle + \rho\psi(V_{t_i})\langle \partial_{\omega} (\partial_x \bu_t), K^t\rangle\bigg)\dt \\
    &+ \psi(V_{t_i}) \partial_x \bu_t \,\D B_t + \langle \partial_\omega \bu_t, K^t \rangle \,\D W_t.
\end{align*}
Combining this with the path-dependent PDE~\eqref{eq:PPDE_rvol} with~$\zeta=0$, we obtain, as claimed,
\begin{align*}
    \frA_i &= \EE\left[u\left(t_i,\overline{X}_{t_i},\Theta^{t_i}_{[t_i,T]}\right) - u\left(t_{i+1},\overline{X}_{t_{i+1}},\Theta^{t_{i+1}}_{[t_{i+1},T]}\right) \right] \\
    & = - \EE\left[ \int_{t_i}^{t_{i+1}} 
    \left\{ \partial_t \bu_t + \psi(V_{t_i})^2\half\partial_{xx}  \bu_t + \half \langle \partial_{\omega\omega} \bu_t, (K^t,K^t)\rangle + \rho\psi(V_{t_i})\langle \partial_{\omega} (\partial_x \bu_t), K^t\rangle\right\}\dt\right]\\
    & =  \EE \left[ \int_{t_i}^{t_{i+1}}  \left\{\left(\psi(V_t)^2 - \psi(V_{t_i})^2\right) \half\partial_{xx} \bu_t  + \rho \Big(\psi(V_{t}) - \psi(V_{t_i}) \Big)  \langle \partial_\omega (\partial_x\bu_t) ,K^t\rangle    \right\}\dt\right].
\end{align*}
\end{proof}

%%%%%%%%%%%%%%%%%%%%%%%%%%%%%%%%%%%%%%%%%%%%%%%%%%
\subsection{Proof of Theorem~\ref{th:main}-Case~1}\label{sec:ProofThmCase1}
We start with a technical lemma~\cite[Lemma~2.2]{friz2022weak}:
\begin{lemma}\label{lemma:Friz}
Consider the function $\varphi:[0,\infty)\to\RR$ defined by
$$
\varphi(t) := \EE\left[g\left(V_t\right)\right],
$$
where~$g:\RR\to\RR$ is a Schwartz function.
Then $\varphi$ is~$\Cc^1$ and, for any $0\leq u\leq t$, 
$$
\left|\varphi(t) - \varphi(u)\right| 
 \le \frac{1}{2} \sup_{s\in\TT} \EE\left[\abs{g''(V_s)}\right] 
 \left(t^{2H} - u^{2H}\right).
$$
\end{lemma}
In cases where the (local) weak error rate of the integrand in~$\frA_i$ is independent of~$t$, say~$f(\Delta)$, then the global rate is~$\abs{\mathcal{E}^N}\lesssim \sum_{i=0}^{N-1}\int_{t_i}^{t_{i+1}}\dt f(\Delta)=f(\Delta)$. Thanks to the following observation, we obtain rate one even when the weak local error~$t^{2H}-t_i^{2H}$ in Lemma~\ref{lemma:Friz} would yield a lower rate.
\begin{lemma}\label{lemma:trick_rate_one}
For any~$\gamma>0$,
$\displaystyle
\sum_{i=0}^{N-1} \int_{t_i}^{t_{i+1}} (t^\gamma -t_i^\gamma)\dt \leq T^{1+\gamma} \Delta$.
\end{lemma}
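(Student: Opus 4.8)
The plan is to prove the inequality directly by bounding each summand and then summing, using only elementary estimates on the monotone function $t \mapsto t^\gamma$.

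First I would observe that on the interval $[t_i, t_{i+1}]$ the integrand satisfies $0 \le t^\gamma - t_i^\gamma \le t_{i+1}^\gamma - t_i^\gamma$, since $t \mapsto t^\gamma$ is nondecreasing on $[0,\infty)$ for $\gamma > 0$. Hence each integral is bounded by $\int_{t_i}^{t_{i+1}} (t_{i+1}^\gamma - t_i^\gamma)\,\dt = \Dt\,(t_{i+1}^\gamma - t_i^\gamma)$, because the integrand is now constant in $t$ and the interval has length $\Dt$.

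Next I would sum over $i$ from $0$ to $N-1$ and recognise a telescoping sum:
\begin{align*}
\sum_{i=0}^{N-1} \int_{t_i}^{t_{i+1}} (t^\gamma - t_i^\gamma)\,\dt
\;\le\; \Dt \sum_{i=0}^{N-1} \bigl(t_{i+1}^\gamma - t_i^\gamma\bigr)
\;=\; \Dt\,\bigl(t_N^\gamma - t_0^\gamma\bigr)
\;=\; \Dt\, T^\gamma,
\end{align*}
using $t_0 = 0$ and $t_N = T$. This gives exactly the claimed bound $T^{1+\gamma}\Dt$ once one absorbs the extra factor $T^\gamma$, and in fact yields the slightly sharper $T^\gamma \Dt$; I would state the bound as in the lemma since that is all that is needed downstream.

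There is no real obstacle here: the only point requiring a word of care is the monotonicity step, which is immediate for $\gamma > 0$, and the telescoping, which is purely formal. The lemma is elementary; its significance, as the surrounding text emphasises, lies in the fact that a local error of size $t^\gamma - t_i^\gamma$ — which over a single step behaves like $\Dt^{\min(\gamma,1)}$ and would naively suggest a global rate of $\min(\gamma,1)$ — instead telescopes to a global error of order $\Dt$, i.e.\ rate one, for \emph{any} $\gamma > 0$. This is precisely the mechanism invoked in Case~1 of Theorem~\ref{th:main} and again in Case~2 under hypothesis~\eqref{eq:EgCondCase2}(i).
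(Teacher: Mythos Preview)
Your argument is correct and in fact more efficient than the paper's. You bound the integrand on each subinterval by its maximum value $t_{i+1}^\gamma - t_i^\gamma$, integrate trivially, and telescope to get $\Delta_t\, T^\gamma$. The paper instead computes each integral exactly as $\frac{t_{i+1}^{\gamma+1}-t_i^{\gamma+1}}{\gamma+1} - t_i^\gamma \Delta_t$, telescopes the first piece, compares the sum $\sum_i t_i^\gamma$ to $\int_0^{N-1} x^\gamma\,dx$, and finishes with the mean value theorem applied to $N^{\gamma+1}-(N-1)^{\gamma+1}$; after all this it also lands on $\Delta_t^{\gamma+1} N^\gamma = \Delta_t\, T^\gamma$ (the paper's final equality $=\Delta_t\, T^{1+\gamma}$ is a slip). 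Your route replaces an exact computation plus two analytic estimates by a single monotonicity bound and a telescope, at no cost in sharpness.

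One small wording issue: your sentence ``This gives exactly the claimed bound $T^{1+\gamma}\Delta_t$ once one absorbs the extra factor $T^\gamma$'' is muddled. What you have proved is $\sum_i \int_{t_i}^{t_{i+1}}(t^\gamma - t_i^\gamma)\,\dt \le T^\gamma \Delta_t$, full stop. This is sharper than the stated bound $T^{1+\gamma}\Delta_t$ when $T\ge 1$ and in fact the stated bound can fail for $T<1$ (try $N=1$, $\gamma=1$, $T=1/4$). Since only the order in $\Delta_t$ matters downstream, just state your bound as $T^\gamma \Delta_t$ and move on.
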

\begin{proof}
The following computations are straightforward:
\begin{align*}
\sum_{i=0}^{N-1}\int_{t_i}^{t_{i+1}} (t^{\gamma}-t_i^{\gamma}) \D t 
    &= \sum_{i=0}^{N-1} \left\{  \frac{t_{i+1}^{\gamma+1}-t_i^{\gamma+1}}{\gamma+1} - t_i^{\gamma}(t_{i+1}-t_i) \right\}\\
    & =   \sum_{i=0}^{N-1} \left\{  \frac{((i+1)\Delta)^{\gamma+1}-(i \Delta)^{\gamma+1}}{\gamma+1} - (i \Delta)^{\gamma}\Delta \right\}\\
    & =  \Delta^{\gamma+1} \left\{  \frac{1}{\gamma+1}\sum_{i=0}^{N-1} \left[(i+1)^{\gamma+1}-i ^{\gamma+1}\right] - \sum_{i=0}^{N-1} i^{\gamma} \right\}\\
    & \le  \Delta^{\gamma+1} \left\{  \frac{N^{\gamma+1}}{\gamma+1}  - \int_{0}^{N-1} x^{\gamma} \D x \right\}
     = \frac{\Delta^{\gamma+1}}{\gamma+1} \left( N^{\gamma+1} - (N-1)^{\gamma+1}  \right)\\
 &  \leq  \frac{\Delta^{\gamma+1}}{\gamma+1} \max_{x \in [N-1,N]} x^{\gamma}(\gamma+1) \left( N-(N-1)\right)
    =   \Delta^{\gamma+1} N^{\gamma}
    =   \Delta T^{\gamma+1},
\end{align*}
where the last line follows from the mean value theorem and the monotonicity of  $x\mapsto x^{\gamma}$.
\end{proof}
Case~1 of Theorem~\ref{th:main} (with quadratic payoff) follows immediately by showing that, for each~$i\le N$, $\abs{\frA_i}\lesssim \int_{t_i}^{t_{i+1}} \big(t^{2H}-t_i^{2H}\big)\dt$ and invoking Lemma~\ref{lemma:trick_rate_one}.

We start with applying the Clark-Ocone formula to~$\psi(V_t)-\psi(V_{t_i})$, which is possible since~$\psi$ is differentiable and~$V_t\in\DD^{1,2}$: 
\begin{align*}
    \psi(V_t)-\psi(V_{t_i})
    &=\EE[\psi(V_t)-\psi(V_{t_i})]+\int_0^t \Df^W_s \EE[\psi(V_t)-\psi(V_{t_i})\lvert \Ff_s] \D W_s.
\end{align*}
Integration by parts then yields
\begin{align*}
    \frB_2(t)
    &= \EE[\psi(V_t)-\psi(V_{t_i})] \EE[\langle\partial_\omega(\partial_x \bu_t),K^t\rangle]\\
    &\quad + \EE\left[\int_0^t \Df^W_s \EE[\psi(V_t)-\psi(V_{t_i})\lvert \Ff_s] \Df^W_s \langle\partial_\omega(\partial_x \bu_t),K^t\rangle\ds\right].
\end{align*}
Note that for $t\ge0$, if $\phi$ is quadratic then $\langle \partial_{\omega}(\partial_x \bu_t),K^t\rangle=0$ as can be seen from~\eqref{eq:der_u}.  
Since~$\EE[\langle\partial_\omega(\partial_x \bu_t),K^t\rangle]$ is bounded we get~$ \abs{\frB_2(t)}\lesssim t^{2H}-t_i^{2H}$ 
by Lemma~\ref{lemma:Friz}. The same strategy applies to~$\frB_1$, which yields the claim.
\begin{remark}
    If a drift is present ($\zeta\neq0$), then~$\Df_s\langle \partial_\omega (\partial_x \bu_t),K^t\rangle$ may not be null for a quadratic~$\phi$ and the conclusion may not hold.
\end{remark}

%%%%%%%%%%%%%%%%%%%%%%%%%%%%%%%%%%%%%%%%%%%%%%%%%%
\subsection{Proof of Theorem~\ref{th:main}-Case~2}\label{sec:ProofThmCase2}
The main computational ingredients are decomposed in two parts, whose proofs are postponed to Sections~\ref{sec:Bone} and~\ref{sec:Btwo} respectively, relying on independent estimates developed in Section~\ref{sec:useful_results}.
\begin{proposition}\label{prop:Bone}
For any~$t\in[t_i,t_{i+1})$, 
    $\abs{\frB_1(t)} \lesssim \bigstar(\Delta) + (t^{2H}-t_i^{2H}) $.
\end{proposition}
\begin{proposition}\label{prop:Btwo}
For any~$t\in[t_i,t_{i+1})$, 
    $\abs{\frB_2(t)} \lesssim \bigstar(\Delta) + (t^{2H}-t_i^{2H})$.
\end{proposition}
This is sufficient to finish the proof of the main result invoking Lemma~\ref{lemma:trick_rate_one}:
$$
\Ee^N
= \sum_{i=0}^{N-1} \frA_i
= \sum_{i=0}^{N-1} \int_{t_i}^{t_{i+1}} \left(\half \frB_1(t) + \frB_2(t) \right)\dt 
\lesssim \bigstar(\Delta). 
$$

%%%%%%%%%%%%%%%%%%%%%%%%%%%%%%%%%%%%
%%%%%%%%%%%%%%%%%%%%%%%%%%%%%%%%%%%%
%%%%%%%%%%%%%%%%%%%%%%%%%%%%%%%%%%
%%%%%%%%%%%%%%%%%%%%%%%%%%%%%%%%%%
\section{Useful results}\label{sec:useful_results}
To prove Propositions~\ref{prop:Bone} and~\ref{prop:Btwo} in Sections~\ref{sec:Bone} and~\ref{sec:Btwo}, we require a few estimates of independent interests, 
which we gather in this section.
Let us introduce the following useful notations:
\begin{itemize}
    \item We write $A\lesssim B$ whenever there is a constant $c>0$ such that $A\lesssim c B$  and $c$  only depends on~$H$ and $L^p$ bounds of $\phi,\,\psi$ and their derivatives (finite thanks to Assumption~\ref{assu:phipsi}).
    \item { We consider a time-grid of $(N+1)$ equally spaced points~$(t_i:=\frac{iT}{N})_{i=0}^{N}$ and we study the convergence, as $\Delta=\frac{T}{N}$ goes to zero. Throughout this section we fix~$t \in [t_i, t_{i+1}]$.
    \item For all $r\in\TT$, we recall the notation $\kappa_{r}:=\max\{t_j,\, t_j\le r\}$ and introduce~$\kappa^t_q=\kappa_q \one_{q\le t} + q\one_{q>t}$.
    \item Furthermore, for all~$r\in\TT$, 
    define~$\tau(r)$ as the unique integer such that~$t_{\tau(r)-1}=\kappa_r$ (equivalently $t_{\tau(r)}=\kappa_r+\Delta$).
    In particular, 
    $t_{\tau(r)-1} =\kappa_r \leq r < \kappa_r +\Delta =t_{\tau(r)}$.  
    \item Finally, let us note $\Uu_r:=\Df^W_r (\partial_x^2\bar{u}_t)$ and $\varphi_r(t):=\EE_r[(\psi^2)'(V_t)]$.}
\end{itemize}
Instead of the crude Cauchy-Schwarz inequality, we rely extensively on the Malliavin integration by parts
$\EE[Y \int Z_{s}\D W_s] = \EE[\int \Df^W_s Y Z_{s} \ds]$: 
this is necessary to achieve the optimal rate but induces a higher number of terms to estimate. 
Combining the Cauchy-Schwarz inequality, the~$L^p$ bounds of~$\phi,\psi$ (and their derivatives) and Lemmas~\ref{lemma:BoundDiscreteIntegral} and~\ref{lemma:Delta_varphi}, the quantities of interest can all nevertheless be reduced  to deterministic integrals. 
Then, we leverage Lemmas~\ref{lemma:bound_deltaK} to~\ref{lemma:doublesum}  and analogous computational tricks for fractional integrals to obtain the desired bounds. 
In particular, when direct computation is not feasible, we  regularly decompose the integrals on the partition $(t_j)_{j=0}^i$ and then replace the running variables by the closest point on that grid, e.g. for $s\in[t_j,t_{j+1}]$ and $k\le j-1$, we have $(s-t_k)^{\alpha}\le (t_j-t_k)^\alpha=(j-k)^\alpha \Delta^\alpha$. 
Finally, the last step consists in estimating the sums.

%%%%%%%%%%%%%%%%%%%%%%%%%%%%%%%%%%%%%%%%%%%%%%%%%%%%%%%%%%%

The following results are used on several occasions. 
First of them is the following trivial inequality,  stated without proof, which will appear in many forms repeatedly: for any continuous function~$g$ decreasing on some interval $[a,b]$,
with $a,b\in\NN$, then
\begin{equation}\label{eq:BoundIneqSum}
\sum_{u=a}^{b}g(u) \lesssim 
\sum_{\ell=a}^{b}\int_{\ell-1}^{\ell}g(u)\du
= \int_{a-1}^{b}g(u)\du.
\end{equation}

\subsection{Stochastic estimates}
\begin{lemma}\label{lemma:BoundDiscreteIntegral}
    Let $Y$ be an $L^2(\Omega)$-random variable  and $Z$ be an adapted process such that~${ \EE[\sup_{s\in\TT} \abs{Z(s)}^2]}$ is finite.
    Then, for all~$0\le r\le t\le t_{i+1}$:
    \begin{equation}\label{eq:BoundDiscreteIntegral}
        \EE\left[\abs{Y} \int_r^t \abs{Z(s)} K(\kappa_s,r)\D B_s\right]
        \lesssim \Big(1+K(\kappa_r+\Delta,r)\sqrt{\Delta}  \Big) \one_{r<t_i}.
    \end{equation}
\end{lemma}
\begin{proof}
By Cauchy-Schwarz inequality and It\^o's isometry,
\begin{align*}
\EE\left[\abs{Y} \int_r^t \abs{Z(s)} K(\kappa_s,r)\D B_s\right]^2
\le \EE\left[Y^2\right] \EE\left[\sup_{s\in\TT} \abs{Z(s)}^2\right] \int_r^t K(\kappa_s,r)^2\ds.
\end{align*}
Notice that~$K(\kappa_r,r)=0$ so that, for all~$r\in[0,t)$, it holds~$\int_r^{\kappa_r+\Delta} K(\kappa_s,r)^2\ds=0$. In particular,  {for~$r\in[t_i,t)$}, we have~$\int_r^tK(\kappa_s,r)^2\ds=0$.
For~$r<t_i$, we write
    \begin{align*}
        \left| \int_r^t K(\kappa_s,r)^2\ds\right|^\half 
        & = \left|\sum_{j=\tau(r)}^{i-1} K(t_j,r)^2\Delta + K(t_i,r)^2(t-t_i)\right|^{\half}\\
        & \le K(t_{\tau(r)},r)\sqrt{\Delta} + \left|\sum_{j=\tau(r)+1}^{i} K(t_j,r)^2\Delta \right|^{\half} \\
        &\le K(\kappa_r+\Delta,r)\sqrt{\Delta} + \left|\sum_{j=\tau(r)+1}^{i} \int_{t_{j-1}}^{t_j} K(t_j,r)^2\du  \right|^{\half}\\
        & = K(\kappa_r+\Delta,r)\sqrt{\Delta} + \left|\int_{t_{\tau(r)}}^{t_i} K(u,r)^2\du  \right|^{\half}\\
    &  = K(\kappa_r+\Delta,r)\sqrt{\Delta} + \frac{(t_i-t_{\tau(r)})^H}{\sqrt{2H}},
\end{align*}
and the claim follows.
\end{proof}
%%%%%%%%%%%%%%%%%%%%%%%%%%%%%%%%%%%%%%%
The following lemma is a conditional version of~\cite[Lemma~2.2]{friz2022weak}. \begin{lemma}\label{lemma:Delta_varphi}
For all $r\le t$ and $g\in \Cc^2(\RR)$, 
define
$ G_r(t):=\EE_r[g(V_t)]$. For all $r\le t_i$,
    $$
     G_r(t)-G_r(t_i)
=\int_{t_i}^t \left\{\EE_r[g'(V_t)]
\left(H-\half\right)\int_0^r (s-u)^{H-\halft} \D W_u+ \half \EE_r[g''(V_t)](s-r)^{2H-1}\right\}\ds.
$$
This leads to the estimate
$$
\abs{G_r(t)-G_r(t_i)}
\lesssim \sup_{q\in[0,t],n=1,2} \abs{\EE_r[g^{(n)}(V_q)]}  \int_{t_i}^t \left\{\abs{ \int_0^r (s-u)^{H-\halft} \D W_u}+ (s-r)^{2H-1}\right\}\ds.
$$
\end{lemma}
\begin{proof}
Assume first that~$g$ is a Schwartz function, in which case the proof mimics that of~\cite[Lemma~2.1]{friz2022weak} but in a conditional fashion.
Indeed, in this case~$g$ admits a Fourier transform, which reads~$
\widehat{g}(\xi) := \frac{1}{\sqrt{2\pi}}
\int_{\RR}\E^{ {\rm i} \xi x}g(x)\D x$, for all $\xi\in\RR.$
Since the Fourier transform preserves the Schwartz property, then~$g$ can be uniquely recovered through the identity~$g(x) = \frac{1}{\sqrt{2\pi}}\int_{\RR}\E^{-{\rm i}\xi x}\widehat{g}(\xi)\D\xi$, for all $x\in\RR$.
Note in particular that Fubini implies
\begin{align*}
g''(x)
= \frac{1}{\sqrt{2\pi}}\partial_{xx}\left(\int_{\RR}\E^{-{\rm i}\xi x}\widehat{g}(\xi)\D\xi\right)
 & = \frac{1}{\sqrt{2\pi}}\int_{\RR}
\partial_{xx}\left(\E^{- {\rm i}\xi x}\widehat{g}(\xi)\right)\D\xi\\
 & = -\frac{1}{\sqrt{2\pi}}\int_{\RR}
\xi^2 \E^{- {\rm i}\xi x}\widehat{g}(\xi)\D\xi.
\end{align*}
%%%%%%%%%%%%%%%%%%%%%%%%%%%%%%%%%%
We have, for all $r\le t$, 
\begin{align*}
    G_r(t)=\EE_r[g(V_t)]
     & = \frac{1}{\sqrt{2\pi}}\int_{\RR}\EE_r\Big[\E^{- {\rm i}\xi V_t}\Big]\widehat{g}(\xi)\D\xi\\
     & = \frac{1}{\sqrt{2\pi}} \int_{\RR} \widehat{g}(\xi) \exp\Big\{-{\rm i}\xi\Theta^r_t  - \frac{1}{4H}\xi^2(t-r)^{2H}\Big\}\D \xi.
\end{align*}
Noting that $g'(x)=-\frac{{\rm i}}{\sqrt{2\pi}} \int_{\RR} \widehat{g}(\xi)\xi \E^{{ -}{\rm i} \xi x} \D \xi$  and $g''(x)=-\frac{1}{\sqrt{2\pi}} \int_{\RR} \widehat{g}(\xi)\xi^2 \E^{{ -}{\rm i} \xi x} \D \xi$, we obtain
\begin{align}
    G_r'(t)
    &= \frac{1}{\sqrt{2\pi}} \int_{\RR} \widehat{g}(\xi)\left(- {\rm i}\xi \partial_t \Theta^r_t - \half \xi^2 (t-r)^{2H-1}\right) \E^{{ -}{\rm i} \xi \Theta^r_t -\frac{\xi^2}{4H}(t-r)^{2H}}\D \xi \nonumber\\
    &=\frac{1}{\sqrt{2\pi}}  \int_{\RR} \widehat{g}(\xi)\left( -{\rm i}\xi \partial_t \Theta^r_t - \half \xi^2 (t-r)^{2H-1}\right)\EE_r[\E^{{ -}{\rm i}\xi V_t}]\D \xi\nonumber \\
    &=\left( \partial_t \Theta^r_t \EE_r[g'(V_t)] + \frac{(t-r)^{2H-1}}{2} \EE_r[g''(V_t)]\right). \label{eq:varphi_prime}
\end{align}
Since we have~$\partial_s \Theta^r_s =(H-\half)\int_0^r (s-u)^{H-\halft}\D W_u$, this entails
\begin{align*}
G_r(t)-G_r(t_i)
& = \int_{t_i}^t G'_r(s) \ds\\
& =\int_{t_i}^t \left\{\EE_r[g'(V_t)]\left(H-\half\right)\int_0^r (s-u)^{H-\halft} \D W_u
+ \frac{(s-r)^{2H-1}}{2}\EE_r[g''(V_t)]\right\}\ds.
\label{eq:DeltaVarphi}
\end{align*}
The result is finally extended from Schwartz function to $\mathcal{C}^2$ as in the proof of~\cite[Lemma~2.2]{friz2022weak}, simply by applying the conditional version of the dominated convergence theorem in place of the standard result.
\end{proof}

%%%%%%%%%%%%%%%%%%%%%%%%%%%%%%%%%%%%%
%%%%%%%%%%%%%%%%%%%%%%%%%%%%%%%%%%%%%

\subsection{Fractional calculus}
Recall that $t_i\le t\le t_{i+1}$. 
For $\alpha\in(-\half,2H]$, define
\begin{equation}\label{eq:diamondsuit}
\diamondsuit_\alpha(\Delta):=\Delta^{H+\half+\alpha}\one_{\alpha+H<\half}+\Delta\one_{\alpha+H>\half}
+ \Delta|\log(\Delta)|\one_{\alpha+H=\half},
\end{equation}
and in particular
\begin{equation}\label{eq:bigstar}
\bigstar(\Delta):= 
\diamondsuit_{2H}(\Delta)
 = \Delta^{3H+\half} \one_{H<\halfs}
 + \Delta \one_{H>\halfs}
 + \Delta|\log(\Delta)|\one_{H=\halfs}.
\end{equation}
%%%%%%%%%%%%%%%%%%%%%%%%%%%%%%%%%%%%%
\begin{lemma}\label{lemma:bound_deltaK} 
For all~$\alpha\le1$, $r\in[t_{j-1},t_{j})$, $j<i$,
$\displaystyle \abs{(t-r)^\alpha-(t_i-r)^\alpha} \lesssim \Delta^\alpha (i-j)^{\alpha-1}$. In particular, this entails
$\displaystyle
\abs{K(t,r)-K(t_i,r)} \lesssim \Delta^{H-\half} (i-j)^{H-\halft}$.
\end{lemma}
%%%%%%%%%%%%%%%%%%%%%%%%%%%%%%%%%%%%%
\begin{lemma}\label{lemma:bound_KDeltaK}
For $\alpha\in(-\half,2H]$,
$\displaystyle    \left|\int_{0}^{t_{i}} K(t_{i},r)\big((t_i-r)^\alpha-(t-r)^\alpha\big)\dr\right|
    \lesssim \diamondsuit_\alpha(\Delta).
$
\end{lemma}
%%%%%%%%%%%%%%%%%%%%%%%%%%%%%%%%%%%%%
\begin{lemma}\label{lemma:ti_tj}
For $j<i$, $\alpha \in (-\half,0]$,
$\displaystyle
        \int_0^{t_i} (t_i-r)^\alpha \abs{K(t_i,r)-K(t_j,r)}\dr 
        \lesssim (t_i-t_j)^{H+\half+\alpha}$.
\end{lemma}
%%%%%%%%%%%%%%%%%%%%%%%%%%%%%%%%%%%%%
\begin{lemma}\label{lemma:Bound_kappar_DeltaK}
For $t\in[t_i,t_{i+1})$, 
$\displaystyle
\int_0^{t_i} (t_i-\kappa_r)^{2H} \abs{K(t,r)-K(t_i,r)} \dr 
\lesssim \bigstar(\Delta)$.
\end{lemma}
%%%%%%%%%%%%%%%%%%%%%%%%%%%%%%%%%%%%%
The following is a mild modification of~\cite[Lemma 2.1~(2.2)]{gassiat2022weak} which features the logarithmic correction for $H=\halfs$ that the author refers to in the published version of the paper.
\begin{lemma}\label{lemma:Gassiat_log}
For $t\in[t_i,t_{i+1})$, 
$\displaystyle
\int_0^{t} (t-r)^{2H} \abs{K(t,r)-K(t_i,r)} \dr 
\lesssim \bigstar(\Delta)$.
\end{lemma}
%%%%%%%%%%%%%%%%%%%%%%%%%%%%%%%%%%%%%
\begin{lemma}\label{lemma:doublesum}
$\displaystyle
S:=\Delta^{2H}
 \sum_{j=1}^{i-2}\left( |i-j-1|^{H-\half}\sum_{\ell=0}^{j-1} |i-\ell-1|^{H-\halft}
 \int_{t_\ell}^{t_{\ell+1}} K(t_{j},r) \dr \right)\lesssim~\bigstar(\Delta)$.
\end{lemma}
%%%%%%%%%%%%%%%%%%%%%%%%%%%%%%%%%%%%%
%%%%%%%%%%%%%%%%%%%%%%%%%%%%%%%%%%%%%
\subsection{Proofs of the lemmas}
\begin{proof}[Proof of Lemma~\ref{lemma:bound_deltaK}]
Let $r\in[t_{j-1},t_{j})$ and $j \le i-1$; the lemma follows from
$$
\abs{(t-r)^\alpha-(t_i-r)^\alpha}
= \left|\alpha\right|\int_{t_i}^t (s-r)^{\alpha-1}\ds
\lesssim (t_i-t_{j})^{\alpha-1} (t-t_i)
\lesssim \Delta^{\alpha} (i-j)^{\alpha-1}.
$$
\end{proof}

%%%%%%%%%%%%%%%%%%%%%%%%%%%%%%%%%
\begin{proof}[Proof of Lemma~\ref{lemma:bound_KDeltaK}]
    First we decompose the integral
\begin{equation}\label{eq:decomposition_bound_KDeltaK}
    \int_{0}^{t_{i}} K(t_{i},r)\big[(t_i-r)^\alpha-(t-r)^\alpha\big]\dr
    = \sum_{\ell=0}^{i-1} 
 \int_{t_\ell}^{t_{\ell+1}} K(t_i,r)\big[(t_i-r)^\alpha-(t-r)^\alpha\big]\dr,
\end{equation}
for which we check that
\begin{align*}
\int_{t_{i-1}}^{t_{i}} K(t_i,r)\big((t_i-r)^\alpha-(t-r)^\alpha\big)\dr
& \le \int_{t_{i-1}}^{t_{i}} (t_i-r)^{H-\half+\alpha} \dr
& \lesssim \Delta^{H+\half+\alpha},\\
\int_{t_{i-2}}^{t_{i-1}} K(t_i,r)\big((t_i-r)^\alpha-(t-r)^\alpha\big)\dr
& \le \int_{t_{i-2}}^{t_{i-1}} (t_i-r)^{H-\half+\alpha} \dr
&\lesssim \Delta^{H+\half+\alpha}.
\end{align*}
Then we recall that for $r\in[t_{\ell},t_{\ell+1})$ and $\ell \le i-2$, Lemma~\ref{lemma:bound_deltaK} yields
$$
\abs{(t_i-r)^\alpha-(t-r)^\alpha} 
\lesssim \Delta^{\alpha} (i-\ell-1)^{\alpha-1}.
$$
Combined with~$K(t_i,r)\le K(t_i,t_{\ell+1})= \Delta^{H-\half} (i-\ell-1)^{H-\half}$, it entails 
\begin{align*}
     \sum_{\ell=0}^{i-3} 
 \int_{t_\ell}^{t_{\ell+1}} K(t_i,r)\big((t_i-r)^\alpha-(t-r)^\alpha\big)\dr 
 &\lesssim \Delta^{H+\half+\alpha} \sum_{\ell=0}^{i-3} 
  (i-\ell-1)^{H-\halft+\alpha}\lesssim \diamondsuit_\alpha(\Delta).
\end{align*}
For the last inequality, we used the fact that~$i-1\le N =\Delta^{-1}$ and that
\begin{align}\label{eq:sum_iell}
    \sum_{\ell=0}^{i-3} 
  (i-\ell-1)^{H-\halft+\alpha}
  & = \sum_{\ell=2}^{i-1} \ell^{H-\halft+\alpha} \int_{\ell-1}^{\ell}\ds\nonumber\\
  &\le  \sum_{\ell=2}^{i-1} \int_{\ell-1}^{\ell} s^{H-\halft+\alpha}\ds \nonumber\\
  &\lesssim \log(i-1)\one_{\alpha+H=\half} + (i-1)^{H-\half+\alpha}\one_{\alpha+H>\half}+\one_{\alpha+H<\half}.
\end{align}
\end{proof}

%%%%%%%%%%%%%%%%%%%%%%%%%%%%%%%%%%%%%%%%%%%%%%%%%%%%%%%%%%%%%%%%%%%%%%%%%%%%%%%%

\begin{proof}[Proof of Lemma~\ref{lemma:ti_tj}]
    We split the integral into three parts that we study separately:
\begin{align}\label{eq:int_over}
 & \int_0^{t_i} (t_i-r)^\alpha \abs{K(t_i,r)-K(t_j,r)}\dr\\
        & = \int_0^{t_{j-1}} (t_i-r)^\alpha (K(t_j,r)-K(t_i,r))\dr\\
        & \qquad + \int_{t_{j-1}}^{t_j} (t_i-r)^\alpha (K(t_j,r)-K(t_i,r))\dr + \int_{t_j}^{t_i} (t_i-r)^\alpha K(t_i,r)\dr\nonumber\\
        & =: I_{[0,j-1]} + I_{[j-1,j]} + I_{[j,i]}. \nonumber
    \end{align}
    Regarding~$I_{[j,i]}$, an explicit integration yields the upper bound
    \begin{align}\label{eq_bound_C}
        I_{[j,i]} 
        = \int_{t_j}^{t_i} (t_i-r)^{\alpha+H-\half} \dr 
        & = \frac{\Delta^{\alpha+H+\half}}{\alpha+H+\half}(i-j)^{\alpha+H+\half}\nonumber\\
        & \lesssim \Delta^{\alpha+H+\half}(i-j)^{\alpha+H+\half}.
    \end{align}
    For $I_{[j-1,j]}$, we proceed bounding separately the two components in it, obtaining
    \begin{align}\label{eq_bound_B}
        I_{[j-1,j]}
        & = \int_{t_{j-1}}^{t_j} (t_i-r)^\alpha K(t_j,r)\dr - \int_{t_{j-1}}^{t_j} (t_i-r)^{\alpha+H-\half} \dr \\ 
        & 
        \lesssim (t_i-t_j)^\alpha\int_{t_{j-1}}^{t_j}  K(t_j,r)\dr\lesssim (i-j)^\alpha \Delta^{\alpha+H+\half}
        \lesssim (i-j)^{\alpha+H+\half}\Delta^{\alpha+H+\half}\nonumber.
    \end{align}
    Without loss of generality we now assume $j\ge 2$. 
    Note that $I_{[0,j-1]}$ can be bounded above as
    \begin{equation}\label{eq_bound_A_1}
        I_{[0,j-1]}
        \le (t_i-t_{j-1})^\alpha \int_0^{t_{j-1}} (K(t_j,r)-K(t_i,r))\dr
        =: \Delta^{\alpha}(i-(j-1))^\alpha \widetilde{I}_{[0,j-1]},
    \end{equation}
    with $\widetilde{I}_{[0,j-1]}:=\int_0^{t_{j-1}} (K(t_j,r)-K(t_i,r))\dr = \sum_{\ell= 0}^{j-2}\int_{t_{\ell}}^{t_{\ell+1}} (K(t_j,r)-K(t_i,r))\dr$.
    For each term in the sum, $r \in [t_{\ell},t_{\ell+1})$, we proceed as in the proof of Lemma~\ref{lemma:bound_KDeltaK} and obtain the following upper bound (independent of~$r$):
\begin{align*}
        K(t_j,r) - K(t_i,r)
         \leq K(t_j,t_{\ell+1}) - K(t_i,t_{\ell})
        = \Delta^{H-\half}\left(
        (j -(\ell+1))^{H-\half} - (i-\ell)^{H-\half}\right),
\end{align*}
as $K(t,\cdot)$ is increasing for $H-\half<0$, so that
\begin{align*}
    \widetilde{I}_{[0,j-1]}
    & = \sum_{\ell= 0}^{j-2}\int_{t_{\ell}}^{t_{\ell+1}} (K(t_j,r)-K(t_i,r))\dr\\
    & \leq \Delta^{H-\half} \sum_{\ell= 0}^{j-2}\left((j -(\ell+1))^{H-\half} - (i-\ell)^{H-\half}\right)\int_{t_{\ell}}^{t_{\ell+1}} \dr\\
    & \leq \Delta^{H+\half} \sum_{\ell= 0}^{j-2}\left((j -(\ell+1))^{H-\half} - (i-\ell)^{H-\half}\right)\\
    & = \Delta^{H+\half}
    \left(\sum_{k=1}^{j-1}k^{H-\half} - \sum_{k=i-(j-2)}^{i} k^{H-\half}\right).
\end{align*}
If $j-1 \leq i-(j-1)$, then, ignoring the positive second term, we write, using~\eqref{eq:BoundIneqSum}
\begin{align*}
\widetilde{I}_{[0,j-1]}
\leq \Delta^{H+\half}\sum_{k= 1}^{j-1}k^{H-\half}
\leq \Delta^{H+\half}\sum_{k= 1}^{i-(j-1)}k^{H-\half}
& \lesssim \Delta^{H+\half}\int_{0}^{i-(j-1)}x^{H-\half}\D x \\
& \lesssim \Delta^{H+\half}(i-j)^{H+\half}.
\end{align*}
Now, if $j-1 > i-(j-1)$, then the two sums overlap, so that, using~\eqref{eq:BoundIneqSum}, we obtain
\begin{align*}
    \widetilde{I}_{[0,j-1]}
    & \leq \Delta^{H+\half} \left(\sum_{k= 1}^{i-(j-2)}k^{H-\half} -  \sum_{k=j-1}^{i-1} k^{H-\half}\right)\\
    & \leq \Delta^{H+\half} \sum_{k= 1}^{i-(j-2)}k^{H-\half} \\
    & \lesssim \Delta^{H+\half}\int_{0}^{i-(j-2)}x^{H-\half}\D x\lesssim \Delta^{H+\half}(i-j)^{H+\half}.
\end{align*}
Combining these two cases with~\eqref{eq_bound_A_1}, we thus obtain
    \begin{align}\label{eq_bound_A_2}
        I_{[0,j-1]}
        \lesssim \Delta^{\alpha + H+\half}(i-(j-1))^\alpha (i-j)^{H+\half}
        \lesssim \Delta^{\alpha + H+\half} (i-j)^{\alpha+H+\half}.
    \end{align}
    The lemma follows from plugging the upper bounds in~\eqref{eq_bound_C}-\eqref{eq_bound_B}-\eqref{eq_bound_A_2} into~\eqref{eq:int_over}
$$
\int_0^{t_i} (t_i-r)^\alpha \abs{K(t_i,r)-K(t_j,r)}\dr
\lesssim \Delta^{\alpha + H+\half} (i-j)^{\alpha+H+\half}.
$$
\end{proof}

%%%%%%%%%%%%%%%%%%%%%%%%%%%%%%%%%%%%%%%%%
%%%%%%%%%%%%%%%%%%%%%%%%%%%%%%%%%%%%%%

\begin{proof}[Proof of Lemma~\ref{lemma:Bound_kappar_DeltaK}]
For~$r\in[t_j,t_{j+1})$, $t_i-\kappa_r=t_i-t_j=\Delta(i-j)$. 
    By Lemma~\ref{lemma:bound_deltaK},
\begin{align*}
& \int_0^{t_i} (t_i-\kappa_r)^{2H} \abs{K(t,r)-K(t_i,r)} \dr \\
    &= \sum_{j=0}^{i-1} (i-j)^{2H}\Delta^{2H}\int_{t_j}^{t_{j+1}} \abs{K(t,r)-K(t_i,r)} \dr \\
    &\lesssim \sum_{j=0}^{i-2} (i-j)^{2H}\Delta^{3H+\half} (i-j-1)^{H-\halft} + \Delta^{2H} \int_{t_{i-1}}^{t_i} \abs{K(t,r)-K(t_i,r)} \dr.
\end{align*}
The last integral is dominated by~$\Delta^{H+\half}$. Furthermore, the simple observation
$$
(i-j)^{2H}(i-j-1)^{H-\halft}=\left(\frac{i-j}{i-j-1}\right)^{\halft-H} (i-j)^{3H-\halft}\le 2^{\halft-H} (i-j)^{3H-\halft}.
$$
allows us to compute, similarly to~\eqref{eq:sum_iell} with~$\alpha=2H$,
\begin{align}\label{eq:sum_ij} 
     \sum_{j=0}^{i-2} (i-j)^{2H} (i-j-1)^{H-\halft} 
     & \le 4\sum_{j=0}^{i-2} (i-j)^{3H-\halft}\\
    & \lesssim \left( \frac{i^{3H-\half} -1}{3H-\half}\one_{H\neq \halfs} + \log(i)\one_{H=\halfs}\right)\\
    &\lesssim  \Delta^{\half-3H} \one_{H>\halfs} +  \one_{H<\halfs} +|\log(\Delta)|\one_{H=\halfs}.\nonumber
\end{align}
\end{proof}
%%%%%%%%%%%%%%%%%%%%%%%%%%%%%%%%%%%%%%%%%
%%%%%%%%%%%%%%%%%%%%%%%%%%%%%%%%%%%%%%

\begin{proof}[Proof of Lemma~\ref{lemma:Gassiat_log}]
Similarly to the previous proof, noting that for any~$r\in[t_j,t_{j+1}]$ with ~$j\le i-2$, we have~$(t-r)^{2H}\le (t_{i+1}-t_j)^{2H}\le (2(t_i-t_j))^{2H}$, it yields
\begin{align*}
    \int_0^t (t-r)^{2H}\abs{K(t,r)-K(t_i,r)}\dr 
    \lesssim \Delta^{3H+\half} \sum_{j=0}^{i-2}(i-j)^{3H- {3/2}} + \Delta^{2H}\int_{t_{i-1}}^{t} \abs{K(t,r)-K(t_i,r)}\dr.
\end{align*}
The claim follows from~\eqref{eq:sum_ij}.
\end{proof}
%%%%%%%%%%%%%%%%%%%%%%%%%%%%%%%%%%%%%%%%%%%%%%%%%%

\begin{proof}[Proof of Lemma~\ref{lemma:doublesum}]

For $\ell=j-1$ then $ \int_{t_\ell}^{t_{\ell+1}} K(t_j,r) \dr=\frac{1}{H+\half}\Delta^{H+\half}$ and similarly to~\eqref{eq:sum_ij}, $\sum_{j=1}^{i-2} (i-j-1)^{H-\half} (i-j)^{H-\halft} \lesssim 1$.
If $\ell<j-1$, then $\int_{t_\ell}^{t_{\ell+1}} K(t_j,r)\dr \le  (j-\ell-1)^{H-\half} \Delta^{H+\half}$. 
We plug this in the rest of the sum in Lemma~\ref{lemma:doublesum}, we dispatch the $H-\halft$ exponent in the following way and exploit $(i-\ell-1)^{-\half}\le(i-j-1)^{-\half}$:
\begin{align*}
    S'
    &:=\sum_{j=1}^{i-2} (i-j-1)^{H-\half}\sum_{\ell=0}^{j-2} (i-\ell-1)^{H-\halft}  (j-\ell-1)^{H-\half}\\
    & \le  
    \sum_{j=1}^{i-2} (i-j-1)^{H-1}\sum_{\ell=0}^{j-2} (i-\ell-1)^{H-1}  (j-\ell-1)^{H-\half}\\
    &=  \sum_{\ell=0}^{i-4} (i-\ell-1)^{H-1}\sum_{j=\ell+2}^{i-2} (i-j-1)^{H-1}  (j-\ell-1)^{H-\half},
\end{align*}
where we swapped the sums in the last line.
Furthermore,
\begin{equation}\label{eq:sum_Beta_function}
\begin{aligned}
     \sum_{j=\ell+2}^{i-2}
     (i-j-1)^{H-1}  (j-\ell-1)^{H-\half}
     & \le \sum_{j=\ell+2}^{i-2} \int_j^{j+1} (i-s-1)^{H-1}  (s-\ell-2)^{H-\half}\ds\\
     & = \int_{\ell+2}^{i-1} (i-s-1)^{H-1}  (s-(\ell+2))^{H-\half}\ds \\
     & = (i-\ell-3)^{2H-\half} \B\left(H,H+\half\right),
\end{aligned}
\end{equation}
since $\int_0^{x} (x-y)^{a-1} y^{b-1}\D y = x^{a+b-1} \B(a,b)$ for $a,b>-1$, and with~$\B$ the Beta function.
Moreover, since $(i-\ell-1)^{H-1}\le (i-\ell-3)^{H-1}$, similarly to~\eqref{eq:sum_iell} we obtain 
\begin{align*}
    S'\lesssim\sum_{\ell=0}^{i-4} (i-\ell-3)^{3H-\halft}
    = 1+ \int_1^{i-3} s^{3H-\halft}\ds 
    \lesssim 
	\left\{
	\begin{array}{ll}
	\displaystyle 1, & \text{if } H<\halfs,\\
	\displaystyle \log(i-3), & \text{if } H=\halfs,\\
\displaystyle (i-3)^{3H-\half}, & \text{if } H>\halfs.
	\end{array}
	\right.
\end{align*}
Note that $i-3\le N =\frac{1}{\Delta}$ and $S\lesssim\Delta^{2H} \left(\Delta^{H+\half}S'+\Delta^{H+\half}\right)$, concluding the proof.
\end{proof}

%%%%%%%%%%%%%%%%%%%%%%%%%%%%%%%%%%%%%%%%%%%%%%%%%%%%%%%%%%%%%%%%%%%%%%%%%%%%%%%%%%%%%%%%%%%%%%%%%%%%%%%%%%%%%%
 
\section{Proof of Proposition~\ref{prop:Bone}}\label{sec:Bone}
We fix~$i\le N$ and $t\in[t_i,t_{i+1})$ and recall that~$H\in(0,\half)$. 
To express the differences in $\frB_1(t)$ and~$\frB_2(t)$ without invoking the strong rate of convergence (that is without absolute values), we rely on a combination of the Clark-Ocone and the integration by parts formulae that allows us to decouple the stochastic terms from the kernels and saves us from the immediate use of crude estimates such as the Cauchy-Schwarz inequality. For some of the terms in the following computations, several applications of this strategy are necessary.
Let us begin with the Clark-Ocone formula:
\begin{align*}
    \psi(V_t)^2-\psi(V_{t_i})^2
    &= \EE\Big[\psi(V_t)^2-\psi(V_{t_i})^2\Big] 
    + \int_0^t \Df^W_r\EE_r\Big[\psi(V_t)^2-\psi(V_{t_i})^2\Big] \D W_r.
\end{align*}
Note $\Uu_r:=\Df^W_r (\partial_x^2\bar{u}_t)$ and $\varphi_r(t)=\EE_r[(\psi^2)'(V_t)]$.
Integration by parts then yields
\begin{align}\label{eq:def_Xis}
    \frB_1(t)
    = & \EE\left[\partial_x^2\bar{u}_t\right] \EE\Big[\psi(V_t)^2-\psi(V_{t_i})^2\Big]
    +\EE\left[\int_0^t \Uu_r \EE_r\Big[(\psi^2)'(V_t) K(t,r) - (\psi^2)'(V_{t_i}) K(t_i,r) \Big] \dr\right]
    \nonumber\\ 
    =& \EE\left[\partial_x^2\bar{u}_t\right] \EE\Big[\psi(V_t)^2-\psi(V_{t_i})^2\Big]
    +\EE \left[\int_0^t \Uu_r \varphi_r(t) \big( K(t,r)-K(t_i,r)\big) \dr \right]\nonumber\\
     & \quad + \EE\left[\int_0^t \Uu_r \big(\varphi_r(t)-\varphi_r(t_i)\big) K(t_i,r)\dr\right] \nonumber\\
    = & \EE\left[\partial_x^2\bar{u}_t\right] \EE\Big[\psi(V_t)^2-\psi(V_{t_i})^2\Big]\nonumber \\
    & \quad + \EE \left[\Uu_t \varphi_t(t)\right]\int_0^t \big( K(t,r)-K(t_i,r)\big) \dr\nonumber \\
    &\quad + \EE\left[\Uu_t\int_0^t   \Big( \varphi_r(t) -  \varphi_t(t)\Big) \big( K(t,r)-K(t_i,r)\big) \dr\right]\nonumber\\
     & \quad + \EE\left[  \int_0^t \varphi_r(t) \Big( \Uu_r-  \Uu_t \Big) \big( K(t,r)-K(t_i,r)\big) \dr\right]
    \nonumber\\
    &\quad 
    + \EE\left[\int_0^{t_i} \Uu_r \big(\varphi_r(t)-\varphi_r(t_i)\big) K(t_i,r)\dr\right] \nonumber\\
    & =: \sum_{k=1}^5 \XXi_k.
\end{align}
For future reference, we first  compute, for all~$r\in[0,t]$,
\begin{equation}\label{eq:exp_Uu}
    \Uu_r= \Df^W_r (\partial_x^2\bar{u}_t)
    =\EE_t\left[\phi^{(3)}(\XTb)\left\{\int_r^t \psi'(V_{\kappa_s})K({\kappa_s},r)\D B_s + \int_t^T \psi'(V_{s}) K(s,r)\D B_s + \rho\psi(V_{\kappa_r})\right\}\right].
\end{equation}
The first two terms $\XXi_1,\XXi_2$ are immediate and yield rate one by Lemma~\ref{lemma:trick_rate_one} and the next comment and lemma.
Indeed, the growth of $\phi,\psi$ and their derivatives imply that
$$
\EE\left[\abs{\phi^{(3)}\left(\XTb\right)\left(\int_t^T \psi'(V_{s})K(s,r)\D B_s + \rho\psi(V_{\kappa_r})\right)}^p\right]
$$
is uniformly bounded across~$r\le t\le T$ for all $p>1$.
\begin{lemma}\label{lemma:Xi1}
    $\abs{\XXi_1}+\abs{\XXi_2}  \lesssim t^{2H}-t_i^{2H}$.
\end{lemma}
\begin{proof}
    Thanks to the growth of~$\phi,\psi$ and their derivatives, the estimates of~$\XTb$ and~$V$ in~\eqref{eq:Bounds_XV}, the square integrability of~$K$ over~$[t,T]$, and Lemmas \ref{lemma:ti_tj} and \ref{lemma:Friz} we can conclude
\begin{align}
    \label{eq:Xione} 
    &\abs{\XXi_1} \lesssim \abs{\EE\left[\psi(V_t)^2-\psi(V_{t_i})^2\right]} \lesssim t^{2H}-t_i^{2H},\\
    \label{eq:Xitwo} 
    &\abs{\XXi_2} \lesssim \abs{\int_0^t \big(K(t,r)-K(t_i,r)\big)\dr} \lesssim t^{2H}-t_i^{2H}. \qquad \qedhere
\end{align}
\end{proof}
The other three terms require some tedious, but unambiguous, computations that are developed in the following sections.

\subsection{Bound for $\Xi_3$}
Recall that $\XXi_3$ was defined in~\eqref{eq:def_Xis}.
\begin{proposition}\label{prop:XiThree}
$\abs{\XXi_3} \lesssim  \bigstar(\Delta)$.
\end{proposition}

\begin{proof}
We apply the Clark-Ocone formula with respect to the Brownian motion~$W$:
\begin{equation}\label{eq:CO_varphi}
 \varphi_t(t) -  \varphi_r(t) 
 = \int_r^t \EE_q[\Df_q^W \varphi_q(t)]\D W_q
 = \int_r^t \EE_q[(\psi^2)''(V_t)]K(t,q)\D W_q.    
\end{equation}
Using integration by parts, this implies
\begin{equation}\label{eq:IBP_Uu}
\EE \Big[\Uu_t (\varphi_r(t) -  \varphi_t(t)) \Big]
= -\EE\left[\int_r^t \Df_q^W \Uu_t \EE_q[(\psi^2)''(V_t)]K(t,q)\right]\dq.
\end{equation}

Looking into the integrand, we compute
\begin{align}\label{eq:Xi3_UtDecompose}
    \Df_q^W \Uu_t 
    &= \Df_q^W \EE_t\left[\phi^{(3)}(\XTb)\left(\int_t^T \psi'(V_s)K(s,t)\D B_s + \rho\psi(V_{t_i})\right)\right]\nonumber\\
    &=\EE_t\left[\phi^{(4)}(\XTb)\left(\int_q^T \psi'(V_{\kappa_s^t})K(\kappa_s^t,q)\D B_s 
    + \rho \psi(V_{\kappa_q})\right)\left(\int_t^T \psi'(V_s)K(s,t)\D B_s 
    + \rho\psi(V_{t_i})\right)\right]\nonumber\\
    &\quad + \EE_t\left[\phi^{(3)}(\XTb)\left(\int_t^T \psi''(V_s) K(s,q) K(s,t)\D B_s + \rho\psi'(V_{t_i})K(t_i,q)\right)\right]\nonumber\\
    &=\EE_t\left[\phi^{(4)}(\XTb)\int_q^t \psi'(V_{\kappa_s})K(\kappa_s,q)\D B_s\left(\int_t^T \psi'(V_s)K(s,t)\D B_s + \rho\psi(V_{t_i})\right)\right]\nonumber\\
    &\quad + \EE_t\left[\phi^{(4)}(\XTb) \left(\int_t^T\psi'(V_{s})K(s,t)\D B_s+\rho\psi(V_{\kappa_q})\right)\left(\int_t^T \psi'(V_s)K(s,t)\D B_s + \rho\psi(V_{t_i})\right)\right]\nonumber\\
    &\quad + \EE_t\left[\phi^{(3)}(\XTb)\int_t^T \psi''(V_s) K(s,q) K(s,t)\D B_s\right] +\rho\EE_t\left[\phi^{(3)}(\XTb) \psi'(V_{t_i})K(t_i,q)\right]\nonumber\\
    & =: \sum_{m=1}^4 \Uu_{t,q}^{(m)}.
\end{align}
We thus have
\begin{align*}
\XXi_3
 & = -\EE\left[\Uu_t\int_0^t   \Big( \varphi_r(t) -  \varphi_t(t)\Big) \big( K(t,r)-K(t_i,r)\big) \dr\right]
\\
&= -\int_0^t \left(\int_r^t \EE\Big[\Df_q \Uu_t \EE_q[(\psi^2)''(V_t)]\Big]K(t,q)\dq\right) \Big(K(t,r)-K(t_i,r)\Big)\dr\\
    &=-\sum_{m=1}^4  \int_0^t\left( \int_r^t \EE\left[\Uu_{t,q}^{(m)}\EE_q[(\psi^2)''(V_t)]\right]K(t,q)\dq \right)\Big(K(t,r)-K(t_i,r)\Big)\dr
    =:- \sum_{m=1}^{4}\XXi_{3m}.
\end{align*}
We proceed by bounding each of these terms.
\begin{itemize}
\item[$\mathbf{(\XXi_{31})}$]
By Lemma~\ref{lemma:BoundDiscreteIntegral},
$\abs{\EE\Big[\Uu_{t,q}^{(1)}\EE_q[(\psi^2)''(V_t)]\Big]}
\lesssim (1 + K(\kappa_q+\Delta,q)\sqrt{\Delta})\one_{q<t_i}$.
For~$q<t_i$,
\begin{align*}
    \int_r^t K(\kappa_q+\Delta,q)\sqrt{\Delta} K(t,q)\dq 
    & = \sqrt{\Delta}\left[ \sum_{j=\tau(r)}^{i-1} \int_{t_{j-1}\vee r}^{t_{j}} K(t_{j},q)K(t,q)\dq + \int_{t_{i-1}}^{t_i} K(t,q)K(t_i,q)\dq
    \right]\\
    & \le \sqrt{\Delta}\left( 
    \sum_{j=\ell}^{i-1}K(t,t_{j}) \int_{t_{j-1}\vee r}^{t_{j}} K(t_{j},q)\dq +\Delta^{2H}\right)\\
    & \le \sqrt{\Delta}\left(\Delta^{H+\half} 
    \sum_{j=\ell}^{i-1}\Delta^{H-\half} (i-j)^{H-\half}  +2\Delta^{2H}\right),
\end{align*}
since $K(t,t_j)\le K(t_i,t_j)=\big(\Delta(i-j) \big)^{H-\half}$. 
Thus, using $t_{\ell-1}\le r < t_\ell$, we obtain
\begin{align}\label{eq:sum_ij_ell}
\sum_{j=\ell}^{i-1} (i-j)^{H-\half} = \sum_{j=1}^{i-\ell}j^{H-\half} \le \int_0^{i-\ell} s^{H-\half}\ds 
\lesssim (i-\ell)^{H+\half}
& \lesssim \left(\frac{t_i-t_\ell}
{\Delta}\right)^{H+\half}\nonumber\\
& \lesssim \left(\frac{t_i-r}{\Delta}\right)^{H+\half},
\end{align}
and hence
\begin{align}\label{eq:Xi31_1}
    \int_r^t K(\kappa_q+\Delta,q)\sqrt{\Delta} K(t,q)\dq  \lesssim \Delta^H (t_i-r)^{H+\half} + \Delta^{2H+\half}. 
\end{align}

\item[$\mathbf{(\XXi_{32})}$]
We note that $\Uu_t^{(2)}$ is bounded in~$L^2(\Omega)$, hence~$\EE\Big[\Uu_{t,q}^{(2)}\EE_q[(\psi^2)''(V_t)]\Big]\lesssim 1$. \\
\item[$\mathbf{(\XXi_{33})}$]
By Cauchy-Schwarz' and Jensen's inequality, we obtain
\begin{align*}
\EE\Big[\Uu_{t,q}^{(3)}\EE_q[(\psi^2)''(V_t)]\Big]^2
    &=\EE\left[\EE_t\left[\phi^{(3)}(\XTb)\int_t^T \psi''(V_s) K(s,q) K(s,t)\D B_s \right] \EE_q[(\psi^2)''(V_t)]\right]^2 \\
    &\le \EE\left[\phi^{(3)}(\XTb)^2(\psi^2)''(V_t)^2\right] \EE\left[\left(\int_t^T \psi''(V_s) K(s,q) K(s,t)\D B_s \right)^2\right] \\
    &\lesssim \int_t^T K(s,q)^2 K(s,t)^2\ds \le K(t,q)^2 \frac{(T-t)^{2H}}{2H}
    \lesssim K(t,q)^2.
\end{align*}
\item[$\mathbf{(\XXi_{34})}$]
 Similarly we write $\EE\Big[\Uu_{t,q}^{(4)}\EE_q[(\psi^2)''(V_t)]\Big]^2\lesssim \EE\big[\lvert\Uu_{t,q}^{(4)}\lvert^2\big]\lesssim K(t_i,q)^2$.
\end{itemize}
We then regroup all those estimates together.
By~\eqref{eq:IBP_Uu}-\eqref{eq:Xi3_UtDecompose}, 
the upper bound~\eqref{eq:Xi31_1} and the above estimates, we obtain
\begin{align*}
\abs{\EE \Big[\Uu_t (\varphi_r(t) -  \varphi_t(t)) \Big]}
&\le  \int_r^t \EE\Big[\Big(\sum_{m=1}^{4}\Uu^{(m)}_{t,q}\Big) \EE_q[(\psi^2)''(V_t)]\Big]K(t,q)\dq\\
&\lesssim \int_r^t \left(1+K(\kappa_q+\Delta,q)\sqrt{\Delta} + K(t,q) + K(t_i,q) \right)K(t,q)\dq\\
&\lesssim (t-r)^{2H} + (t_i-r)^{2H}\one_{r<t_i}.
\end{align*}
To conclude, we notice
\begin{align*}
\int_0^{t_i} (t_i-r)^{2H} \Big|K(t,r)-K(t_i,r)\Big| \dr
& \le \int_0^{t_i} (t-r)^{2H} \Big|K(t,r)-K(t_i,r)\Big| \dr\\
& \le \int_0^{t} (t-r)^{2H} \Big|K(t,r)-K(t_i,r)\Big| \dr,
\end{align*}
such that we obtain 
\begin{align*}
\abs{\XXi_3}
& \lesssim \int_0^t (t-r)^{2H} \Big|K(t,r)-K(t_i,r)\Big| \dr +  \int_0^{t_i} (t_i-r)^{2H} \Big|K(t,r)-K(t_i,r)\Big| \dr\\
& \le 2\int_0^t (t-r)^{2H} \Big|K(t,r)-K(t_i,r)\Big| \dr
\lesssim \bigstar(\Delta),
\end{align*}
using Lemma~\ref{lemma:Gassiat_log}, which concludes the proof.
\end{proof}

%%%%%%%%%%%%%%%%%%%%%%%%%%%%%%%%%%%%%%%%%%%%%%%%%%%%%%%%%%%%%%%%%
\subsection{Bound for $\Xi_4$}
Recall the definition of~$\XXi_4$ from~\eqref{eq:def_Xis}.
\begin{proposition}\label{prop:Xi4}
$|\XXi_4|\lesssim 
\bigstar(\Delta).$
\end{proposition}
\begin{proof}
We recall the expression of~$\Uu_r$ given in~\eqref{eq:exp_Uu}, which yields
\begin{align*}
&     \Uu_r - \Uu_t \\
&     =\EE_t\left[\phi^{(3)}(\XTb)\left\{\int_r^t \psi'(V_{\kappa_s})K({\kappa_s},r)\D B_s + \int_t^T \psi'(V_{s}) \big(K(s,r)-K(s,t)\big)\D B_s + \rho[\psi(V_{\kappa_r})-\psi(V_{t_i})]\right\}\right].
\end{align*}
This expression requires integration by parts because Cauchy-Schwarz yields a suboptimal rate.
We divide and conquer in the following way:
\begin{align*}
    \XXi_4 &= \EE\left[\int_0^t \varphi_r(t) \Big( \Uu_r-  \Uu_t \Big) \big( K(t,r)-K(t_i,r)\big) \dr\right] \\
    &=  \EE\left[\int_0^t \varphi_r(t)\EE_t\left[\phi^{(3)}(\XTb)\left( \int_r^t \psi'(V_{\kappa_s})K({\kappa_s},r)\D B_s \right)\right] \big( K(t,r)-K(t_i,r)\big) \dr\right]\\
    &\quad + \EE\left[\int_0^t \varphi_r(t)\EE_t\left[\phi^{(3)}(\XTb) \left(\int_t^T \psi'(V_{s}) \big(K(s,r)-K(s,t)\big)\D B_s\right)\right]\big( K(t,r)-K(t_i,r)\big) \dr\right]\\
    &\quad + \rho\EE\left[\int_0^t \varphi_r(t) \EE_t\left[\phi^{(3)}(\XTb)\Big(\psi(V_{\kappa_r})-\psi(V_{t_i})\Big)\right]\big( K(t,r)-K(t_i,r)\big) \dr\right]\\
    &=: \int_0^t \EE\Big[ \XXi_{41}(r) + \XXi_{42}(r) + \XXi_{43}(r) \Big] \big( K(t,r)-K(t_i,r)\big) \dr.
\end{align*}
The bound then follows from
Propositions~\ref{prop:Xi41_Int}-\ref{prop:Xi42_Int}-\ref{prop:Xi43_Int}, noting that
$\Delta^{2H+1}\lesssim\bigstar(\Delta)$.
\end{proof}
In spirit, the proofs show that 
\begin{equation*}
\abs{\EE[\varphi_r(t)(\Uu_r - \Uu_t)]}\lesssim (t_i-\kappa_r)^{2H} \one_{r<t_i} + (t-r)^{2H},
\end{equation*}
which is sufficient to conclude by Lemmas~\ref{lemma:Bound_kappar_DeltaK} and~\ref{lemma:Gassiat_log}. 

%%%%%%%%%%%%%%%%%%%%%%%%%
\subsubsection{Bound for $\Xi_{41}$}

\begin{proposition}\label{prop:Xi41_Int}
$\displaystyle
\abs{\int_0^t \EE\left[\XXi_{41}(r)\right] \big(K(t,r)-K(t_i,r)\big)\dr }
\lesssim \bigstar(\Delta)
$.
\end{proposition}

\begin{proof}
Applying integration by parts, we obtain the following decomposition:
\begin{align*}
     \abs{\EE[\XXi_{41}(r)] }
    &=\abs{\EE\left[\int_r^t \Big(\Df^{B}_s \varphi_r(t) \phi^{(3)}(\XTb) + \varphi_r(t) \Df^{B}_s\phi^{(3)}(\XTb) \Big) \psi'(V_{\kappa_s}) K(\kappa_s,r)\ds\right]} \\
    &\lesssim \abs{\int_r^t \EE\left[\EE_r[(\psi^2)''(V_t)] \phi^{(3)}(\XTb) \psi'(V_{\kappa_s})\right] K(t,s)K(\kappa_s,r)\ds}\\
    &\quad+ \abs{\int_r^t \EE\left[\varphi_r(t) \phi^{(4)}(\XTb) \left(\int_s^t\psi'(V_{\kappa_q})K(\kappa_q,s)\D B_q\right) \psi'(V_{\kappa_s}) \right] K(\kappa_s,r)\ds }\\
    &\quad +  \abs{\int_r^t \EE\left[\varphi_r(t) \phi^{(4)}(\XTb) \left(\int_t^T\psi'(V_{q})K(q,s)\D B_q+\psi(V_{\kappa_s})\right) \psi'(V_{\kappa_s}) \right] K(\kappa_s,r)\ds }\\
    &\lesssim \int_r^t  K(t,s)K(\kappa_s,r)\ds 
    + \int_r^t K(\kappa_s+\Delta,s)\sqrt{\Delta}K(\kappa_s,r)\ds
    + \int_r^t K(\kappa_s,r)\ds ,
\end{align*}
where we used Lemma~\ref{lemma:BoundDiscreteIntegral}.
Since $1\lesssim K(t,s)$ for all $s>0$,
we write
\begin{align*}
&\abs{\int_0^t \EE\left[\XXi_{41}(r)\right] \big(K(t,r)-K(t_i,r)\big)\dr }\\
 &\lesssim \int_0^t 
\left[\int_r^t  K(t,s)K(\kappa_s,r)\ds
 +\int_r^t K(\kappa_s+\Delta,s)\sqrt{\Delta}K(\kappa_s,r)\ds \right]\abs{K(t,r)-K(t_i,r)}\dr \\
& =: \XXi_{411} + \XXi_{412}.
\end{align*}
The proof then follows from
Lemma~\ref{lem:Xi411} and Lemma~\ref{lem:Xi412} below,
since $\Delta\lesssim\bigstar(\Delta)$.
\end{proof}
\begin{lemma}\label{lem:Xi411}
$\XXi_{411}\lesssim \bigstar(\Delta)$.
\end{lemma}
\begin{lemma}\label{lem:Xi412}
$\XXi_{412}\lesssim \Delta$.
\end{lemma}

\begin{proof}[Proof of Lemma~\ref{lem:Xi411}]
If $r\in [t_i,t)$ then $\int_r^t  K(t,s)K(\kappa_s,r)\ds=0$. 
For $r\in[0,t_i)$, $s\in [t_j,t_{j+1})$ with~$j\le i-2$, it holds $K(t,s)\le (t_i-t_{j+1})^{H-\half}=\Delta^{H-\half}(i-j-1)^{H-\half}$; thus
\begin{align}
& \int_r^t  K(t,s)K(\kappa_s,r)\ds\\
    &=\sum_{j=\tau(r)}^{i-2} \int_{t_j}^{t_{j+1}} K(t,s)K(t_j,r)\ds 
    + \int_{t_{i-1}}^{t_{i}} K(t,s) K(t_{i-1},r) \ds
    +\int_{t_{i}}^{t} K(t,s) K(t_{i},r) \ds
    \nonumber \\
    &\lesssim \Delta^{H+\half} \left(\sum_{j=\tau(r)}^{i-2} K(t_j,r) (i-j-1)^{H-\half}\right) + \Delta^{H+\half}\Big( K(t_{i-1},r)+K(t_i,r)\Big) \nonumber\\
    &=:\XXi_{4111}(r) + \XXi_{4112}(r).
    \label{eq:Xi411_decomp}
\end{align}
For the last term on the right-hand side of the equation above, note that, for $j=i$,
$\int_{0}^{t_{i}} K(t_{i},r)\big(K(t_i,r)-K(t,r)\big)\dr
\lesssim \Delta^{2H}$
by Lemma~\ref{lemma:bound_KDeltaK} (and
similarly $j=i-1$). 
Thus
$
\int_0^t \XXi_{4112}(r) \big(K(t,r)-K(t_i,r)\big)\dr \lesssim \Delta^{3H+\half}$.
Now, we assume~$i\ge3$. 
Plugging $\XXi_{4111}$ in the integral $\XXi_{411}$, we obtain
\begin{align*}
&     \int_0^t \XXi_{4111}(r) \abs{K(t,r)-K(t_i,r)}\dr \\
    &\lesssim  \Delta^{H+\half} \int_0^{t_i}  \sum_{j=\tau(r)}^{i-2} (i-j-1)^{H-\half} K(t_j,r)\big(K(t_i,r)-K(t,r)\big)\dr\\
    &\lesssim \Delta^{H+\half} \sum_{\ell=0}^{i-1}\int_{t_\ell}^{t_{\ell+1}}  \sum_{j=\ell+1}^{i-2} (i-j-1)^{H-\half}K(t_j,r)\big(K(t_i,r)-K(t,r)\big)\dr\\
    &=\Delta^{H+\half} \sum_{j=1}^{i-2} (i-j-1)^{H-\half}\sum_{\ell=0}^{j-1} 
     \int_{t_\ell}^{t_{\ell+1}} K(t_j,r)\big(K(t_i,r)-K(t,r)\big)\dr\\
     &\le \Delta^{2H}  \sum_{j=1}^{i-2} (i-j-1)^{H-\half}\sum_{\ell=0}^{j-1} (i-\ell-1)^{H-\halft}
     \int_{t_\ell}^{t_{\ell+1}} K(t_j,r) \dr
     \lesssim \bigstar(\Delta),
\end{align*}
where the second-to-last inequality follows from Lemma~\ref{lemma:bound_deltaK} and the last one from
Lemma~\ref{lemma:doublesum}.
\end{proof}

\begin{proof}[Proof of Lemma~\ref{lem:Xi412}]
For any $r\in[0,t_i)$, we have
\begin{align*}
    \int_r^t K(\kappa_s+\Delta,s)\sqrt{\Delta}K(\kappa_s,r)\ds
    & = \sum_{j=\tau(r)}^{i} \int_{t_j}^{t_{j+1}\wedge t} K(t_{j+1},s)\sqrt{\Delta} K(t_j,r)\ds \\
& \lesssim \Delta^{1+H} \sum_{j=\tau(r)}^{i}  K(t_j,r). 
\end{align*}
Integrating the last two terms of the sum yields
\begin{align*}
    \int_0^t K(t_{i-1},r) \big(K(t_i,r)-K(t,r)\big)\dr +\int_0^t K(t_i,r) \big(K(t_i,r)-K(t,r)\big)\dr \lesssim t_i^{2H}.
\end{align*}
Since $K(\kappa_s,r)=0$ for $r\in[t_i,t)$ and $s\in[r,t)$, it holds
\begin{align*}
    \XXi_{412}
    &\lesssim \int_0^{t_i} \Delta^{H+1}\sum_{j=\tau(r)}^{i-2} K(t_j,r) \abs{K(t,r)-K(t_i,r)}  \dr + \Delta^{H+1} \\
    &\lesssim \Delta^{H+1}\sum_{\ell=0}^{i-1} \int_{t_{\ell}}^{t_{\ell+1}} \sum_{j=\tau(r)}^{i-2} K(t_j,r) \big(K(t_i,r)-K(t,r)\big) \dr+ \Delta^{H+1}\\
    &\lesssim \Delta^{H+1}\sum_{\ell=0}^{i-1} \int_{t_{\ell}}^{t_{\ell+1}} \sum_{j=\ell+1}^{i-2} K(t_j,r) \big(K(t_i,r)-K(t,r)\big) \dr+ \Delta^{H+1}\\
    &\lesssim \Delta^{H+1}\sum_{\ell=0}^{i-3} \int_{t_{\ell}}^{t_{\ell+1}} \sum_{j=\ell+1}^{i-2} K(t_j,r) \big(K(t_i,r)-K(t,r)\big) \dr+ \Delta^{H+1},
\end{align*}
because for $r\in[t_\ell,t_{\ell+1})$, $\tau(r)=\ell+1$ and hence the inner sum is empty if~$\ell\ge i-2$. We continue by separating the term $j=\ell+1$, invoking Lemma~\ref{lemma:bound_deltaK} and~$K(t_j,r)\le K(t_j,t_{\ell+1})$:
%%%%%%%%%%%%%%%%
\begin{align*}
    \XXi_{412}
    &\lesssim \Delta^{H+1}\sum_{\ell=0}^{i-3} \Big\{\int_{t_{\ell}}^{t_{\ell+1}} \sum_{j=\ell+2}^{i-2} K(t_j,r) \big(K(t_i,r)-K(t,r)\big) \dr\\
 &    \qquad\qquad\qquad +  \int_{t_{\ell}}^{t_{\ell+1}}  K(t_{\ell+1},r) \big(K(t_i,r)-K(t,r)\big) \dr\Big\}
    +\Delta^{H+1}\\
    &\lesssim \Delta^{H+1}\sum_{\ell=0}^{i-3} \int_{t_{\ell}}^{t_{\ell+1}} \sum_{j=\ell+2}^{i-2} \Delta^{H-\half} (j-\ell-1)^{H-\half} \Delta^{H-\half} (i-\ell-1)^{H-\halft}\dr\\
    &\qquad\qquad\qquad + \Delta^{H+1}\sum_{\ell=0}^{i-3}  \Delta^{H-\half} (i-\ell-1)^{H-\halft}\int_{t_{\ell}}^{t_{\ell+1}} K(t_{\ell+1},r)\dr 
    + \Delta^{H+1}\\
    &= \Delta^{3H+1}\sum_{\ell=0}^{i-3} \sum_{j=\ell+2}^{i-2} (j-\ell-1)^{H-\half}  (i-\ell-1)^{H-\halft}+\Delta^{3H+1} \sum_{\ell=0}^{i-3} (i-\ell-1)^{H-\halft} 
    +\Delta^{H+1}.
\end{align*}
On the one hand, we already saw in~\eqref{eq:sum_iell} that~$\sum_{\ell=0}^{i-3} (i-\ell-1)^{H-\halft}\lesssim 1$. On the other hand, we have, similarly to~\eqref{eq:sum_ij_ell}, that~$\sum_{j=\ell+2}^{i-2} (j-\ell-1)^{H-\half}\lesssim (i-\ell-3)^{H+\half}\le (i-\ell-1)^{H+\half}$ and~$  \sum_{\ell=0}^{i-3}  (i-\ell-1)^{2H-1}\lesssim (i-1)^{2H} \le \Delta^{-2H}$. 
Overall this results in~$\XXi_{412}\lesssim \Delta^{H+1}$.
\end{proof}

%%%%%%%%%%%%%%%%%%%%%%%%%%%%%%%%%%%%%%%%%%%%%%%%%%%%%%%%
%%%%%%%%%%%%%%%%%%%%%%%%%
\subsubsection{Bound for $\Xi_{42}$}

\begin{proposition}\label{prop:Xi42_Int}
    $\displaystyle
    \int_0^t \abs{\EE[\XXi_{42}(r)]}\abs{K(t,r)-K(t_i,r)}\dr \lesssim \Delta^{2H+1}$.
\end{proposition}
\begin{proof}
This proof heavily replies on IBP formula as well. 
Note that, for $r<t<s$, $\varphi_r(t)$ is $\Ff_r$-measurable and hence~$\Df^{B}_s\varphi_r(t)=\rho \Df^{W}_s\varphi_r(t)=0$.
Thus we obtain
\begin{align*}
    \abs{\EE[\XXi_{42}(r)] }
    &=\abs{\EE\left[ \varphi_r(t) \phi^{(3)}(\XTb) \left(\int_t^T \psi'(V_{s}) \big(K(s,r)-K(s,t)\big)\D B_s\right)\right]}\\
    & = \abs{\EE\left[ \int_t^T \varphi_r(t) \Df^{B}_s \phi^{(3)}(\XTb) \psi'(V_{s}) \big(K(s,r)-K(s,t)\big) \ds \right]}\\
    & \le \sup_{s\in[t,T]} \abs{\EE\left[\varphi_r(t) \Df^{B}_s \phi^{(3)}(\XTb) \psi'(V_{s}) \right]}  \int_t^T  \abs{K(s,r)-K(s,t)} \ds.
\end{align*}
We recall that for all~$s\in[t,T]$, $ \Df^{B}_s \phi^{(3)}(\XTb)$ is bounded in~$L^2(\Omega)$. 
Thus, the prefactor multiplying the integral in the equation above is bounded. 
By Tonelli's theorem we conclude
\begin{align*}
    \abs{\EE[\XXi_{42}(r)]} 
    \lesssim \int_t^T \int_r^t (s-q)^{H-\halft}\dq\ds 
    \lesssim \int_r^t \int_t^T (s-q)^{H-\halft}\ds \dq
    & \lesssim \int_r^t (t-q)^{H-\half} \dq\\
    & \lesssim (t-r)^{H+\half}.
\end{align*}
The claim thus follows from \cite[Lemma 2.1]{gassiat2022weak}.
\end{proof} 

%%%%%%%%%%%%%%%%%%%%%%%%%
\subsubsection{Bound for $\Xi_{43}$}
\begin{proposition}\label{prop:Xi43_Int}
$\displaystyle 
\int_0^t \abs{\XXi_{43}(r)}\abs{K(t,r)-K(t_i,r)}\dr \lesssim \bigstar(\Delta)$.
\end{proposition}
\begin{proof}
Since $|\rho|\leq 1$, we omit it in the proof.
We show that $\abs{\XXi_{43}(r)}\lesssim (t_i-\kappa_r)^{2H}\one_{r\le t_i}$ for all $r\in[0,t)$. Together with Lemma~\ref{lemma:Bound_kappar_DeltaK}, this yields the claim.
Clark-Ocone formula yields
\begin{align*}
    \psi(V_{\kappa_r})-\psi(V_{t_i}) 
    =\EE[\psi(V_{\kappa_r})-\psi(V_{t_i}) ]+ \int_0^{t_i}\EE_q\Big(\psi'(V_{\kappa_r})K(\kappa_r,q)-\psi'(V_{t_i})K(t_i,q)\Big)\D W_q,
\end{align*}
and Malliavin IBP with respect to $W$ implies
\begin{align*}
    \XXi_{43}(r)
&=\rho\EE\left[\varphi_r(r)\EE_t\left[\phi^{(3)}(\XTb)\right] \right] \EE[\psi(V_{\kappa_r})-\psi(V_{t_i}) ] \\
    &\quad + \rho\EE\Big[\int_0^{t_i} \Big(\Df^W_q\varphi_r(t) \phi^{(3)}(\XTb) + \varphi_r(t) \Df^W_q \phi^{(3)}(\XTb)\Big) \times
    \\
    & \qquad\qquad\times\EE_q\Big(\psi'(V_{\kappa_r})K(\kappa_r,q)-\psi'(V_{t_i})K(t_i,q)\Big)\dq\Big]
    \\
    &= \rho\EE\left[\varphi_r(r)\EE_t\left[\phi^{(3)}(\XTb)\right] \right] \EE[\psi(V_{\kappa_r})-\psi(V_{t_i}) ] \\
    &\quad + \rho\EE\left[\int_0^{t_i} \Df^W_q\varphi_r(t) \phi^{(3)}(\XTb) K(\kappa_r,q)\EE_q\Big(\psi'(V_{\kappa_r})-\psi'(V_{t_i})\Big)\dq\right] \\
    &\quad +  \rho \EE\left[\int_0^{t_i} \Df^W_q\varphi_r(t) \phi^{(3)}(\XTb) \EE_q\psi'(V_{t_i})\Big(K(\kappa_r,q)-K(t_i,q)\Big)\dq\right] \\
    &\quad+  \rho\EE\left[\int_0^{t_i} \varphi_r(t) \Df^W_q \phi^{(3)}(\XTb) K(\kappa_r,q)\EE_q\Big(\psi'(V_{\kappa_r})-\psi'(V_{t_i})\Big)\dq\right] \\
    &\quad + \rho\EE\left[\int_0^{t_i} \varphi_r(t) \Df^W_q \phi^{(3)}(\XTb) \EE_q\psi'(V_{t_i})\Big(K(\kappa_r,q)-K(t_i,q)\Big)\dq\right] \\
    &=:\rho\Big( \XXi_{430}(r) +\XXi_{431}(r) + \XXi_{432}(r) + \XXi_{433}(r) + \XXi_{434}(r)\Big).
\end{align*}
The proof then follows from the lemmas below 
and Lemma~\ref{lemma:Bound_kappar_DeltaK}.
\end{proof}
%%%%%%%%%%%%%%%%%%%%%%%%%
\begin{lemma}\label{lem:Bound430}
$\abs{\XXi_{430}(r)}\lesssim (t_i-\kappa_r)^{2H}$.
\end{lemma}
%%%%%%%%%%%%%%%%%%%%%%%%%
\begin{lemma}\label{lem:Bound431}
$\abs{\XXi_{431}(r)}\lesssim (t_i-\kappa_r)^{2H}$.
\end{lemma}
%%%%%%%%%%%%%%%%%%%%%%%%%
\begin{lemma}\label{lem:Bound432}
$\displaystyle 
\abs{\XXi_{432}(r)}
\lesssim (t_i-\kappa_r)^{2H}
$.
\end{lemma}
%%%%%%%%%%%%%%%%%%%%%%%%%
\begin{lemma}\label{lem:433}
$
|\XXi_{433}(r) |
\lesssim
(t_i-\kappa_{r})^{2H}$.
\end{lemma}
%%%%%%%%%%%%%%%%%%%%%%%%%
\begin{lemma}\label{lem:434}
$|\XXi_{434}(r)|\lesssim
%(t_i-\kappa_r)^{2H}+ 
 (t_i-\kappa_r)^{H+\half}
$. 
\end{lemma}
%%%%%%%%%%%%%%%%%%%%%%%%%
%%%%%%%%%%%%%%%%%%%%%%%%%
\begin{proof}[Proof of Lemma~\ref{lem:Bound430}]
By virtue of Lemma~\ref{lemma:Friz}, 
$$
\abs{\EE[\psi(V_{\kappa_r})-\psi(V_{t_i}) ] }\lesssim t_i^{2H}-\kappa_r^{2H}\lesssim (t_i-\kappa_r)^{2H}.
$$
Since the prefactor is bounded, this yields the claim.
\end{proof}
%%%%%%%%%%%%%%%%%%%%%%%%%
%%%%%%%%%%%%%%%%%%%%%%%%%
\begin{proof}[Proof of Lemma~\ref{lem:Bound431}]
We recall that, by Lemma~\ref{lemma:Delta_varphi} we have
\begin{align*}
    \abs{\EE_q\left[\psi'(V_{\kappa_r})-\psi'(V_{t_i})\right]}
    \lesssim \int_{\kappa_r}^{t_i} (v-q)^{2H-1}\D v + \int_{\kappa_r}^{t_i}\abs{\int_0^q (v-x)^{H-\halft}\D W_x}\D v.
\end{align*}
Noticing that~$\Df_q^W \varphi_r(t)=\EE_r[(\psi^{2})^{(2)}(V_t)] K(t,q)$, the first term then yields
\begin{align*}
\abs{\XXi_{431}(r)}    
&\le \int_0^{\kappa_r} \EE\Big[ \EE_r[(\psi^{2})^{(2)}(V_t)] \phi^{(3)}(\XTb) \abs{\EE_q\big[\psi'(V_{\kappa_r})-\psi'(V_{t_i})\big]}\Big] K(t,q)K(\kappa_r,q) \dq\\
&\le \int_0^{\kappa_r} \EE\Big[ \EE_r[(\psi^{2})^{(2)}(V_t)] \phi^{(3)}(\XTb) \Big] \left(  \int_{\kappa_r}^{t_i} (v-q)^{2H-1}\D v \right) K(t,q)K(\kappa_r,q) \dq\\
&\quad + \int_0^{\kappa_r}\left\{
\int_{\kappa_r}^{t_i} \EE\left[ \EE_r[(\psi^{2})^{(2)}(V_t)] \phi^{(3)}(\XTb)  \abs{\int_0^q (v-x)^{H-\halft}\D W_x} \right]\D v \right\} K(t,q)K(\kappa_r,q) \dq\\
&=: \XXi_{4311}(r) + \XXi_{4312}(r).
\end{align*}
Since~$q\le \kappa_r$,
then 
$
\int_{\kappa_r}^{t_i} (v-q)^{2H-1}\D v
\le \int_{\kappa_r}^{t_i} (v-\kappa_r)^{2H-1}\D v
\lesssim (t_i-\kappa_r)^{2H}$,
and thus
\begin{align*}
    \XXi_{4311}(r) \lesssim \int_0^{\kappa_r} K(t,q)K(\kappa_r,q)\dq \,(t_i-\kappa_r)^{2H} \lesssim (t_i-\kappa_r)^{2H}.
\end{align*}
Now, by Cauchy-Schwarz and It\^o isometry, it holds
\begin{align*}
    & \XXi_{4312}(r) \\
    &\le \int_0^{\kappa_r}\left|\int_{\kappa_r}^{t_i} \EE\left[ \left|\EE_r[(\psi^{2})''(V_t)] \phi^{(3)}(\XTb) \right|^2\right]^{\half} \EE\left[\left|\int_0^q (v-x)^{H-\halft}\D W_x\right|^2 \right]^{\half}\D v \right|
    K(t,q)K(\kappa_r,q) \dq  \\
    &\lesssim \int_0^{\kappa_r}\int_{\kappa_r}^{t_i}  \left(\int_0^q (v-x)^{2H-3}\D x\right)^{\half}\D v  K(t,q)K(\kappa_r,q) \dq \\
    & \lesssim  \int_0^{\kappa_r}\left(\int_{\kappa_r}^{t_i}  (v-q)^{H-1}\D v \right) K(t,q)K(\kappa_r,q) \dq\\
    &= \int_0^{\kappa_r}\left(\int_{\kappa_r}^{t_i}  (v-q)^{H-\half} (v-q)^{-\half}\D v\right)  K(t,q)K(\kappa_r,q) \dq\\
    &\le \int_0^{\kappa_r}\left(\int_{\kappa_r}^{t_i}   (v-\kappa_r)^{H-\half}\D v \right) (\kappa_r-q)^{-\half} K(t,q)K(\kappa_r,q) \dq\\
    &\le \int_0^{\kappa_r} (t_i-\kappa_r)^{H+\half} K(t,q) (\kappa_r-q)^{H-1}\dq 
    \lesssim (t_i-\kappa_r)^{2H},
\end{align*}
where we used $K(t,q)\le K(t_i,\kappa_r)$ in the last inequality.
\end{proof}

\begin{proof}[Proof of Lemma~\ref{lem:Bound432}]
Exploiting Lemma~\ref{lemma:ti_tj} with $\alpha=H-\half$, we write
\begin{align*}
    \abs{\XXi_{432}(r)}
    &\le  \int_0^{t_i}\abs{\EE\Big[ \EE_r[(\psi^{2})^{(2)}(V_t)] \phi^{(3)}(\XTb) \EE_q\psi'(V_{t_i})\Big]} K(t,q)\abs{K(\kappa_r,q)-K(t_i,q)} \dq\\
    &\lesssim \int_0^{t_i} K(t_i,q)\abs{K(\kappa_r,q)-K(t_i,q)} \dq
    \le (t_i-\kappa_r)^{2H}.
\end{align*}
\end{proof}
%%%%%%%%%%%%%%%%%%%%%%%%%
\begin{proof}[Proof of  {Lemma}~\ref{lem:433}]
Let us now move on to discuss 
\begin{align}
    \XXi_{433}(r) 
    & = \EE\left[\int_0^{\kappa_r} \varphi_r(t) \Df_q^W \phi^{(3)}(\XTb) K(\kappa_r,q)\EE_q\Big[\psi'(V_{\kappa_r})-\psi'(V_{t_i})\Big]\dq\right]\\ \nonumber
    & 
    = \EE\left[\int_0^{\kappa_r} \varphi_r(t) \phi^{(4)}(\XTb) \Df_q^W \XTb K(\kappa_r,q)\EE_q\Big[\psi'(V_{\kappa_r})-\psi'(V_{t_i})\Big]\dq\right]\\ \nonumber
    & 
    = \EE\Bigg[\int_0^{\kappa_r} \varphi_r(t) \phi^{(4)}(\XTb) \Bigg\{\int_{\kappa_q +\Delta}^t \psi'(\kappa_s)K(\kappa_s,q)\D B_s + \int_{t}^T \psi'(V_{\kappa_s})K(\kappa_s,q)\D B_s \\ \nonumber
    & \qquad  + \rho \psi (V_{\kappa_q})\Bigg\} K(\kappa_r,q)\EE_q\Big[\psi'(V_{\kappa_r})-\psi'(V_{t_i})\Big]\dq\Bigg].
\end{align}
Let $\Gg_q:=\sup_{v\in\TT,n=2,3} \EE_q[\psi^{(n)}(V_v)]$, by Lemmas~\ref{lemma:BoundDiscreteIntegral} and~\ref{lemma:Delta_varphi} we can write,
with
$\boldsymbol{W}_{q,v} := \int_0^q (v-x)^{H-\halft}\D W_x
$ and $\phi^{(4)}_{t}:=\phi^{(4)}(\XTb)$,
\begin{align*}
    &\abs{\XXi_{433}(r)}\\
    &\lesssim \int_0^{\kappa_r} \EE\Bigg[\varphi_r(t)\phi^{(4)}_{t}\left[\int_{\kappa_q +\Delta}^t \psi'(V_{{\kappa_s}})K({\kappa_s+\Delta},q)\D B_s\right] \Gg_q\int_{\kappa_r}^{t_i}\abs{\boldsymbol{W}_{q,v}} \ \dv  \Bigg] K(\kappa_r,q)\dq \\ \nonumber
    & + \int_0^{\kappa_r} \EE\Bigg[\varphi_r(t)\phi^{(4)}_{t}\left[\int_{\kappa_q +\Delta}^t \psi'(V_{{\kappa_s}})K({\kappa_s+\Delta},q)\D B_s\right]\Gg_q\Bigg]  \left(\int_{\kappa_r}^{t_i} (v-q)^{2H-1}  \dv \right)  K(\kappa_r,q)\dq \\ \nonumber
    & +  \int_0^{\kappa_r}\EE\bigg[ \varphi_r(t)\phi^{(4)}_{t}\left[ \int_t^T \psi'(V_s)K(s,q)\D B_s  + \rho \psi (V_{\kappa_q})\right]\int_{\kappa_r}^{t_i} \EE_q\left[\psi''(V_{t_i})\right](H-\half)\boldsymbol{W}_{q,v}\dv\bigg] K(\kappa_r,q)\dq\\ \nonumber
    & + \int_0^{\kappa_r}\EE\left[ \varphi_r(t)\phi^{(4)}_{t}\left[ \int_t^T \psi'(V_s)K(s,q)\D B_s  + \rho \psi (V_{\kappa_q})\right]\Gg_q\right] \left(\int_{\kappa_r}^{t_i} (v-q)^{2H-1}  \dv \right)  K(\kappa_r,q)\dq \\
    &\lesssim \int_0^{\kappa_r} 
    \int_{\kappa_r}^{t_i}
    \left[\EE\left[\abs{\varphi_r(t)\phi^{(4)}_{t}\left[\int_{\kappa_q +\Delta}^t \psi'(V_{{\kappa_s}})K({\kappa_s+\Delta},q)\D B_s\right]\Gg_q}^2\right]\EE\left[\boldsymbol{W}_{q,v}^2\right] 
    \right]^\half \dv K(\kappa_r,q)\dq \\
    &\quad +\sqrt{\Delta} \int_0^{\kappa_r} K({\kappa_q+\Delta},q)  \left(\int_{\kappa_r}^{t_i} (v-q)^{2H-1} \dv\right)  K(\kappa_r,q)\dq \\
    &\quad + \int_0^{\kappa_r} \int_{\kappa_r}^{t_i}\abs{\EE\left[ \varphi_r(t)\phi^{(4)}_{t}\left[ \int_t^T \psi'(V_s)K(s,q)\D B_s  + \rho \psi (V_{\kappa_q})\right]
     \EE_q[\psi''(V_{t_i})]\boldsymbol{W}_{q,v}\right]}\dv K(\kappa_r,q)\dq \\
    &\quad + \int_0^{\kappa_r}  \left(\int_{\kappa_r}^{t_i} (v-q)^{2H-1}\dv \right)   K(\kappa_r,q)\dq \\
    &=: \XXi_{4331} + \XXi_{4332} + \XXi_{4333} + \XXi_{4334}.
\end{align*}
The rest of the proof follows from the lemmas below.
\end{proof}
We shall need the following technical lemma repeatedly, where we introduce
\begin{equation}\label{eq:Beta1}
\Pf_{\alpha}(\kappa_r, t_i) := \int_0^{\kappa_r} 
\left(\int_{\kappa_r}^{t_i} (v-q)^{\alpha}\D v \right)
K(\kappa_{q}+\Delta,q)K(\kappa_r,q)\dq.
\end{equation}
\begin{lemma}\label{lem:Beta1}
For $\alpha+H+1\geq 0$,
$\displaystyle 
\Pf_{\alpha}(\kappa_r, t_i) \lesssim (t_i-\kappa_r)^{\alpha+H+1}\Delta^{H-\half}$.
\end{lemma}
%%%%%%%%%%%%%%%%%%%%%%%%%%%%%%%%%%%%%%%%
\begin{lemma}\label{lem:Bound4331}
$\displaystyle |\XXi_{4331}|\lesssim  (t_i-\kappa_r)^{2H}$.
\end{lemma}
\begin{lemma}\label{lem:Bound4332}
$\displaystyle |\XXi_{4332}| 
\lesssim (t_i-\kappa_r)^{3H}\Delta^{H}$.
\end{lemma}
\begin{lemma}\label{lem:Bound4333}
$\displaystyle |\XXi_{4333}|\lesssim (t_i-\kappa_r)^{2H}$.
\end{lemma}
\begin{lemma}\label{lem:Bound4334}
$\displaystyle |\XXi_{4334}|\lesssim (t_i-\kappa_r)^{2H}$.
\end{lemma}

%%%%%%%%%%%%%%%%%%%%%%%%%%%%%%%%%%%%%
\begin{proof}[Proof of Lemma~\ref{lem:Beta1}]

In the following, we use $(v-q)^\alpha=(v-q)^{-H} (v-q)^{\alpha+H}\le (\kappa_r-q)^{-H}(v-\kappa_r)^{\alpha+H}$, for all~$q\le \kappa_r \le v$ :
\begin{align*}
 &  \int_0^{\kappa_r} \int_{\kappa_r}^{t_i} (v-q)^{\alpha}\D v K(\kappa_{q}+\Delta,q)K(\kappa_r,q)\dq\\
 & \lesssim \int_0^{\kappa_r} (\kappa_r-q)^{-H}\left(\int_{\kappa_r}^{t_i} (v-\kappa_r)^{\alpha+H}\D v \right)K(\kappa_{q}+\Delta,q)K(\kappa_r,q)\dq\\
    & =  (t_i-\kappa_r)^{\alpha+H+1}  \int_0^{\kappa_r} (\kappa_r-q)^{-H}K(\kappa_{q}+\Delta,q)K(\kappa_r,q)\dq\\
 &  =  (t_i-\kappa_r)^{\alpha+H+1}  \int_0^{\kappa_r} (\kappa_r-q)^{-\half}((\kappa_{q}+\Delta)-q)^{H-\half}\dq\\
    & =  (t_i-\kappa_r)^{\alpha+H+1}  \left\{\sum_{j=0}^{\tau(r)-3}\int_{t_j}^{t_{j+1}}(\kappa_r-q)^{-\half}(t_{j+1}-q)^{H-\half}\dq+\int_{\kappa_r-\Delta}^{\kappa_r}(\kappa_r-q)^{H-1}\dq\right\}\\
    & \lesssim (t_i-\kappa_r)^{\alpha+H+1}  \left\{\sum_{j=0}^{\tau(r)-3}(\kappa_r-t_{j+1})^{-\half}\int_{t_j}^{t_{j+1}}(t_{j+1}-q)^{H-\half}\dq+\Delta^H\right\}\\
    & \lesssim \Delta^{H} (t_i-\kappa_r)^{\alpha+H+1}  \left\{\sum_{j=0}^{\tau(r)-3}(\tau(r)-1-({j+1}))^{-\half}+1\right\}
    \lesssim \Delta^{H-\half} (t_i-\kappa_r)^{\alpha+H+1},
\end{align*}
where we estimated the sum in the following way with $k=\tau(r)-1$:
\begin{align*}
\sum_{j=0}^{k-2}({k}-({j+1}))^{-\half}+1
=\sum_{j=1}^{k-1}j^{-\half}+1
& \le \int_{0}^{k-1}s^{-\half}\D s+1\\
 & = \frac{(k-1)^{\half}}{2}+1
  =\frac{\sqrt{t_{k-1}}+\Delta^{\half}}{\Delta^{\half} }.
\end{align*}
\end{proof}

\begin{proof}[Proof of Lemma~\ref{lem:Bound4331}]
A similar argument as for Lemma~\ref{lemma:BoundDiscreteIntegral} entails
\begin{align*}
\EE\left[\abs{\varphi_r(t)\phi^{(4)}(\XTb)\left(\int_{\kappa_q +\Delta}^t \psi'(V_{{\kappa_s}})K({\kappa_s+\Delta},q)\D B_s\right)\Gg_q}^2\right]
    \lesssim \left(1+K(\kappa_q+\Delta,q)\sqrt{\Delta}\right)^2,
\end{align*}
while It\^o isometry and routine integration yield
\begin{align*}
    \EE\left[\abs{\int_0^q (v-x)^{H-\halft}\D W_x }^2   \right]^\half
    =\Bigg(\int_0^q (v-x)^{2H-3}\D x\Bigg)^{\frac{1}{2}}
    & \lesssim\Bigg((v-q)^{2H-2}- v^{2H-2}\Bigg)^{\frac{1}{2}}\\
    & \lesssim(v-q)^{H-1}.
\end{align*}
Therefore, thanks to Lemma~\ref{lem:Beta1}, we conclude  
\begin{align*}
    \XXi_{4331} 
    & \lesssim \int_0^{\kappa_r} \left(\int_{\kappa_r}^{t_i} (v-q)^{H-1}\D v \right) \left(1+ K(\kappa_{q}+\Delta,q) \sqrt{\Delta} \right) K(\kappa_r,q)\dq \\
    & = \int_0^{\kappa_r} \left(\int_{\kappa_r}^{t_i} (v-q)^{2H-1}(v-q)^{-H}\D v \right) K(\kappa_r,q)\dq
     +\sqrt{\Delta}\Pf_{H-1}(\kappa_r, t_i)\\
    &\lesssim \int_0^{\kappa_r} \left(\int_{\kappa_r}^{t_i} (v-\kappa_r)^{2H-1}\D v \right) (\kappa_r-q)^{-\half}\dq
    +\sqrt{\Delta}\Pf_{H-1}(\kappa_r, t_i)\\
    & \lesssim  (t_i-\kappa_r)^{2H}+ \Delta^{H} (t_i-\kappa_r)^{2H}.
\end{align*}
\end{proof}

\begin{proof}[Proof of Lemma~\ref{lem:Bound4332}]
Since  $\XXi_{4332} = \sqrt{\Delta}
\Pf_{2H-1}(\kappa_r,t_i)$,
with~$\Pf_{2H-1}$ introduced in~\eqref{eq:Beta1},
the result follows from Lemma~\ref{lem:Beta1}.
\end{proof}
%%%%%%%%%%%%%%%%%%%%%%%%%%%%%%%%%%%%%%%%%%%%%%%%%%%%%%%%
\begin{proof}[Proof of Lemma~\ref{lem:Bound4333}]
For the term~$\XXi_{4333}$ we need sharp bounds, hence we applying Malliavin integration by parts, which generates several new terms
(here $\phi_{t} := \phi(\XTb)$ and likewise for its derivatives):
\begin{align}
    &\EE\left[ \varphi_r(t)\phi^{(4)}_{t}\left\{ \int_t^T \psi'(V_s)K(s,q)\D B_s  + \rho \psi (V_{\kappa_q})\right\}
     \EE_q[(\psi^2)''(V_{t_i})]\int_0^q (v-x)^{H-\halft}\D W_x\right] \nonumber\\
     &= \int_0^q\EE\left[ \Df_x^W \varphi_r(t)\phi^{(4)}_{t}\left\{ \int_t^T \psi'(V_s)K(s,q)\D B_s  + \rho \psi (V_{\kappa_q})\right\}
     \EE_q[(\psi^2)''(V_{t_i})] \right] (v-x)^{H-\halft}\D x \nonumber\\
     &\quad+\int_0^q\EE\left[  \varphi_r(t)\Df_x^W\phi^{(4)}_{t}\left\{ \int_t^T \psi'(V_s)K(s,q)\D B_s  + \rho \psi (V_{\kappa_q})\right\}
     \EE_q[(\psi^2)''(V_{t_i})] \right] (v-x)^{H-\halft}\D x \nonumber\\
     &\quad+\int_0^q\EE\left[  \varphi_r(t)\phi^{(4)}_{t}\Df_x^W\left\{ \int_t^T \psi'(V_s)K(s,q)\D B_s  + \rho \psi (V_{\kappa_q})\right\}
     \EE_q[(\psi^2)''(V_{t_i})] \right] (v-x)^{H-\halft}\D x \nonumber\\
     &\quad+\int_0^q\EE\left[  \varphi_r(t)\phi^{(4)}_{t}\left\{ \int_t^T \psi'(V_s)K(s,q)\D B_s  + \rho \psi (V_{\kappa_q})\right\}
     \Df_x^W\EE_q[(\psi^2)''(V_{t_i})] \right] (v-x)^{H-\halft}\D x \nonumber\\
     &=\int_0^q\EE\left[ \EE_r[(\psi^2)''(V_t)]\phi^{(4)}_{t}\left\{ \int_t^T \psi'(V_s)K(s,q)\D B_s  + \rho \psi (V_{\kappa_q})\right\}
     \EE_q[(\psi^2)''(V_{t_i})] \right] K(t,x)(v-x)^{H-\halft}\D x \label{eq:Bound4333_line1} \\
     &\quad+\int_0^q\EE\bigg[  \varphi_r(t)\phi^{(5)}_{t}
     \left\{\int_x^t \psi'(V_{\kappa_s})K(\kappa_s,x)\D B_s+ \int_t^T \psi'(V_s)K(s,x)\D B_s  + \rho \psi (V_{\kappa_x})\right\}\cdot \label{eq:Bound4333_line2}\\
     &\qquad \cdot
     \left\{ \int_t^T \psi'(V_s)K(s,q)\D B_s  + \rho \psi (V_{\kappa_q})\right\}
     \EE_q[(\psi^2)''(V_{t_i})] \bigg] (v-x)^{H-\halft}\D x \nonumber\\
      &\quad+\int_0^q\EE\left[  \varphi_r(t)\phi^{(4)}_{t}\left\{ \int_t^T \psi''(V_s)K(s,x)K(s,q)\D B_s  + \rho \psi'(V_{\kappa_q})K(\kappa_q,x)\right\}
     \EE_q[(\psi^2)''(V_{t_i})] \right] (v-x)^{H-\halft}\D x \label{eq:Bound4333_line3}\\
     &\quad+\int_0^q\EE\left[  \varphi_r(t)\phi^{(4)}_{t}\left\{ \int_t^T \psi'(V_s)K(s,q)\D B_s  + \rho \psi (V_{\kappa_q})\right\}
     \EE_q[(\psi^2)''(V_{t_i})] \right] K(t_i,x) (v-x)^{H-\halft}\D x \label{eq:Bound4333_line4}\\
     &\lesssim \int_0^q K(t_i,x) (v-x)^{H-\halft}\dx
      + \int_0^q \left\{(1+K(\kappa_x+\Delta,x) \sqrt{\Delta}\right\} (v-x)^{H-\halft}\dx 
      + \int_0^q K(\kappa_q,x) (v-x)^{H-\halft}\dx \nonumber\\
     &=: \XXi_{43331}(v,q) + \XXi_{43332}(v,q) + \XXi_{43333}(v,q). \nonumber
\end{align}
In the last step we leveraged the fact that the expectations in~\eqref{eq:Bound4333_line1} and~\eqref{eq:Bound4333_line4} are bounded uniformly, hence both these terms contribute to~$\XXi_{43331}$. 
The estimate for line~\eqref{eq:Bound4333_line2} is a consequence of Lemma~\ref{lemma:BoundDiscreteIntegral} and it contributes to~$\XXi_{43332}$. 
Then we used Cauchy-Schwarz inequality to estimate~\eqref{eq:Bound4333_line3} as follows:
\begin{align*}
    &\EE\left[  \varphi_r(t)\phi^{(4)}_{t}\int_t^T \psi''(V_s)K(s,x)K(s,q)\D B_s  
     \EE_q\left[(\psi^2)''(V_{t_i})\right] \right]\\
     &\lesssim \left| \int_t^T \EE[\psi''(V_s)^2] K(s,x)^2 K(s,q)^2 \D s\right|^{\half}\\
     & \lesssim \left|\int_t^T  K(s,q)^2 \D s\,\sup_{s\in\TT}\EE[\psi''(V_s)^2]  K(t,x)^2\right|^{\half}\\
     & \lesssim K(t,x) \le K(t_i,x).
\end{align*}
Thus, this term contributes to~$\XXi_{43333}$.
Note that, for all $q<\kappa_r<v$, then
\begin{align*}
    \XXi_{43331}(v,q) 
    \le \int_0^q  (v-x)^{H-\halft}\dx \, K(t_i,q) 
    \lesssim (v-q)^{H-\half}  K(t_i,\kappa_r)
    \le  K(\kappa_r,q)K(t_i,\kappa_r).
\end{align*}
Plugging this back into the double integral  {for $\XXi_{4333}$} we obtain
\begin{align*}
    \int_0^{\kappa_r}\left(\int_{\kappa_r}^{t_i} \XXi_{43331}(v,q) \dv \right)K(\kappa_r,q)\dq
    \lesssim \left(\int_{\kappa_r}^{t_i} \dv \right)K(t_i,\kappa_r) \int_0^{\kappa_r}K(\kappa_r,q)^2 \dq
    \lesssim (t_i-\kappa_r)^{H+\half}.
\end{align*}
 {Recall that $\kappa_q=t_{\tau(q)-1}$.} 
For the term~$\XXi_{43332}$ we investigate
\begin{equation}\label{eq:int_K_kappa}
    \int_0^q K(\kappa_x+\Delta,x)\dx 
    =\sum_{j=1}^{\tau(q)-1} \int_{t_{j-1}}^{t_j} K(t_j,x)\dx + \int_{\kappa_q}^q K(\kappa_q+\Delta,x)\dx 
    \lesssim \sum_{j=1}^{\tau(q)} \Delta^{H+\half} \le \Delta^{H-\half}.
\end{equation}
Once we insert this back in the double integral, since $1\lesssim K(t_i,x)$, it entails
\begin{align*}
&\int_0^{\kappa_r}\left(\int_{\kappa_r}^{t_i} \XXi_{43332}(v,q) \dv\right) K(\kappa_r,q)\dq\\
    &\lesssim \int_0^{\kappa_r}\left(\int_{\kappa_r}^{t_i} \XXi_{43331}(v,q) \dv\right)K(\kappa_r,q)\dq\\ 
    & \qquad + \sqrt{\Delta} \int_0^{\kappa_r}\int_{\kappa_r}^{t_i}\left(\int_0^q K(\kappa_x+\Delta,x)\dx\right)  (v-q)^{H-\halft}\dv K(\kappa_r,q)\dq \\
    &\lesssim (t_i-\kappa_r)^{H+\half} + \Delta^H \int_0^{\kappa_r} \Big( K(\kappa_r,q)-K(t_i,q)\Big)K(\kappa_r,q)\dq \\
    & \lesssim (t_i-\kappa_r)^{H+\half} + \Delta^H(t_i-\kappa_r)^{2H},
\end{align*}
where we leveraged Lemma~\ref{lemma:ti_tj} with $\alpha=H-\half$ to obtain the last inequality. 
Finally, we exploit~$(v-x)^{H-\halft}\le (v-q)^{H-\halft}$ with~$x\le q$ in the last term to reach, thanks to Lemma~\ref{lemma:ti_tj},
\begin{align*}
    & \int_0^{\kappa_r}\left(\int_{\kappa_r}^{t_i} \XXi_{43333}(v,q) \dv\right) K(\kappa_r,q)\dq\\
    &\le \int_0^{\kappa_r}\left(\int_0^q K(\kappa_q,x)\dx\right)  \left(\int_{\kappa_r}^{t_i}(v-q)^{H-\halft}\dv\right) K(\kappa_r,q)\dq \\
    &\lesssim \int_0^{\kappa_r} \Big( K(\kappa_r,q)-K(t_i,q)\Big)K(\kappa_r,q)\dq
    \lesssim (t_i-\kappa_r)^{2H}.
\end{align*}
\end{proof}

%%%%%%%%%%%%%%%%%%%%%%%%%%%%%%%%%%%%%%%%ù
\begin{proof}[Proof of Lemma~\ref{lem:Bound4334}]
Since
$\int_{\kappa_r}^{t_i} (v-q)^{2H-1}\D v 
    \leq \int_{\kappa_r}^{t_i} (v-\kappa_r)^{2H-1}\D v 
    \lesssim (t_i-\kappa_r)^{2H}$,
\begin{align*}
\XXi_{4334} = \int_0^{\kappa_r}  \left(\int_{\kappa_r}^{t_i} (v-q)^{2H-1}\dv \right)   K(\kappa_r,q)\dq
 & \lesssim 
(t_i-\kappa_r)^{2H}
\int_0^{\kappa_r}K(\kappa_r,q)\dq\\
 & \lesssim 
(t_i-\kappa_r)^{2H}.
\end{align*}
\end{proof}

\begin{proof}[Proof of Lemma~\ref{lem:434}]
By Lemma~\ref{lemma:BoundDiscreteIntegral} we get the following decomposition:
\begin{align*}
    & |\XXi_{434}(r) | \\
    & \leq \int_0^{t_i}\EE\Bigg[ \varphi_r(t) \phi^{(4)}(\XTb) 
    \Bigg\{ \int_q^t \psi'(V_{\kappa_{s}})K(\kappa_{s},q)\D B_q + \int_t^T \psi'(V_s)K(s,q)\D B_q + \rho\psi(V_{\kappa_r}) \Bigg\}\EE_q[\psi'(V_{t_i})]\Bigg]\\
    &\qquad \cdot \Big(K(\kappa_r,q)-K(t_i,q)\Big)\dq\\
    & \lesssim \int_0^{t_i} \Big(\sqrt{\Delta} K(\kappa_q+\Delta,q)+1\Big)\Big(K(\kappa_r,q)-K(t_i,q)\Big)\dq
    =: \XXi_{4341}(r) +\XXi_{4342}(r).
\end{align*}
For $j\le \tau(r)-2$, let us define
\begin{equation}\label{eq:def_vartheta}
    \vartheta_j:= \int_{t_j}^{t_{j+1}} K(t_{j+1},q)\big(K(\kappa_r,q)-K(t_i,q)\big)\dq .    
\end{equation}
Recall that~$\kappa_r=t_{\tau(r)-1}$. Thus, we have
\begin{align}
    |\XXi_{4341}(r)|
    & { \leq} \sqrt{\Delta}\Bigg\{ \sum_{j=0}^{\tau(r)-3}\int_{t_j}^{t_{j+1}} K(t_{j+1},q)\big(K(\kappa_r,q)-K(t_i,q)\big)\dq\\ & \qquad\qquad \qquad  +\int_{\kappa_r-\Delta}^{\kappa_r} K(\kappa_r,q)\big(K(\kappa_r,q)-K(t_i,q)\big)\dq \nonumber\\
    & \qquad\qquad \qquad  +\sum_{j=\tau(r)-1}^{i-2}\int_{t_j}^{t_{j+1}}K(t_{j+1},q)K(t_i,q)\dq + \int_{t_{i-1}}^{t_{i}}K(t_i,q)^2\dq\Bigg\}\nonumber\\
    &  \lesssim \sqrt{\Delta}\Bigg\{ \sum_{j=0}^{\tau(r)-3}\vartheta_j +\int_{\kappa_r-\Delta}^{\kappa_r} K(\kappa_r,q)^2\dq\nonumber\\
    & \qquad\qquad \qquad \quad +\sum_{j=\tau(r)-1}^{i-2}K(t_i,t_{j+1})\int_{t_j}^{t_{j+1}}K(t_{j+1},q)\dq + \Delta^{2H}\Bigg\}\nonumber\\
    & \lesssim \sqrt{\Delta}\Bigg\{ \sum_{j=0}^{\tau(r)-3}\vartheta_j + \Delta^{2H} +\sum_{j=\tau(r)-1}^{i-2}K(t_i,t_{j+1})\Delta^{H+\half} + \Delta^{2H}\Bigg\} \label{eq:comps_4341}\\
\end{align}
We note that, for $q\in[t_j,t_{j+1})$ and $t_{j+1}<\kappa_r$,
\begin{align*}
    K(\kappa_r,q)-K(t_i,q)
    & \lesssim
    \int_{\kappa_r}^{t_i} (s-q)^{H-\halft}\ds
    \le (\kappa_r-t_{j+1})^{-H-1} \int_{\kappa_r}^{t_i} (s-\kappa_r)^{2H-\half}\ds\\
    &=\Delta^{-H-1} (\tau(r)-j-2)^{-H-1}(t_i-\kappa_r)^{2H+\half}.
\end{align*}
Therefore, from~\eqref{eq:def_vartheta}, we obtain
\begin{align*}
\sqrt{\Delta} \sum_{j=0}^{\tau(r)-3}\vartheta_j
    \lesssim  (t_i-\kappa_r)^{2H+\half}\sum_{ j=0}^{\tau(r)-3}(\tau(r)-j-2)^{-H-1}
    \lesssim (t_i-\kappa_r)^{2H+\half},
\end{align*}
and
\begin{align*}
\sum_{j=\tau(r)-1}^{i-2}K(t_i,t_{j+1})
    \lesssim \sum_{j=\tau(r)-1}^{i-2} (i-(j+1))^{H-\half} \Delta^{H-\half} 
    & \lesssim \Delta^{H-\half} (i-\tau(r)-1)^{H+\half}\\
    & = \Delta^{-1} (t_i-\kappa_r)^{H+\half},
\end{align*}
where we exploited $i-\tau(r)-1=\Delta^{-1} (t_i-\kappa_r)$. 
Overall, from~\eqref{eq:comps_4341}, we get
\begin{align*}
    \XXi_{4341}(r)
    \lesssim (t_i-\kappa_r)^{2H+\half}+\Delta^H(t_i-\kappa_r)^{H+\half} + \Delta^{2H+\half}.
\end{align*}
Furthermore, 
$
    \abs{\XXi_{4342}(r)} 
    \lesssim \int_0^{t_i} \abs{ K(\kappa_r,q) - K(t_i,q)} \dq \lesssim (t_i-\kappa_r)^{H+\half}$, by Lemma~\ref{lemma:ti_tj}
    and the claim follows.
\end{proof}

%%%%%%%%%%%%%%%%%%%%%%%%%%%%
%%%%%%%%%%%%%%%%%%%%%%%%%%%%

\subsection{Bound for $\Xi_5$}
Recall that $\Gg_r=\sup_{s\in\TT,n=2,3} \EE_r[\psi^{(n)}(V_s)]$ and define 
\begin{align*}
Z(t,r)
:=\phi^{(3)}(\XTb) \left(\int_{\kappa_r+2\Delta}^t\psi'(V_{\kappa_s})K(\kappa_s,r)\D B_s + \int_t^T \psi'(V_{s})K(s,r)\D B_s +  {\rho}\psi(V_{\kappa_r})\right),
\end{align*}
where we interpret the first  integral to be null if~$\kappa_r+2\Delta\ge t$, i.e. $r\ge t_{i-1}$. 
Note that~$Z(t,r)$ is uniformly bounded in $L^p$ for all $p>1$, by virtue of the growth of $\phi,\psi$ and their derivatives, and the proof of Lemma~\ref{lemma:BoundDiscreteIntegral}. Equation~\eqref{eq:exp_Uu} shows that
\begin{align*}
\Uu_r
    &=\phi^{(3)}(\XTb)\int_{\kappa_r+\Delta}^{\kappa_r+2\Delta}\psi'(V_{\kappa_s})K(\kappa_s,r)\D B_s + Z(t,r)\\
    &= \phi^{(3)}(\XTb)\psi'(V_{{\kappa_r+\Delta}})(B_{{\kappa_r+2\Delta}}-B_{{\kappa_r+\Delta}})K({\kappa_r+\Delta},r) + Z(t,r).
\end{align*}
Exploiting this and Lemma~\ref{lemma:Delta_varphi} yields,
writing and $\boldsymbol{W}_{r,s} := \int_0^r (s-u)^{H-\frac{3}{2}}\D W_u
$,
\begin{align}
    & \XXi_5 
    = \EE\Bigg[\int_0^{t_i} \Uu_r \big(\varphi_r(t)-\varphi_r(t_i)\big) K(t_i,r)\dr\Bigg] \\ \nonumber
    & \leq \EE\Bigg[\int_0^{t_i} \Bigg( \EE_t\Bigg[\phi^{(3)}(\XTb)\psi'(V_{{\kappa_r+\Delta}})(B_{{\kappa_r+2\Delta}}-B_{{\kappa_r+\Delta}})\Bigg]K({\kappa_r+\Delta},r) + Z(t,r) \Bigg) \cdot \\ \nonumber
    & \qquad \cdot \Bigg( \int_{t_i}^t \left(\EE_r[(\psi^2)''(V_{s})]\boldsymbol{W}_{r,s} + \half\EE_r[(\psi^2)^{(3)}(V_{s})] (s-r)^{2H-1}\right) \ds \Bigg) K(t_i,r)\dr\Bigg] \\ \nonumber
    & \leq \int_0^{t_i} \EE\Bigg[\phi^{(3)}(\XTb)\psi'(V_{{\kappa_r+\Delta}})(B_{{\kappa_r+2\Delta}}-B_{{\kappa_r+\Delta}})\mathcal{G}_r\int_{t_i}^t
    \left|\boldsymbol{W}_{r,s} 
    \right|
    \ds  \Bigg] K({\kappa_r+\Delta},r)K(t_i,r)\dr \\ \nonumber
    &\qquad + \int_0^{t_i} \EE\Bigg[\phi^{(3)}(\XTb)\psi'(V_{{\kappa_r+\Delta}})\mathcal{G}_r(B_{{\kappa_r+2\Delta}}-B_{{\kappa_r+\Delta}})\Bigg]K({\kappa_r+\Delta},r)  \int_{t_i}^t (s-r)^{2H-1}  \ds  K(t_i,r)\dr \\ \nonumber
    &\qquad +  \int_0^{t_i}\EE\left[ Z(t,r) \int_{t_i}^t \EE_r[(\psi^2)''(V_{s})]\boldsymbol{W}_{r,s}\ds\right] K(t_i,r)\dr\\ \nonumber
    &\qquad + \int_0^{t_i} \EE\Big[ Z(t,r) \mathcal{G}_r \Big] \left(\int_{t_i}^t (s-r)^{2H-1} \ds\right)  K(t_i,r)\dr =: \sum_{i=1}^4 \XXi_{5i},
\end{align}
where we used~\eqref{eq:varphi_prime}.
We shall need the following lemma, mimicking Lemma~\ref{lem:Beta1}, but with slightly different arguments in with respect to the ones used in~\eqref{eq:Beta1}.
\begin{lemma}\label{lem:Bound_Two_Int2}
For $\alpha\in(-1,0)$ and $t \in [t_i,t_{i+1})$,
$$
\Pf_{\alpha}(t_i,t)
\lesssim 
\Delta^{2H+\alpha+1}
\left(\Delta^{-\alpha-H-\half} \one_{\alpha+H>-\half} +  \one_{\alpha+H<-\half} 
+|\log(\Delta)|\one_{\alpha+H=-\half} + 1\right).
$$
\end{lemma}
\begin{proof}
Since 
$
    \int_{t_i}^t (t_i-t_{j+1})^{\alpha} \ds
    \lesssim (i-j-1)^{\alpha}\Delta^{1+\alpha}
$, 
$
    \int_{t_j}^{t_{j+1}}K(t_{j+1},r)\dr 
    = \frac{(t_{j+1}-t_{j})^{H+\half}}{H+\half}
    \lesssim \Delta^{H+\half}
$ 
and  
$
    \int_{t_i}^t (s-t_i)^{\alpha}\ds
    = \frac{1}{1+\alpha}(t-t_i)^{1+\alpha}
    \lesssim \Delta^{1+\alpha}
$,
then we obtain
\begin{align*}
\Pf_{\alpha}(t_i,t)
 & =  \int_0^{t_i} \left(\int_{t_i}^t (s-r)^{\alpha}\ds \right) K(\kappa_r+\Delta,r)K(t_i,r)\dr\\
    &\le \sum_{j=0}^{i-3} \int_{t_j}^{t_{j+1}} \left(\int_{t_i}^t (t_i-t_{j+1})^{\alpha} \ds\right) K(t_{j+1},r)(t_i-t_{j+1})^{H-\half} \dr\\ 
    &\qquad+  \int_{t_{i-2}}^{t_{i-1}} \left(\int_{t_i}^t (s-t_i)^{\alpha}\ds\right) K(t_{i-1},r)K(t_i,r)\dr
    +  \int_{t_{i-1}}^{t_{i}} \left(\int_{t_i}^t (s-t_i)^{\alpha}\ds\right) K(t_{i},r)^2\dr\\
    &\lesssim \Delta^{2H+\alpha+1}  \sum_{j=0}^{i-3} (i-j-1)^{\alpha+H-\half}
     + \Delta^{1+\alpha} \left(\int_{t_{i-2}}^{t_{i-1}}K(t_{i-1},r)^2\dr + \int_{t_{i-1}}^{t_{i}}K(t_{i},r)^2\dr \right)\\
    &\lesssim \Delta^{2H+\alpha+1}
\left(\Delta^{-\alpha-H-\half} \one_{\alpha+H>-\half} +  \one_{\alpha+H<-\half} -\log(\Delta)\one_{\alpha+H=-\half}\right)
     + \Delta^{2H+\alpha+1}.
\end{align*}
Indeed, 
since $\alpha+H-\half<0$ by assumption, then we have
\begin{align*}
\sum_{j=0}^{i-3} 
(i-j-1)^{\alpha+H-\half}
& \leq 
 \int_{0}^{i-2}
(i-s-1)^{\alpha+H-\half}\ds\\
 & =
 \left\{
 \begin{array}{ll}
 \displaystyle \frac{(i-1)^{\alpha+H+\half} - 1}{\alpha+H+\half}
  \lesssim \Delta^{-\half-\alpha-H}, & \text{if }\alpha+H>-\half,\\
 \displaystyle \frac{(i-1)^{\alpha+H+\half} - 1}{\alpha+H+\half}
  \lesssim 1, & \text{if }\alpha+H<-\half,\\
   \displaystyle \log(i-1)\sim 
   |\log(\Delta)|, & \text{if }\alpha+H=-\half.
  \end{array}
\right.
\end{align*}
\end{proof}
\begin{proposition}
$\displaystyle
    |\XXi_{5}| 
    \lesssim \bigstar(\Delta)$.
\end{proposition}
\begin{proof}
Since 
$|\XXi_{5}| \lesssim \sum_{i=1}^4 \XXi_{5i}$,
the result follows from Lemmas~\ref{lem:Bound51}-\ref{lem:Bound52}-\ref{lem:Bound53}-\ref{lem:Bound54}.
\end{proof}
\begin{lemma}\label{lem:Bound51}
$\displaystyle \abs{\XXi_{51}} 
\lesssim \sqrt{\Delta} \Pf_{H-1}(t_i,t)\lesssim \bigstar(\Delta)$. \end{lemma}

\begin{lemma}\label{lem:Bound52}
$\displaystyle \abs{\XXi_{52}}
\lesssim 
    \Delta^{4H+\half}
\left(\Delta^{\half-3H} \one_{H>\halfs}
+  \one_{H<\halfs}
+|\log(\Delta)|\one_{H=\halfs} + 1\right)
\lesssim \bigstar(\Delta).
$
\end{lemma}

\begin{lemma}\label{lem:Bound53}
$\displaystyle
\abs{\XXi_{53}} 
\lesssim \bigstar(\Delta)$.
\end{lemma}

\begin{lemma}\label{lem:Bound54}
$\displaystyle
\abs{\XXi_{54}} 
\lesssim \bigstar(\Delta)$.
\end{lemma}

\begin{proof}[Proof of Lemma~\ref{lem:Bound51}]
By Cauchy-Schwarz and It\^o isometry, we write
\begin{align*}
    \abs{\XXi_{51}}
    &\lesssim \sqrt{\Delta} \int_0^{t_i} \left(\int_{t_i}^t \left\{\int_0^r (s-u)^{2H-3}\du\right\}^\half\ds \right) K(\kappa_r+\Delta,r)K(t_i,r)\dr\\
    &\lesssim \sqrt{\Delta} \int_0^{t_i} \left(\int_{t_i}^t (s-r)^{H-1}\ds \right) K(\kappa_r+\Delta,r)K(t_i,r)\dr
    = \sqrt{\Delta} \Pf_{H-1}(t_i,t),
\end{align*}
and the proof follows from  Lemma~\ref{lem:Bound_Two_Int2} since
\begin{align*}
\sqrt{\Delta} \Pf_{H-1}(t_i,t)
    & \lesssim \Delta^{3H+\half} \left[\Delta^{\half-2H} \one_{H>\frac{1}{4}} +  \one_{H<\frac{1}{4}} +|\log(\Delta)|\one_{H=\frac{1}{4}} + 1\right]\\
     & \lesssim \Delta^{(3H+\half)\land 1}
     \lesssim\bigstar(\Delta).
\end{align*}
\end{proof}

\begin{proof}[Proof of Lemma~\ref{lem:Bound52}]
As in the~$\XXi_{51}$ case, Cauchy-Schwarz and Lemma~\ref{lem:Bound_Two_Int2} yield
\begin{align*}
    \abs{\XXi_{52}} &\lesssim \sqrt{\Delta} \int_0^{t_i} \int_{t_i}^t (s-r)^{2H-1}\ds 
K(\kappa_r+\Delta,r)K(t_i,r)\dr
     = \sqrt{\Delta} \Pf_{2H-1}(t_i,t)
     \lesssim \bigstar(\Delta).
\end{align*}
\end{proof}

\begin{proof}[Proof of Lemma~\ref{lem:Bound54}]
It is immediate from
Lemma~\ref{lemma:bound_KDeltaK} since
$$
\abs{\XXi_{54}} 
\lesssim \int_0^{t_i} \left( (t_i-r)^{2H} -(t-r)^{2H}\right) K(t_i,r)\dr 
\lesssim \diamondsuit_{2H}(\Delta)
 = \bigstar(\Delta).$$
\end{proof}

\begin{proof}[Proof of Lemma~\ref{lem:Bound53}]
By IBP with respect to $W$, the inner integrand decomposes as
\begin{align*}
&\EE\left[Z(t,r)\EE_r[(\psi^2)''(V_{s})]\left(\int_0^r (s-u)^{H-\halft}\D W_u\right)\right]\\
    &= \EE\left[\int_0^r \Big(\Df^W_u Z(t,r) \EE_r[(\psi^2)''(V_{s})]+ Z(t,r)\Df^W_u \EE_r[(\psi^2)''(V_{s})]\Big) (s-u)^{H-\halft}\du\right] \\
    &= \int_0^r \EE\left[ \Df^W_u \phi^{(3)}_{t}\left(\int_{\kappa_r+2\Delta}^T \psi'(V_{\kappa_q^t})K(\kappa_q^t,r)\D B_q + \rho \psi(V_{\kappa_r})\right)(\psi^2)''(V_{s})\right] (s-u)^{H-\halft}\du\\
    &+  \int_0^r \EE\left[  \phi^{(3)}_{t} \Df^W_u \left(\int_{\kappa_r+2\Delta}^T \psi'(V_{\kappa_q^t})K(\kappa_q^t,r)\D B_q + \rho \psi(V_{\kappa_r})\right)(\psi^2)''(V_{s})\right] (s-u)^{H-\halft}\du\\
    & +  \int_0^r \EE\left[ \phi^{(3)}_{t}\left(\int_{\kappa_r+2\Delta}^T \psi'(V_{\kappa_q^t})K(\kappa_q^t,r)\D B_q +  \rho\psi(V_{\kappa_r})\right) \Df^W_u (\psi^2)''(V_{s})\right] (s-u)^{H-\halft}\du\\
    &=: \XXi_{531}(s,r) + \XXi_{532}(s,r) + \XXi_{533}(s,r),
\end{align*}
again writing $\phi^{(3)}_{t}:=\phi^{(3)}(\XTb)$, 
and the proof follows from the three lemmas below.
\end{proof}

%%%%%%%%%%%%%%%%%%%%%%%%%%%%%%%%%%%%%%%%%%%%%%%%%
\begin{lemma}\label{lem:Bound531}
    $\displaystyle \int_0^{t_i} \left(\int_{t_i}^t \abs{\XXi_{531}(s,r)} \ds \right)K(t_i,r)\dr\lesssim \Delta^{1+H}$.
\end{lemma}
\begin{lemma}\label{lem:Bound532}
    $\displaystyle \int_0^{t_i} \left(\int_{t_i}^t \abs{\XXi_{532}(s,r)} \ds \right)K(t_i,r)\dr\lesssim \bigstar(\Delta)$.
\end{lemma}
\begin{lemma}\label{lem:Bound533}
    $\displaystyle \int_0^{t_i} \left(\int_{t_i}^t \abs{\XXi_{533}(s,r)} \ds \right)K(t_i,r)\dr\lesssim \bigstar(\Delta)$.
\end{lemma}

\begin{remark}
    Some terms in the proofs below are reminiscent of the integrals in $\XXi_{4333}$. We however here aim for a lower error ($\bigstar(\Delta)$ vs $\Delta^{2H})$, which requires sharper estimates.
\end{remark}
The proofs are given in order of increasing length.
%%%%%%%%%%%%%%%%%%%%%%%%%%%%%%%%%%%%%%%%%%%%%%%%%%%%%%%%%%%%
\begin{proof}[Proof of Lemma~\ref{lem:Bound533}]
Since $Z(t,r)\in L^2$, then
\begin{align*}
     \abs{\XXi_{533}(s,r)} 
     & = \abs{\int_0^r \EE\left[Z(t,r) (\psi^2)^{(3)}(V_s) \right] K(s,u) (s-u)^{H-\halft}\du}\\
     & \lesssim \int_0^r (s-u)^{2H-2}\du
     \lesssim (s-r)^{2H-1},
\end{align*}
and with that we conclude, 
exploiting Lemma~\ref{lemma:bound_KDeltaK}:
\begin{align*}
    \int_0^{t_i} \left\{\int_{t_i}^t \abs{\XXi_{533}(s,r)} \ds \right\}K(t_i,r)\dr
    &\lesssim \int_0^{t_i} \left(\int_{t_i}^t (s-r)^{2H-1} \ds \right)K(t_i,r) \dr\\
    &\lesssim \int_0^{t_i} \Big[(t-r)^{2H}-(t_i-r)^{2H} \Big] K(t_i,r) \dr
    \lesssim \bigstar(\Delta).
\end{align*}
\end{proof}

%%%%%%%%%%%%%%%%%%%%%%%%%%%%%%%%%%%%%%%%%%%%%%%%%%%%%

\begin{proof}[Proof of Lemma~\ref{lem:Bound531}]
By virtue of Lemma~\ref{lemma:BoundDiscreteIntegral}, we estimate
\begin{align}
    \abs{\XXi_{531}}(s,r) 
    &=\int_0^r \EE\bigg[\phi^{(4)}(\XTb) \left(\int_u^T \psi'(V_{\kappa^t_q})K(\kappa^t_q,u)\D B_q + \rho\psi(V_{\kappa_r})\right) \cdot \nonumber\\
    &\qquad \left(\int_{\kappa_r+2\Delta}^T \psi'(V_{\kappa_q^t})K(\kappa_q^t,r)\D B_q + \rho\psi(V_{\kappa_r})\right)(\psi^2)''(V_{s})\bigg]  (s-u)^{H-\halft}\du \nonumber\\
    &\lesssim \int_0^r \Big( 1+ \sqrt{\Delta}K(\kappa_u+\Delta,u) \Big) (s-u)^{H-\halft}\du \label{eq:Xi_531}\\
    &\lesssim (s-r)^{H-\half} + \sqrt{\Delta}\sum_{\ell=0}^{\tau(r)-2} \int_{t_\ell}^{t_{\ell+1}} K(t_{\ell+1},u)(s-u)^{H-\halft} \du\\
    & \qquad\qquad \qquad + \sqrt{\Delta}\int_{\kappa_r}^r K(\kappa_r+\Delta,u)(s-u)^{H-\halft}\du\\
    &=: \XXi_{5311}(s,r) +\XXi_{5312}(s,r)+\XXi_{5313}(s,r).
\end{align}
$\mathbf{(\XXi_{5311})}$ 
To begin with, we have, by Lemma~\ref{lemma:bound_KDeltaK},
\begin{align*} 
 \int_0^{t_i}\int_{t_i}^t \XXi_{5311}(s,r) \ds K(t_i,r)\dr
    & =\int_0^{t_i} \int_{t_i}^t(s-r)^{H-\half} \ds K(t_i,r)\dr \\
    & \lesssim \int_0^{t_i} \Big[ (t_i-r)^{H+\half} - (t-r)^{H+\half} \Big]  K(t_i,r)\dr\\
    &= \diamondsuit_{H+\half}(\Delta) = \Delta.
    \end{align*} 
%%%%%%%%%%%%%%%%%%%%%%%%%%%%%%%%%%%%%%%%%%%%%%%%%%%%%%%%%%%%%%%%%%%%%%%%%%%

$\mathbf{(\XXi_{5312})}$ 
We have, for $\ell\le \tau(r)-2$ and $s\in[t_i,t]$, 
$$
\int_{t_\ell}^{t_{\ell+1}} K(t_{\ell+1},u)(s-u)^{H-\halft} \du
\le 
(t_i-t_{\ell+1})^{H-\halft} \int_{t_\ell}^{t_{\ell+1}} K(t_{\ell+1},u)\du
\le \Delta^{2H-1} (i-\ell-1)^{H-\halft}.
$$
We first deal with the interval $[t_{i-1},t_i]$, and exploiting the previous lines, we have
\begin{align*}
    \int_{t_{i-1}}^{t_i}\left[ \int_{t_i}^t \XXi_{5312}(s,r) \ds \right] K(t_i,r)\dr 
    &\le \sqrt{\Delta} \Delta^{2H-1} \int_{t_{i-1}}^{t_i} \int_{t_i}^t \ds\left[\sum_{\ell=0}^{\tau(r)-2} (i-\ell-1)^{H-\halft}\right] K(t_i,r)\dr\\
    &\le \Delta^{2H+\half} \int_{t_{i-1}}^{t_i} K(t_i,r) \dr \left[\sum_{\ell=0}^{i-2} (i-\ell-1)^{H-\halft}\right]
    \lesssim \Delta^{3H+1},
\end{align*}
since the sum is bounded, as it can be seen from~\eqref{eq:sum_iell} for $\alpha=0$.
We observe that, on $[0,t_{i-1}]$, 
\begin{align*}
    \int_0^{t_{i-1}}
    \left[ \int_{t_i}^t \XXi_{5312}(s,r)\ds \right]K(t_i,r)\dr 
    \le \sqrt{\Delta} \Delta^{2H-1} \int_0^{{ t_{i-1}}}\int_{t_i}^t \ds\left[\sum_{\ell=0}^{\tau(r)-2} (i-\ell-1)^{H-\halft}\right] K(t_i,r)\dr.
\end{align*}
The sum is estimated as the one above, albeit with the advantage that $\tau(r)\le i-1$:
\begin{align*}
    \sum_{\ell=0}^{\tau(r)-2} (i-\ell-1)^{H-\halft}
    \lesssim (i-\tau(r))^{H-\half} - (i-1)^{H-\half} 
    = \Delta^{\half-H} \Big[ (t_{i-1}-{ \kappa_r})^{H-\half} - 
    t_{i-1}^{H-\half}\Big],
\end{align*}
which entails 
\begin{align*}
    \int_0^{t_{i-1}}\left( \int_{t_i}^t \XXi_{5312}(s,r)\ds \right)K(t_i,r)\dr 
    &\lesssim \Delta^{1+H} \int_0^{t_{i-1}}  \Big[ (t_{i-1}-{ \kappa_r})^{H-\half} - 
    t_{i-1}^{H-\half}\Big] K(t_i,r) \dr\\
    &\lesssim \Delta^{1+H} \int_0^{t_{i-1}} (t_{i-1}-r)^{2H-1} \dr\lesssim \Delta^{1+H}.
\end{align*}
%%%%%%%%%%%%%%%%%%%%%%%%%%%%%%%%%%%%%%%%%
$\mathbf{(\XXi_{5313})}$ For the last term
\begin{align*}
    \sqrt{\Delta} \int_0^{t_i} \int_{t_i}^t & \XXi_{5313}(s,r) \ds K(t_i,r)\dr
    \lesssim \sqrt{\Delta} \int_0^{t_i} \int_{t_i}^t (s-r)^{H-\halft} \ds \int_{\kappa_r}^r K(r,u)\du K(t_i,r)\dr\\
    &\lesssim \Delta^{H+1} \int_0^{t_i} \Big[ (t_i-r)^{H-\half} -(t-r)^{H-\half}\Big]    K(t_i,r)\dr\\
    & \lesssim \Delta^{H+1} \diamondsuit_{H-\half}(\Delta)
     = \Delta^{3H+1} .
\end{align*}
\end{proof}
%%%%%%%%%%%%%%%%%%%%%%%%%%%%%%%%%%%%%%%%%%%%%%%%ù

\begin{proof}[Proof of Lemma~\ref{lem:Bound532}]
We compute
\begin{align*}
    &\Df_u^W \left(\int_{\kappa_r+2\Delta}^T \psi'(V_{\kappa_q^t})K(\kappa_q^t,r)\D B_q +  \rho\psi(V_{\kappa_r})\right)\\
& = \int_{\kappa_r+2\Delta}^t \psi''(V_{\kappa_q})K(\kappa_q,u)K(\kappa_q,r)\D B_q \\
 & \qquad + \int_t^T \psi''(V_{q})K(q,u)K(q,r)\D B_q
    + \rho  {\psi'(V_{\kappa_u^t})K(\kappa_u^t,r)}
    +\rho \psi'(V_{\kappa_r})K(\kappa_r,u),
\end{align*}
where $K(\kappa_u,r)=0$ since $u\le r$  {and we recall that $\kappa^t_q=\kappa_q \one_{q\le t} + q\one_{q>t}$}. 
By Cauchy-Schwarz inequality and It\^o isometry, 
\begin{align*}
    \abs{\XXi_{532}(s,r)} 
    &\lesssim \overbrace{\bigg\lvert\int_0^r \EE\left[ \phi^{(3)}(X^{t, \overline{X}_t}_T)(\psi^2)''(V_s)\int_{\kappa_r+2\Delta}^t \psi''(V_{\kappa_q}) K(\kappa_q,u) K(\kappa_q,r) \D B_q \right](s-u)^{H-\halft}\du\bigg\lvert}^{{=: \XXi_{5321}(s,r)}} \\
    & + \bigg\lvert\int_0^r \bigg\{ \EE\left[\int_t^T \psi''(V_{q})^2 K(q,u)^2 K(q,r)^2 \D q\right] + \EE\left[\psi'(V_{\kappa_r})^2 K(\kappa_r,u)^2 \right] \bigg\}^\half (s-u)^{H-\halft}\du\bigg\lvert \\
    &\lesssim \XXi_{5321}(s,r) + \int_0^r \bigg\{\int_t^T K(t,u)^2 K(q,t)^2\D q + K(\kappa_r,u)^2 \bigg\}^\half (s-u)^{H-\halft}\du \\
    &\lesssim \XXi_{5321}(s,r) + \int_0^r K(t,u)  (s-u)^{H-\halft}\du+ \int_0^r K(\kappa_r,u)  (s-u)^{H-\halft}\du \\
    &=: \XXi_{5321}(s,r) + \XXi_{5322}(s,r)+ \XXi_{5323}(s,r).
\end{align*}
Notice that $\XXi_{5321}(s,r)$ is zero if $\kappa_r+2\Delta \ge t$.
$\mathbf{(\XXi_{5322})}$. 
Since $u\le r\le s\le t$, then $K(t,u)\le K(s,r)$  and~$\int_0^r (s-u)^{H-\halft}\du \le K(s,r)$. 
Thus
\begin{align*}
    \int_0^{t_i} \Big(\int_{t_i}^t \int_0^r 
    & K(t,u)(s-u)^{H-\halft}\du \ds\Big) K(t_i,r)\dr
    \lesssim \int_0^{t_i} \left( \int_{t_i}^t K(s,r)^2 \ds\right) K(t_i,r)\dr\\
    &\lesssim \int_0^{t_i} K(t_i,r) \big( (t-r)^{2H} - (t_i-r)^{2H}\big) \dr = \diamondsuit_{2H}(\Delta)=\bigstar(\Delta),
\end{align*}
where in the last step we have exploited Lemma~\ref{lemma:bound_KDeltaK} for $\alpha=2H$.

%%%%%%%%%%%%%%%%%%%%%%%%%%%
$\mathbf{(\XXi_{5323})}$.
For all $j=0,\cdots,i-1$, let 
\begin{equation}\label{eq:xi_j}
\xi_j:=\int_{t_i}^t \int_0^{t_j}  K(t_j,u)  (s-u)^{H-\halft}\du \ds.
\end{equation}
We write
\begin{align}\label{Eq:fin_est_Xi5322}
    \int_0^{t_i}&  \left(\int_{t_i}^t \XXi_{5323}(s,r) \ds\right) K(t_i,r)\dr \\
    & =\sum_{j=0}^{i-1} \left(\int_{t_j}^{t_{j+1}} K(t_i,r)\dr\right) \int_{t_i}^t \int_0^{t_j} K(t_j,u)(s-u)^{H-\halft}\du\ds  \\\nonumber
    &\lesssim \sum_{j=0}^{i-2} \Delta^{H+\half} (i-j-1)^{H-\half}  \xi_j
    + \int_{t_{i-1}}^{t_i}K(t_i,r)\dr\, \xi_{i-1}\\
    & \lesssim \Delta^{H+\half} \sum_{j=0}^{i-2}  (i-j-1)^{H-\half} \xi_j+ \Delta^{H+\half}\xi_{i-1}.\nonumber
\end{align}

$\mathbf{(\XXi_{5321})}$.
Another application of the IBP formula to $ \XXi_{5321}(s,r)$ yields
\begin{align}\label{eq_expl_XI5321}
     \XXi_{5321}(s,r)
     &= \int_0^r \int_{\kappa_r+2\Delta}^t \EE\left[ \Df_q^B \left(\phi^{(3)}(X^{t, \overline{X}_t}_T)(\psi^2)''(V_s) \right)\psi''(V_{\kappa_q})\right] K(\kappa_q,u) K(\kappa_q,r) \D q\, (s-u)^{H-\halft}\du ,
\end{align}
where we compute, applying Lemma~\ref{lemma:BoundDiscreteIntegral} to the first term and noticing that the other expectations are uniformly bounded,
\begin{align*}
    \EE\bigg[ \Df_q^B &\left(\phi^{(3)}(X^{t, \overline{X}_t}_T)(\psi^2)''(V_s) \right)\psi''(V_{\kappa_q})\bigg]\\
    & =  \rho \EE\left[(\psi^2)''(V_s)\phi^{(4)}(X^{t, \overline{X}_t}_T)\psi''(V_{\kappa_q})\int_q^t\psi'(V_{\kappa_\ell})K(\kappa_\ell,q)\D B_\ell \right]\\
    &  +  \rho \EE\left[(\psi^2)''(V_s)\phi^{(4)}(X^{t,\overline{X}_t}_T)\psi''(V_{\kappa_q})\int_t^T\psi'(V_{\ell})K(\ell,q)\D B_\ell \right] \\
    &   +   \EE\left[(\psi^2)''(V_s)\phi^{(4)}(X^{t, \overline{X}_t}_T) \psi(V_{\kappa_q}) \psi''(V_{\kappa_q})\right] 
    +   \rho  \EE\left[\phi^{(3)}(X^{t, \overline{X}_t}_T)(\psi^2)^{(3)}(V_s) \psi''(V_{\kappa_q})\right] K(s,q)\\
    &\lesssim (1+K(\kappa_q+\Delta,q)\sqrt{\Delta})\one_{q<t_i}  + 1 + K(s,q).
\end{align*}
This implies, since $1\lesssim K(s,q)$,
\begin{align}
     \XXi_{5321}(s,r)
    & \lesssim \int_0^r  \left(\int_{\kappa_r+2\Delta}^t  \bigg[K(\kappa_q+\Delta,q)\sqrt{\Delta}\one_{q<t_i} +  K(s,q) \bigg] K(\kappa_q,u) K(\kappa_q,r)\D q \right)(s-u)^{H-\halft} \du \\
     & =: \XXi_{53211}(s,r)+  \XXi_{53212}(s,r).
\end{align}
Since, as mentioned before, the integral over the interval $(\kappa_r+2\Delta,t)$ is null if $r\ge t_{i-1}$, we write the following decomposition for the term we want to estimate:
\begin{align}
    \int_0^{t_i} \left(\int_{t_i}^t \XXi_{5321}(s,r) \ds\right) K(t_i,r)\dr
    \lesssim \int_0^{t_{i-1}} \left(\int_{t_i}^t \big(\XXi_{53211}(s,r)+  \XXi_{53212}(s,r)\big) \ds\right) K(t_i,r)\dr,
\end{align}
%%%%%%%%%%%%%%%%%%%%%%%%%%
With the definition of $\xi_j$ in~\eqref{eq:xi_j}. The following estimates hold:
\begin{lemma}\label{lemma:53211}
$\displaystyle \int_0^{t_{i-1}} \left(\int_{t_i}^t \XXi_{53211}(s,r)\ds\right) K(t_i,r)\dr
    \lesssim \Delta^{3H+1} \sum_{j=0}^{i-2} (i-j-1)^{2H} \xi_{j+1}$.
\end{lemma}
\begin{lemma}\label{lemma:53212}
    $\displaystyle\int_0^{t_{i-1}} \left(\int_{t_i}^t \XXi_{53212}(s,r)\ds\right) K(t_i,r)\dr \lesssim \Delta^{3H+\half
    } \sum_{j=0}^{i-2} (i-j-1)^{3H-\half}\xi_{j+1}$
\end{lemma}
%%%%%%%%%%%%%%%%%%
\begin{proof}[Proof of Lemma~\ref{lemma:53211}]
With similar arguments as above and observing that~$\int_{\kappa_r+2\Delta}^{t_i}=0$ for all~$r\ge t_{i-2}$, we get 
\begin{align*}
    & \int_0^{t_{i-1}}\int_{t_i}^t\int_0^r (s-u)^{H-\halft} \Bigg\{ \int_{\kappa_r+2\Delta}^t K(\kappa_q,u) K(\kappa_q,r) K(\kappa_q+\Delta,q)\sqrt{\Delta}\one_{q<t_i} \D q \Bigg\} \du \ds K(t_i,r) \dr\\
    & = \sqrt{\Delta}\sum_{j=0}^{i-3} \int_{t_j}^{t_{j+1}} K(t_i,r)  \int_{t_i}^t \int_0^{r}  (s-u)^{H-\halft}\Bigg\{ \sum_{k=j+2}^{i-1}\int_{t_{k}}^{t_{k+1}} K(\kappa_q,u)  K(\kappa_q,r) K(\kappa_q+\Delta,q)\D q  \Bigg\} \du \ds \dr \\
    &=\sqrt{\Delta}\sum_{j=0}^{i-3} \int_{t_j}^{t_{j+1}} K(t_i,r) \sum_{k=j+2}^{i-1} K(t_k,r) \left(\int_{t_i}^t \int_0^{r} (s-u)^{H-\halft} K(t_k,u) \left(\int_{t_{k}}^{t_{k+1}} K(t_{k+1},q)\dq\right)\du\ds \right) \dr\\
    &\lesssim \Delta^{1+H} \sum_{j=0}^{i-3} \left(\int_{t_j}^{t_{j+1}} K(t_i,r) \sum_{k=j+2}^{i-1} K(t_k,r) \dr\right) \left(\int_{t_i}^t \int_0^{t_{j+1}} (s-u)^{H-\halft} K(t_{j+1},u) \du\ds \right) \\
    &=:\Delta^{1+H} \sum_{j=0}^{i-3} \varpi_j \xi_{j+1} 
    \lesssim \Delta^{1+3H} \sum_{j=0}^{i-3} (i-j-1)^{2H} \xi_{j+1},
\end{align*}
where, in the very last step,
we exploited the fact that $\varpi_j$ is bounded in the following way
\begin{align}\label{eq_bound_AAJ}
    \varpi_j
    & := \sum_{k=j+2}^{i-1}\int_{t_j}^{t_{j+1}} K(t_i,r)    K(t_k,r) \dr
    \lesssim K(t_i,t_{j+1})\sum_{k=j+2}^{i-1}   \int_{t_j}^{t_{j+1}}    K(t_k,r) \dr\\
    & \lesssim K(t_i,t_{j+1}) (\Delta (i-j-1))^{H+\half}
    \lesssim (\Delta (i-j-1))^{2H}.
\end{align}

\end{proof}
\begin{proof}[Proof of Lemma~\ref{lemma:53212}]
In a similar fashion as the proof of Lemma~\ref{lemma:53211}, we have~$K(t_k,u)\le K(t_{j+1},u)$ and, since $s\in[t_i,t]$ for $i>k+1$, we have  
$$
    \int_{t_{k}}^{t_{k+1}\wedge t}K(s,q)\D q \lesssim \Delta^{H+1/2} (i-k-1)^{H-1/2},
$$ 
while, for $k=i-1$ and $k=i$, we have  
$$
    \int_{t_{k}}^{t_{k+1}\wedge t}K(s,q)\D q \lesssim \Delta^{H+1/2}.
$$ 
Recalling that $\kappa_r=t_{\tau(r)-1}$ we obtain
\begin{align}
    & \int_0^{t_{i-1}}\int_{t_i}^t\int_0^r (s-u)^{H-\halft} \Bigg\{ \int_{\kappa_r+2\Delta}^t K(\kappa_q,u) K(\kappa_q,r) K(s,q) \D q \Bigg\} \du \ds K(t_i,r) \dr\nonumber\\
    & = \int_{0}^{t_{i-1}} K(t_i,r)  \int_{t_i}^t \int_0^{r} (s-u)^{H-\halft}  \Bigg\{\sum_{k=\tau(r)+1}^{i} K(t_k,u)  K(t_k,r) \left(\int_{t_{k}}^{t_{k+1}\wedge t}K(s,q)\D q\right)  \Bigg\}\du \ds  \dr\nonumber\\
    &\lesssim \sum_{j=0}^{i-2} \int_{t_j}^{t_{j+1}} K(t_i,r) \sum_{k=j+1}^{i} K(t_k,r)  \Bigg\{ \int_{t_i}^t \int_0^{r} (s-u)^{H-\halft} K(t_{j+1},u)   \left(\int_{t_{k}}^{t_{k+1}\wedge t}K(s,q)\D q\right)   \du \ds \Bigg\} \dr\nonumber \\
    &\lesssim \Delta^{H+\half} \sum_{j=0}^{i-3} \int_{t_j}^{t_{j+1}} K(t_i,r) \sum_{k=j+1}^{i-2} K(t_k,r) (i-k-1)^{H-\half}\dr \Bigg\{ \int_{t_i}^t \int_0^{t_{j+1}} (s-u)^{H-\halft} K(t_{j+1},u)    \du \ds \Bigg\} \nonumber\\
    &\qquad + \Delta^{H+\half} \sum_{j=0}^{i-2} \int_{t_j}^{t_{j+1}} K(t_i,r) \Big(K(t_{i-1},r)+ K(t_i,r) \Big)\dr\Bigg\{ \int_{t_i}^t \int_0^{t_{j+1}} (s-u)^{H-\halft} K(t_{j+1},u)    \du \ds \Bigg\}\nonumber\\
    & = \Delta^{H+\half} \sum_{j=0}^{i-3} \int_{t_j}^{t_{j+1}} K(t_i,r) \sum_{k=j+1}^{i-2} K(t_k,r) (i-k-1)^{H-\half}\dr \xi_{j+1} \nonumber\\
    &\qquad + \Delta^{H+\half} \sum_{j=0}^{i-2} \int_{t_j}^{t_{j+1}} K(t_i,r) \Big(K(t_{i-1},r)+ K(t_i,r) \Big)\dr \xi_{j+1},
    \label{eq:proof_53212_est0}
\end{align}
where in the sum the term $j=i-2$ vanishes because~$\sum_{k=j+1}^{i}K(t_k,r)=K(t_i,r)+K(t_{i-1},r)$.

Furthermore, using the same technique as in~\eqref{eq:sum_Beta_function} (with $a=b=H-\half$) to compute the sum, for $j<i-3$,
\begin{align}
\sum_{k=j+1}^{i-2} & \int_{t_j}^{t_{j+1}} K(t_i,r)  K(t_k,r)\dr  (i-k-1)^{H-\half} \nonumber\\
&\le K(t_i,t_{j+1})\sum_{k=j+2}^{i-2}(k-j-1)^{H-\half}\Delta^{H+\half} (i-k-1)^{H-\half} +  K(t_i,t_{j+1}) \Delta^{H+\half} \nonumber\\
&\lesssim K(t_i,t_{j+1})\Delta^{H+\half} (i-j-3)^{2H}\nonumber\\
&\lesssim \Delta^{2H} (i-j-1)^{3H-\half},
\label{eq:proof_53212_est1}
\end{align}
whereas for $j=i-3$ we have $\int_{t_{i-3}}^{t_{i-2}} K(t_i,r)K(t_{i-2},r)\dr\lesssim \Delta^{2H}$. 
Regarding the second term, since $K(t_i,r)\le K(t_{i-1},r)$ we consider
\begin{align}
    \sum_{j=0}^{i-2} \int_{t_j}^{t_{j+1}} K(t_i,r) K(t_{i-1},r)\dr \xi_{j+1}
    &= \sum_{j=0}^{i-3} \int_{t_j}^{t_{j+1}} K(t_i,r) K(t_{i-1},r)\dr\xi_{j+1} + \int_{t_{i-2}}^{t_{i-1}} K(t_i,r) K(t_{i-1},r)\dr \xi_{i-1}\nonumber\\
    &\lesssim \sum_{j=0}^{i-3}(i-j-2)^{2H-1}\Delta^{2H}\xi_{j+1} 
    + \Delta^{2H}\xi_{i-1}.
    \label{eq:proof_53212_est2}
\end{align}
Since $H<1/2$ and $j\le i-3$, we have $(i-j-2)^{2H-1}\le 2^{1-2H} (i-j-1)^{3H-1/2}$. Plugging estimates~\eqref{eq:proof_53212_est1} and \eqref{eq:proof_53212_est2} into \eqref{eq:proof_53212_est0} yields 
\begin{align*}
    &\int_0^{t_{i-1}} \left(\int_{t_i}^t \XXi_{53212}(s,r)\ds\right) K(t_i,r)\dr\\
    &\lesssim \Delta^{3H+\half} \left(\sum_{j=0}^{i-4} (i-j-1)^{3H-\half}\xi_{j+1} +\xi_{i-2} + \sum_{j=0}^{i-3}(i-j-1)^{3H-\half} \xi_{j+1} + \xi_{i-1}\right)
\end{align*}
yields the claim. 
\end{proof}

We must also observe that, again for $j\le i-3$,
\begin{align}
\xi_{j+1}
 & = \sum_{\ell=0}^{j} \int_{t_\ell}^{t_{\ell+1}} \int_{t_i}^t K(t_{j+1},u)(s-u)^{H-\halft}\ds \du \nonumber \\
&\le\sum_{\ell=0}^{j} \int_{t_\ell}^{t_{\ell+1}} \int_{t_i}^t K(t_{j+1},u)(t_i-t_{\ell+1})^{H-\halft}\ds \du \nonumber \\
&= \Delta^{H-\half} \sum_{\ell=0}^{j} (i-\ell-1)^{H-\halft} \int_{t_{\ell}}^{t_{\ell+1}} K({ t_{j+1}},u)\du,
\label{eq:est_xij}
\end{align}
whereas
\begin{align*}
\xi_{i-1}&=\sum_{\ell=0}^{i-2}\int_{t_\ell}^{t_{\ell+1}} K(t_{i-1},u)\big[(t_i-u)^{H-\half} - (t-u)^{H-\half}\big] \du \lesssim \Delta^{2H},
\end{align*}
as they both boil down to~\eqref{eq:decomposition_bound_KDeltaK} in the proof of Lemma~\ref{lemma:bound_KDeltaK}.

Finally, putting together the upper bound~\eqref{Eq:fin_est_Xi5322} and Lemmas~\ref{lemma:53211}, \ref{lemma:53212}, we obtain

\begin{align*}
    &\int_0^{t_i} \int_{t_i}^t \abs{\XXi_{532}(s,r)} \ds K(t_i,r)\dr\\
 &     \lesssim \int_0^{t_i} \left(\int_{t_i}^t \XXi_{5321}(s,r) \ds\right) K(t_i,r)\dr + \int_0^{t_i} \left(\int_{t_i}^t \XXi_{5322}(s,r) \ds\right) K(t_i,r)\dr\\
& \lesssim \Delta^{3H+1} \sum_{j=0}^{i-2} (i-j-1)^{2H} \xi_{j+1} + 
 \Delta^{3H+\half} \sum_{j=0}^{i-2}  (i-j-1)^{3H-\half} \xi_{j+1}\\
 & \lesssim \Delta^{2H+\half} \sum_{j=0}^{i-2} (i-j-1)^{H-\half} \xi_{j+1} + 
 \Delta^{H+\half} \sum_{j=0}^{i-2}  (i-j-1)^{H-\half} \xi_{j+1}\\
 &\lesssim 
 \Delta^{2H} \sum_{j=0}^{i-3}  (i-j-1)^{H-\half} \sum_{\ell=0}^{j} (i-\ell-1)^{H-\halft} \int_{t_{\ell}}^{t_{\ell+1}} K({ t_{j+1}},u)\du + \Delta^{3H+\half}
    \lesssim  \bigstar(\Delta),
\end{align*}
where we used $\Delta^\alpha(i-j-1)^\alpha\le T^\alpha$ with $\alpha=H-\half$ and~$\alpha=2H$, the estimate~\eqref{eq:est_xij} and (a mild variation of) Lemma~\ref{lemma:doublesum} to conclude.

\end{proof}
%%%%%%%%%%%%%%%%%%%%%%%%%%%%%%%%%%%%%%%%%

\section{Proof of Proposition~\ref{prop:Btwo}}\label{sec:Btwo}
The second term to deal with is defined as follows:
\begin{align*}
    \frB_2(t)
    & = \EE\Big[\big(\psi(V_t)-\psi(V_{t_i})\big) \langle \partial_\omega (\partial_x \bar{u}_t),K^t \rangle \Big]\\
    & = \EE\bigg[\big(\psi(V_t)-\psi(V_{t_i})\big)\EE_t\Big[\phi^{(2)}(\XTb)\int_t^T \psi'(V_s)K(s,t)\D B_s\Big]\bigg].
\end{align*}
We pursue the same approach as for~$\frB_1$, using Clark-Ocone formula and integration by parts. Thus, in view of this, {for $r \leq t$,} we define
\begin{align*}
    { \Vv_r}
    & :=\Df_r^W \langle \partial_\omega (\partial_x \bar{u}_t),K^t \rangle\\
    & = \EE_t \Bigg[ \phi^{(3)}(\XTb)\bigg(\int_r^T \psi'(V_{\kappa^t_s})K(\kappa^t_s,r)\D B_s + \rho\psi(V_{\kappa_r})\bigg)\int_t^T \psi'(V_s)K(s,t)\D B_s\Bigg]\\
    &\quad + \EE_t \Bigg[ \phi^{(2)}(\XTb)\int_t^T \psi'(V_{s})K(s,r)K(s,t)\D B_s \Bigg]
     =: \Vv_r^{(1)} + \Vv_r^{(2)}.
\end{align*}
We start by discussing the term $\Vv_2$, which requires one more application of IBP:
\begin{align}\label{eq:IBP_VTwo_r}
    \Vv_r^{(2)} & = \EE_t \Bigg[ \phi^{(2)}(\XTb)\int_t^T \psi'(V_{s})K(s,r)K(s,t)\D B_s \Bigg]\nonumber\\
    & =\EE_t\left[\int_t^T \phi^{(3)}(\XTb)\Df_s^B \XTb\psi'(V_{s})K(s,r)K(s,t)\ds\right]\nonumber\\
    &=\EE_t\left[\int_t^T \phi^{(3)}(\XTb)\bigg\{\int_s^T \psi'(V_v)K(v,s)\D B_v + \psi(V_s)\bigg\}\psi'(V_{s})K(s,r)K(s,t)\ds\right].
\end{align}
With a slight abuse of notations, let $\varphi_r(t):=\EE_r[\psi'(V_t)]$
(in the previous section this denoted the derivative of~$\psi^2$). 
Similarly to $\frB_1$, Clark-Ocone and integration by parts yield
\begin{align*}
    & \frB_2(t) = \EE\big[ \langle \partial_\omega (\partial_x \bar{u}_t),K^t \rangle\big] \EE[\psi(V_t)-\psi(V_{t_i})]
    + \EE\left[\int_0^t \Vv_r \EE_r\Big[ \psi'(V_t)K(t,r) - \psi'(V_{t_i}) K(t_i,r) \Big] \dr\right]\\
    & = \EE\big[ \langle \partial_\omega (\partial_x \bar{u}_t),K^t \rangle\big] \EE[\psi(V_t)-\psi(V_{t_i})]
    + \EE\left[\int_0^t  \Vv_r^{(1)} \EE_r\Big[ \psi'(V_t)K(t,r) - \psi'(V_{t_i}) K(t_i,r) \Big] \dr\right] \\
    & + \EE\left[\int_0^t  \Vv_r^{(2)} \EE_r\Big[ \psi'(V_t)K(t,r) - \psi'(V_{t_i}) K(t_i,r) \Big] \dr\right] \\
    &\lesssim  \EE\big[ \langle \partial_\omega (\partial_x \bar{u}_t),K^t \rangle\big] \EE[\psi(V_t)-\psi(V_{t_i})]
    + \EE\left[\int_0^t  \Vv_r^{(1)} \EE_r\Big[ \psi'(V_t)K(t,r) - \psi'(V_{t_i}) K(t_i,r) \Big] \dr\right] \\
    & + \abs{\EE \left[\Vv_2(t) \varphi_t(t)\right]\int_0^t \big( K(t,r)-K(t_i,r)\big) \dr} 
    + 
    \abs{\EE\left[ \Vv_2(t) \int_0^t \big(\varphi_r(t)-\varphi_t(t)\big) \big(K(t,r)-K(t_i,r)\big)\dr\right]} \\
    &+ \abs{\EE \left[\int_0^t \varphi_r(t) \big(\Vv_r^{(2)} -\Vv_2(t)\big) \big(K(t,r)-K(t_i,r)\big)\dr \right]}
    + \abs{\EE\left[\int_0^{t_i} \Vv_r^{(2)} \big(\varphi_r(t)-\varphi_r(t_i)\big)  K(t_i,r)\dr\right]}\\
    &=  \sum_{k=1}^6 \abs{\Upsilon_k}.
\end{align*}
The first and third terms are similar to $\XXi_1$ and~$\XXi_2$ and the following mimics Lemma~\ref{lemma:Xi1}:
\begin{lemma}
    $\abs{\Upsilon_1}+\abs{\Upsilon_3} \lesssim t^{2H}-t_i^{2H}$.
\end{lemma}
%%%%%%%%%%%%%%%%

\begin{lemma}
    $\abs{\Upsilon_2} \lesssim \bigstar(\Delta)$.
\end{lemma}
\begin{proof}[Sketch of proof]
The first term can be treated as $\Uu_r$ (in the discussion for $\frB_1$) since it only differs by a smooth $L^p$ random variable. 
For the sake of preventing this proof from being too long, we do not give full details but highlight crucial points. 
Whenever IBP is necessary, an additional term is generated compared to $\frB_1$. 
It is however almost identical to the term that differentiates the other integral on the interval $(t,T)$, hence is treated in the same way. 
Indeed, since $K(q,t)=0$ for $q<t$ we have
\begin{align*}
   \Df_q^W \int_t^T \psi'(V_s)K(s,t)\D B_s = \int_t^T \psi''(V_s) K(s,q)K(s,t)\D B_s. 
\end{align*}
This same remark holds for higher-order Malliavin derivatives too. 
When comes the time to bound the stochastic terms, the additional multiplication by the extra integral does not matter because all the terms are bounded in $L^p$.
\end{proof}
%%%%%%%%%%%%%%%%%%%%%%%%%%%%%%%%%%%%%%%%%%%%%%%%%%%%%%%%%%%%%%%%%%%%%%%%%%%%%%%%%%ù

Now, we present the following technical lemma, which is used multiple times.
\begin{lemma}\label{lem:ZVTwoBound}
    For all $Y\in L^2$, and any $0\leq r\leq t_i\leq t$, 
    $\displaystyle \EE[|Y\Vv_r^{(2)}|]\leq \int_{t}^{T}K(s,r)K(s,t)\D s$.
\end{lemma}
\begin{proof}
Straightforward computations and Cauchy-Schwarz inequality yield
\begin{align*}
    & \EE[|Y \Vv_r^{(2)}|]\\
    & = \EE\Bigg[\left|Y \int_t^T \EE_t\left[ \phi^{(3)}(\XTb)\bigg\{\int_s^T \psi'(V_u)K(u,s)\D B_u + \psi(V_s)\bigg\}\psi'(V_{s})\right] K(s,r)K(s,t)\ds\right|\Bigg]\\
    & = \EE\Bigg[\left|Y \sup_{s \in [t,T]}\EE_t\left[ \phi^{(3)}(\XTb)\bigg\{\int_s^T \psi'(V_u)K(u,s)\D B_u + \psi(V_s)\bigg\}\psi'(V_{s})\right] \right|\Bigg] \int_t^T K(s,r)K(s,t)\ds\\ 
    & \lesssim \int_t^T K(s,r)K(s,t)\ds.
\end{align*}
\end{proof}

%%%%%%%%%%%%%%%%%%%%%%%%%%%%%%%%%%%%%%%%%%%%%%%%%%%%%%%%%%%%
\begin{lemma}
    $\displaystyle \abs{\Upsilon_4}\lesssim \bigstar(\Delta)$.
\end{lemma}
\begin{proof}
As in~\eqref{eq:CO_varphi}-\eqref{eq:IBP_Uu}, we compute~$\EE\big[\Vv_2(t)(\varphi_r(t)-\varphi_t(t))\big]$ by  Clark-Ocone and IBP:
\begin{align*}
    & \Upsilon_4 
    = \int_0^t \EE\left[\int_r^t \Df_q^W \Vv_2(t) \EE_q[\psi''(V_t)]K(t,q)\dq \right] \big(K(t,r)-K(t_i,r)\big) \dr \\
    &= \EE\int_0^t \int_r^t \Bigg\{ \EE_t\left[\int_t^T \phi^{(4)}(\XTb)
    \Df_q^W \XTb
    \Df_s^B \XTb \psi'(V_{s})K(s,t)^2\ds\right]  \\
    &+ \EE_t\left[\int_t^T \phi^{(3)}(\XTb)
    \bigg\{ \int_s^T \psi''(V_v) K(v,q) K(v,s) \D B_v + \psi'(V_s)K(s,q)
    \bigg\} \psi'(V_{s})K(s,t)^2\ds\right]\\
    & + \EE_t\left[\int_t^T \phi^{(3)}(\XTb)
    \Df_s^{B} \XTb \psi''(V_{s})K(s,q)K(s,t)^2\ds\right]\Bigg\}\EE_q[\psi''(V_{t})]  K(t,q)\dq [K(t,r)-K(t_i,r)]\dr.
    \end{align*}
    where, using representation~\eqref{eq:IBP_VTwo_r} and $q\le t$
    \begin{align*}
        &\Df_q^W \Vv_2(t)
        = \EE_t\left[\int_t^T \phi^{(4)}(\XTb)
    \Df_q^W \XTb
    \Df_s^B \XTb \psi'(V_{s})K(s,r)K(s,t)\ds\right] \\
    &\quad + \EE_t\left[\int_t^T \phi^{(3)}(\XTb)
    \bigg\{ \int_s^T \psi''(V_v) K(v,q) K(v,s) \D B_v + +\psi'(V_s)K(s,q)
    \bigg\} \psi'(V_{s})K(s,t)^2\ds\right]\\
    &\quad +  \EE_t\left[\int_t^T \phi^{(3)}(\XTb)
    \Df_s^{{B}} \XTb \psi''(V_{s})K(s,q)K(s,t)^2\ds\right].
    \end{align*}
In the first term, since $s \in [t,T]$, $\Df_s^{ {B}} \XTb$ has bounded moments but the same does not hold for $\Df_q^{{W}} \XTb$ as $0\leq r \leq  q \leq t$. Using Lemma~\ref{lemma:BoundDiscreteIntegral} with $Y= \phi^{(4)}(\XTb)    \Df_s^B \XTb \psi'(V_{s})$ then
\begin{align*}
\EE\left[\abs{\phi^{(4)}(\XTb) 
    \Df_s^B \XTb \psi'(V_{s})} \abs{\Df_q^W \XTb}\right] \lesssim \left(1+\sqrt{\Delta} K(\kappa_q+\Delta,q)\right)\one_{q< t_i}.
\end{align*}
We also apply Cauchy-Schwarz inequality to the second line as follows:
\begin{align}\label{eq:CS_forKK}
    &\EE\left[\abs{Z}\abs{\int_s^T\psi''(V_v)K(v,q)K(v,s)\D B_v}\right]^2\\
    &\le \EE\left[\abs{Z}^2\right]
    \EE\left[\int_s^T \abs{\psi''(V_v)K(v,q)K(v,s)}^2\dv\right]\\ 
    &\le \EE\left[\abs{Z}^2\right] \sup_{v\in[s,T]}\EE\left[\abs{\psi''(V_v)}^2\right] K(s,q)^2\int_s^T K(v,s)^2\dv
    \lesssim K(s,q)^2,\nonumber
\end{align}
where $Z=\phi^{(3)}(\XTb)\psi'(V_s) {\EE_q[\psi''(V_{t})] }$.
Exploiting the fact that~$1\le K(s,q)\le K(t,q)$, we obtain 
%%%%%%%%%%%%%%%%
\begin{align*}
    { \Upsilon_4}&\lesssim 
    \int_0^t \int_r^t \bigg\{ \int_t^T \Big(1+\sqrt{\Delta} K(\kappa_q+\Delta,q)\one_{q<t_i}\Big) K(s,t)^2\ds \bigg\}  K(t,q)\dq\, \Big|K(t,r)-K(t_i,r)\Big|\dr\\
    &\quad +3  \int_0^t \int_r^t \bigg\{\int_t^T K(s,q) K(s,t)^2 \ds \bigg\}  K(t,q)\dq\, \Big|K(t,r)-K(t_i,r)\Big|\dr\\
    &\lesssim   \int_0^{t_i} \int_r^{t_i}  \sqrt{\Delta} K(\kappa_q+\Delta,q)   K(t,q)\dq\, \Big|K(t,r)-K(t_i,r)\Big|\dr\\
    & \quad + \int_0^t \int_r^t   K(t,q)^2\dq\, \Big|K(t,r)-K(t_i,r)\Big|\dr\\
    &=: { \Upsilon_{41} + \Upsilon_{42}}.
\end{align*}
By integration and Lemma~\ref{lemma:Gassiat_log}, we obtain 
$$
{ \Upsilon_{42}} \lesssim \int_0^t (t-r)^{2H}\Big(K(t_i,r)-K(t,r)\Big)\dr
\lesssim \bigstar(\Delta).
$$
For $r\ge t_{i-1}$, we have $ \int_r^{t_i} K(\kappa_q+\Delta,q) K(t,q)\dq\le \int_r^{t_i} K(t_i,q)^2\dq\lesssim \Delta^{2H}$.
Then, for~$q\in[t_\ell,t_{\ell+1})$, we have~$K(t,q)\le \big(\Delta(i-\ell-1)\big)^{H-\half}$, so that, for $r<t_{i-1}$,
\begin{align*}
    \int_r^{t_i} & K(\kappa_q+\Delta,q) K(t,q)\dq
    = \sum_{\ell=\tau(r)-1}^{i-2} \int_{t_\ell \vee r}^{t_{\ell+1}} K(t_{\ell+1},q)K(t,q)\dq + \int_{t_{i-1}}^{t_i} K(t_{i},q)K(t,q)\dq  \\
    &\lesssim \sum_{\ell=\tau(r)-1}^{i-2} \big(\Delta(i-\ell-1)\big)^{H-\half} \int_{t_\ell \vee r}^{t_{\ell+1}} K(t_{\ell+1},q)\dq + \int_{t_{i-1}}^{t_i} K(t_{i},q)^2\dq \\
    &\lesssim \Delta^{2H}\sum_{\ell=\tau(r)-1}^{i-2} (i-\ell-1)^{H-\half} + \Delta^{2H}\\
    &\lesssim \Delta^{2H}(i-\tau(r))^{H+\half}+ \Delta^{2H}
    \lesssim \Delta^{H-\half} (t_i-\kappa_r)^{H+\half} +\Delta^{2H}.
\end{align*}
Finally, we conclude thanks to monotonicity of integration and Lemma~\ref{lemma:Bound_kappar_DeltaK} 
\begin{align*}
    { \Upsilon_{41}}
    &\lesssim \Delta^H \int_0^{t_{i}} (t_{i}-\kappa_r)^{H+\half}\Big[K(t_i,r)-K(t,r)\Big]\dr + \Delta^{2H+\half} \int_{t_{i-1}}^{t_i} \Big[K(t_i,r)-K(t,r)\Big]\dr \\
    &\lesssim \Delta^H\bigstar(\Delta) + \Delta^{1+3H}.
\end{align*}
\end{proof}
%%%%%%%%%%%%%%%%%%%%%%%%%%%%%%%%%%%%%%%%%%%%%%%%
We are left to bound $\Upsilon_5$ and $\Upsilon_6$.
The first one is dealt with the following lemma.
\begin{lemma}
    $\abs{\Upsilon_5}\lesssim \bigstar(\Delta)$.
\end{lemma}
\begin{proof}
For all $Y\in L^2$ and $r\le t$, with the same computations as in Lemma~\ref{lem:ZVTwoBound},
\begin{align*}
\EE Y\abs{\Vv_r^{(2)}-\Vv_2(t)}
& \lesssim \int_t^T K(s,t) 
\big[K(s,t) - K(s,r)\big]\ds \\
& \le \int_t^T \big( K(s,t)^2 - K(s,r)^2\big) \ds
 \lesssim (t-r)^{2H},
\end{align*}
so that, by Lemma~\ref{lemma:Gassiat_log},
$\displaystyle
\abs{\Upsilon_2} \lesssim \abs{\int_0^t (t-r)^{2H} \big(K(t,r)-K(t_i,r)\big)\dr } \lesssim \bigstar(\Delta)$.
\end{proof}
%%%%%%%%%%%%%%%%%%%%%%%%%%%%%%%%%%%%%%%%%%%%%%%
The last term is a tad more tedious and is detailed in the proof of the next lemma:
\begin{lemma}
    $\abs{\Upsilon_6}\lesssim \bigstar(\Delta)$.
\end{lemma}
\begin{proof}
    Using Lemma~\ref{lemma:Delta_varphi}, we rewrite and decompose $\Upsilon_6$ as
\begin{align*}
    \Upsilon_6 &= \left(H-\half\right)\EE\int_0^{t_i} \Vv_r^{(2)} \left( \int_{t_i}^t \EE_r[\psi''(V_{q})]\int_0^r (q-u)^{H-\halft}\D W_u \dq \right) K(t_i,r)\dr \\
    &\qquad + 
    \EE\int_0^{t_i} \Vv_r^{(2)}  \left( \int_{t_i}^t \half\EE_r[\psi^{(3)}(V_{q})] (q-r)^{2H-1} \dq \right) K(t_i,r)\dr 
    =: \Upsilon_{61}+\Upsilon_{62}.
\end{align*}

$\mathbf{(\Upsilon_{62}})$ An application of Lemma~\ref{lem:ZVTwoBound}, with $Y= \sup_{q\in[t_i,t]} \EE_r[\psi^{(3)}(V_{q})]$, yields
\begin{align*}
   \EE[|Y\Vv_r^{(2)}|]\lesssim \int_t^T K(s,r)K(s,t)\ds \le \int_t^T K(s,t)^2\ds =\frac{(T-t)^{2H}}{2H}, 
\end{align*}
that in turn, by Lemma~\ref{lemma:bound_KDeltaK}, entails
\begin{align*}
    \abs{\Upsilon_{62}}
    & \lesssim \int_0^{t_i} \left(\int_{t_i}^t (q-r)^{2H-1} \dq \right) K(t_i,r)\dr\\
    & \lesssim \int_0^{t_i} \left|(t-r)^{2H} - (t_i-r)^{2H}\right| K(t_i,r)\dr
    \lesssim \bigstar(\Delta).
\end{align*}
%%%%%%%%%%%%%%%%%%%%%%%%%%%%%%%%%%%%%%%%%%%%%%%%%%%%%%%%%%%%%%%%%%
$\mathbf{(\Upsilon_{61}})$ This term requires IBP with respect to $W$. In particular, we rewrite it as
\begin{align*}
    \Upsilon_{61} = \left(H-\half\right)
    \int_0^{t_i} \int_{t_i}^t \int_0^r  \EE\Big[\Df^W_u \Big(\Vv_r^{(2)} \EE_r[\psi''(V_{q})]\Big) \Big] (q-u)^{H-\halft} \du \dq K(t_i,r)\dr.
\end{align*}
For all $u\in[0,r)$, we recall that
\begin{equation}\label{eq:DuXT}
    \Df_u^W \XTb = \int_u^T \psi'(V_{\kappa^t_v})K(\kappa_v^t,u)\D B_v + \rho\psi(V_{\kappa^t_u}),
\end{equation}
and, combined with the equivalent representation of $\Vv_r^{(2)}$ in~\eqref{eq:IBP_VTwo_r}, we obtain  {for all~$0\le u<r<t_i<q\le t$}
\begin{align}\label{eq:DV2}
    & \Df_u^W \Big(\Vv_r^{(2)} \EE_r[\psi''(V_{q})]\Big) \\
    &= \EE_t\left[\int_t^T \phi^{(4)}(\XTb)
    \Df_u^W \XTb
    \Df_s^B \XTb \psi'(V_{s})K(s,r)K(s,t)\ds\right]\EE_r[\psi''(V_{q})]\\
    &\quad+ \EE_t\left[\int_t^T \phi^{(3)}(\XTb) \left(\int_s^T \psi''(V_v) K(v,u) K(v,s) \D B_v \right)\psi'(V_{s})K(s,r)K(s,t)\ds\right]\EE_r[\psi''(V_{q})]\\
     &\quad+ \EE_t\left[\int_t^T \phi^{(3)}(\XTb) \psi'(V_s)K(s,u)\psi'(V_{s})K(s,r)K(s,t)\ds\right]\EE_r[\psi''(V_{q})]\\
    &\quad + \EE_t\left[\int_t^T \phi^{(3)}(\XTb)
    \Df_s^{B} \XTb \psi''(V_{s})K(s,u)K(s,r)K(s,t)\ds\right]\EE_r[\psi''(V_{q})] \label{eq:DV2_line3}\\
    &\quad + \EE_t\left[\int_t^T \phi^{(3)}(\XTb)
    \Df_s^{B} \XTb \psi'(V_{s}) K(s,r)K(s,t)\ds\right]\EE_r[\psi^{(3)}(V_{q})] K(q,u),\label{eq:DV2_line4}
\end{align}
where in the second term~$K(u,v)=0$ since $u\le r\le t\le s\le v$.\\
In order to bound $\Df_u^{W} (\Vv_r^{(2)} \EE_r[\psi''(V_{q})])$ as per~\eqref{eq:DV2}, 
we apply Lemma~\ref{lemma:BoundDiscreteIntegral} to the first line~\eqref{eq:DV2} with 
$
    Y
    =\phi^{(4)}(\XTb)
    \Df_s^B \XTb \psi'(V_{s})\EE_r[\psi''(V_{q})] 
$, which entails
\begin{align*}
    \EE\left[\abs{\phi^{(4)}(\XTb)\EE_r[\psi''(V_{q})] 
    \Df_s^B \XTb \psi'(V_{s})} \abs{\Df_u^W \XTb}\right] \lesssim \big(1+\sqrt{\Delta} K(\kappa_u+\Delta,u)\big).
\end{align*}
We also apply Cauchy-Schwarz inequality to the second line~\eqref{eq:DV2} as in~\eqref{eq:CS_forKK}, yielding
\begin{align*}
\EE\left[\abs{\phi^{(3)}(\XTb)\psi'(V_s)\EE_r[\psi''(V_q)]}\abs{\int_s^T\psi''(V_v)K(v,u)K(v,s)\D B_v}\right]^2\lesssim K(s,u)^2.
\end{align*}
For the last three lines we simply observe that the stochastic terms are uniformly bounded in~$L^p$ and $K(s,u)\le K(q,u)$, hence we obtain
\begin{align*}
& \abs{\EE\Big[\Df_u \Big(\Vv_r^{(2)} \EE_r[\psi''(V_{q})]\Big) \Big]}\\
& \lesssim \int_t^T \left\{1+\sqrt{\Delta} K(\kappa_u+\Delta,u)\right\}
K(s,r) K(s,t)  \ds 
    +4 K(q,u)\int_t^T  K(s,r) K(s,t)  \ds \\
&=: \Upsilon_{611}(u,r) + \Upsilon_{612}(u,r,q).
\end{align*}
%%%%%%%%%%%%%%%%%%%%%%%%%
$\mathbf{(\Upsilon_{611}})$
Exploiting again $K(s,r)\le K(s,t)$, we get that~$ \Upsilon_{611}(u,r)\lesssim 1+\sqrt{\Delta} K(\kappa_u+\Delta,u)$. 
This term then boils down to~\eqref{eq:Xi_531}, which gives us
\begin{align}
&\int_0^{t_i}\int_{t_i}^t \int_0^r \Upsilon_{611}(u,r)(q-u)^{H-\halft}\du\D q K(t_i,r)\dr  
\lesssim \int_{t_i}^t\int_0^r \Big(1+\sqrt{\Delta}K(\kappa_u+\Delta,u)\Big) (q-u)^{H-\halft}\du \dq,
\end{align}
which can bounded by $\Delta^{1+H}$ as in Equation~\eqref{eq:Xi_531} in the proof of Lemma~\ref{lem:Bound531}.

%%%%%%%%%%%%%%%%%%%%%%%%%
$\mathbf{(\Upsilon_{612}})$ Since $K(s,r)\le K(s,t)$, by Lemma~\ref{lemma:bound_KDeltaK}, we obtain
\begin{align*}
& \int_0^{t_i} \int_{t_i}^t \int_0^r \Upsilon_{612}(u,r,q) (q-u)^{H-\halft} \du \dq K(t_i,r)\dr\\
&\lesssim \int_0^{t_i} \int_{t_i}^t \int_0^r \left(K(q,u)\int_t^T K(s,t)^2\ds\right) (q-u)^{H-\halft}\du \dq K(t_i,r)\dr \\
&\lesssim \int_0^{t_i} \int_{t_i}^t (q-r)^{2H-1} \dq K(t_i,r)\dr
    \lesssim \bigstar(\Delta).
\end{align*}
\end{proof}

%%%%%%%%%%%%%%%%%%%%%%%%%%%%%%%%%%%%%%%%
\section{Proofs of Section~\ref{sec:RvolApplication}}\label{sec:proof_RvolApplication}

\subsection{Proof of Lemma~\ref{lemma:Bounds_XV}}\label{sec:Proof_Bounds_XV}
Let~$p>0$.
First, $V$ is a Gaussian process hence~$\EE\left[\E^{p \norm{V}_{\TT}}\right]$ is finite~\cite[Lemma 6.13]{jacquier2021rough}, proving the first bound of~\eqref{eq:Bounds_XV}. 
Now, by Gaussian computations,
$$
\EE\left[\E^{pV_s^{t,\omega}}\right] 
= \E^{p\omega_t} \EE\left[ \exp\left(p \int_t^s K(t,r)\,\D W_r\right)\right]
= \exp\left(p\omega_t + \frac{p^2 (s-t)^{2H}}{4H}\right),
$$
yielding the second estimate.
For the last one, by BDG and Jensen's inequalities, we have
\begin{align*}
    \EE\left[\sup_{s \in [t,T]}\abs{X_s^{t,x,\omega}}^p\right]
    & \le 3^{p-1} \EE\left[\abs{x}^p +\sup_{s\in[t,T]}\left(\int_t^s \psi(r,V_r^{t,\omega})\,\D B_r\right)^p + \zeta^p \left(\int_0^T\psi(r,V_r^{t,\omega})^2\dr\right)^p\right]\\
    &\le  3^{p-1}    \EE\left[
\abs{x}^p +\bdg_p \left(\int_t^T \psi(r,V_r^{t,\omega})^{2} \dr\right)^{p/2} + \zeta^p T^{p-1}\int_t^T \abs{\psi(r,V_r^{t,\omega})}^{2p} \dr \right] \\
    &\le 3^{p-1} \EE\left[\abs{x}^p +\bdg_p T^{p/2-1} \int_0^T\abs{\psi(r,V_r)}^p\ds + \zeta^p T^{p-1} \int_0^T \abs{\psi(r,V_r)}^{2p}\dr\right],
\end{align*}
where~$\bdg_p$ is the BDG constant. Furthermore, Assumption~\ref{assu:phipsi}(ii) gives
$$
\EE\left[\int_0^T \abs{\psi(r,V_r)}^{2p}\dr\right]
\le \sup_{r\in\TT} \EE\left[\E^{2p\kpsi\left(r+V^{t,\omega}_r\right)}\right].
$$
This, combined with the second estimate, yields the claim.
By assumption there exists $\ell>0$ such that
$G(X_t,V_t) \lesssim 1+ \abs{X_t}^\ell + \E^{\ell V_t}$.
From~\eqref{eq:Bounds_XV} that~$\EE\left[\norm{G(X,V)}_{\TT}\right]$ is then finite.
%%%%%%%%%%%%%%%%%%%%%%%%%%%%%%%%%%%%%%%%%%%%%%%%%%%%%%%%%%%%%%%%%%
%%%%%%%%%%%%%%%%%%%%%%%%%%%%%%%%%%%%%%%%%%%%%%%%%%%%%%%%%%%%%%%%%%

\subsection{Proof of Proposition~\ref{prop:SpatialDerivatives}} \label{sec:Proof_SpatialDerivatives}\footnote{The authors would like to thank Jean-François Chassagneux for pointing out this nice method of proof.}
    For all~$(s,(t,x,\omega))\in\TT\times\overline{\Gamma}$, such that $t <s$, recall~$V^{t,\omega}_s$ and~$ X_T^{t,x,\omega}$ defined in~\eqref{eq:Xtxomega}. \\
    
    $\bullet$ We start by considering the derivatives in $x$ and for clarity we note~$X^{x}_T=X_T^{t,x,\omega}$, with~$(t,\omega)$ fixed. For any~$\ep>0$, notice that~$X^{x+\ep}_T-X^x_T=\ep$ hence
    \begin{equation}\label{eq:Diff_proof_derux}
    \phi(X_T^{x+\ep})-\phi(X_T^x)
    = (X^{x+\ep}_T-X^x_T)\int_0^1 \phi'(\lambda X^{x+\ep}_T + (1-\lambda) X^{x}_T) \D \lambda
    =\ep \int_0^1 \phi'(X^{x}_T + \lambda \ep) \D \lambda.
    \end{equation}
    Since $\phi'\in \Cc^1$ it is also locally Lipschitz continuous which entails
    \begin{align*}
        \EE_{t,\omega}\left[ \frac{\phi(X_T^{x+\ep})-\phi(X_T^x)}{\ep} - \phi'(X^x_T)\right]
        &=\EE_{t,\omega}\left[ \int_0^1 \phi'(X^{x}_T + \lambda \ep) - \phi'(X^{x}_T )\D \lambda\right] \\
        &\leq\EE_{t,\omega}\left[ \sup_{\ep'\in(0,\ep)}\abs{\phi''(X^{x+\ep'}_T)}\right] \int_0^1 \lambda\ep \D \lambda,
    \end{align*}
    which tends to zero as~$\ep\downarrow0$ by Assumption~\ref{assu:phipsi}(i) and~\eqref{eq:Bounds_XV}, as
    \begin{align*}
        \EE_{t,\omega}\left[ \sup_{\ep'\in(0,\ep)}\abs{\phi''(X^{x+\ep'}_T)}\right]
        & = \EE_{t,\omega}\left[ \sup_{\ep'\in(0,\ep)}\abs{\phi''(X^{x}_T +\ep')}\right]\\
        & \lesssim \EE_{t,\omega}\left[ \sup_{\ep'\in(0,\ep)}(1+|X^{x}_T +\ep'|^\kphi)\right]\\
        &\lesssim 1 + \sup_{\ep'\in(0,\ep)} (\ep')^\kphi + \EE_{t,\omega}\left[ |X^{x}_T |^\kphi\right] 
        < \infty.
    \end{align*}

    This proves the expression of the first derivative. 
    The second space derivative is proved in the same fashion since~$\phi''$ also belongs to~$\Cc^1$ by assumption.\\

    $\bullet$
        We turn to the Fréchet derivative; by a mild abuse of notation we will write~$\eta$ instead of~$\eta\one_{[t,T]}$ in the remainder of this proof. 
    We fix $(t,x)$ and write~\eqref{eq:Xtxomega} as~$X^\omega_T$. For clarity, we only show the proof in the case~$\zeta=0$, the general case being analogous. Recall the definition of the pathwise derivative in~\eqref{eq:PathwiseDerDef}, thus the proof amounts to showing that
    $$
    \lim_{\norm{\eta}_{\TT}\downarrow0} \frac{u(t,x,\omega+\eta)-u(t,x,\omega)-\EE_{t,x,\omega}\left[\phi'(X_T) \left\{\int_t^T \psi'(s,V_s) \eta_s \,\D B_s\right\}\right]}{\norm{\eta}_{\TT}} =0.
    $$
    We then write
   \begin{equation}
        \phi(X_T^{\omega+\eta})-\phi(X_T^\omega)
    =\delta_{\eta} X^\omega_T \,\wphi(\eta) ,
    \label{eq:Diff_proof_Dphi}
   \end{equation}
    where~$\delta_{\eta} X^\omega_T:= X^{\omega+\eta}_T-X^\omega_T$ and $\wphi(\eta):=\int_0^1 \phi'(X^{\omega}_T + \lambda \delta_{\eta} X^\omega_T) \D \lambda$. Therefore,
     \begin{align}
        &\frac{1}{\norm{\eta}_{\TT}}\EE_{t,x}\left[ \phi(X_T^{\omega+\eta})-\phi(X_T^\omega) - \phi'(X^\omega_T)\int_t^T \psi'(s,V^\omega_s)\eta_s\,\D B_s\right]\nonumber \\
        &=\frac{1}{\norm{\eta}_{\TT}}\EE_{t,x}\left[ \wphi(\eta) \left(\delta_{\eta} X^\omega_T- \int_t^T \psi'(s,V^\omega_s) \eta_s\,\D B_s\right)\right] \label{eq:Diff_proof_deruw_one}\\
        & \qquad + \frac{1}{\norm{\eta}_{\TT}}\EE_{t,x}\left[\left(\wphi(\eta)-\phi'(X^\omega_T)\right)\int_t^T \psi'(s,V^\omega_s)\eta_s\,\D B_s \right].
        \label{eq:Diff_proof_deruw_two}
    \end{align}   
    To conclude, we need to show that the terms on the right-hand side tend to zero as $\norm{\eta}_{\TT}$ goes to zero. We can thus restrict to $\norm{\eta}_{\TT}\le 1$. Let us start by considering the first one.
    On the one hand, for all~$\lambda\in(0,1)$, Assumption~\ref{assu:phipsi} gives
    \begin{align}
        \EE_{t,x}\big[\phi'(\lambda X^\omega_T + (1-\lambda) X^{\omega+\eta}_T)^2 \big] 
        &\lesssim 1+ \EE_{t,x}\left[\abs{ X^{\omega}_T}^{2\kphi} +  \abs{X^{\omega+\eta}_T}^{2\kphi}\right],
        \label{eq:Diff_proof_deruw_1}
    \end{align}
    which is bounded by~\eqref{eq:Bounds_XV} and in turn yields the finiteness of the second moment of $\widetilde{\phi}(\eta)$.
    Moreover, we notice that~$V^{\omega+\eta}_s-V^{\omega}_s=\eta_s$ which yields
    \begin{align}\label{eq:DeltaXomega}
        \delta_{\eta} X^\omega_T 
        &= \int_t^T \psi(s,V^{\omega+\eta}_s)-\psi(s,V^{\omega}_s) \,\D B_s \\
        &= \int_t^T\left( (V^{\omega+\eta}_s-V^{\omega}_s)\int_0^1 \psi'\big(s,V^{\omega}_s + \lambda (V^{\omega+\eta}_s-V^{\omega}_s) \big) \,\D \lambda\right) \D B_s \\
        & = \int_t^T \eta_s \wpsi_s(\eta) \D B_s, \nonumber
    \end{align}
    where~$\wpsi_s(\eta):= \int_0^1 \psi'\big(s,V^{\omega}_s + \lambda \eta_s\big) \,\D \lambda$. 
    Now, the local Lipschitz continuity of $\psi'(s, \cdot)$ gives
    \begin{equation}\label{eq:est_dif_psi}
       \wpsi_s(\eta)- \psi'(s,V^\omega_s) 
    = \int_0^1 \Big[\psi'(s,V^\omega_s+\lambda\eta_s) -\psi'(s,V^\omega_s)\Big] \D \lambda 
    \le \half\sup_{\norm{\tau}_{\TT}\le1}\abs{\psi''(s,V^\omega_s+ \tau_s)}  |\eta_s| .
    \end{equation}
    For all $p>1$, again by Assumption~\ref{assu:phipsi} and~\eqref{eq:Bounds_XV}, we have
    \begin{align}
       \EE_{t,x} \left[\sup_{\norm{\tau}_{\TT}\le1}\abs{\psi''(s,V^\omega_s+\tau_s)}^p\right]
       & \lesssim \EE_{t,x} \left[\sup_{\norm{\tau}_{\TT}\le1}(1+ \E^{\kpsi(V^\omega_s+ \tau_s)})^p\right]
       \lesssim 1 + \sup_{\norm{\tau}_{\TT}\le1} \E^{\kpsi p \tau_s }\EE_{t,x} \left[\E^{\kpsi p V^\omega_s }\right]\\ \nonumber
       & \lesssim 1 + \E^{\kpsi p }\EE \left[\E^{\kpsi p V^\omega_s }\right]
       <\infty.
    \end{align} 
    This allows us to show, exploiting~\eqref{eq:DeltaXomega} and It\^{o}'s isometry first and then~\eqref{eq:est_dif_psi}, 
\begin{align}
\frac{1}{\norm{\eta }_{\TT}^2}
& \EE_{t,x}\left[ \left(\delta_{\eta} X^\omega_T - \int_t^T \psi'(s,V^\omega_s) \eta_s\,\D B_s\right)^2\right]\\
        &= \frac{1}{\norm{\eta }_{\TT}^2}\EE_{t,x}\left[ \left(\int_t^T\big( \wpsi_s(\eta)- \psi'(s,V^\omega_s) \big)\eta_s\,\D B_s\right)^2\right] \nonumber\\
        &=\frac{1}{\norm{\eta }_{\TT}^2}\EE_{t,x}\left[ \int_t^T\big( \wpsi_s(\eta)- \psi'(s,V^\omega_s) \big)^2\eta_s^2\ds\right]\nonumber\\
        &\le \norm{\eta }_{\TT}^2\sup_{s\in\TT}\frac{\EE_{t,x}[\sup_{\norm{\tau}_{\TT}\le1}\abs{\psi''(s,V^\omega_s+ \tau_s)}^2](T-t)}{4}  , \qquad \qquad
        \label{eq:Diff_proof_deruw_2}
    \end{align}
    which goes to zero as~$\norm{\eta }_{\TT}$ tends to zero. 
    By virtue of Cauchy-Schwarz inequality, { the finiteness of the second moment of $\widetilde{\phi}(\eta)$, as noticed in} the sentence below~\eqref{eq:Diff_proof_deruw_1}, and~\eqref{eq:Diff_proof_deruw_2}, we conclude that the first term~\eqref{eq:Diff_proof_deruw_one} tends to zero.

%%%%%%%%%%%%%%%%%%%%%%%%%%%%%%%%%%%%%%%%%%%%%%%%%%%%%%%%%%%%%%
    For the second term, by definition of $\widetilde{\phi}$, the local Lipschitz continuity of~$\phi''$, Assumption~\ref{assu:phipsi} and~\eqref{eq:Bounds_XV}, we have
    \begin{align}\label{eq:diff_phi'}
        \wphi(\eta)-\phi'(X^\omega_T) 
        & = \int_0^1 \big( \phi'(X^\omega_T+ \lambda\delta_{\eta} X^\omega_T) - \phi'(X^\omega_T)\big) \D\lambda\\ \nonumber
        &\le \sup_{\alpha\in(0,1)}\abs{\phi''\left((1-\alpha)X^{\omega}_T+\alpha X^{\omega +\eta}_T\right)} \int_0^1 \lambda\abs{\delta_{\eta} X^\omega_T} \D\lambda \\ \nonumber
        &\lesssim \left(1 + \abs{X^{\omega}_T}^\kphi+ \abs{ X^{\omega +\eta}_T}^\kphi\right) \abs{\delta_{\eta} X^\omega_T}.
    \end{align}
    By~\eqref{eq:DeltaXomega}, BDG and H\"{o}lder's inequality, we have
    \begin{align}
        \EE_{t,x}\left[(\delta_{\eta} X^\omega_T)^4\right] 
        \lesssim \int_t^T \eta_s^4 \EE_{t,x}\left[\wpsi_s(\eta)^4\right]\ds,
    \end{align}
    while Assumption~\ref{assu:phipsi}(ii) yields
    \begin{align*}
\EE_{t,x}\left[\psi'(s,V^\omega_s+\lambda \eta_s)^4\right] 
& \lesssim 1 + \E^{4\kpsi T} \EE_{t,x}\left[\exp\left( 4\kpsi (V^\omega_s+\lambda \eta_s)\right)\right] \\
& \lesssim 1+\E^{4\kpsi\left( \omega_s+\lambda\eta_s\right)} \EE_{t,x}\left[\E^{4\kpsi I^t_s}\right],
    \end{align*}
    which is uniformly bounded for $s,t\in\TT$ since~$I_t^s$ is Gaussian and, as a consequence, so is $\EE_{t,x}[\wpsi_s(\eta)^4]$.  {Therefore $\EE\abs{\delta_\eta X^\omega_T}^p$ tends to zero as $\norm{\eta}_{\TT}\to0$.}
    Using Cauchy-Schwarz and $(1 + \abs{X^{\omega}_T}^\kphi+ \abs{ X^{\omega +\eta}_T}^\kphi )\in L^4$we conclude that 
    $$
\lim_{\norm{\eta}_{\TT}\downarrow0}\EE_{t,x}\left[\left( \wphi(\eta)-\phi'(X^\omega_T) \right)^2\right]=0.
    $$
    On the other hand, similar computations yield
    $$
    \sup_{t\in\TT}\EE_{t,x}\left[\left(\int_t^T \psi'(s,V^\omega_s) \eta_s \,\D B_s\right)^2\right] <\infty.
    $$
    A new application of Cauchy-Schwarz and the last two displays show that the second term~\eqref{eq:Diff_proof_deruw_two} tends to zero as well, concluding the proof of the first pathwise derivative formula. 
    The crossed derivative~$\partial_{\omega x} u$ is computed analogously since~$\phi''$ is also Lipschitz continuous.\\ 
%%%%%%%%%%%%%%%%%%%%%%%%%%%%%%%%%%%%%%%%%%%%%

    $\bullet$ For the second pathwise derivative, we again consider only the case $\zeta=0$, as the other follow with the same method.
    First of all, let us recall that we use the same notations as in the previous bullet point with $(t,x)$ fixed.
    For all~$\eta^{(1)},\eta^{(2)}\in\Ww_t,\omega\in\Ww$, let us define
    $$
    \Psi_t^i(\omega) := \int_t^T \psi'(s,V^{\omega}_s) \eta^{(i)}_s \D B_s,
    $$
    for $i=1,2$, 
    so that we are interested in the difference
    \begin{align}
       & \phi'(X^{\omega+\eta^{(2)}}_T) \Psi_t^1(\omega+\eta^{(2)}) -\phi'(X^{\omega}_T)\Psi_t^1(\omega)\\
       & \qquad\qquad\qquad- \left\{\phi''(X_T^\omega) \Psi_t^1(\omega)\Psi_t^2(\omega) + \phi'(X_T^\omega) \int_t^T \psi''(s,V_s^\omega) \eta^{(1)}_s \eta^{(2)}_s \,\D B_s \right\}\nonumber\\
       &\qquad \qquad =  \left\{ \phi'(X^{\omega+\eta^{(2)}}_T)  -\phi'(X^{\omega}_T) -\phi''(X_T^\omega) \Psi_t^2(\omega)\right\} \Psi_t^1(\omega) \label{eq:Diff_proof_deruww_1}\\
       & \qquad\qquad\qquad+ \phi'(X^{\omega+\eta^{(2)}}_T)\left\{\Psi_t^1(\omega+\eta^{(2)}) -\Psi_t^1(\omega) -\int_t^T \psi''(s,V_s^\omega) \eta^{(1)}_s \eta^{(2)}_s \,\D B_s\right\} \label{eq:Diff_proof_deruww_2}\\
       & \qquad\qquad\qquad+ \int_t^T \psi''(s,V_s^\omega) \eta^{(1)}_s \eta^{(2)}_s \,\D B_s \left\{ \phi'(X^{\omega+\eta^{(2)}}_T)-\phi'(X^{\omega}_T)\right\} .\label{eq:Diff_proof_deruww_3}
    \end{align}
    Once again, we need to take expectations,  {divide by~$\norm{\eta^{(2)}}_{\TT}$ and show that the limit as~$\norm{\eta^{(2)}}_{\TT}\downarrow0$ is zero. }
    The first term~\eqref{eq:Diff_proof_deruww_1} can be dealt with in the same way as~\eqref{eq:Diff_proof_deruw_two} with~$\phi',\psi'$ replaced by~$\phi'',\psi''$ and one more application of Cauchy-Schwarz to separate the term in brackets and~$\Psi_t^1(\omega)$.  The third term~\eqref{eq:Diff_proof_deruww_3} 
    is studied similarly to~\eqref{eq:Diff_proof_Dphi}  exploiting~\eqref{eq:DeltaXomega}. 
    We focus our attention on the second term~\eqref{eq:Diff_proof_deruww_2}. 
    As in~\eqref{eq:DeltaXomega}, we start by writing 
    $$
  \Psi^1_t(\omega+\eta^{(2)}) - \Psi^1_t(\omega) 
 = \int_t^T \big(\psi'(s,V^{\omega+\eta^{(2)}}_s)-\psi'(s,V^{\omega}_s)\big) \eta^{(1)}_s \,\D B_s
 = \int_t^T  \wpsi'_s(\eta^{(2)}) \eta^{(1)}_s\eta^{(2)}_s\,\D B_s,
    $$
    where $\wpsi'_s(\eta^{(2)}):= \int_0^1 \psi''(s,V^\omega_s+\lambda\eta^{(2)}_s)\D\lambda$. Therefore, 
    \begin{align}\label{eq:psi_est_incr}
        \Psi^1_t(\omega+\eta^{(2)}) - \Psi^1_t(\omega) & -\int_t^T \psi''(s,V_s^\omega) \eta^{(1)}_s \eta^{(2)}_s \D B_s \nonumber\\
        & = \int_t^T \left(  \wpsi'_s(\eta^{(2)})- \psi''(s,V^\omega_s)\right) \eta^{(1)}_s\eta^{(2)}_s \D B_s \nonumber\\
        & = \int_t^T \left( \int_0^1 \left(\psi''(s,V^\omega_s+\lambda\eta^{(2)}_s)- \psi''(s,V^\omega_s)\right)\D \lambda \right) \eta^{(1)}_s \eta^{(2)}_s \D B_s.
    \end{align}
    Now, as in~\eqref{eq:Diff_proof_deruw_2}, It\^o's isometry for~\eqref{eq:psi_est_incr} and the local Lipschitz continuity of~$\psi''$ yield
    \begin{align*}
        \lim_{\norm{\eta^{(2)}}_{\TT}\downarrow0} \frac{1}{\norm{\eta^{(2)}}_{\TT}} \EE_{t,x}\left[\left(  \Psi^1_t(\omega+\eta^{(2)}) - \Psi^1_t(\omega) -\int_t^T \psi''(s,V_s^\omega) \eta^{(1)}_s \eta^{(2)}_s \,\D B_s \right)^2\right] =0,
    \end{align*}
    which concludes the proof.

%%%%%%%%%%%%%%%%%%%%%%%%%%%%%%%%%%%%%%%%%%%%%

%%%%%%%%%%%%%%%%%%%%%%%%%%%%%%%%%%%%%%%%%%%%%%%%%%%%%%%%%%%%%%%%%%
\subsection{Proof of Proposition~\ref{prop:SingularDerivative}}\label{sec:Proof_SingularDerivative}
We recall that the pathwise derivative with respect to a singular direction is the limit of smooth directions, as in~\eqref{eq:SpatialDerivatives}.
Hence, we introduce the smooth approximation~$K^{\delta,t}:=K(\cdot\vee(t+\delta),t)$ which belongs to~$\Ww_t$, such that Proposition~\ref{prop:SpatialDerivatives} holds with $\eta=K^{\delta,t}$ and then pass to the limit~$\delta\downarrow0$.
We show that the conclusions of Proposition~\ref{prop:SpatialDerivatives} still hold for the second pathwise derivative, and in the case~$\zeta=0$, as the first derivative and the other cases follow in a straightforward way using the same techniques.  \\

$\bullet$ Let us start with the first term of~\eqref{eq:SingularDerivative}. Note first that~$K^{\delta,t}-K^t=0$ on~$[t+\delta,T]$.  We use the identity $a^2-b^2=(a+b)(a-b)$ and apply Cauchy-Schwarz inequality 
\begin{align}
&\EE_{t,x,\omega}\left[\phi'(X_T) \left(\int_t^{t+\delta} \psi'(s,V_s) K(t+\delta,t)\,\D B_s\right)^2\right]\\
 &\qquad\qquad -\EE_{t,x,\omega}\left[\phi'(X_T) \left(\int_t^{t+\delta} \psi'(s,V_s) K(s,t)\,\D B_s\right)^2\right] \label{eq:DeltaToZero1} \\
    &\le \sqrt{2} \EE_{t,x,\omega}\left[\phi'(X_T)^2\left\{\left(\int_t^{t+\delta} \psi'(s,V_s) K(t+\delta,t)\,\D B_s\right)^2 + \left(\int_t^{t+\delta} \psi'(s,V_s) K(s,t)\,\D B_s\right)^2\right\}\right]^\half \label{eq:DeltaToZero2} \\
    &\qquad \cdot \EE_{t,x,\omega}\left[\left(\int_t^{t+\delta} \psi'(s,V_s) \big\{K(t+\delta,t)-K(s,t)\big\}\,\D B_s\right)^2\right]^\half.\label{eq:DeltaToZero3} 
\end{align}
We study the second term in~\eqref{eq:DeltaToZero2}.
For all~$p\ge2$, 
BDG and Holder's inequalities yield
\begin{align}
    &\EE_{t,x,\omega}\left[\abs{\int_t^{t+\delta} K(s,t) \psi'(s,V_s)\,\D B_s}^p\right] \\
    &\le \bdg_p \EE_{t,x,\omega}\left[\abs{\int_t^{t+\delta}K(s,t)^2 \psi'(s,V_s)^2\ds}^{\frac{p}{2}}\right] \nonumber \\
    &= \bdg_p\EE_{t,x,\omega}\left[\abs{\int_t^{t+\delta}\big(K(s,t)^{\frac{4}{p}} \psi'(s,V_s)^2\big) K(s,t)^\frac{2p-4}{p}\ds}^{\frac{p}{2}}\right] \nonumber  \\
    &\le \bdg_p\EE_{t,x,\omega}\left[ \abs{\int_t^{t+\delta}\big(K(s,t)^{\frac{4}{p}} \psi'(s,V_s)^2 \big)^{\frac{p}{2}} \dr} \cdot\abs{\int_t^{t+\delta}K(s,t)^{\frac{2p-4}{p} \frac{p}{p-2}} \ds }^{\frac{p}{2}\frac{p-2}{p}}\right] \nonumber \\
    &\le \bdg_p \int_t^{t+\delta}K(s,t)^2 \EE_{t,x,\omega}[\abs{\psi'(s,V_s)}^p] \ds \left( \int_t^{t+\delta}K(s,t)^2\ds \right)^{\frac{p-2}{2}}.
\label{eq:DeltaToZeroMoment}
\end{align}
Since $\EE_{t,x,\omega}[\abs{\psi'(s,V_s)}^p]= \EE[\abs{\psi'(s,V_s^{t,\omega})}^p] $ is uniformly bounded in $s$, the above is bounded by some constant times~$(\int_t^{t+\delta}K(s,t)^2\ds)^{p/2}\le C \delta^{pH}$.
A similar reasoning holds replacing~$K(s,t)$ with~$K(t+\delta,t)$, hence Cauchy-Schwarz ensures that~\eqref{eq:DeltaToZero2} is bounded. It\^o's isometry, together with Assumption~\ref{assu:phipsi} and~\eqref{eq:lemma_bound_G}, yields
\begin{align*}
&\EE_{t,x,\omega}\left[\left(\int_t^{t+\delta} \psi'(s,V_s) \big\{K(t+\delta,t)-K(s,t)\big\}\,\D B_s\right)^2\right] \\
    & = \EE\left[\int_t^{t+\delta} \psi'(s,V_s)^2\big\{K(t+\delta,t)-K(s,t)\big\}^2\ds\right]\\
    & \qquad \qquad \lesssim \sup_{s\in\TT} \EE[\psi'(s,V_s)^2] \int_t^{t+\delta} \big\{K(t+\delta,t)-K(s,t)\big\}^2\ds
    \lesssim \delta^{2H}.
\end{align*}

$\bullet$ The other term to deal with is, for~$\delta>0$,
\begin{align*}
& \EE_{t,x,\omega}\left[\phi'(X_T) \int_t^T \psi''(s,V_s) K(s\vee (t+\delta),t)^2 \,\D B_s\right]\\
& = \EE_{t,x,\omega}\left[ \int_t^T \langle\bm{\Df}_s \phi'(X_T),\bm{\rho}\rangle \psi''(s,V_s) K(s\vee (t+\delta),t)^2 \ds \right],
\end{align*}
using Malliavin integration by parts~\eqref{eq:IBP_B}. Otherwise, when taking~$\delta\downarrow 0$, we would obtain the ill-defined integral~$\int_t^T K(s,t)^2\,\D B_s$. Notice that, for~$r\le s$, $\Df_s^W V_r=K(s,r)$ and~$\Df_s^{\overline{W}} V_r=0$. 
Therefore, $\bm{\Df}_s \phi'(X_T) =  \phi''(X_T)\big( \Df_s^W X_T,  \Df^{\overline{W}}_s X_T\big)^\top$ where
\begin{align}\label{eq:DX}
\Df^{\overline{W}}_s X_T 
&=  \overline\rho \psi(s,V_s), \\ \nonumber
\Df_s^W X_T &= \rho \psi(s,V_s) + \int_s^T \Df_s \psi(r,V_r) \,\D B_r + \zeta \int_0^T \Df_s^W \psi(r,V_r)^2 \dr \\ \nonumber
&= \rho \psi(s,V_s) + \int_s^T K(r,s)\psi'(r,V_r) \,\D B_r + \zeta \int_s^T K(r,s) (\psi^2)'(r,V_r) \dr.
\end{align}
Since~$\psi,\psi^2,(\psi^2)'$ have subexponential growth, 
the following holds for all~$p>1$ by Cauchy--Schwarz inequality
\begin{align*}
    &\EE_{t,x,\omega}\left[|\psi(s,V_s)|^p\right] 
    + \EE_{t,x,\omega}\left[\abs{\int_s^T K(r,s)(\psi^2)'(r,V_r)\dr }^p\right] \\ 
    & \qquad \lesssim \EE_{t,x,\omega}\left[|\psi(s,V_s)|^p\right] 
    + \left(\int_s^T K(r,s)^2\dr \right)^{p/2}\EE_{t,x,\omega}\left[\abs{\int_s^T (\psi^2)'(r,V_r)^{2}\dr }^p\right] \\
    & \qquad \lesssim 1+ \E^{p\kpsi T} { \sup_{s\in\TT } }\EE_{t,x,\omega}\left[\E^{p\kpsi  V_s}\right]
    + \E^{p\kappa_\psi^2 T} \left(\int_s^T K(r,s)^2 \dr \right)^{p/2}  \EE_{t,x,\omega}\left[{ \sup_{s\in\TT } }\, \E^{p\kappa_\psi^2  V_s}\right]  .
\end{align*}
By virtue of~\eqref{eq:Bounds_XV} and~$K\in L^2(\TT)$, these terms are uniformly bounded in~$L^p$ so that the first and the last terms on the right-hand side of~\eqref{eq:DX} are in ~$L^p$.
The same technique as in~\eqref{eq:DeltaToZeroMoment} shows the same holds for the second term in the second equation in~\eqref{eq:DX}; this entails
\begin{equation}\label{eq:Bound_DX}
\sup_{s\in\TT} \EE\left[\abs{\langle\bm{\Df}_s\phi'(X_T), \bm{\rho}\rangle}^p\right]<\infty.
\end{equation}
Hence the proof follows from the estimate
\begin{align*}
    &\abs{\EE_{t,x,\omega}\left[ \int_t^T \langle\bm{\Df}_s \phi'(X_T),\bm{\rho}\rangle \psi''(s,V_s) \Big\{K(s\vee (t+\delta),t)^2-K(s,t)^2\big\} \ds \right]} \\
    &\le \sup_{s\in\TT}\EE_{t,x,\omega}\left[\langle\bm{\Df}_s \phi'(X_T),\bm{\rho}\rangle \psi''(s,V_s)\right] \abs{\int_t^{t+\delta} \Big\{K(t+\delta,t)^2-K(s,t)^2\big\} \ds} \lesssim \delta^{2H}.
\end{align*}

%%%%%%%%%%%%%%%%%%%%%%%%%%%%%%%%%%%%%%%%%%%%%%%%%%%%%%%%%%%%%%%%%%%%%%%%%%%%%%%%%%%%%%%%%%%%%%%%%%%%%%%%%%%%%%%%%%%%%%%%%%%%%%%%%

\subsection{Proof of Proposition~\ref{prop:Cplusalpha}}\label{sec:Proof_Cplusalpha}
As in Definition~\ref{def:Cplusalpha}, let~$(t,x,\omega)\in\overline{\Gamma}$, $\eta,\eta^{(1)},\eta^{(2)}\in\Ww_t$ with supports in~$[t,t+\delta]$ for some small~$\delta>0$. Essentially, the estimates in~$x,\omega$ are verified thanks to growth conditions on~$\phi,\psi$ and the bounds~\eqref{eq:Bounds_XV}. The presence of~$\eta$ in a Riemann integral (resp. stochastic integral) leads to a bound proportional to~$\delta$ (resp. $\sqrt{\delta}$). This justifies the estimates of Definition~\ref{def:Cplusalpha} with~$\alpha=\half$. Finally the continuity of the derivatives is a consequence of the regularity of~$\phi$ and~$\psi$. \\

\emph{(i)} 
Since~$\phi'$ and~$\phi''$ both have polynomial growth, it is clear from~\eqref{eq:Bounds_XV} that~$\partial_x u$ and~$\partial_{xx} u$ have $G$-growth as introduced in Definition~\ref{def:Cplusalpha}. 
Let us note these derivatives do not require the factor~$\delta^\alpha$ as they are not in the direction of a singular kernel. 
Turning to the pathwise derivative~$\langle \partial_\omega u,\eta\rangle$ given in Proposition~\ref{prop:SpatialDerivatives}, we apply It\^{o}'s isometry after H\"{o}lder's inequality and by the growth of~$\phi,\psi$ and~\eqref{eq:Bounds_XV}, there exist~$G_1,G_2\in\Xx$ such that 
\begin{align}\label{eq:est_first_derivative}
\EE\left[\phi'(X_T^{t,x,\omega}) \int_t^T \psi'(s,V_s^{t,\omega}) \eta_s\,\D B_s\right] 
&\le \EE\left[\phi'(X_T^{t,x,\omega})^2\right]^\half \EE\left[\left(\int_t^{t+\delta} \psi'(s,V_s^{t,\omega}) \eta_s\,\D B_s\right)^2\right]^\half \\
&\lesssim \norm{G_1(x,\omega)}_{\TT} \EE\left[\int_t^{t+\delta} \psi'(s,V_s^{t,\omega})^2 \eta_s^2\ds\right]^\half \nonumber\\
&\lesssim \norm{G_1(x,\omega)}_{\TT} \sup_{s\in\TT} \EE\left[ \psi'(s,V_s^{t,\omega})^2 \right]^\half \,\norm{\eta}_\TT \sqrt{\delta} \nonumber\\
&\lesssim \norm{G_1(x,\omega)}_{\TT} \norm{G_2(x,\omega)}_{\TT} \,\norm{\eta}_\TT \sqrt{\delta} \nonumber.
\end{align}
Here~$\norm{G(x,\omega)}_{\TT}=\sup_{t\in\TT} \abs{G(x,\omega_t)}$. 
Clearly, $G_1 G_2$  is also in~$\Xx$ by Cauchy-Schwarz inequality, and the second term of~$\langle \partial_\omega u,\eta\rangle$ can be dealt with in the same way by applying Jensen's inequality instead of It\^o's isometry. This shows the $G$-growth with~$\alpha=\half$. The case of~$\langle \partial_\omega(\partial_x u),\eta\rangle$ is identical.

For the second pathwise derivative, we first apply the Malliavin IBP as in the representation~\eqref{eq:SingularDerivative} and then~\eqref{eq:Bound_DX} entails
\begin{align}\label{eq:est_second_derivative_1}
    \EE\left[\phi'(X_T^{t,x,\omega})\int_t^T \psi''(s,V^{t,\omega}_s) \eta^{(1)}_s \eta^{(2)}_s \D B_s \right]
    &=\int_t^T \EE\left[\langle\bm{\Df}_s \phi'(X_T^{t,x,\omega}),\bm{\rho}\rangle \psi''(s,V^{t,\omega}_s)\right] \eta^{(1)}_s \eta^{(2)}_s \ds \\
    &\lesssim \int_t^T \eta^{(1)}_s \eta_s^{(2)}\ds
    \lesssim \norm{\eta^{(1)}}_{\TT}\norm{\eta^{(2)}}_{\TT} \delta_1\land\delta_2 
    \lesssim \norm{\eta^{(1)}}_{\TT}\norm{\eta^{(2)}}_{\TT} \sqrt{\delta_1}\sqrt{\delta_2} . \nonumber
\end{align}
Similar computations show that for some~$G\in\Xx$
\begin{align}\label{eq:est_second_derivative2}
\EE\left[\zeta \int_t^T (\psi_s^2)''(V_s^{t,\omega}) \eta^{(1)}_s \eta^{(2)}_s \,\D s \right] \lesssim \norm{G(x,\omega)}_{\TT}^2 \,\norm{\eta^{(1)}}_{\TT}\norm{\eta^{(2)}}_{\TT} \sqrt{\delta_1}\sqrt{\delta_2}.
\end{align}
 For the other terms, we apply H\"older's inequality with~$\frac{1}{p_1} + \frac{1}{p_2} + \frac{1}{p_3}=1$ to separate the three factors and there exist~$G\in\Xx$ such that
\begin{align*}
    & \EE\left[\phi''(X_T^{t,x,\omega}) \left(\int_t^T \psi'(s,V_s^{t,\omega}) \eta^{(1)}_s \,\D B_s \right)\left(\int_t^T \psi'(s,V_s^{t,\omega}) \eta^{(2)}_s \,\D B_s \right) \right] \\
    &\le \norm{\phi''(X_T^{t,x,\omega})}_{L^{p_1}} \norm{\int_t^T \psi'(s,V_s^{t,\omega}) \eta^{(1)}_s \,\D B_s}_{L^{p_2}} \norm{\int_t^T \psi'(s,V_s^{t,\omega}) \eta^{(2)}_s \,\D B_s}_{L^{p_3}}\\
    & \lesssim \norm{G(x,\omega)}_{\TT}\norm{\eta^{(1)}}_{\TT} \norm{\eta^{(2)}}_{\TT} \sqrt{\delta_1}\sqrt{\delta_2},
\end{align*}
where the last inequality follows from BDG inequality and taking the supremum of~$\eta^{(1)},\eta^{(2)}$.
The same technique of splitting into three different factors shows that the other terms also yield the same estimates. 
This concludes Definition~\ref{def:Cplusalpha}(i) with $\alpha=\half$.
\\

\emph{(ii)} For the regularity, we consider~$(t,x,\omega),(t,x',\omega')\in\overline{\Gamma}$. We focus on $\omega$-continuity as $x$-continuity is easier and follows with similar arguments. Hence we fix~$(t,x)$, abbreviate~$X^{t,x,\omega}$ by~$X^\omega$ and denote the~$L^2$ norm under~$\EE_{t,x}$ as~$\norm{\cdot}_{L^2_{t,x}}$:
\begin{align}\label{eq:est_first_derivative_reg}
\EE_{t,x} & \left[ \phi'(X_T^\omega) \int_t^T \psi'(s,V_s^\omega)\eta_s\,\D B_s\right]  - \EE_{t,x}\left[ \phi'(X_T^{\omega'}) \int_t^T \psi'(s,V_s^{\omega'})\eta_s \,\D B_s\right]\\
&= \EE_{t,x}\left[ \left(\phi'(X_T^\omega)-\phi'(X_T^{\omega'})\right) \int_t^T \psi'(s,V_s^\omega)\eta_s\,\D B_s\right] \nonumber\\
& \qquad  + \EE_{t,x}\left[ \phi'(X_T^{\omega'}) \left(\int_t^T \psi'(s,V_s^\omega)\eta_s\,\D B_s-\int_t^T \psi'(s,V_s^{\omega'})\eta_s\,\D B_s\right)\right] \nonumber\\
&\le \norm{\phi'(X_T^\omega)-\phi'(X_T^{\omega'})}_{L^2_{t,x}} \norm{\int_t^{t+\delta}\psi'(s,V_s^\omega)\eta_s\,\D B_s}_{L^2_{t,x}} \nonumber\\
& \qquad +\norm{\phi'(X_T^{\omega'})}_{L^2_{t,x}} \norm{\int_t^{t+\delta} \left(\psi'(s,V_s^\omega)- \psi'(s,V_s^{\omega'})\right)\eta_s\,\D B_s}_{L^2_{t,x}}. \nonumber
\end{align}
Previous computations showed there exists~$G\in\Xx$ such that
\begin{align*}
\norm{\phi'(X_T^{\omega'})}_{L^2_{t,x}}
& \lesssim \norm{G(x,\omega')}_{\TT},\\
\norm{\int_t^{t+\delta}\psi'(s,V_s^\omega)\eta_s\,\D B_s}_{L^2_{t,x}}
 & \lesssim \norm{G(x,\omega)}_{\TT} \norm{\eta}_{\TT} \sqrt{\delta}.
\end{align*}
Starting with a decomposition similar to the one in~\eqref{eq:DeltaXomega}, then applying It\^{o}'s isometry and the fact that $\eta$ is uniformly bounded over time together with Assumption~\ref{assu:phipsi}, we obtain
\begin{align*}
& \norm{\int_t^{t+\delta} \left(\psi'(s,V_s^\omega)- \psi'(s,V_s^{\omega'})\right)\eta_s\,\D B_s}_{L^2_{t,x}}\\
    & = \norm{\int_t^{t+\delta}\eta_s(V_s^\omega-V_s^{\omega'})\int_0^1 \psi''(s,\lambda V_s^\omega+ (1-\lambda)V_s^{\omega'})\D \lambda \,\D B_s}_{L^2_{t,x}}\\
    & = \EE_{t,x} \left[ \int_t^{t+\delta}\eta_s^2(\omega_s-\omega'_s)^2\left(\int_0^1 \psi''(s,\lambda V_s^\omega+ (1-\lambda)V_s^{\omega'}) \D \lambda\right)^2 \,\D s\right]^{\half}\\
    & \lesssim \norm{\eta}_{\TT}\norm{\omega-\omega'}_\TT \EE \left[ \int_t^{t+\delta}\int_0^1 (1+\E^{\kappa_\psi(\lambda V_s^\omega+ (1-\lambda)V_s^{\omega'})})^2 \D \lambda  \,\D s\right]^{\half}\\
    & \lesssim \norm{\eta}_{\TT}\norm{\omega-\omega'}_\TT \left( 1+\int_t^{t+\delta}\EE \left[ \E^{2\kappa_\psi I^t_s}\right]\int_0^1 \E^{2\kappa_\psi(\lambda \omega_s+ (1-\lambda)\omega'_s)} \D \lambda  \,\D s\right)^{\half}\\
    & \lesssim \norm{\eta}_{\TT}\norm{\omega-\omega'}_\TT \left( 1+\int_t^{t+\delta}(\E^{2\kappa_\psi \omega_s}+ \E^{2\kappa_\psi\omega'_s}) \D s\right)^{\half}\\
    &\lesssim (\norm{G(x,\omega)}_{\TT} +\norm{G(x,\omega')}_{\TT})\norm{\omega-\omega'}_{\TT} \norm{\eta}_{\TT}\sqrt{\delta},
\end{align*}
where we used the convexity of the exponential.
Following the idea of~\eqref{eq:Diff_proof_Dphi}, we also know by local Lipschitz continuity and Cauchy-Schwarz inequality
\begin{align*}
\norm{\phi'(X_T^\omega)-\phi'(X_T^{\omega'})}_{L^2_{t,x}} 
&\le \norm{X^\omega_T-X^{\omega'}_T}_{L^4_{t,x}} \norm{ \int_0^1 \phi''(\lambda X^\omega_T +(1-\lambda) X^{\omega'}_T)\D \lambda}_{L^4_{t,x}}\\
&\lesssim (\norm{G(x,\omega)}_{\TT} +\norm{G(x,\omega')}_{\TT})\norm{\omega-\omega'}_\TT .    
\end{align*}
where, to get the bound for the integral, we have exploited the fact that, by Assumption~\ref{assu:phipsi}, monotonicity and an application of Lemma~\ref{lemma:Bounds_XV}, we have
\begin{align}
    \norm{ \int_0^1 \phi''(\lambda X^\omega_T +(1-\lambda) X^{\omega'}_T)\D \lambda}_{L^4_{t,x}}
    & \lesssim \norm{ \int_0^1 (1+ (\lambda X^\omega_T+(1-\lambda) X^{\omega'}_T)^{\kappa_\phi})\D \lambda}_{L^4_{t,x}}\\
    & \lesssim 1+  \norm{ \int_0^1  (\lambda X^\omega_T)^{\kappa_\phi} +((1-\lambda) X^{\omega'}_T)^{\kappa_\phi}\D \lambda}_{L^4_{t,x}}\\
    & \lesssim 1+  \norm{ ( X^\omega_T)^{\kappa_\phi} }_{L^4_{t,x}}+ \norm{(X^{\omega'}_T)^{\kappa_\phi}}_{L^4_{t,x}}    \\
    & \lesssim 1+ ( |x|^{4\kappa_\phi} + \E^{2\kappa_\psi4\kappa_\psi\norm{\omega}_T})^{\frac{1}{4}} +( |x|^{4\kappa_\phi} + \E^{2\kappa_\psi4\kappa_\psi\norm{\omega'}_T})^{\frac{1}{4}}
    \\
    & \lesssim 1+ |x|^{4\kappa_\phi} + \E^{2\kappa_\psi4\kappa_\psi\norm{\omega}_T} +  \E^{2\kappa_\psi4\kappa_\psi\norm{\omega'}_T},
\end{align}
which corresponds to~$\norm{G(x,\omega)}_{\TT}+\norm{G(x,\omega')}_{\TT}$ in the bound above.
Once again, the other term follows from the same steps.
\\

Let us move on to the second derivative with representation~\eqref{eq:SingularDerivative}.
The first term in the representation is handled in the following way. 
Let us notice once more that without loss of generality we restrict our study to the case $\zeta=0$, as the additional terms are studied with similar computations.
\begin{align*}
    &\EE_{t,x}\left[ \int_t^T \langle\bm{\Df}_s \phi'(X_T^{\omega}),\bm{\rho}\rangle \psi''(s,V_s^{\omega}) \eta^{(1)}_s\eta^{(2)}_s \ds \right] 
    - \EE_{t,x}\left[ \int_t^T \langle\bm{\Df}_s \phi'(X_T^{\omega'}),\bm{\rho}\rangle \psi''(s,V_s^{\omega'}) \eta^{(1)}_s\eta^{(2)}_s \ds \right] \\
    &=\EE_{t,x}\Bigg[ \int_t^{t+(\delta_1\land\delta_2)} \Bigg\{ \Big(\langle\bm{\Df}_s \phi'(X_T^{\omega}),\bm{\rho}\rangle -\langle\bm{\Df}_s \phi'(X_T^{\omega'}),\bm{\rho}\rangle \Big) \psi''(s,V_s^{\omega}) \\
    & \qquad \qquad + \langle\bm{\Df}_s\phi'(X_T^{\omega'}),\bm{\rho}\rangle \left(\psi''(s,V_s^{\omega})-\psi''(s,V_s^{\omega'})\right) \Bigg\} \eta^{(1)}_s\eta^{(2)}_s \ds \Bigg] \\
    &\le \int_t^{t+(\delta_1\land\delta_2)} \norm{\langle\bm{\Df}_s \phi'(X_T^{\omega}),\bm{\rho}\rangle -\langle\bm{\Df}_s \phi'(X_T^{\omega'}),\bm{\rho}\rangle}_{L^2_{t,x}} \norm{\psi''(s,V_s^{\omega})}_{L^2_{t,x}} \ds \norm{\eta^{(1)}}_{\TT} \norm{\eta^{(2)}}_{\TT}\\
    & \quad +\int_t^{t+(\delta_1\land\delta_2)}
    \norm{\langle\bm{\Df}_s \phi'(X_T^{\omega'}),\bm{\rho}\rangle}_{L^2_{t,x}} \norm{\psi''(s,V_s^{\omega})-\psi''(s,V_s^{\omega'})}_{L^2_{t,x}} \ds \norm{\eta^{(1)}}_{\TT} \norm{\eta^{(2)}}_{\TT}.
\end{align*}
Concerning the second term on the right-hand side, we have already seen that~$ \langle\bm{\Df}_s \phi'(X_T^{\omega'}),\bm{\rho}\rangle$ is bounded in~$L^2$ and, since~$\psi''$ is $\Cc^1$, $\norm{\psi''(s,V_s^{\omega})-\psi''(s,V_s^{\omega'})}_{L^2}\lesssim \norm{G(x,\omega)}_{\TT} \norm{\omega-\omega'}_\TT$. 
On the other hand, for the first term, we see from~\eqref{eq:DX} that
\begin{align}\nonumber
    & \norm{\langle\bm{\Df}_s \phi'(X_T^{\omega}),\bm{\rho}\rangle -\langle\bm{\Df}_s \phi'(X_T^{\omega'}),\bm{\rho}\rangle}_{L^2_{t,x}} \\
    &= \norm{\rho \phi''(X^\omega_T) \Df_s X^\omega_T +\bar{\rho}  \phi''(X^\omega_T) \overline{\Df}_s X^\omega_T - \rho \phi''(X^{\omega'}_T) \Df_s X^{\omega'}_T +\bar{\rho}  \phi''(X^{\omega'}_T) \overline{\Df}_s X^{\omega'}_T}_{L^2_{t,x}}
\end{align}
and, focusing on the first term, Cauchy-Schwarz inequality yields
\begin{align} 
    &\norm{ \phi''(X^\omega_T) \Df_s X^\omega_T  -  \phi''(X^{\omega'}_T) \Df_s X^{\omega'}_T}_{L^2_{t,x}}\\
    &\le \norm{\big(\phi''(X^\omega_T)-\phi''(X^{\omega'}_T)\big) \Df_s X^\omega_T }_{L^2_{t,x}}
    + \norm{ \phi''(X^{\omega'}_T) \big( \Df_s X^{\omega}_T-\Df_s X^{\omega'}_T\big)}_{L^2_{t,x}}\\
    &\lesssim \norm{\phi''(X^\omega_T)-\phi''(X^{\omega'}_T)}_{L^4_{t,x}}\norm{ \Df_s X^\omega_T }_{L^4_{t,x}}
    + \norm{\phi''(X^{\omega'}_T)}_{L^4_{t,x}}\bigg[ \norm{\psi(s,V_s^\omega) - \psi(s,V_s^{\omega'})}_{L^4_{t,x}} \\
    & + \norm{\int_s^T K(r,s) \big( \psi'_r(V_r^\omega)-\psi'_r(V_r^{\omega'})\big) \D B_r }_{L^4_{t,x}} + \norm{\int_s^T K(r,s)\big((\psi^2_r)'(V_r^{\omega})-(\psi^2_r)'(V_r^{\omega'})\big) \dr }_{L^4_{t,x}}\bigg].
\label{eq:Time_Diff_MalliavinTerm}
\end{align}
The regularity of~$\psi,\psi',\psi''$ allow us to conclude, using similar computations as before, that this term is smaller than~$\norm{G(x,\omega)}_{\TT} \norm{\omega-\omega'}_\TT \norm{\eta^{(1)}}_{\TT} \norm{\eta^{(2)}}_{\TT} (\delta_1\land\delta_2)$.

For the next term, we separate this triple in the following way
\begin{align}\label{eq:est_second_derivative_reg}
&\EE_{t,x}\left[\phi''(X_T^{\omega}) \left(\int_t^T \psi'(s,V_s^{\omega}) \eta^{(1)}_s \,\D B_s \right)\left(\int_t^T \psi'(s,V_s^{\omega}) \eta^{(2)}_s \,\D B_s \right) \right] \\
&-\EE_{t,x}\left[\phi''(X_T^{\omega'}) \left(\int_t^T \psi'(s,V_s^{\omega'}) \eta^{(1)}_s \,\D B_s \right)\left(\int_t^T \psi'(s,V_s^{\omega'}) \eta^{(2)}_s \,\D B_s \right) \right] \nonumber\\
& = \EE_{t,x}\left[\left(\phi''(X_T^{\omega}) - \phi''(X_T^{\omega'}) \right) \left(\int_t^T \psi'(s,V_s^{\omega}) \eta^{(1)}_s \,\D B_s \right)\left(\int_t^T \psi'(s,V_s^{\omega}) \eta^{(2)}_s \,\D B_s \right) \right] \nonumber\\
&\quad + \EE_{t,x}\left[\phi''(X_T^{\omega'}) \left(\int_t^T \left(\psi'(s,V_s^{\omega})-\psi'(s,V_s^{\omega'})\right) \eta^{(1)}_s \,\D B_s \right)\left(\int_t^T \psi'(s,V_s^{\omega}) \eta^{(2)}_s \,\D B_s \right) \right] \nonumber\\
&\quad+\EE_{t,x}\left[\phi''(X_T^{\omega'}) \left(\int_t^T \psi'(s,V_s^{\omega'}) \eta^{(1)}_s \,\D B_s \right)\left(\int_t^T \left(\psi'(s,V_s^{\omega})-\psi'(s,V_s^{\omega'})\right) \eta^{(2)}_s \,\D B_s \right) \right],\nonumber
\end{align}
and then applying H\"older to each of them, the same arguments as in point~(i) show that is bounded by~$\norm{G(x,\omega)}_{\TT} \norm{\omega-\omega'}_{\TT}\norm{\eta^{(1)}}_{\TT} \norm{\eta^{(2)}}_{\TT} \sqrt{\delta_1}\sqrt{\delta_2}$. 

\bigskip
\emph{(iii)}
We are left checking the last condition, continuity of the pathwise derivatives on~$\overline{\Gamma}$. 
We just showed uniform continuity in~$\omega$,
thus time continuity is the only step left.
We only show it for the second derivative, using~\eqref{eq:SingularDerivative}. We fix $x,\omega$; as in point~(ii) of this proof,
\begin{align}
&\EE_{x,\omega}\left[\phi''(X_T^{t+\delta}) \left(\int_{t+\delta}^T \psi'(s,V_s^{t+\delta}) \eta^{(1)}_s \,\D B_s \right)\left(\int_{t+\delta}^T \psi'(s,V_s^{t+\delta}) \eta^{(2)}_s \,\D B_s \right) \right] \nonumber \\
&-\EE_{x,\omega}\left[\phi''(X_T^{t}) \left(\int_t^T \psi'(s,V_s^{t}) \eta^{(1)}_s \,\D B_s \right)\left(\int_t^T \psi'(s,V_s^{t}) \eta^{(2)}_s \,\D B_s \right) \right] \nonumber\\
&= \EE_{x,\omega}\left[ \left(\phi''(X_T^{t+\delta}) - \phi''(X_T^{t}) \right)\left(\int_{t+\delta}^T \psi'(s,V_s^{t+\delta}) \eta^{(1)}_s \,\D B_s \right)\left(\int_{t+\delta}^T \psi'(s,V_s^{t+\delta}) \eta^{(2)}_s \,\D B_s \right) \right] \qquad \label{eq:Time_Diff_Decomposition_1}\\
& + \EE_{x,\omega}\left[\phi''(X_T^{t}) \left(\int_{t+\delta}^T \psi'(s,V_s^{t+\delta}) \eta^{(1)}_s \,\D B_s 
-\int_{t}^T \psi'(s,V_s^{t}) \eta^{(1)}_s \,\D B_s\right)\left(\int_{t+\delta}^T \psi'(s,V_s^{t+\delta}) \eta^{(2)}_s \,\D B_s \right) \right] \qquad\qquad\label{eq:Time_Diff_Decomposition_2}\\
&  + \EE_{x,\omega}\left[\phi''(X_T^{t}) \left(\int_t^T \psi'(s,V_s^{t}) \eta^{(1)}_s \,\D B_s \right) \left(\int_{t+\delta}^T \psi'(s,V_s^{t+\delta}) \eta^{(2)}_s \,\D B_s 
-\int_{t}^T \psi'(s,V_s^{t}) \eta^{(2)}_s \,\D B_s\right)\right].\qquad\qquad
\label{eq:Time_Diff_Decomposition_3}
\end{align}
For the first line~\eqref{eq:Time_Diff_Decomposition_1}, we apply Cauchy-Schwarz inequality to focus on the difference
\begin{align*}
\norm{\phi''(X_T^{t+\delta}) - \phi''(X_T^{t})}_{L^2_{x,\omega}} 
&\le \norm{X_T^{t+\delta}-X_T^t}_{L^4_{x,\omega}} \norm{\int_0^1 \phi'''(\lambda X_T^{t+\delta} + (1-\lambda) X_T^{t})\,\D \lambda}_{L^4_{x,\omega}}.
\end{align*}
Furthermore, by BDG and Jensen inequalities
\begin{align*}
\norm{X_T^{t+\delta}-X_T^t}_{L^4_{x,\omega}}^4 
&= \EE_{x,\omega}\left[\left(\int_{t+\delta}^T \big(\psi(s,V_s^{t+\delta})-\psi(s,V_s^t)\big)\,\D B_s + 
\int_t^{t+\delta} \psi(s,V_s^t)\,\D B_s\right)^4\right] \\
&\le 8 \bdg_4 T \int_t^T \EE_{x,\omega}\left[\abs{\psi(s,V_s^{t+\delta})-\psi(s,V_s^t)}^4\right]\ds +
8 \delta \bdg_4  \int_t^{t+\delta} \EE_{x,\omega}\left[\psi(s,V_s)^4  \right]\ds,
\end{align*}
where the second term clearly goes to zero as~$\delta$ tends to zero. 
For the first one, we write
\begin{equation}\label{eq:Time_Diff_psi}
\EE_{x,\omega}\left[\abs{\psi(s,V_s^{t+\delta})-\psi(s,V_s^t)}^4\right] 
= \EE_{x,\omega}\left[\abs{\left(V_s^{t+\delta}-V_s^t\right) \int_0^1 \psi'\left(s,V^t_s+\lambda\left(V_s^{t+\delta}-V_s^t\right)\right)\D \lambda }^4\right],
\end{equation}
where the integral is bounded in~$L^8$ as in~\eqref{eq:Bounds_XV} and
$$
\EE_{x,\omega}\left[\abs{V_s^{t+\delta}-V_s^t}^8\right]=\EE_{x,\omega}\left[\abs{\int_t^{t+\delta} K(s,r)\,\D W_r}^8\right] 
\le \bdg_8 \left(\int_t^{t+\delta} K(s,r)^2 \dr\right)^4 
\le \bdg_8 \delta^{8H}. 
$$
Yet another application of Cauchy-Schwarz yields that~\eqref{eq:Time_Diff_Decomposition_1} goes to zero with~$\delta$.

The second and third lines~\eqref{eq:Time_Diff_Decomposition_2} and~\eqref{eq:Time_Diff_Decomposition_3} are identical hence we only deal with the second one.
We decompose further 
\begin{align*}
& \int_{t+\delta}^T \psi'(s,V_s^{t+\delta}) \eta^{(1)}_s \,\D B_s 
-\int_{t}^T \psi'(s,V_s^{t}) \eta^{(1)}_s \,\D B_s\\
 & = \int_{t+\delta}^T \big(\psi'(s,V_s^{t+\delta})-\psi'(s,V_s^{t})\big) \eta^{(1)}_s \,\D B_s 
+\int_{t}^{t+\delta}  \psi'(s,V_s^{t}) \eta^{(1)}_s \,\D B_s
\end{align*}
The integral between $t$ and~$t+\delta$ was already studied point~(ii) and we showed these tend to zero in~$L^p$ norm as~$\delta\downarrow0$. Since~$\psi'$ is also $\Cc^1$, the same computations as above show that~$\EE_{x,\omega}[\abs{\psi'(s,V_s^{t+\delta})-\psi'(s,V_s^t)}^4]$ goes to zero, which concludes the time continuity of the first term of the representation in~\eqref{eq:SingularDerivative}.

Regarding the next term,
\begin{align*}
    &\EE_{x,\omega}\left[ \int_{t+\delta}^T \langle\bm{\Df}_s \phi'(X_T^{t+\delta}),\bm{\rho}\rangle \psi''(s,V_s^{t+\delta}) K(s,t+\delta)^2 \ds
    - \int_t^T \langle\bm{\Df}_s \phi'(X_T^t),\bm{\rho}\rangle \psi''(s,V_s^t) K(s,t)^2 \ds\right] \\
    &= \EE_{x,\omega}\bigg[ \int_{t+\delta}^T \Big\{\langle\bm{\Df}_s \phi'(X_T^{t+\delta}),\bm{\rho}\rangle \psi''(s,V_s^{t+\delta}) K(s,t+\delta)^2 - \langle\bm{\Df}_s \phi'(X_T^t),\bm{\rho}\rangle \psi''(s,V_s^t) K(s,t)^2 \Big\} \ds \\
    &\qquad + \int_t^{t+\delta} \langle\bm{\Df}_s \phi'(X_T^t),\bm{\rho}\rangle \psi''(s,V_s^t) K(s,t)^2 \ds \bigg] \\
    &=\EE_{x,\omega}\left[\int_{t+\delta}^T \Big\{\langle\bm{\Df}_s \phi'(X_T^{t+\delta}),\bm{\rho}\rangle - \langle\bm{\Df}_s \phi'(X_T^{t}),\bm{\rho}\rangle\Big\} \psi''(s,V_s^{t+\delta}) K(s,t+\delta)^2 \ds \right]\\
    &\qquad +\EE_{x,\omega}\left[\int_{t+\delta}^T \langle\bm{\Df}_s \phi'(X_T^{t}),\bm{\rho}\rangle \Big\{\psi''(s,V_s^{t+\delta})-\psi''(s,V_s^{t})\Big\} K(s,t+\delta)^2 \ds\right] \\
    &\qquad +\EE_{x,\omega}\left[\int_{t+\delta}^T \langle\bm{\Df}_s \phi'(X_T^{t}),\bm{\rho}\rangle \psi''(s,V_s^{t})\Big\{ K(s,t+\delta)^2-K(s,t)^2\Big\} \ds\right] \\
    &\qquad + \EE_{x,\omega}\left[\int_t^{t+\delta} \langle\bm{\Df}_s \phi'(X_T^t),\bm{\rho}\rangle \psi''(s,V_s^t) K(s,t)^2 \ds \right].
\end{align*}
Using the expression~\eqref{eq:DX} and similar techniques as before, the first term boils down to evaluating differences of the type~\eqref{eq:Time_Diff_psi}, albeit with~$\psi'$ instead of~$\psi$, which we proved tends to zero as~$\delta\downarrow0$. The second term follows identically with~$\psi''$ this time. For the third one we apply Cauchy-Schwarz inequality to isolate the term
$$
\abs{\int_{t+\delta}^T \Big( K(s,t+\delta)^2-K(s,t)^2\Big)\ds } \lesssim \delta^{2H}.
$$
Finally, the last term is smaller in absolute value than
$$
\sup_{s\in[t,t+\delta]} \EE_{x,\omega}\left[\langle\bm{\Df}_s \phi'(X_T^t),\bm{\rho}\rangle \psi''(s,V_s^t)\right] \int_t^{t+\delta} K(s,t)^2\ds \lesssim \delta^{2H}.
$$
The other terms of~\eqref{eq:SingularDerivative} can be shown to be continuous using the same techniques. This concludes Definition~\ref{def:Cplusalpha}(iii) and hence the proof of the proposition.

\subsection{Proof of Lemma~\ref{lemma:u_regularity}}\label{app:u_reg}

In order to prove this lemma we have to check that each of the conditions in Assumption~\ref{assumption:regcoef+U} are satisfied for the rough Bergomi model. 

(i) 
Regarding the regularity of $b$ and $\sigma$, for $x=(x_1,x_2)$ and $y=(y_1,y_2)$, that of~$\psi$ is guaranteeing the existence of a function~$G$ and a suitable modulus of continuity so that
\begin{align*}
    \abs{b(s,r,x)-b(s,r,y)}+\abs{\sigma(s,r,x)-\sigma(s,r,y)} 
    & = 2|\zeta|\abs{\psi(r,x_2)^2-\psi(r,y_2)^2}+\abs{\psi(r,x_2)-\psi(r,y_2)}\\
    &\le \varrho(\abs{x-y}) G(x\vee y).
\end{align*}

(ii) a. About the regularity of the derivatives of $U$, exploiting the explicit expression for the derivatives of $u$ as in Proposition \ref{prop:SpatialDerivatives}, we have that, for any $t\in\TT$, $\ww,\ww'\in \overline{\Ww}$ such that~$\ww,\ww',\eta,\eta'\in\Ww_t$, it holds
\begin{align*}
\abs{\big\langle\partial_{\ww} U(t,\ww),\eta\big\rangle} 
&= \abs{ \big\langle\partial_{\omega_1} U(t,\ww),\eta^{(1)}\big\rangle + \big\langle\partial_{\omega_2} U(t,\ww),\eta^{(2)}\big\rangle}\\
&= \abs{ \eta^{(1)}\partial_{x} u(t,x,\omega) + \big\langle\partial_{\omega} u(t,x,\omega),\eta^{(2)}\big\rangle}\\
    &= |\eta^{(1)}|\abs{\EE_{t,x,\omega}[\phi'(X_T)]} + \abs{\EE_{t,x,\omega}\left[\phi'(X_T) \int_t^T \psi'(s,V_s) \eta^{(2)}_s \D B_s \right]}\\
    &\qquad\qquad\qquad\qquad\qquad+\zeta \abs{\EE_{t,x,\omega}\left[\phi'(X_T)\int_t^T (\psi^2)'(s,V_s) \eta^{(2)}_s \D s \right]}
    \\
    &
    \le \norm{G(\ww)}_{\TT} \norm{\eta}_{L^2(\TT)}.
\end{align*}
In the last line we have exploited the fact that, since~$\phi'$ has polynomial growth, it is clear from~\eqref{eq:Bounds_XV} that~$\partial_x u$ 
has $G$-growth as introduced in Definition~\ref{def:Cplusalpha}. For the first term of~$\langle \partial_\omega u,\eta^{(2)}\rangle$ (the second term can be dealt with in the same way by applying Jensen's inequality instead of It\^o's isometry) we have exploited a suitable modification of~\eqref{eq:est_first_derivative} that provides the following bound in terms of $\norm{\eta^{(2)}}_{L^2(\TT)}$
\begin{align*}
    \EE\left[\phi'(X_T^{t,x,\omega}) \int_t^T \psi'(s,V_s^{t,\omega}) \eta^{(2)}_s\,\D B_s\right] 
    &\le \EE\left[\phi'(X_T^{t,x,\omega})^2\right]^\half \EE\left[\left(\int_t^{T} \psi'(s,V_s^{t,\omega}) \eta^{(2)}_s\,\D B_s\right)^2\right]^\half \\
    &\lesssim \norm{G_1(x,\omega)}_{\TT} \EE\left[\int_t^{T} \psi'(s,V_s^{t,\omega})^2 (\eta^{(2)}_s)^2\ds\right]^\half \nonumber\\
    &\lesssim \norm{G_1(x,\omega)}_{\TT}  \EE\left[ \sup_{s\in\TT}\psi'(s,V_s^{t,\omega})^2 \right]^\half \,\norm{\eta^{(2)}}_{L^2(\TT)}  \nonumber\\
    &\lesssim \norm{G_1(x,\omega)}_{\TT} \norm{G_2(x,\omega)}_{\TT} \,\norm{\eta^{(2)}}_{L^2(\TT)} \nonumber.
\end{align*}

(ii) b. For what concerns the estimate of 
\begin{align}
    \abs{\big\langle\partial_{\ww\ww} U(t,\ww),(\eta,\widetilde{\eta})\big\rangle}
    & = | \eta^{(1)}\widetilde{\eta}^{(1)}\partial_{xx} u(t,x,\omega)+ \widetilde{\eta}^{(1)}\big\langle\partial_{x\omega} u(t,x,\omega),\eta^{(2)}\big\rangle \\  
    & + {\eta}^{(1)}\big\langle\partial_{x\omega} u(t,x,\omega),\widetilde{\eta}^{(2)}\big\rangle 
    + \big\langle\partial_{\omega\omega} u(t,x,\omega),(\eta^{(2)}, \widetilde{\eta}^{(2)})\big\rangle|
\end{align}
we notice that, thanks again to the explicit representation of the derivative of $u$, this follows from estimating terms that are analogous to the ones performed above and some additional ones that we can deal with as in equations \eqref{eq:est_second_derivative_1} and  \eqref{eq:est_second_derivative2} by applying Hölder inequality instead of taking the supremum to obtain an estimate in terms of the $L^2(\TT)$-norm of $\eta$.

Finally, for the other terms, we apply H\"older's inequality with~$\frac{1}{p_1} + \frac{1}{p_2} + \frac{1}{p_3}=1$ to separate the three factors and then BDG inequality so that there exist~$G\in\Xx$ such that 
\begin{align*}
    & \EE\left[\phi''(X_T^{t,x,\omega}) \left(\int_t^T \psi'(s,V_s^{t,\omega}) \eta^{(2)}_s \,\D B_s \right)\left(\int_t^T \psi'(s,V_s^{t,\omega}) \widetilde{\eta}^{(2)}_s \,\D B_s \right) \right] \\
    &\le \norm{\phi''(X_T^{t,x,\omega})}_{L^{p_1}} \norm{\int_t^T \psi'(s,V_s^{t,\omega}) \eta^{(2)}_s \,\D B_s}_{L^{p_2}} \norm{\int_t^T \psi'(s,V_s^{t,\omega}) \widetilde{\eta}^{(2)}_s \,\D B_s}_{L^{p_3}}\\
    & \lesssim \norm{G(x,\omega)}_{\TT}\norm{\eta^{(2)}}_{L^2(\TT)} \norm{\widetilde{\eta}^{(2)}}_{L^2(\TT)}.
\end{align*} 
The same technique of splitting into three different factors shows that the other terms also yield the same estimates. 

(ii) c. For what concerns $\abs{\big\langle\partial_{\ww} U(t,\ww),\eta\big\rangle - \big\langle\partial_{\ww} U(t,\ww'),\eta\big\rangle}$, as in the proof of Proposition~\ref{prop:Cplusalpha}(ii), we focus on $\omega$-continuity as $x$-continuity is easier and follows with similar arguments. 
In particular, we refer to the computations in~\eqref{eq:est_first_derivative_reg} and what follows where exploiting Hölder inequality we can replace the supremum norm for $\eta$ with a $L^2(\TT)$-norm.

(ii) d. An analogous conclusion can be obtained adapting the results in~\eqref{eq:est_second_derivative_reg} and what follows for the estimate of $\abs{\big\langle\partial_{\ww\ww} U(t,\ww),(\eta,\eta)\big\rangle - \big\langle\partial_{\ww\ww} U(t,\ww'),(\eta,\eta)\big\rangle} $.

%%%%%%%%%%%%%%%%%%%%%%%%%%%%%%%%%%%%%%%%
%%%%%%%%%%%%%%%%%%%%%%%%%%%%%%%%%%%%%%%%%%

\bibliographystyle{siam}
%apalike
\bibliography{references}

@article{nualart2023error,
  title={Error distribution of the {E}uler approximation scheme for stochastic {V}olterra equations},
  author={Nualart, David and Saikia, Bhargobjyoti},
  journal={Journal of Theoretical Probability},
  volume={36},
  number={3},
  pages={1829-1876},
  year={2023},
  publisher={Springer}
}

@article{fukasawa2023limit,
  title={Limit distributions for the discretization error of stochastic {V}olterra equations with fractional kernel},
  author={Fukasawa, Masaaki and Ugai, Takuto},
  journal={The Annals of Applied Probability},
  volume={33},
  number={6B},
  pages={5071--5110},
  year={2023},
  publisher={Institute of Mathematical Statistics}
}

@article{leon2023euler,
  title={Euler scheme for {SDEs} driven by fractional {B}rownian motions: integrability and convergence in law},
  author={Le{\'o}n, Jorge and Liu, Yanghui and Tindel, Samy},
  journal={arXiv:2307.06759},
  year={2023}
}

@article{pannier2024path,
  title={A path-dependent {PDE} solver based on signature kernels},
  author={Pannier, Alexandre and Salvi, Cristopher},
  journal={arXiv:2403.11738},
  year={2024}
}

@article{dupret2022impact,
  title={Impact of rough stochastic volatility models on long-term life insurance pricing},
  author={Dupret, Jean-Loup and Barbarin, J{\'e}r{\^o}me and Hainaut, Donatien},
  journal={European Actuarial Journal},
  pages={1--41},
  year={2022},
  publisher={Springer}
}

@article{abi2019markovian,
  title={Markovian structure of the {V}olterra {H}eston model},
  author={Abi Jaber, Eduardo and El Euch, Omar},
  journal={Statistics \& Probability Letters},
  volume={149},
  pages={63--72},
  year={2019},
  publisher={Elsevier}
}

@article{euch2018perfect,
  title={Perfect hedging in rough {H}eston models},
  author={Euch, Omar El and Rosenbaum, Mathieu},
  journal={The Annals of Applied Probability},
  volume={28},
  number={6},
  pages={3813--3856},
  year={2018},
  publisher={JSTOR}
}

@article{bonesini2021functional,
  title={Functional quantization of rough volatility and applications to volatility derivatives},
  author={Bonesini, Ofelia and Callegaro, Giorgia and Jacquier, Antoine},
  journal={Quantitative Finance},
  year={2023},
volume = {23},
number = {12},
pages = {1769-1792}
}

@article{cuchiero2020generalized,
  title={Generalized {F}eller processes and {M}arkovian lifts of stochastic {V}olterra processes: the affine case},
  author={Cuchiero, Christa and Teichmann, Josef},
  journal={Journal of Evolution Equations},
  volume={20},
  number={4},
  pages={1301--1348},
  year={2020},
  publisher={Springer}
}

@article{ren2014overview,
  title={An overview of viscosity solutions of path-dependent {PDE}s},
  author={Ren, Zhenjie and Touzi, Nizar and Zhang, Jianfeng},
  journal={Stochastic Analysis and Applications 2014: In Honour of Terry Lyons},
  pages={397--453},
  year={2014},
  publisher={Springer}
}

@article{jacquier2019deep,
  title={Deep curve-dependent {PDEs} for affine rough volatility},
  author={Jacquier, Antoine and Oumgari, Mugad},
  journal={SIAM Journal on Financial Mathematics},
  volume={14},
  number={2},
  pages={353--382},
  year={2023},
}

@article{bayer2020weak,
  title={Weak error rates for option pricing under the rough {B}ergomi model},
  author={Bayer, Christian and Hall, Eric Joseph and Tempone, Ra{\'u}l},
  journal={International Journal Theoretical and Applied Finance},
  year={2022},
volume = {25},
number = {7}
}

@article{gatheral2018volatility,
  title={Volatility is rough},
  author={Gatheral, Jim and Jaisson, Thibault and Rosenbaum, Mathieu},
  journal={Quantitative finance},
  volume={18},
  number={6},
  pages={933--949},
  year={2018},
  publisher={Taylor \& Francis}
}

@article{viens2019martingale,
  title={A martingale approach for fractional {B}rownian motions and related path-dependent {PDE}s},
  author={Viens, Frederi and Zhang, Jianfeng},
  journal={The Annals of Applied Probability},
  volume={29},
  number={6},
  pages={3489--3540},
  year={2019},
  publisher={Institute of Mathematical Statistics}
}

@article{horvath2017functional,
  title={Functional central limit theorems for rough volatility},
  author={Horvath, Blanka and Jacquier, Antoine and Muguruza, Aitor and S{\o}jmark, Andreas},
  journal={Finance and Stochastics},
  volume={28},
pages={1-47},
  year={2024}
}

@article{dupire2019functional,
  title={Functional {I}t{\^o} calculus},
  author={Dupire, Bruno},
  journal={Quantitative Finance},
  volume={19},
  number={5},
  pages={721--729},
  year={2019},
  publisher={Taylor \& Francis}
}

@article{BFG16,
  title={Pricing under rough volatility},
  author={Bayer, Christian and Friz, Peter and Gatheral, Jim},
  journal={Quantitative Finance},
  volume={16},
  number={6},
  pages={887--904},
  year={2016}
}

@book{Nualart06,
    author =       {Nualart, D.},
    title =        {The {M}alliavin {C}alculus and {R}elated {T}opics},
    publisher =     {Springer},
    year =         {2006},
}

@article{gassiat2022weak,
  title={Weak error rates of numerical schemes for rough volatility},
  author={Gassiat, Paul},
  journal={SIAM Journal on Financial Mathematics},
  volume={14},
  number={2},
  pages={475--496},
  year={2023}
}

@article{wang2022path,
  title={Path dependent {F}eynman--{K}ac formula for forward backward stochastic {V}olterra integral equations},
  author={Wang, Hanxiao and Yong, Jiongmin and Zhang, Jianfeng},
  journal={Annales de l'Institut Henri Poincar{\'e}: Probabilit{\'e}s et Statistiques},
  volume={58},
  number={2},
  pages={603--638},
  year={2022}
}

@article{peng2016bsde,
  title={{BSDE}, path-dependent {PDE} and nonlinear {F}eynman-{K}ac formula},
  author={Peng, Shige and Wang, Falei},
  journal={Science China Mathematics},
  volume={59},
  number={1},
  pages={19--36},
  year={2016},
  publisher={Springer}
}

@book{zhang2017backward,
  title={Backward {S}tochastic {D}ifferential {E}quations},
  author={Zhang, Jianfeng},
  year={2017},
  publisher={Springer}
}

@article{talay1990expansion,
  title={Expansion of the global error for numerical schemes solving stochastic differential equations},
  author={Talay, Denis and Tubaro, Luciano},
  journal={Stochastic analysis and applications},
  volume={8},
  number={4},
  pages={483--509},
  year={1990},
  publisher={Taylor \& Francis}
}

@article{talay1986discretisation,
  title={Discr{\'e}tisation d'une {\'e}quation diff{\'e}rentielle stochastique et calcul approch{\'e} d'esp{\'e}rances de fonctionnelles de la solution},
  author={Talay, Denis},
  journal={ESAIM: Mathematical Modelling and Numerical Analysis},
  volume={20},
  number={1},
  pages={141--179},
  year={1986},
  publisher={EDP Sciences}
}

@incollection{fukasawa2021hedging,
  title={Hedging under rough volatility},
  author={Fukasawa, Masaaki and Horvath, Blanka and Tankov, Peter},
booktitle = {Rough Volatility},
  publisher = {SIAM},
  year = {2023},
editor = {Bayer, Christian and Friz, Peter K. and Fukasawa, Masaaki and Gatheral, Jim and Jacquier, Antoine and Rosenbaum, Mathieu}
}

@article{bayer2022weak,
  title={On the weak convergence rate in the discretization of rough volatility models},
  author={Bayer, Christian and Fukasawa, Masaaki and Nakahara, Shonosuke},
  journal={SIAM Journal on Financial Mathematics},
  volume={13},
  number={2},
  pages={66--73},
  year={2022},
  publisher={SIAM}
}

@article{friz2022weak,
  title={Weak error estimates for rough volatility models},
  author={Friz, Peter K and Salkeld, William and Wagenhofer, Thomas},
  journal={arXiv:2212.01591},
  year={2022}
}

@article{richard2021discrete,
  title={Discrete-time simulation of stochastic {V}olterra equations},
  author={Richard, Alexandre and Tan, Xiaolu and Yang, Fan},
  journal={Stochastic Processes and their Applications},
  volume={141},
  pages={109--138},
  year={2021},
  publisher={Elsevier}
}

@article{li2022numerical,
  title={Numerical methods for stochastic {V}olterra integral equations with weakly singular kernels},
  author={Hu, Yaozhong and Huang, Chengming and Li, Min},
  journal={IMA Journal of Numerical Analysis},
  volume={42},
  number={3},
  pages={2656--2683},
  year={2022},
  publisher={Oxford University Press}
}

@incollection{pages2018numerical,
  title={Numerical {P}robability},
  author={Pag{\`e}s, Gilles},
  booktitle={Universitext},
  year={2018},
  publisher={Springer}
}

@article{alfonsi2021approximation,
  title={Approximation of Stochastic {V}olterra Equations with kernels of completely monotone type},
  author={Alfonsi, Aur{\'e}lien and Kebaier, Ahmed},
  journal={Mathematics of Computation},
  volume={93},
  number={346},
  pages={643--677},
  year={2024}
}

@article{neuenkirch2016order,
  title={The order barrier for strong approximation of rough volatility models},
  author={Neuenkirch, Andreas and Shalaiko, Taras},
  journal={arXiv:1606.03854},
  year={2016}
}

@article{jacquier2021rough,
  title={Rough multifactor volatility for {SPX} and {VIX} options},
  author={Jacquier, Antoine and Muguruza, Aitor and Pannier, Alexandre},
  journal={Advances in Applied Probability},
  year={2025}
}

@article{wu2022rough,
  title={From rough to multifractal volatility: The log {S-fBM} model},
  author={Wu, Peng and Muzy, Jean-Fran{\c{c}}ois and Bacry, Emmanuel},
  journal={Physica A: Statistical Mechanics and its Applications},
  volume={604},
  pages={127919},
  year={2022},
  publisher={Elsevier}
}

@article{romer2022empirical,
  title={Empirical analysis of rough and classical stochastic volatility models to the {SPX} and {VIX} markets},
  author={R{\o}mer, Sigurd Emil},
  journal={Quantitative Finance},
  volume={22},
  number={10},
  pages={1805--1838},
  year={2022},
  publisher={Taylor \& Francis}
}

@article{bolko2022gmm,
  title={A {GMM} approach to estimate the roughness of stochastic volatility},
  author={Bolko, Anine E and Christensen, Kim and Pakkanen, Mikko S and Veliyev, Bezirgen},
  journal={Journal of Econometrics},
  volume={235},
  number={2},
  pages={745--778},
  year={2023},
  publisher={Elsevier}
}

@article{han2021hurst,
  title={The {H}urst roughness exponent and its model-free estimation},
  author={Han, Xiyue and Schied, Alexander},
  journal={arXiv:2111.10301},
  year={2021}
}

@article{bezborodov2019option,
  title={Option pricing with fractional stochastic volatility and discontinuous payoff function of polynomial growth},
  author={Bezborodov, Viktor and Di Persio, Luca and Mishura, Yuliya},
  journal={Methodology and Computing in Applied Probability},
  volume={21},
  pages={331--366},
  year={2019},
  publisher={Springer}
}

@article{clement2006duality,
  title={A duality approach for the weak approximation of stochastic differential equations},
  author={Cl{\'e}ment, Emmanuelle and Kohatsu-Higa, Arturo and Lamberton, Damien},
  journal={The Annals of Applied Probability},
  volume={16},
  number={3},
  pages={1124--1154},
  year={2006}
}

@article{gassiat2019martingale,
  title={On the martingale property in the rough Bergomi model},
  author={Gassiat, Paul}, journal={Electronic Communications in Probability}, 
    volume={24},
pages={1--9},
  year={2019}
}

@article{abi2025martingale,
  title={Martingale property and moment explosions in signature volatility models},
  author={Abi Jaber, Eduardo and Gassiat, Paul and Sotnikov, Dimitri},
  journal={arxiv:2503.17103},
  year={2025}
}
%%%%%%%%%%%%%%%%%%%%%%%%%%%%%%%%%%%%%%%%%%
\end{document}